\newtheorem{theorem}{Theorem}[section]
\newtheorem{corollary}[theorem]{Corollary}
\newtheorem{lemma}[theorem]{Lemma}
\newtheorem{proposition}[theorem]{Proposition}
\theoremstyle{definition}
\newtheorem{definition}[theorem]{Definition}
\newtheorem{remark}[theorem]{Remark}
\newtheorem{assumption}[theorem]{Assumption}
\newtheorem{theoremletter}{Theorem}
\numberwithin{equation}{section}
\numberwithin{subsection}{section}
\newcommand{\Na}{\mathbb{N}} %naturali
\newcommand{\R}{\mathbb{R}} %reali
\newcommand{\eps}{\varepsilon} %epsilon
\newcommand{\spt}{\mathrm{spt}} %support (usually of a measure)
\newcommand{\dist}{\mathrm{dist}} %distance
\newcommand{\Ha}{\mathcal{H}} %Hausdorff measure
\newcommand{\Leb}{\mathcal{L}} %Lebesgue measure
\newcommand{\pa}{\partial}
\newcommand{\mres}{\mathbin{\vrule height 1.6ex depth 0pt width %measure restriction
0.13ex\vrule height 0.13ex depth 0pt width 1.3ex}}
\newcommand{\weakstar}{\stackrel{*}{\rightharpoonup}}
\newcommand{\abs}[1]{\lvert#1\rvert} %absolute value
\newcommand{\V}{\mathbf{V}} %space of general varifolds
\newcommand{\IV}{\mathbf{IV}} %space of integral varifolds
\newcommand{\var}{\mathbf{var}} %varifold(rect. set, density)
\newcommand{\E}{\mathcal{E}} %open partition
\newcommand{\cA}{\mathcal{A}}%class A of functions
\newcommand{\bE}{\mathbf{E}}%set of area reducing Lip def
\DeclareMathOperator*{\esssup}{ess\,sup}
\title[
canonical multi--phase Brakke flows]{On the existence of canonical \\ multi--phase Brakke flows} %\\ with the velocities in $L^2$}
\date{\today}
\author[S. Stuvard]{Salvatore Stuvard}
\address{Dipartimento di Matematica, Universit\`{a} degli Studi di Milano, Via Saldini 50, I-20133 Milano (MI), Italy}
\email{salvatore.stuvard@unimi.it}
\author[Y. Tonegawa]{Yoshihiro Tonegawa}
\address{Department of Mathematics, Tokyo Institute of Technology, 2-12-1 Ookayama, Meguro-ku, Tokyo 152-8551, Japan}
\email{tonegawa@math.titech.ac.jp}
\begin{document}

\begin{abstract}
This paper establishes the global-in-time existence of a multi-phase mean curvature flow, evolving from an arbitrary closed rectifiable initial datum, which is a Brakke flow and a BV solution at the same time. In particular, we prove the validity of an explicit identity concerning the change of volume of the evolving grains, showing that their boundaries move according to the generalized mean curvature vector of the Brakke flow. As a consequence of the results recently established by Fischer et al. in \cite{fischer}, under suitable assumptions on the initial datum, such additional property resolves the non-uniqueness issue of Brakke flows.   %\\

%\textsc{AMS Math Subject Classification (2020):} 53E10 (primary), 49Q15.
\end{abstract}

\maketitle

\tableofcontents

\section{Introduction} \label{s:introduction}

Arising as the gradient flow of the area functional, the \emph{mean curvature flow} (henceforth abbreviated as MCF) is arguably the most fundamental geometric flow involving extrinsic curvatures. The unknown of MCF is a one-parameter family $\{\Gamma(t)\}_{t \geq 0}$ of surfaces in Euclidean space (or in an ambient Riemannian manifold) such that the normal velocity of the flow equals the mean curvature vector at each point for every time $t$. The initial value problem for MCF starting with a smooth closed surface $\Gamma(0)=\Gamma_0$ is locally well-posed in time, until the appearance of singularities such as shrinking or neck pinching. Numerous frameworks
of generalized solutions past singularities have been proposed: we mention, among others, the Brakke flows \cite{Brakke}, level set flows \cite{CGG,ES}, BV solutions \cite{Luckhaus,Laux1} and $L^2$ flows \cite{Mugnai,Bertini}. Existence of these possibly different generalized solutions to the 
MCF as well as 
their relations
have been studied intensively in the past 40 years or so. 

\smallskip

The aim of the present paper is to establish the global-in-time 
existence of a \emph{``canonical multi-phase'' Brakke flow} evolving from an arbitrary rectifiable initial datum. 
The attribute \emph{``multi-phase''} here refers to the fact that the evolving surfaces are, in fact, \emph{boundaries} of finitely many, but at least two, labelled open subsets
of $\R^{n+1}$ (henceforth referred to also as \emph{``grains''} or \emph{``phases''}). The MCF evolution of such objects is strongly motivated by materials science, as it describes the motion and growth of crystallites in polycrystalline materials; see e.g.\,\cite{Mullins}. While the literature concerning the two-phase MCF is rich, fewer works have been dedicated to the more general case of multi-phase MCF with at least three grains, despite its relevance in the modeling of physical processes governed by surface tension type energies.

\smallskip

In the present paper, we work with an arbitrary (but finite) number of grains. The solution we construct consists of two objects: the flow of the evolving grains 
and a Brakke flow, intertwined as follows. The Brakke flow -- a measure-theoretic generalization of MCF, particularly suited to describe the evolution of surfaces through singularities (see Definition \ref{d:Brakke flow}) -- is 
essentially supported on the topological boundary of the grains, and it keeps track of multiplicities.
Additionally, the mean curvature of the Brakke flow determines the distributional velocity at which the reduced boundary of each grain moves. As a result of the latter property, the change of volume of each grain between two instants of time can be recovered by integrating the mean curvature over the reduced boundary, a property certainly expected for a smooth MCF but quite non-trivial in a context where singularities and multiplicities occur. The attribute \emph{``canonical''} refers to this very precise interplay between the Brakke flow and the evolution of the grains. Note that Brakke flows are non-unique in general due to the nature of the formulation, but the existence of the grains prevents
redundant non-uniqueness such as sudden vanishing, 
for example. 
In the absence of higher multiplicities
of the Brakke flow, we show that the collection of the grains (or, more precisely, the collection of their indicator functions) constitutes
a \emph{BV solution} of the MCF (see Definition \ref{d:BV flow}). In this case, the grain
boundaries are, in fact, 
a smooth MCF almost everywhere in space
and time by Brakke's partial regularity theory
\cite{Brakke,Kasai-Tone,Ton-2}. 

\smallskip

In certain instances, the additional 
BV characterization may lead to the uniqueness of the canonical Brakke flow. 
For instance, in ambient dimension $n+1=2$, the recent work of Fischer et al. \cite{fischer} shows that, as soon as a \emph{strong solution} of the network flow exists (see \cite[Definition 16]{fischer} for the definition of strong solution) then any BV solution must coincide with it at least for all times until the first topology changes occur. One conclusion derived from \cite{fischer} and the
present paper is then that, if $n=1$
and if a regular network flow starting from the given initial datum exists, then the canonical Brakke flow 
constructed in the present paper from the same initial datum necessarily
coincides with that regular flow until the first topology changes; see \cite[Theorem 19]{fischer} for the precise statement (also see \cite{Laux2} for a similar uniqueness result in $n=2$).  
In fact, the work \cite{fischer} inspired the study carried out in the present paper.

\smallskip

In more precise and technical terms, the highlight of the main 
results of the present paper may be stated as follows (see the complete
statements in Section \ref{Mainresult}).
\begin{theoremletter}\label{t:main}
Let $E_{0,1},\ldots,
E_{0,N}\subset\mathbb R^{n+1}$ be mutually disjoint non-empty
open sets with $N\geq 2$ such that $\Gamma_0:=\mathbb R^{n+1}\setminus\cup_{i=1}^N E_{0,i}$ is
countably $n$-rectifiable. Assume that the $n$-dimensional Hausdorff measure of $\Gamma_0$ is finite or grows at most exponentially fast at infinity. Then,
there exist a Brakke flow $\{V_t\}_{t\geq 0}$ (see Definition \ref{d:Brakke flow} below) as well as one-parameter families $\{E_i (t)\}_{t \geq 0}$ ($i \in \{1,\ldots,N\}$) of open sets, with $\{E_1(t),\ldots,E_N(t)\}$ mutually disjoint for each $t \ge 0$, such that
\[
\|V_0\| = \Ha^n\mres_{\Gamma_0}\,, \qquad E_i (0) = E_{0,i} \quad \mbox{for every $i=1,\ldots,N$}\,,
\]
and satisfying
the following 
properties. Writing $\Gamma(t):=\mathbb R^{n+1}
\setminus \cup_{i=1}^N E_i(t)$:
\begin{itemize}
    \item[(a)] $\mathcal H^{n-1+\delta}(\Gamma(t)\triangle\,{\rm spt}\,\|V_t\|)=0$ for any $\delta>0$ and for a.e.~$t\geq 0$. 
\item[(b)] For each $i=1,\ldots,N$ and 
for any arbitrary test function $\phi = \phi (x,t)$, we have, in the sense of distributions on $\left[0,\infty\right)$, that
\begin{equation}\label{vchange}
    \frac{d}{dt}\int_{E_i(t)}\phi\,dx 
    = \int_{\partial^*E_i(t)} \phi\,h\cdot \nu_i\,
    d\mathcal H^n+\int_{E_i(t)}\frac{\partial\phi}{\partial t}\,dx\,.
\end{equation}
\item[(c)] If the Brakke flow is locally a unit density flow, then, locally, we have 
\begin{equation}
    \mathcal H^n(\Gamma(t)\triangle \cup_{i=1}^N \partial^* E_i(t))=0
    \mbox{ and }\|V_t\|=\mathcal H^n\mres_{\cup_{i=1}^N\partial^* E_i(t)}
\end{equation}
for a.e.~$t$.  
\end{itemize}
\end{theoremletter}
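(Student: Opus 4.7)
My plan is to construct $(\{V_t\}_{t\geq 0}, \{E_i(t)\}_{t\geq 0,\, i=1,\ldots,N})$ via a time-discrete multi-phase approximation in the spirit of Kim--Tonegawa, refined so as to keep the motion of each grain tightly coupled to the first variation of an accompanying integral varifold. At each step $k\Delta t$ the scheme produces a pair $(\{E_i^k\}_i, V^k)$, with $\{E_i^k\}_i$ mutually disjoint open sets and $V^k$ an integral varifold essentially supported on $\bigcup_i \pa E_i^k$. The one-step update would consist of two sub-steps: an area-reducing deformation of $V^k$ in the direction of a (mollified) first variation, followed by a retraction that synchronizes the grain interfaces with the new support of $V^{k+1}$. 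By design, the normal displacement of $\pa E_i^k$ between consecutive steps will be, up to $o(\Delta t)$, the mean curvature vector of $V^k$.

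For the passage to the limit $\Delta t \downarrow 0$, I would extract subsequences via Allard's compactness for the varifolds (relying on uniform mass bounds and $L^2_{x,t}$ control of the approximate generalized mean curvature) and via BV compactness in space, weak-$*$ in time, for the characteristic functions $\chi_{E_i^{(\Delta t)}}$ (uniformly bounded in $L^\infty_t BV_x$ by $\|V^{(\Delta t)}\|$). Iteration of the discrete one-step inequality then yields the Brakke inequality for the limit. The synchronization built into the discrete step, together with a density argument for the limiting Brakke flow, yields item (a): $\Gamma(t)$ and $\spt \|V_t\|$ coincide modulo a set of Hausdorff dimension at most $n-1+\delta$, corresponding to lower-dimensional junctions where several phases meet.

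The crux of the proof is the volume identity (b). I would test the discrete scheme against a smooth $\phi = \phi(x,t)$ and decompose
\[
\int_{E_i^{k+1}} \phi(\cdot, (k+1)\Delta t)\,dx - \int_{E_i^k} \phi(\cdot, k\Delta t)\,dx
\]
as the sum of a time-derivative contribution $\Delta t \int_{E_i^k} \pa_t \phi$ and a boundary-motion contribution given by the flux across $\pa E_i^k$ of the approximate curvature field used to update the grains, which by construction equals $\Delta t \int_{\pa E_i^k} \phi\, h^{(\Delta t)} \cdot \nu_i^{(\Delta t)}\,d\Ha^n$ up to higher-order errors. Telescoping over $k$ and passing to the limit should recover \eqref{vchange}, provided $h^{(\Delta t)}$ converges in the correct dual sense to $h$ and the approximate outer normals converge $\Ha^n$-a.e.\,on $\pa^* E_i(t)$ to $\nu_i$. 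The hard part is exactly this joint convergence on $\pa^* E_i(t)$: the approximate tangent planes of $V^{(\Delta t)}$ at a typical point of $\pa^* E_i(t)$ must agree, with unit multiplicity, with the measure-theoretic tangent to $\pa^* E_i(t)$, in spite of possible higher-multiplicity phenomena elsewhere in $V_t$. Handling this will require exploiting the synchronization design of the discrete step uniformly across scales.

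Finally, item (c) follows once $V_t$ is locally unit-density: by the Brakke and Kasai--Tonegawa partial regularity, $\spt\|V_t\|$ is then locally a smooth MCF, so each of its local pieces must coincide with some $\pa^* E_i(t)$. Combined with (a), this upgrades the $\Ha^{n-1+\delta}$-equality between $\Gamma(t)$ and $\spt\|V_t\|$ to an $\Ha^n$-equality with $\bigcup_i \pa^* E_i(t)$ and forces $\|V_t\| = \Ha^n \mres \bigcup_i \pa^* E_i(t)$.
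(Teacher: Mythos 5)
Your overall strategy (time-discrete partition flow in the spirit of Kim--Tonegawa, telescoping a volume-change identity over discrete steps, and passing to the limit) is indeed what the paper does, but there are concrete gaps at the two places where the paper invests the most technical effort.

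First, the paper's decisive technical innovation is the \emph{volume-controlled} Lipschitz deformation: the regularizing deformation step is required to change the volume of each grain by at most $\bigl(\|\partial\E\|(\Omega)-\|\partial f_\star\E\|(\Omega)\bigr)/j$. This is precisely what makes the error $D_1$ from that step (see \eqref{first summand}--\eqref{first error}) summable across all epochs and vanishing after division by $\Delta t$, because its sum is dominated by the total dissipation of the approximate scheme. Your proposal speaks of a ``retraction that synchronizes the grain interfaces'' with the varifold, but with the unmodified Kim--Tonegawa class of deformations (or any uncontrolled retraction) the cumulative volume displacement from the regularizing step is only $O(1/j)$ per epoch, which is \emph{not} $o(\Delta t)$ after summing over the $T/\Delta t$ epochs; the scheme then fails to close the telescoping identity. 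Second, for the characterization $u_i = h\cdot\nu_i$ in item~(b), you propose to prove directly that the approximate outer normals and the approximate curvature fields converge pointwise $\Ha^n$-a.e.\ on $\partial^*E_i(t)$, and you rightly flag this ``joint convergence'' as the hard part. The paper deliberately avoids this: it first establishes only the \emph{existence} of some velocity $u_i \in L^2(\mu\mres_\Omega)$ by weak-$*$ compactness of the flux measures (Proposition \ref{p:existence of a measure derivative}), and then \emph{characterizes} $u_i$ at the level of the limit object using that the Brakke flow is an $L^2$ flow (Theorem \ref{t:Brakke implies L2}), the Mugnai--R\"oger tangency $(h,1)\in T_{(x,t)}\mu$ (Corollary \ref{cor:tangent vector}), and a coarea/slicing argument on $\partial^*S(i)$ in space-time (Lemma \ref{l:key technical}). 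Your direct-convergence plan would need an argument that is not available without that structure.

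For item~(c) there is also a gap: appealing to Brakke/Kasai--Tonegawa partial regularity to conclude that each smooth local sheet of $\spt\|V_t\|$ coincides with some $\partial^*E_i(t)$ does not, by itself, rule out ``hidden boundaries'' where the same phase lies on both sides of a regular sheet (i.e.\ a sheet of $\partial E_i(t)\setminus\partial^*E_i(t)$). The authors make exactly this point in the introduction: partial regularity for unit-density flows is not sufficient to identify $\|V_t\|$ with the reduced-boundary measure. The paper instead proves a \emph{two-sidedness} statement (Proposition \ref{lay2}) by a blow-up analysis of the approximating partitions at scale $O(1/j^{2.5})$, where the almost-measure-minimizing property of the Lipschitz step forces a regular unit-density sheet to separate two distinct grains. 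That input from the approximation is what is missing from your argument for~(c).
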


In the above statement, $A\triangle B$ denotes the symmetric difference of two sets
$A$ and $B$, and ${\rm spt}\,\|V_t\|$ is the support of
the weight measure $\|V_t\|$ of the varifold $V_t$. 
The symbol $h$ denotes the generalized mean curvature vector of $V_t$,
and $\nu_i$ is the outer unit normal vector field to the reduced boundary 
$\partial^* E_i(t)$ of $E_i(t)$. Since $\Gamma(t)=\cup_{i=1}^N \partial E_i(t)$ for all $t>0$ (see Theorem \ref{main3}(3)), 
the claim (a) shows that the support of the
Brakke flow coincides -- up to a lower dimensional set -- with the union of the topological boundaries of the grains for a.e.~$t>0$. The claim (b) states that each reduced boundary $\partial^* E_i (t)$ is a solution to the MCF in the integral sense specified in \eqref{vchange}: that is, the generalized velocity of $\partial^* E_i (t)$ -- defined as the distributional time derivative of the indicator function of $E_i (t)$ -- is precisely $h \cdot \nu_i\, \Ha^n \mres_{\partial^* E_i (t)}$.

When integrated, formula \eqref{vchange} provides, as a byproduct, the change of the $(n+1)$-dimensional volume of each grain $E_i(t)$ in any bounded open set $U$:
\begin{equation}\label{vchange-s}
    \mathcal L^{n+1}(U\cap E_i(t_2))-\mathcal L^{n+1}(U\cap E_i(t_1))=\int_{t_1}^{t_2}\int_{U\cap\partial^*
    E_i(t)}h\cdot\nu_i\,
    d\mathcal H^n dt\,.
\end{equation}
We emphasize the following point in particular: the formulae \eqref{vchange} and \eqref{vchange-s}
hold true even if there is a ``higher
multiplicity portion'' of $\|V_t\|$ on $\partial^* E_i(t)$, or some ``interior boundary'' $\partial E_i(t)\setminus\partial^* E_i(t)$. As far as the authors are aware of, for generalized MCF,
\eqref{vchange-s} had never been established prior to the present paper, even for the two-phase MCF. It is worth mentioning that, in the latter context, the existence of a square-integrable generalized velocity advecting the common boundary of the two phases had been proved before, in the setting of sharp interface limits of minimizers of the Allen-Cahn action functional in \cite{Mugnai}, and of solutions to the Allen-Cahn equation in \cite{Hensel-Laux_AC}. A fundamental new achievement of the present paper is, besides the multi-phase setting, the fact that the generalized velocity advecting each phase is precisely the dot product of the generalized mean curvature of the underlying varifold with the appropriate unit normal. Even though we have $\|V_0\|=\mathcal H^n\mres_{\Gamma_0}$, and thus the unit density condition is satisfied at the initial time of the Brakke flow, it is not possible to exclude, in the very general framework under consideration, the occurrence of
higher multiplicities at a later time. Despite all 
those possible singular behaviors, \eqref{vchange}
and \eqref{vchange-s} are guaranteed time-globally.
The claim (c) states that, if the higher 
multiplicity of 
$\|V_t\|$ does not occur for $\mathcal H^n$-a.e.~$x$ and
a.e.~$t$ locally in space-time, then the reduced
boundary measure and the Brakke flow may be
identified with one another
in that region, and 
we may say that the $N$-tuple $\chi=\left(\chi_{E_1} , \ldots , \chi_{E_N} \right)$ is a BV solution to MCF (see Definition \ref{d:BV flow}).
We can guarantee the existence of a unit density flow for some initial time interval $[0,T_0)$ if we additionally 
assume a suitable density ratio upper bound on $\Gamma_0$
(see Theorem \ref{main5}). Such assumption would still allow for an initial datum $\Gamma_0$ which consists of a union of
Lipschitz curves joined by triple junctions in $n=1$ and 
Lipschitz bubble clusters with tetrahederal singularities in $n=2$,
for example.

\smallskip

For general Brakke flows, 
there is no clear pathway leading from the characterization of Brakke flow, which 
consists of a variational inequality dictating an upper bound on the rate of change of the mass of the evolving surfaces, 
to formula \eqref{vchange}, even under 
the unit density assumption. In fact, as mentioned already, 
by the partial regularity theorem for unit density
Brakke flows \cite{Brakke,Kasai-Tone,Ton-2}, in this latter case it is known that $\Gamma(t)$ is a $C^\infty$ MCF in a space-time neighborhood of $(x_0,t_0)$ for a.e.~$t_0$ and for $\Ha^n$-a.e.~$x_0 \in \Gamma(t_0)$: nonetheless, this alone is not sufficient to guarantee that $\chi=(\chi_{E_1},\ldots,\chi_{E_N})$ is a BV solution. The same
remark goes for the opposite implication, i.e., from BV solution to 
Brakke flow. Since there is no known partial regularity theory for general BV solutions, these two notions appear far from being 
equivalent in any case. 
In the present paper, instead, the Brakke flow arises as the limit of a suitable time-discrete approximation scheme, analogous to that introduced by Kim and the second-named author in \cite{KimTone}, and we prove \eqref{vchange} by showing that an analogous identity holds approximately true for the approximating flows, with vanishing errors in the limit. In order to gain enough control on the change of volumes in the approximation scheme and consequently obtain good estimates on the error terms, we will need to implement an appropriate modification to 
the construction of the time-discrete
approximate flows devised in \cite{KimTone}:
the details of such modification will 
be explained thoroughly later; see Section \ref{s:Lipschitz} and Appendix \ref{appendix:Lipschitz}. 

\smallskip

We next discuss closely related works, particularly on the
aspect of existence of generalized MCF. For two-phase 
MCF, the level-set method \cite{CGG,ES} provides a 
general existence and uniqueness result even past the time after singularities appear.
On the other hand, the level-set may develop a
non-trivial interior, a phenomenon called ``fattening'',
due to the singular behavior of the MCF.
Also the uniqueness 
of the level-set solutions depends essentially 
on the maximum principle and it cannot handle general
multi-phase MCF of more than two phases.

For the general multi-phase problem, 
it is natural to consider an initial datum $\Gamma_0$
with singularities to start with.  For example, in dimension $n=1$, a typical $\Gamma_0$ in a three-phase problem is a union of
curves meeting at triple junctions. In the parametric 
setting, Bronsard and Reitich \cite{Bronsard} first showed 
the short-time existence of a unique solution for $C^{2,\alpha}$ initial datum.
Since then, there have been numerous studies (mostly for $n=1$ but also for higher dimensions \cite{Freire,Depner}), and we refer the reader to
the survey \cite{Mantegazza} for the references on the parametric approaches. 
Due to the nature of the solutions and the need to heavily employ PDE techniques, these existence results do not extend beyond the time of 
topological changes. With a non-parametric approach and for the existence of MCF with regular triple junctions, 
one can
adopt the elliptic regularization \cite{Ilm1} for the
class of flat chains
with coefficients in a finite group, see \cite{SW}.

 Luckhaus and Sturzenhecker \cite{Luckhaus} introduced
the formulation of BV solution of the two-phase MCF, which can be
extended naturally to the multi-phase MCF. Their existence result is 
conditional, 
in the sense that a BV solution is shown to exist under the assumption that 
the time-discrete approximate solutions converge to their limit
without loss of surface energy. Laux and Otto \cite{Laux1} proved that 
a sequence arising from the thresholding scheme of Merriman, Bence and Osher converges conditionally to a BV solution using the interpretation in terms of minimizing movements due to Esedo\={g}lu and Otto \cite{Esedoglu}, again under an 
assumption similar to \cite{Luckhaus} (see also 
\cite{Laux-Simon} for a similar convergence result of
the parabolic Allen-Cahn system). The BV solution of \cite{Luckhaus} was partly motivated by the minimizing movements scheme of Almgren-Taylor-Wang \cite{ATW}, 
and the multi-phase version has been studied recently by Bellettini and 
Kholmatov \cite{Bellettini}. 

On the side of Brakke flows, Ilmanen \cite{Ilm_AC}
proved the existence of a rectifiable Brakke flow arising as a limit 
of solutions to a parabolic Allen-Cahn equation, and the second-named author proved the
integrality of the Brakke flow \cite{Ton3}. These results are for two-phase
MCF, but the relation to the BV solution as formulated in \cite{Luckhaus}
remained obscure. In a different but 
related problem -- the Allen-Cahn action functional --, Mugnai and R\"{o}ger
\cite{Mugnai} introduced a notion of $L^2$ flow to describe a weak 
formulation of MCF with additional $L^2$ forcing term for
$n=1,\,2$. The existence 
of the $L^2$ flow depends on the result of R\"{o}ger and Sch\"{a}tzle \cite{R-S} which solved
one of De Giorgi's conjectures. The work \cite{Mugnai}
essentially contains the result that the limit phase boundary of the parabolic Allen-Cahn equation satisfies an analogous equation to \eqref{vchange} (see \cite[Proposition 4.5]{Mugnai}). The solution constructed in the present paper is, in fact, also an $L^2$ flow in the sense of Mugnai-R\"oger, and, even though the approach leading to \eqref{vchange} is different, some properties of generalized velocities of $L^2$ flows established in \cite{Mugnai} are used in the present paper as well.

Finally, we mention again that a global-in-time  existence
result of a multi-phase Brakke flow which is equipped with 
moving grain boundaries was given by Kim and the second-named author in \cite{KimTone}, reworking the pioneering paper by 
Brakke \cite{Brakke} within a different formulation. The grains in \cite{KimTone}
move continuously with respect to the Lebesgue measure, but the problem concerning the validity of an exact identity involving the volume change was not addressed in there. Previous works by the authors of the present paper (see \cite{ST19,stu-tone2}), in which certain Brakke flows are constructed with an approximation scheme analogous to that introduced in \cite{KimTone}, could be reworked so that the additional conclusions concerning the interplay between the flow of the grains and the Brakke flow can be drawn in those contexts as well: in particular, it is possible to have the Brakke flow with prescribed boundary constructed in \cite{ST19} satisfy the formulae \eqref{vchange} and \eqref{vchange-s} (see Section \ref{fixed}).

\medskip

\noindent\textbf{Acknowledgments.} S.S. is partially supported by the \textit{GNAMPA - Gruppo Nazionale per l'Analisi Matematica, la Probabilità e le loro Applicazioni} of INdAM; Y.T. is partially supported by JSPS 18H03670, 19H00639, 17H01092.

%%%%%%%%%%%%%%%%%%%%%%%%%%%%%%%%%%%%%%%%%%
\section{Notation and main results} \label{s:notation}
\subsection{Basic notation}
We shall use the same notation adopted in \cite[Section 2]{KimTone}. In particular, the ambient space we will be working in is the Euclidean space $\R^{n+1}$, and $\R^+$ will denote the interval $\left[0,\infty\right)$. Coordinates $(x,t)$ are set in the product space $\R^{n+1} \times \R^+$, and $t$ will be thought of and referred to as ``time''. For a subset $A$ of Euclidean space, ${\rm clos}\,A$, ${\rm int}\,A$, $\pa A$ and ${\rm conv}\,A$ will denote the closure, interior, boundary and convex hull of $A$, respectively. If $A \subset \R^{n+1}$ is (Borel) measurable, $\Leb^{n+1} (A)$ or $|A|$ will denote the Lebesgue measure of $A$, whereas $\Ha^k (A)$ denotes the $k$-dimensional Hausdorff measure of $A$. When $x \in \R^{n+1}$ and $r>0$, $U_r(x)$ and $B_r(x)$ denote the open ball and the closed ball centered at $x$ with radius $r$, respectively. More generally, if $k$ is an integer then $U^k_r(x)$ and $B^k_r(x)$ will denote open and closed balls in $\R^k$, and $\omega_k := \Leb^k (U^k_1(0))$.\\

A positive Radon measure $\mu$ on $\mathbb R^{n+1}$ (or in ``space-time'' $\R^{n+1} \times \R^+$) is always also regarded as a positive linear functional on the space $C_c(\R^{n+1})$ of continuous and compactly supported functions on $\R^{n+1}$, with the pairing denoted $\mu(\phi)$ for $\phi \in C_c(\R^{n+1})$. The restriction of $\mu$ to a Borel set $A$ is denoted $\mu\, \mres_A$, so that $(\mu \,\mres_A)(E) := \mu(A \cap E)$ for any $E \subset \R^{n+1}$. The support of $\mu$ is denoted $\spt\,\mu$, and it is the closed set defined by
\[
\spt\,\mu := \left\lbrace x \in \R^{n+1} \, \colon \, \mu(B_r(x)) > 0 \mbox{ for every $r > 0$} \right\rbrace\,.
\]
The upper and lower $k$-dimensional densities of a Radon measure $\mu$ at $x \in \R^{n+1}$ are
\[
\Theta^{*k}(\mu,x) := \limsup_{r \to 0^+} \frac{\mu(B_r(x))}{\omega_k\, r^k} \,, \qquad \Theta^k_*(\mu,x) := \liminf_{r \to 0^+} \frac{\mu(B_r(x))}{\omega_k\, r^k}\,,
\]
respectively. If $\Theta^{*k}(\mu,x) = \Theta^k_*(\mu,x)$ then the common value is denoted $\Theta^k(\mu,x)$, and is called the $k$-dimensional density of $\mu$ at $x$. For $1 \leq p \leq \infty$, the space of $p$-integrable (resp.~locally $p$-integrable) functions with respect to $\mu$ is denoted $L^p(\mu)$ (resp.~$L^p_{{\rm loc}}(\mu)$). If $U\subset \R^{n+1}$ is an open set, $L^p(\Leb^{n+1}\mres_U)$ and $L^p_{{\rm loc}}(\Leb^{n+1}\mres_U)$ are simply written $L^p(U)$ and $L^p_{{\rm loc}}(U)$. For a signed or vector-valued measure $\mu$, $|\mu|$ denotes its total variation. \\

Given an open set $U \subset \R^{n+1}$, we say that a function $f \in L^1(U)$ has bounded variation in $U$, written $f \in {\rm BV}(U)$, if 
\[
\sup\left\lbrace \int_U f \, {\rm div}\,g \, dx \, \colon \, g \in C^1_c(U;\R^{n+1}) \mbox{ with } \|g\|_{C^0} \leq 1 \right\rbrace < \infty\,.
\]
If $f\in {\rm BV}(U)$ then there exists an $\R^{n+1}$-valued Radon measure on $U$, which we will call the measure derivative of $f$ and denoted $\nabla f$, such that
\[
\int_U f\,{\rm div}\,g\, dx = - \int_U g \cdot d\nabla f \qquad \mbox{for all $g \in C^1_c(U;\R^{n+1})$}\,.
\]
We say that $f \in {\rm BV}_{{\rm loc}}(U)$ if $f \in {\rm BV}(U')$ for all $U' \Subset U$.\\

For a set $E \subset \R^{n+1}$, $\chi_E$ is the characteristic (or indicator) function of $E$, defined by $\chi_E(x)=1$ if $x\in E$ and $\chi_E(x)=0$ otherwise. We say that $E$ has locally finite perimeter in $\R^{n+1}$ if $\chi_E \in {\rm BV}_{{\rm loc}}(\R^{n+1})$. When $E$ is a set of locally finite perimeter, then the measure derivative $\nabla \chi_E$ is the associated Gauss-Green measure, and its total variation $|\nabla \chi_E|$ is the perimeter measure; by De Giorgi's structure theorem, $| \nabla \chi_E| = \Ha^n \mres_{\pa^* E}$, where $\pa^* E$ is the reduced boundary of $E$, and $\nabla\chi_E = - \nu_E\, |\nabla \chi_E| = - \nu_E\, \Ha^n \mres_{\pa^* E}$, where $\nu_E$ is the outer pointing unit normal vector field to $\pa^*E$.\\

A subset $\Gamma \subset \R^{n+1}$ is countably $k$-rectifiable if it is $\Ha^k$-measurable and it admits a covering
\[
\Gamma \subset Z \cup \bigcup_{h \in \mathbb N} f_h (\R^k)\,,
\]
where $\Ha^k(Z)=0$ and $f_h \colon \R^k \to \R^{n+1}$ is Lipschitz. A countably $k$-rectifiable set $\Gamma$ is (locally) $\Ha^k$-rectifiable if, moreover, $\Ha^k(\Gamma)$ is (locally) finite. Countably $k$-rectifiable sets $\Gamma$ are characterized by the existence of approximate tangent planes $\Ha^k$-almost everywhere (\cite[Theorem 11.6]{Simon}). In other words, $\Gamma$ is countably $k$-rectifiable if and only if there exists a \emph{positive} function $g \in L^1_{{\rm loc}}(\Ha^k \mres_{\Gamma})$ such that the following holds: for $\Ha^k$-a.e. $x \in \Gamma$ there exists a $k$-dimensional linear subspace of $\R^{n+1}$, denoted $T_x\Gamma$ or ${\rm Tan}(\Gamma, x)$ and referred to as the (approximate) tangent plane to $\Gamma$ at $x$ such that
\[
g(x+r\,\cdot) \, \Ha^k \mres_{\frac{\Gamma - \{x\}}{r}} \weakstar g(x) \, \Ha^k \mres_{T_x\Gamma} \qquad \mbox{in the sense of measures, as $r \to 0^+$}\,.
\]
A (positive) measure $\mu$ on $\R^{n+1}$ is said to be $k$-rectifiable if there are a countably $k$-rectifiable set $\Gamma$ and a positive function $g\in L^1_{{\rm loc}}(\Ha^k \mres_{\Gamma})$ such that $\mu=g\,\Ha^k \mres_{\Gamma}$. If $\mu$ is $k$-rectifiable, $\mu = g\,\Ha^k \mres_{\Gamma}$, then for $\mu$-a.e. $x$ the measures $\mu_{x,r}$ defined, for $r > 0$, by $\mu_{x,r}(A) := r^{-k}\,\mu(x+r\,A)$ satisfy
\[
\mu_{x,r} \weakstar g(x)\,\Ha^k \mres_{T_x\Gamma} \qquad \mbox{as $r \to 0^+$}\,,
\]
that is $g(x)\,\Ha^k \mres_{T_x\Gamma}$ is the tangent measure of $\mu$ at $x$. In this case the notation $T_x\mu$ may be used interchangeably with $T_x\Gamma$ to denote the approximate tangent plane to the $k$-rectifiable measure $\mu$ at $x$ for $\mu$-a.e.\,$x$. More generally, given a Radon measure $\mu$ on $\R^{n+1}$ and $x \in \R^{n+1}$, we say that $\mu$ has an approximate tangent $k$-plane at $x$ if there exist $g(x) \in \left(0, \infty\right)$ and a $k$-dimensional linear subspace $\pi \subset \R^{n+1}$ such that
\[
\mu_{x,r} \weakstar g(x)\,\Ha^k \mres_{\pi} \qquad \mbox{as $r \to 0^+$}\,.
\]
When this happens, the plane $\pi$ is unique, and it is denoted $T_x\mu$.\\

A $k$-dimensional varifold in $\R^{n+1}$ (\cite{Allard,Simon})
is a positive Radon measure $V$ on the space $\R^{n+1} \times {\bf G}(n+1,k)$, where ${\bf G}(n+1,k)$ is the Grassmannian of (unoriented) $k$-dimensional linear subspaces of $\R^{n+1}$. If $V$ is a $k$-varifold in $\R^{n+1}$, we write $V \in \mathbf{V}_k(\R^{n+1})$, and we let $\|V\|$ and $\delta V$ denote its weight and first variation, respectively. When $\delta V$ is locally bounded and absolutely continuous with respect to $\|V\|$, we let $h(\cdot,V) \in L^1_{{\rm loc}}(\|V\|;\R^{n+1})$ denote the generalized mean curvature vector of $V$, so that $\delta V = - h(\cdot, V) \,\|V\|$ in the sense of $\R^{n+1}$-valued measures on $\R^{n+1}$. If $\Gamma$ is countably $k$-rectifiable, and $\theta \in L^1_{{\rm loc}}(\Ha^k \mres_{\Gamma})$ is positive and integer-valued, we let $\var (\Gamma, \theta)$ denote the varifold $\var (\Gamma,\theta) = \theta\,\Ha^k\mres_{\Gamma} \otimes \delta_{T_{\cdot}\Gamma}$. When $V$ admits a representation $V=\var (\Gamma,\theta)$ as above, we say that $V$ is an integral $k$-varifold, and we write $V \in \mathbf{IV}_k(\R^{n+1})$. All above notions concerning measures (and varifolds) in Euclidean space $\R^{n+1}$ can be immediately localized to open sets $U \subset \R^{n+1}$.

\subsection{Three weak notions of MCF}

As anticipated in Section \ref{s:introduction}, in the last few decades several alternative notions of weak solution to the MCF have been proposed. In this subsection we briefly define and comment upon the three of interest in the present paper: Brakke flows, $L^2$ flows and ${\rm BV}$ flows. We begin with the notion of Brakke flow, introduced by Brakke in \cite{Brakke}.

\begin{definition}[Brakke flow]\label{d:Brakke flow}
Let $0 < T \leq \infty$, and let $U \subset \R^{n+1}$ be an open set. A \emph{$k$-dimensional (integral) Brakke flow} in $U$ is a one-parameter family of varifolds $\{V_t\}_{t\in\left[0,T\right)}$ in $U$ such that all of the following hold: 
\begin{enumerate}
\item[(a)]
For a.e.~$t\in \left[0,T\right)$, $V_t\in{\bf IV}_k(U)$;
\item[(b)] For a.e.~$t\in \left[0,T\right)$, $\delta V_t$ is locally bounded and absolutely continuous with respect to $\|V_t\|$;
\item[(c)] The generalized mean curvature $h(\cdot,V_t)$ (which exists for a.e.~$t$ by (b)) satisfies $h(\cdot,V_t) \in L^2_{{\rm loc}}(\|V_t\|;\R^{n+1})$, and for every compact set $K \subset U$ and for every $t < T$ it holds $\sup_{s \in \left[0,t\right]} \|V_s\| (K) < \infty$;
\item[(d)]
For all $0\leq t_1<t_2<T$ and $\phi\in C_c^1(U\times\left[0,T\right);\mathbb R^+)$, it holds
\begin{equation}
\label{brakineq}
\begin{split}
&\|V_{t_2}\|(\phi(\cdot,t_2)) - \|V_{t_1}\|(\phi(\cdot,t_1)) \\ 
&\qquad\leq \int_{t_1}^{t_2} \int_{U} \left\lbrace - \phi (x,t) \,\abs{h(x,V_t)}^2 + \nabla\phi(x,t) \cdot h(x,V_t) + \frac{\partial\phi}{\partial t}(x,t) \right\rbrace    d\|V_t\|(x)\,dt\,.
\end{split}
\end{equation}
\end{enumerate}
\end{definition}

The inequality in \eqref{brakineq} is typically referred to as \emph{Brakke's inequality}. It is not difficult to show that if $\{\Gamma(t)\}_{t\in \left[0,T\right)}$ is a flow of smooth submanifolds with mean curvature $h(x,t)=h(x,\Gamma(t))$ and normal velocity $v(x,t)$ then one has, for any $0\leq t_1 < t_2 < T$ and for every $\phi \in C^1_c(U \times \left[t_1,t_2\right])$, the identity 
\begin{equation} \label{smooth flow}
    \left.\int_{\Gamma(t)} \phi(x,t) \,d\Ha^k\right|_{t=t_1}^{t_2}
    = \int_{t_1}^{t_2} \int_{\Gamma(t)} \left\lbrace  - \phi\, v \cdot h + \nabla \phi \cdot v + \frac{\pa\phi}{\pa t} \right\rbrace \, d\Ha^k\,dt\,.
\end{equation}
In particular, if $\{\Gamma(t)\}_{t\in[0,T)}$ is a smooth MCF then setting $V_t := \var(\Gamma(t), 1)$ defines a Brakke flow for which \eqref{brakineq} is satisfied with an equality. Conversely, if $V_t = \var (\Gamma(t),1)$ with $\Gamma(t)$ smooth, then the inequality \eqref{brakineq} is sufficient to guarantee that $\{\Gamma(t)\}_{t\in[0,T)}$ is a classical solution to the MCF. For further details on the definition, the reader can consult the original work by Brakke in \cite{Brakke} or the more recent monograph \cite{Ton1}. Next, we define the notion of unit density flow. 
\begin{definition}
A Brakke flow is said to be a \emph{unit density flow} 
in $U\times [t_1,t_2]$ if
$\Theta^k(\|V_t\|,x)=1$ for $\|V_t\|$-a.e.~$x\in U$ and
$\mathcal L^1$-a.e.~$t\in [t_1,t_2]$. If $U=\mathbb R^{n+1}$, we may simply say that $\{V_t\}_{t\in[t_1,t_2]}$ is a unit density Brakke flow.
\end{definition}

The following definition of $L^2$ flow has been given by Mugnai and R\"oger in \cite{Mugnai}.

\begin{definition}[$L^2$ flow] \label{d:L2-flow}
Let $0 < T < \infty$, and let $U \subset \R^{n+1}$ be an open and bounded set. A one-parameter family $\{V_t\}_{t \in \left[0,T\right)}$ of varifolds in $U$ is a \emph{$k$-dimensional $L^2$ flow} if it satisfies (a)-(b) in Definition \ref{d:Brakke flow} as well as the following:
\begin{itemize}
    \item[(c')] The generalized mean curvature $h (\cdot, V_t)$ (which exists for a.e. $t \in \left[0,T\right)$ by (b)) satisfies $h(\cdot,V_t) \in L^2(\|V_t\|;\R^{n+1})$, and $d\mu:=d\|V_t\|dt$ is a Radon measure on $U \times \left( 0, T \right)$;
    \item[(d')] There exist a vector field $v \in L^2(\mu;\R^{n+1})$ and a positive constant $C$ such that
    \begin{itemize}
        \item[(d'1)] $v(x,t)\,\perp\,T_x\|V_t\|$ for $\mu$-a.e. $(x,t) \in U \times \left( 0, T \right)$;
        \item[(d'2)] For every $\phi \in C^1_c(U \times \left( 0,T \right))$ it holds
        \begin{equation} \label{e:L^2-flow def}
            \left| \int_0^T \int_{U} \frac{\partial\phi}{\partial t}(x,t) +  \nabla\phi(x,t) \cdot v(x,t) \, d\|V_t\|(x) \, dt \right| \leq C \, \|\phi\|_{C^0}\,.
        \end{equation}
    \end{itemize}
\end{itemize}
Any function $v\in L^2(\mu;\R^{n+1})$ satisfying (d') above is called a \emph{generalized velocity vector} for the flow. 
\end{definition}

The definition can be easily motivated by considering \eqref{smooth flow} once again: the latter, indeed, implies \eqref{e:L^2-flow def} if both $h$ and $v$ are in $L^2(\mu;\R^{n+1})$. $L^2$ flows can then be described as flows of generalized surfaces with generalized mean curvature and velocity vectors in $L^2$. 

\medskip

Finally, the following definition of ${\rm BV}$ flow is related to a motion of \emph{hypersurfaces} which are the boundaries of a finite family of sets of locally finite perimeter; see \cite{Luckhaus,Laux1}. Given $N \ge 2$, we say that $\{E_1,\ldots,E_N\}$ is an $\Leb^{n+1}$-partition of $\R^{n+1}$ if $E_i \subset \R^{n+1}$ for every $i$, they are pairwise disjoint, and $\Leb^{n+1}(\R^{n+1} \setminus \bigcup_{i=1}^N E_i) = 0$.

\begin{definition}[${\rm BV}$ flow] \label{d:BV flow}
Let $N \ge 2$ be an integer, and let $0 < T < \infty$. $N$ one-parameter families $\{E_i (t)\}_{t\in\left[0,T\right)}$ ($i=1,\ldots,N$) identify a ${\rm BV}$ solution for multi-phase MCF in $\R^{n+1}$ if all of the following hold:
\begin{itemize}
    \item[(a'')] For a.e.~$t \in \left[0,T\right)$, $\{E_1(t),\ldots,E_N(t)\}$ is an $\Leb^{n+1}$-partition of $\R^{n+1}$, $E_i(t)$ is a set of locally finite perimeter, and, setting $I_{i,j}(t):=\pa^*E_i (t)\cap \pa^*E_j(t)$ for $i \neq j$, 
    \begin{equation} \label{BV:finite energy}
        \esssup_{t \in \left[0,T\right)} \sum_{i,j=1,\,i \neq j}^N \Ha^n (I_{i,j}(t)) < \infty\,;
    \end{equation}
    \item[(b'')] There exist scalar functions $v_1,\ldots,v_N$ such that
    \begin{equation} \label{scalar velocities are in L2}
        \int_0^T \int_{\pa^*E_i(t)} |v_i(x,t)|^2\,d\Ha^n(x)\,dt < \infty \qquad \mbox{for every $i$}\,,
    \end{equation}
    and with the property that
    \begin{equation} \label{vi are velocities}
        \begin{split}
 \left.\int_{E_i(t)} \phi(x,t) \, dx\right|_{t=t_1}^{t_2} = & \int_{t_1}^{t_2} \int_{E_i(t)} \frac{\partial\phi}{\partial t}(x,t) \, dx\,dt  \\
& + \int_{t_1}^{t_2} \int_{\pa^*E_i(t)} \phi(x,t) \, v_i(x,t)\,d\Ha^n(x)\,dt 
\end{split}
    \end{equation}
    for a.e.~$0 \le t_1 < t_2 < T$ and for all $\phi \in C^1_c(\R^{n+1} \times \left[0,T\right))$;
    \item[(c'')] Setting $\nu_i(x,t) = \nu_{E_i(t)}(x)$ for the outer unit normal to the reduced boundary of $E_i(t)$ at $x$, it holds 
    \begin{equation} \label{reflection condition}
        v_i(\cdot,t)\,\nu_i (\cdot,t) = v_j(\cdot, t)\,\nu_j(\cdot,t) \qquad \mbox{$\Ha^n$-a.e.~on $I_{i,j}(t)$, for a.e. $0 \leq t < T$}\,;
    \end{equation}
    \item[(d'')] The functions $v_i$ further satisfy
    \begin{equation} \label{BV formulation of mean curvature}
        \sum_{i\neq j} \int_0^T \int_{I_{i,j}(t)} {\rm div}\,g - (\nu_i \otimes \nu_i) \cdot \nabla g\,d\Ha^n\,dt = - \sum_{i \neq j} \int_0^T \int_{I_{i,j}(t)} v_i \, \nu_i \cdot g \,  d\Ha^n\,dt
    \end{equation}
    for all vector fields $g \in C^1_c(\R^{n+1}\times \left[0,T\right];\R^{n+1})$;
\item[(e)] The following inequality holds for 
a.e.~$0\leq t< T$:
\begin{equation}\label{d:e_diss}
    \sum_{i,j=1,\,i \neq j}^N \Ha^n(I_{i,j}(t)) + \sum_{i,j=1,\,i \neq j}^N \int_0^t\int_{I_{i,j}(s)} |v_i(x,s)|^2\,d\Ha^n(x)\,ds \leq \sum_{i,j=1,\,i \neq j}^N \Ha^n(I_{i,j}(0))\,.
\end{equation}
\end{itemize}
\end{definition}
The equality \eqref{vi are velocities} characterizes $v_i(\cdot, t)$ as the velocity of 
$\partial^* E_i(t)$, as anticipated in Section \ref{s:introduction}. 
By \cite[Proposition 29.4]{Maggi_book}, 
for each $i=1,\ldots, N$, we have
\begin{equation}\label{chap1}
 \mathcal H^n\Big(\partial^* E_i(t)
 \setminus \cup_{j=1,j\neq i}^N I_{i,j}(t)\Big)=0\,\mbox{ and }\,
 \mathcal H^n(I_{i,j}(t)\cap I_{i,j'}(t))=0
 \,\mbox{ for } j\neq j',
\end{equation}
thus \eqref{BV formulation of mean curvature}
formally characterizes $v_i\nu_i$ as the
mean curvature vector of $\cup_{i=1}^N
\partial^* E_i(t)$ on $I_{i,j}(t)$. Since
\begin{equation}\label{chap2}
\mathcal H^n(\cup_{i=1}^N\partial^* E_i(t))
 =\frac12\sum_{i,j=1,i\neq j}^N\mathcal H^n(I_{i,j}(t)),
\end{equation}
(e) corresponds precisely to the energy dissipation 
inequality for the hypersurface measures
of the MCF. 
\subsection{Main results} \label{Mainresult}

\begin{definition} \label{def:weight}
We will denote by $\Omega$ a function in $C^2(\R^{n+1})$ such that
\begin{equation}
    0<\Omega(x)\leq 1\,, \quad
    |\nabla\Omega(x)|\leq c_1\Omega(x)\,,\quad \|\nabla^2\Omega(x)\|
    \leq c_1\Omega(x)
\end{equation}
for all $x\in \mathbb R^{n+1}$, where $c_1 \geq 0$ is a constant and $\|\nabla^2\Omega(x)\|$ is the Hilbert-Schmidt norm of the Hessian matrix $\nabla^2\Omega(x)$.
\end{definition}
The function $\Omega$ is introduced as a weight function to treat unbounded MCF, which may have infinite $n$-dimensional Hausdorff measure in $\R^{n+1}$. A typical choice of 
$\Omega$ in this case can be $\Omega (x) = \exp(-\sqrt{1+|x|^2})$ with a suitable choice of
$c_1$. If one is interested in MCF with finite measure, one can choose $\Omega \equiv 1$, with $c_1=0$ in this case. With a function $\Omega$ as specified above, we consider an initial datum $\Gamma_0$ satisfying the following:
\begin{assumption} 
Suppose that $\Gamma_0\subset\mathbb R^{n+1}$ is a closed, countably $n$-rectifiable
set such that
\begin{equation}\label{ofini}
    \mathcal H^n\mres_\Omega(\Gamma_0):=\int_{\Gamma_0}\Omega(x)\,d\mathcal H^n(x)<\infty\,.
\end{equation}
Moreover, assume that there are $N \geq 2$ mutually disjoint non-empty 
open sets $E_{0,1},\ldots,E_{0,N}$ 
such that $\Gamma_0 = \mathbb R^{n+1}\setminus \cup_{i=1}^N E_{0,i}$. In particular, $\{E_{0,1},\ldots,E_{0,N}\}$ is an $\Leb^{n+1}$-partition of $\R^{n+1}$.
\label{as1}
\end{assumption}
By \eqref{ofini} and $\Omega>0$, $\Gamma_0$ has locally finite
$\mathcal H^n$-measure, and thus $\Gamma_0$ has no interior points. In particular, we have 
$$\Gamma_0=\cup_{i=1}^N
\partial E_{0,i}.$$
For a given $\Gamma_0$ as in Assumption \ref{as1}, the assignment of the partition $\R^{n+1} \setminus \Gamma_0 = \bigcup_{i=1}^N E_{0,i}$ is certainly 
non-unique. Each $E_{0,i}$ may not be connected, for example, and the different choice may result in different MCF. In general, 
$E_{0,i}$ has locally finite perimeter, and $\partial^* E_{0,i}\subset {\rm spt}\,|\nabla\chi_{E_{0,i}}|\subset\partial E_{0,i}$ for each $i=1,\ldots,N$. 

\medskip

The following Theorems \ref{main1}-\ref{main4} all hold under Assumption
\ref{as1}. First, we claim the existence of a Brakke flow starting with $\Gamma_0$ which is also an $L^2$ flow in any bounded domain of $\R^{n+1}$, and whose generalized velocity vector coincides with the generalized mean curvature vector of the evolving varifolds.
\begin{theorem}\label{main1}
There exists an $n$-dimensional Brakke flow $\{V_t\}_{t\geq 0}$ in $\R^{n+1}$ such that
\begin{itemize}
    \item[(1)] $\|V_0\|=\mathcal H^n\mres_{\Gamma_0}$,
    \item[(2)] if $\mathcal H^n(\cup_{i=1}^N(\partial E_{0,i}\setminus\partial^*
    E_{0,i}))=0$, then $\lim_{t\rightarrow 0+}\|V_t\|=\|V_0\|$,
  \item[(3)] $\|V_t\|(\Omega)\leq \mathcal H^n\mres_{\Omega}(\Gamma_0)\,
  \exp(c_1^2 t/2)$ and $\int_0^t\int_{\mathbb R^{n+1}}|h(\cdot,
  V_s)|^2\,\Omega\,d\|V_s\|ds<\infty$ for all $t> 0$.
  \item[(4)] If $\Ha^n (\Gamma_0) < \infty$, and thus if one can choose $\Omega \equiv 1$ in Assumption \ref{as1}, then 
  $\|V_t\|(\mathbb R^{n+1})+\int_0^t\int_{\mathbb R^{n+1}} |h(\cdot,V_s)|^2\,d\|V_s\|ds\leq \mathcal H^n(\Gamma_0)$ for all $t>0$.
  \item[(5)] For any $0 < T < \infty$ and for any open and bounded set $U \subset \R^{n+1}$, the one-parameter family $\{V_t \,\mres_{U}\}_{t \in \left[0,T\right)}$ (where we regard $V_t\, \mres_U$ as a varifold in $U$) is an $n$-dimensional $L^2$ flow with generalized velocity vector $v(x,t)=h(x,V_t)$ on $U\times\left(0,T\right)$.
\end{itemize}
\end{theorem}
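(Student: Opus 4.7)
The plan is to construct $\{V_t\}_{t \ge 0}$ as the varifold limit of a sequence of time-discretized approximate flows in the spirit of Kim--Tonegawa \cite{KimTone}, incorporating the modifications to the Lipschitz deformation step detailed in Section \ref{s:Lipschitz} and Appendix \ref{appendix:Lipschitz}. For each $j \in \mathbb{N}$, one fixes parameters $\epsilon_j, \Delta t_j \to 0^+$ and builds a piecewise-constant-in-time family $\{V_t^j\}_{t\ge 0}$ starting from a varifold $V_0^j$ close to $\mathcal H^n\mres_{\Gamma_0}$, by alternating on each step a Lipschitz retraction that decreases $\Omega$-weighted area (while keeping track of the partition into grains $E_i^j(t)$) and a motion step driven by an $\epsilon_j$-smoothing of the first variation. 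The modifications of Section \ref{s:Lipschitz} guarantee enough control on how the grains change across each step, a property needed for the main theorem.

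The estimates in (3) are derived inductively on $j$: the retraction step reduces $\Omega$-weighted mass modulo an error of order $c_1^2 \Delta t_j \|V_t^j\|(\Omega)$ coming from $|\nabla\Omega|\le c_1\Omega$, and a Gronwall-type iteration yields in the limit the exponential factor $\exp(c_1^2 t/2)$; the motion step generates the $L^2$ bound on the approximate mean curvature (this is where $\|\nabla^2\Omega\|\le c_1\Omega$ enters). When $\Omega\equiv 1$ there is no growth, and one obtains the dissipation inequality (4). Allard's compactness then extracts a subsequence of $\{V_t^j\}$ converging in the varifold sense for every $t$ to a limit $\{V_t\}_{t\ge 0}$; standard lower-semicontinuity arguments (see e.g.\ \cite{Ton1}) verify Brakke's inequality \eqref{brakineq}. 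Property (1) is built into the construction. Property (2) follows from the assumption $\mathcal H^n(\cup_i(\partial E_{0,i}\setminus\partial^*E_{0,i}))=0$, which lets one write $\mathcal H^n\mres_{\Gamma_0}$ as a sum of perimeter measures of the $E_{0,i}$: their lower semicontinuity under $L^1$-convergence of the characteristic functions of the grains, combined with the one-sided upper bound on mass coming from \eqref{brakineq}, forces $\|V_t\|\to\|V_0\|$ as $t\to 0^+$.

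For property (5), one verifies Definition \ref{d:L2-flow} with generalized velocity $v:=h(\cdot,V_t)$. Conditions (a), (b) are inherited from the Brakke flow; Brakke's perpendicularity theorem for integral varifolds gives (d'1); and the positivity and continuity of $\Omega$ on the compact closure of $U$, together with (3), yield $\int_0^T\int_U|h|^2\,d\|V_t\|\,dt<\infty$ and that $d\mu:=d\|V_t\|\,dt$ is a Radon measure on $U\times(0,T)$, giving (c'). The key bound (d'2) follows from Brakke's inequality \eqref{brakineq} by a standard cutoff argument: given $\phi\in C^1_c(U\times(0,T))$ with $M:=\|\phi\|_{C^0}$, pick $\chi\in C^1_c(U\times(0,T);[0,1])$ with $\chi\equiv 1$ on $\supp\,\phi$, apply \eqref{brakineq} to the non-negative test functions $M\chi+\phi$ and $M\chi-\phi$, and combine the resulting inequalities; using the finite $L^2$ energy of $h$ and Cauchy--Schwarz, one arrives at $\Abs{\int\partial_t\phi+\nabla\phi\cdot h\,d\|V_t\|\,dt}\le C\|\phi\|_{C^0}$.

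The main technical obstacle lies in the Lipschitz retraction step, which must simultaneously decrease the weighted varifold mass, track the displacement of the reduced boundaries $\partial^* E_i^j(t)$ with enough precision to control how the grains evolve, and preserve all the estimates compatible with the passage to the limit. This is precisely what the modifications of Section \ref{s:Lipschitz} and Appendix \ref{appendix:Lipschitz} are designed to achieve; although their primary motivation is the volume-change identity \eqref{vchange} in the main theorem, they also enable here a cleaner verification of the $L^2$ flow property (5) than a direct adaptation of \cite{KimTone}.
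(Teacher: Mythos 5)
Your proposal follows essentially the same route as the paper: parts (1)--(4) are delegated to the Kim--Tonegawa construction with volume-controlled Lipschitz deformations (the paper's Theorem~\ref{p:properties and limit}), and part (5) is derived from Brakke's inequality~\eqref{brakineq} by treating the left-hand side as a positive linear functional and comparing with a space-time cutoff — the same mechanism the paper uses in Theorem~\ref{t:Brakke implies L2} via the cutoff $\psi_{U,\eps}$ and monotonicity of $L$. One small point of attribution: the volume-controlled modifications of Section~\ref{s:Lipschitz} and Appendix~\ref{appendix:Lipschitz} are not needed for (5) — the $L^2$-flow property is a general consequence of the Brakke inequality and the global $L^2$ bound on $h$, so the paper's proof of Theorem~\ref{t:Brakke implies L2} applies to any Brakke flow with those bounds, independently of the specific construction; those modifications are only needed for the ${\rm BV}$ identity~\eqref{vchange} in Theorem~\ref{main3}(6). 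Your cutoff argument for (d'2) is correct in spirit, but to get a constant $C$ independent of $\phi$ you should spell out that the cutoff $\chi$ can be taken of product form $\psi_U(x)\psi_\eps(t)$ with $\int_0^T|\psi_\eps'|\,dt$ bounded uniformly in $\eps$, so that $L\chi$ — which splits into a $\partial_t$-term controlled by $\sup_t\|V_t\|$ and a $\nabla$-term controlled by Cauchy--Schwarz and the $L^2$ bound on $h$ — is bounded uniformly over all admissible $\phi$; otherwise the bound on $L\chi$ a priori depends on $\chi$, hence on $\phi$.
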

\begin{remark}
If $n=1$, the above Brakke flow satisfies the additional regularity 
properties obtained in \cite{KimTone2}: 
for $\mathcal L^1$-a.e.~$t>0$, ${\rm spt}\|V_t\|$ is locally
the union of a finite number of $W^{2,2}$ curves meeting at junctions  with angles of either $0$, $60$, or $120$ degrees
for $N\geq 3$, and only $0$ degree (no transverse crossing) for $N=2$. See \cite[Theorem 2.2, 2.3]{KimTone2} for the further details.
\end{remark}
\begin{definition}
With $\{V_t\}_{t\geq 0}$ as in Theorem \ref{main1}, let $\mu$ be the Radon measure on $\R^{n+1} \times \R^+$ given by $d\mu=d\|V_t\|dt$, and for $t\in\mathbb R^+$, define
the closed set
$$({\rm spt}\,\mu)_t:=\{x\in\R^{n+1}\,:\,(x,t)\in {\rm spt}\,\mu\}\,.$$
\end{definition}
The following Theorem \ref{main2} is satisfied in general for
Brakke flows, and thus it holds, in particular, for the flow produced in Theorem \ref{main1}. 
\begin{theorem}\label{main2}
The Radon measure $\mu$ and the Brakke flow $\{V_t\}_{t\geq 0}$ are related as follows. 
\begin{itemize}
    \item[(1)] ${\rm spt}\|V_t\|\subset({\rm spt}\,\mu)_t$ and 
    $\mathcal H^n(B_r\cap({\rm spt}\,\mu)_t)<\infty$ for all $t>0$ and $r>0$,
    \item[(2)] $\mathcal H^{n-1+\delta}(({\rm spt}\,\mu)_t\setminus {\rm spt}\|V_t\|)=0$
    for every $\delta>0$ and for $\mathcal L^1$-a.e.~$t\in\mathbb R^+$,
    \item[(3)] ${\rm spt}\|V_t\|$ is countably $n$-rectifiable
    for $\mathcal L^1$-a.e.~$t\in\mathbb R^+$,
    \item[(4)] $V_t={\bf var}({\rm spt}\|V_t\|,\theta_t)$ with $\theta_t(x)=
    \Theta^n(\|V_t\|,x)$ ($\|V_t\|$-a.e.~$x$)
    for $\mathcal L^1$-a.e.~$t\in\mathbb R^+$.
\end{itemize}
\end{theorem}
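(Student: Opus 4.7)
The four items of Theorem \ref{main2} are general properties of Brakke flows, independent of the specific construction in Theorem \ref{main1}. Accordingly, the plan is to derive each of them from the defining properties (a)--(d) of Definition \ref{d:Brakke flow} together with the standard toolkit of Brakke--Allard theory: Huisken's backward heat kernel monotonicity formula, Brakke's clearing-out lemma, and Allard's rectifiability theorem for integral varifolds.

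For item (1), to establish $\spt\|V_t\|\subset(\spt\mu)_t$ for every $t>0$: given $x_0\in\spt\|V_t\|$ and $r>0$, I would choose $\phi=\eta^2$ with $\eta\in C^1_c(B_r(x_0);\R^+)$ and $\|V_t\|(\phi)>0$, and apply \eqref{brakineq} on $[s,t]$ for $s\in(0,t)$. Using Young's inequality $\nabla\phi\cdot h\le\tfrac{1}{2}\phi|h|^2+2|\nabla\eta|^2$, the integrand is dominated by $2|\nabla\eta|^2$, hence
\begin{equation*}
    \|V_s\|(\phi)\;\ge\;\|V_t\|(\phi)-2(t-s)\sup_{\sigma\in[0,t]}\|V_\sigma\|(|\nabla\eta|^2),
\end{equation*}
which is positive for $s$ close enough to $t$ since the supremum is finite by property (c); therefore $\mu(B_r(x_0)\times(t-\varepsilon,t))>0$. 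For the local $\Ha^n$-finiteness of $(\spt\mu)_t$ I would appeal to Brakke's clearing-out lemma, which produces at every $(x,t)\in\spt\mu$ scales $r_0>0$, times $s_r\in(t-r^2,t)$, and a dimensional constant $\eta_0>0$ such that $\|V_{s_r}\|(B_{2r}(x))\ge\eta_0\,r^n$ for $r\in(0,r_0)$; a Vitali covering of $(\spt\mu)_t\cap B_R$ by such balls then yields $\Ha^n((\spt\mu)_t\cap B_R)\le C(n)\eta_0^{-1}\sup_{\sigma\in[0,t]}\|V_\sigma\|(B_{R+1})<\infty$.

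For item (2), the exceptional set $W(t):=(\spt\mu)_t\setminus\spt\|V_t\|$ consists of points $x$ at which $\|V_t\|(B_\rho(x))=0$ for some $\rho>0$ while a positive amount of mass is present in $B_\rho(x)$ at times arbitrarily close to $t$. Huisken's monotonicity formula, applied at such a point, would produce a quantitative lower bound on the mean curvature dissipation $\int_{t-\rho^2}^{t}\int_{B_\rho(x)}|h|^2\,d\|V_\sigma\|\,d\sigma$ in terms of $\rho$: heuristically, the backward Gaussian density at $(x,t)$ drops from a definite positive value (at scale $\rho$ at some time $s<t$) down to zero at time $t$, and the monotonicity defect controls this drop by the $L^2$-norm of $h$. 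A parabolic Vitali covering at the appropriate scale and summation then bound the $(n-1+\delta)$-dimensional Hausdorff pre-measure of $W(t)$ by a quantity which, integrated against $\int_0^T\int|h|^2\,d\|V_\sigma\|\,d\sigma<\infty$, vanishes for $\Leb^1$-a.e.~$t>0$ and every $\delta>0$. Items (3) and (4) then follow from Allard's rectifiability theorem: at $\Leb^1$-a.e.~$t$, property (a) gives $V_t\in\IV_n$, hence $V_t=\var(\Gamma_t,\theta_t)$ with $\Gamma_t$ countably $n$-rectifiable and $\theta_t$ positive integer-valued; standard density theory for rectifiable measures gives $\theta_t(x)=\Theta^n(\|V_t\|,x)$ for $\|V_t\|$-a.e.~$x$, while item (2) identifies $\Gamma_t$ with $\spt\|V_t\|$ up to a set of Hausdorff dimension strictly less than $n$, yielding $V_t=\var(\spt\|V_t\|,\theta_t)$.

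The main obstacle is expected to be item (2): extracting the threshold $n-1+\delta$ for every $\delta>0$ requires a delicate scale-by-scale application of the monotonicity formula, and quantifying how mass can vanish exactly at time $t$ while still contributing to $\spt\mu$ is the most technical step. Balancing the dissipation budget $\int|h|^2\,d\|V_\sigma\|\,d\sigma$ against the volume of covering balls at the correct parabolic scale is where the bulk of the analytic work lies.
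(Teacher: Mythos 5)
For item (1) your argument is sound and is essentially the same in spirit as the paper's (which delegates to \cite[Theorem 3.2, Proposition 3.4]{KimTone}). For items (3) and (4) your strategy (Allard's theorem plus identifying $\Gamma_t$ with $\spt\|V_t\|$ up to $\Ha^n$-null sets) is the correct shape, though you should note that the paper in fact obtains the identification not from item (2) alone but from the density dichotomy of Corollary \ref{cor:global_lb}: $\Ha^n(\tilde\Gamma(t)\setminus\spt\|V_t\|)=0$ trivially, while $\spt\|V_t\|\setminus\tilde\Gamma(t)$ is a set of $\|V_t\|$-density zero at $\Ha^n$-a.e.~point and is therefore absorbed into the small exceptional set.

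The genuine gap is in your proposed proof of item (2). You argue that a drop of the backward Gaussian density at $(x,t)$ from a definite positive value down to zero would force a quantitative lower bound on the dissipation $\int_{t-\rho^2}^{t}\int_{B_\rho(x)}|h|^2\,d\|V_\sigma\|\,d\sigma$, and that integrating this budget against $\int_0^T\int|h|^2\,d\|V_t\|\,dt<\infty$ kills the bad set for a.e.~$t$. This is backwards. For a Brakke flow, Brakke's inequality only bounds the \emph{rate of increase} of mass, and Huisken's monotonicity inherits only the one-sided inequality
\[
\Theta(y,t_0;t_1)-\Theta(y,t_0;t_2)\;\ge\;\int_{t_1}^{t_2}\int\rho_{(y,t_0)}\Bigl|h-\tfrac{(x-y)^\perp}{2(t_0-s)}\Bigr|^2\,d\|V_s\|\,ds,\qquad t_1<t_2<t_0,
\]
so a large drop of the Gaussian density is an \emph{upper} bound for the dissipation, not a lower bound. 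A Brakke flow is free to have $V_s=\mathbf{var}(\Gamma_0,1)$ for $s<t_0$ and $V_s=0$ for $s\ge t_0$: then $(\spt\mu)_{t_0}\setminus\spt\|V_{t_0}\|=\Gamma_0$ has full $\Ha^n$-measure while $h\equiv 0$ and the dissipation vanishes identically. No budget argument can distinguish this from a flow with small $W(t)$. The paper's actual argument (Section~\ref{s:Ilmanen}, after Ilmanen) is of a completely different nature: it is a purely geometric space-time covering. The clearing-out Lemma~\ref{l:col} gives a dichotomy: any $(y,t)$ where the density ratio is below a threshold $\theta_0$ forces nearby space-time points, both slightly forward and slightly backward in time, to lie \emph{outside} $\spt\mu$; consequently each truncated double paraboloid $\mathcal P(y,t)$ meets the low-density set $\mathcal Z^0_{\theta,\sigma}$ only at its tip. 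This makes the fiber $\{y\}\times\R$ meet $\mathcal Z^0_{\theta,\sigma}$ in at most one point, and a Vitali covering plus Fubini then gives $\int_0^\infty\Ha^{n-1+\alpha}((\mathcal Z^0_{\theta,\sigma})_t)\,dt=0$, whence the conclusion for a.e.~$t$. No $L^2$-bound on $h$ enters; the mechanism that rules out a large bad set at a.e.~time is isolation in the parabolic geometry of $\spt\mu$, not dissipation.
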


The following theorem shows that, in addition to the Brakke flow of Theorem \ref{main1}, 
there are evolving domains $\{E_i(t)\}_{t\geq 0}$ 
starting from $E_{0,i}$ defining a (generalized) ${\rm BV}$ solution for multi-phase MCF in $\R^{n+1}$. 
\begin{theorem}\label{main3}
For each $i=1,\ldots,N$, there exists a family of 
open sets $\{E_i(t)\}_{t\geq 0}$ such that, setting $\Gamma(t):=\mathbb R^{n+1}\setminus
\cup_{i=1}^N E_i(t)$:
\begin{itemize}
  \item[(1)] $E_i(0)=E_{0,i}$ for $i=1,\ldots,N$,
    \item[(2)] $E_1(t),\ldots,E_N(t)$ are pairwise disjoint for $t\in\mathbb R^+$,
    \item[(3)] $({\rm spt}\,\mu)_t=\Gamma(t)=\cup_{i=1}^N
    \partial E_i(t)$ for all $t>0$,
    \item[(4)] for all $t\in\mathbb R^+$, $\|V_t\|\geq |\nabla\chi_{E_i(t)}|$ for every
    $i=1,\ldots,N$, and $2\|V_t\| \geq \sum_{i=1}^N |\nabla\chi_{E_i (t)}|$,
    \item[(5)] $S(i):=\{(x,t)\,:\,x\in E_i(t),\,t\in\mathbb R^+\}$
    is open in $\mathbb R^{n+1}\times\mathbb R^+$ for $i=1,\ldots,N$,
    \item[(6)] for every $0 < T < \infty$, the families $\{E_i(t)\}_{t \in \left[0,T\right)}$ define a generalized ${\rm BV}$ solution for multi-phase MCF in the following sense: referring to Definition \ref{d:BV flow}, (a'') holds when $\Ha^n \mres_\Omega$ replaces $\Ha^n$ in \eqref{BV:finite energy}; (b'') holds when $\Ha^n \mres_\Omega$ replaces $\Ha^n$ in \eqref{scalar velocities are in L2}; (c'') holds. In fact, the scalar velocities $v_i$ are precisely $v_i(x,t) = h(x,V_t) \cdot \nu_i(x,t)$, so that \eqref{vi are velocities} reads as follows: for every $i \in \{1,\ldots,N\}$
    \begin{equation} \label{th:BV_identity}
        \left.\int_{E_i(t)}\phi(x,t)\,dx\,\right|_{t=t_1}^{t_2}=\int_{t_1}^{t_2}\Big(\int_{\partial^*E_i(t)}\phi\, h\cdot\nu_{i}\,
        d\mathcal H^n+\int_{E_i(t)}\frac{\partial\phi}{\partial t}\,dx\Big)\,dt
    \end{equation}
    for any $0\leq t_1<t_2<\infty$ and $\phi\in C^1_c(\mathbb R^{n+1}
    \times \mathbb R^+)$,
    \item[(7)]
    if $N\geq 3$, then, for $\mathcal L^1$-a.e.~$t\in\mathbb R^+$
    and $\mathcal H^n$-a.e.~$x\in\Gamma(t)$, 
    $\theta_t(x)=1$ implies $x\in \cup_{i=1}^N\partial^*
    E_i(t)$,
    \item[(8)]
    if $N=2$, then, for $\mathcal L^1$-a.e.~$t\in\mathbb R^+$
    and $\mathcal H^n$-a.e.~$x\in\Gamma(t)$,
    \begin{equation}
        \theta_t(x)=\left\{\begin{array}{ll}
        \mbox{odd integer} & \mbox{for }x\in\partial^* E_{1}(t)
        (=\partial^* E_2(t)), \\
        \mbox{even integer} & \mbox{for }x\in\Gamma(t)\setminus
        \partial^* E_1(t).\end{array}
        \right.
    \end{equation}
\end{itemize}
\end{theorem}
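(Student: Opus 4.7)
The plan is to build the grains $\{E_i(t)\}_{i=1}^N$ jointly with the Brakke flow $\{V_t\}$ of Theorem \ref{main1}, as the limit of a time-discretized approximation scheme modeled on that of \cite{KimTone} but with the Lipschitz modification of Section \ref{s:Lipschitz}. For each step size $\Delta t_k\to 0$ and each $j\in\Na$, I would inductively define open partitions $\{E_1^{j,k},\dots,E_N^{j,k}\}$ of $\R^{n+1}$ together with varifolds $V^{j,k}$ by alternating a Lipschitz retraction step (which pushes the varifold and the grains simultaneously, decreasing surface energy) and an explicit mean curvature step. Setting $V^k_t:=V^{j,k}$ and $E_i^k(t):=E_i^{j,k}$ on $[j\Delta t_k,(j+1)\Delta t_k)$, and using the bound $\|V^k_t\|\geq |\nabla\chi_{E_i^k(t)}|$ preserved at each step, a diagonal compactness argument yields $\chi_{E_i^k}\to\chi_{E_i}$ in $L^1_{\rm loc}(\R^{n+1}\times\R^+)$ along a subsequence. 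Choosing an open representative at each time so that $S(i)=\{(x,t):x\in E_i(t)\}$ is open produces the families claimed in the theorem, and items (1), (2), (4) and (5) follow directly from the construction and from lower semicontinuity of total variation under $L^1_{\rm loc}$ convergence (the factor $2$ in (4) reflects that neighboring grains share interfaces that are counted twice).

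For (3), the inclusion $\cup_i\partial E_i(t)\supset ({\rm spt}\,\mu)_t$ follows from (4) together with Theorem \ref{main2}(1), while the opposite inclusion holds for \emph{every} $t>0$ thanks to a locality argument: at any $(x,t)$ with $x\notin ({\rm spt}\,\mu)_t$, the definition of $\spt\,\mu$ yields a spacetime cylinder on which $\|V_s\|=0$ for $s$ in a time neighborhood of $t$, and the approximate volume-change identity for the scheme forces $\chi_{E_i^k}$ to be essentially constant in that cylinder, so that exactly one $E_i(t)$ contains $x$.

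The heart of the argument is (6), and specifically the identity \eqref{th:BV_identity}. At the discrete level the Lipschitz retraction $f_k$ together with the mean curvature step produces, on each time interval, an approximate volume-change identity of the shape
\begin{equation*}
    \int\phi\,\chi_{E_i^{j+1,k}}\,dx-\int\phi\,\chi_{E_i^{j,k}}\,dx=\Delta t_k\int_{\partial^*E_i^{j,k}}\phi\,h^{j,k}\cdot\nu_i^{j,k}\,d\Ha^n+\text{Err}^{j,k}(\phi),
\end{equation*}
where $\text{Err}^{j,k}(\phi)$ is controlled by the pointwise Lipschitz displacement bounds of Appendix \ref{appendix:Lipschitz} together with the $L^2$ estimate on $h^{j,k}$; this quantitative control of the error is precisely why the scheme of \cite{KimTone} has to be modified. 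Summing in $j$, testing against $\phi\in C^1_c$, and passing to the limit using (i) the $L^1_{\rm loc}$ convergence of $\chi_{E_i^k}$, (ii) the weak-$*$ convergence of the reduced-boundary measures $\nu_i^k\,\Ha^n\mres_{\partial^*E_i^k(t)}\to\nu_i\,\Ha^n\mres_{\partial^*E_i(t)}$ for a.e.\ $t$, and (iii) the $L^2(\mu)$ convergence (up to a further subsequence) of the approximate mean curvatures to $h(\cdot,V_t)$ guaranteed by Theorem \ref{main1}, yields \eqref{th:BV_identity}. The main obstacle will be identifying the resulting limit velocity as $h\cdot\nu_i$ in the sense of (c''): this requires combining the $L^2$-flow property of Theorem \ref{main1}(5) with Mugnai--R\"oger uniqueness of the generalized velocity vector modulo components tangential to $\|V_t\|$, which in the present multi-phase setup forces $v_i=h(\cdot,V_t)\cdot\nu_i$ $\Ha^n$-a.e.\ on $\partial^*E_i(t)$. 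The condition \eqref{BV formulation of mean curvature} in (c'') then follows from the definition of generalized mean curvature of $V_t$ and the identification of the tangent plane of $\|V_t\|$ with that of $\partial^*E_i(t)$ on $I_{i,j}(t)$.

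Finally, (7) and (8) are density-at-unity arguments. For (7), if $\theta_t(x)=1$ with $x\in\Gamma(t)$ then Theorem \ref{main2}(4) and rectifiability of $\spt\|V_t\|$ force the tangent measure to be an $n$-plane of multiplicity one; by (3) the two half-balls it separates must belong to distinct grains (otherwise $x$ would lie in the interior of a single $E_i$, contradicting $x\in\Gamma(t)$), so $x\in\partial^*E_i(t)\cap\partial^*E_j(t)$ for some $i\neq j$. For (8) with $N=2$, an analogous blow-up shows that $\theta_t(x)$ has the same parity as the number of sheets of $\spt\|V_t\|$ separating $E_1(t)$ from $E_2(t)$ near $x$, and this number is odd precisely when the two phases switch across $x$, i.e.\ when $x\in\partial^*E_1(t)=\partial^*E_2(t)$, and even otherwise.
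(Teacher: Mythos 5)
Your high-level plan follows the paper: the grains are constructed jointly with the Brakke flow via the \cite{KimTone}-style time-discrete scheme with volume-controlled Lipschitz deformations, and the discrete volume-change identity plus error control is indeed the engine behind \eqref{th:BV_identity}. However, there are two genuine gaps. First, your argument for (7) does not close. You claim that because $\theta_t(x)=1$, the tangent plane is an $n$-plane of multiplicity one, so ``the two half-balls it separates must belong to distinct grains (otherwise $x$ would lie in the interior of a single $E_i$).'' This inference is false: a single grain $E_i$ can approach $x$ from both sides of the sheet (an ``interior boundary'' or crack), in which case $x\in\partial E_i(t)$ but $x\notin\partial^* E_i(t)$, and $x$ is certainly \emph{not} in the interior of $E_i$. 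Ruling out exactly this scenario at unit-density points is the entire content of the paper's Proposition \ref{lay2}, which relies on the layered-sheet structure of the approximating flows (Lemma \ref{lay1}) and on the two-sidedness consequences of the measure-minimizing Lipschitz step at scale $o(1/j^2)$ from \cite{KimTone2}. The same criticism applies to your parity sketch for (8): without tracking the measure-theoretic interior/exterior of the approximating grains between consecutive sheets, you cannot read off the parity of $\theta_t(x)$ from the topology of the limit.

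Second, on (6): you invoke ``$L^2(\mu)$ convergence (up to a further subsequence) of the approximate mean curvatures to $h(\cdot,V_t)$ guaranteed by Theorem \ref{main1}.'' Theorem \ref{main1} guarantees no such thing, and this convergence is not what the paper uses. The paper's Proposition \ref{p:existence of a measure derivative} instead shows that the signed measures $\Omega\,h_{\eps_{j_\ell}}(\cdot,\partial\E_{j_\ell}(t))\cdot d\mu_{E_{j_\ell,i}(t)}\,dt$ have uniformly bounded total variation and therefore admit a weak-$*$ limit $\sigma_i$, which by absolute continuity with respect to $\mu\mres_\Omega$ has a Radon--Nikod\'ym density $u_i\in L^2(\mu\mres_\Omega)$; only afterwards is $u_i$ identified with $h\cdot\nu_i$, via the $L^2$-flow property, the Mugnai--R\"oger tangent-vector observation (Corollary \ref{cor:tangent vector}), the tangent-space structure of $\partial^*S(i)$ (Lemma \ref{l:key technical}) and the coarea formula. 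You do flag that the identification is the obstacle and you cite the right tools, but the intermediate existence step is different. Finally, you also misread the statement: you attempt to deduce \eqref{BV formulation of mean curvature}, which is condition (d'') of Definition \ref{d:BV flow}, but Theorem \ref{main3}(6) claims only (a''), (b'') and (c'') (the reflection condition $v_i\nu_i=v_j\nu_j$); as the paper notes explicitly, (d'') is \emph{not} guaranteed in general and is established only under the unit-density hypothesis of Theorem \ref{main4}.
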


Notice that in (6) we speak about \emph{generalized} ${\rm BV}$ flow, because we cannot guarantee, in general, the validity of \eqref{BV formulation of mean curvature}. We will comment further on this point in Section \ref{on-g-BV}. We anticipate, nonetheless, that further information on the flow, including the validity of \eqref{BV formulation of mean curvature}, can be inferred when the Brakke flow is \emph{unit density}.
Even though the result follows immediately from Theorem
\ref{main3}(7), we state it separately. 
\begin{theorem}\label{main4}
Suppose that, for $0 \leq t_1 < t_2 < \infty$ and for an open set $U$, 
the Brakke flow in Theorem \ref{main1} is a 
unit density flow in $U\times (t_1,t_2)$. Then, for $\mathcal L^1$-a.e.~$t\in(t_1,t_2)$,
\begin{equation}\label{4-eq}
    \|V_t\|\mres_U=\mathcal H^n\mres_{U\cap \cup_{i=1}^N \partial^* E_i(t)}.
\end{equation}
Furthermore, \eqref{BV formulation of mean curvature} holds true with $v_i = h \cdot \nu_i$ for all vector fields $g \in C^1_c (U \times \left[t_1,t_2\right];\R^{n+1})$.
\end{theorem}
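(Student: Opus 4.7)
The plan is to derive both assertions directly from the structural results obtained in Theorems \ref{main1}--\ref{main3}, combined with Brakke's perpendicularity of the generalized mean curvature for integral varifolds. The measure identity \eqref{4-eq} amounts to matching $\|V_t\|\mres_U$ with $\Ha^n\mres_{U\cap\cup_i\pa^* E_i(t)}$ using the unit density hypothesis together with the multiplicity dichotomy in Theorem \ref{main3}(7)--(8), while the BV formulation \eqref{BV formulation of mean curvature} is obtained by rewriting the first variation $\delta V_t$ in terms of the pairwise interfaces $I_{i,j}(t)$.

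For \eqref{4-eq}, I would first invoke Theorem \ref{main2}(3)--(4) to write, for $\Leb^1$-a.e.~$t$, the identity $V_t=\var(\spt\|V_t\|,\theta_t)$, with $\spt\|V_t\|$ countably $n$-rectifiable and $\theta_t$ a positive integer-valued multiplicity. The unit density hypothesis in $U\times(t_1,t_2)$ forces $\theta_t\equiv 1$ $\|V_t\|$-a.e.~on $U$ for $\Leb^1$-a.e.~$t\in(t_1,t_2)$, whence $\|V_t\|\mres_U=\Ha^n\mres_{U\cap\spt\|V_t\|}$. The inclusion $U\cap\spt\|V_t\|\subset U\cap\cup_{i=1}^N\pa^* E_i(t)$ (modulo $\Ha^n$-null sets) is precisely the content of Theorem \ref{main3}(7) when $N\geq 3$; for $N=2$, Theorem \ref{main3}(8) yields the same conclusion since $\theta_t=1$ rules out the even-multiplicity branch. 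The opposite inclusion follows from Theorem \ref{main3}(4), which gives $\|V_t\|\geq|\nabla\chi_{E_i(t)}|=\Ha^n\mres_{\pa^* E_i(t)}$ and hence $\pa^* E_i(t)\subset\spt\|V_t\|$ up to $\Ha^n$-null sets.

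For \eqref{BV formulation of mean curvature}, I would use \eqref{4-eq} together with the integrality of $V_t$, valid for a.e.~$t$ by Definition \ref{d:Brakke flow}(a), to express the first variation against a test field $g(\cdot,t)\in C^1_c(U;\R^{n+1})$ as
\begin{equation*}
\delta V_t(g(\cdot,t)) = \int_U \bigl(\di\,g - (\nu\otimes\nu)\cdot\nabla g\bigr)\,d\|V_t\| = -\int_U h(\cdot,V_t)\cdot g(\cdot,t)\,d\|V_t\|,
\end{equation*}
where $\nu$ is a unit normal to $T_x\|V_t\|$, unambiguous up to sign. By \eqref{chap1}--\eqref{chap2} and \eqref{4-eq}, one has $\|V_t\|\mres_U=\tfrac{1}{2}\sum_{i\neq j}\Ha^n\mres_{U\cap I_{i,j}(t)}$, and on each $I_{i,j}(t)$ the $\Ha^n$-rectifiability of $\pa^* E_i(t)$ identifies $T_x\|V_t\|=\nu_i(x,t)^\perp$. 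Integrating in $t\in(t_1,t_2)$ and applying Brakke's perpendicularity theorem for integral varifolds to rewrite $h\cdot g=(h\cdot\nu_i)(\nu_i\cdot g)$ on each $I_{i,j}(t)$, the factor $\tfrac{1}{2}$ cancels on both sides and delivers \eqref{BV formulation of mean curvature} with $v_i=h\cdot\nu_i$.

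The only delicate points are the identification of the approximate tangent plane of $V_t$ with $\nu_i^\perp$ on each interface (which uses $\Ha^n$-rectifiability of $\pa^* E_i(t)$ and uniqueness of tangent planes for rectifiable measures) and the invocation of Brakke's perpendicularity theorem to reduce $h$ to its normal component $(h\cdot\nu_i)\nu_i$. Beyond these standard ingredients, the argument is essentially an algebraic corollary of Theorem \ref{main3}(7)--(8), consistent with the authors' remark preceding the statement.
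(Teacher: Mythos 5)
Your proof is correct and takes essentially the same route as the paper: pass via Theorem~\ref{main2}(4) and Theorem~\ref{main3}(7)--(8) to the unit-density representation $V_t=\var(\cup_i\partial^*E_i(t),1)$ on $U$, then use \eqref{chap1}--\eqref{chap2} together with the perpendicularity of the generalized mean curvature to identify both sides of \eqref{BV formulation of mean curvature} with $\pm 2\int\delta V_t(g)\,dt$. You are in fact slightly more careful than the paper's statement in explicitly invoking Theorem~\ref{main3}(8) to cover the case $N=2$, where item (7) does not directly apply.
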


If we assume the following additional conditions on $\Gamma_0$, we can guarantee that the resulting Brakke flow $\{V_t\}_{t \ge 0}$ as above is unit density for short time; see also \cite[Theorem 2.2(4)]{Takasao-Tonegawa} for a similar statement valid in the context of mean curvature flows with a transport term.

\begin{theorem}\label{main5}
Suppose that $\mathcal H^n(\Gamma_0)<\infty$ and 
there exist $r_0>0$ and $\delta_0>0$ such that
\begin{equation}\label{decon}
    \sup_{x\in \mathbb R^{n+1},\, 0<r<r_0}\frac{\mathcal H^n(
    \Gamma_0\cap B_r(x))}{\omega_n r^n}<2-\delta_0. 
\end{equation}
Then there exists $T_0=T_0(n,r_0,\delta_0,\mathcal H^n(\Gamma_0))\in (0,\infty)$ such that $\{V_t\}_{t\in[0,T_0)}$
in Theorem \ref{main1} is a unit density Brakke flow
and
$\{(\chi_{E_1(t)},\ldots,\chi_{E_N(t)})\}_{t\in[0,T_0)}$ 
in Theorem \ref{main3} is a BV solution of MCF.
\end{theorem}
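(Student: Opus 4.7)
The plan is to use Huisken's monotonicity formula to transfer the initial density ratio bound \eqref{decon} into a uniform pointwise density upper bound strictly less than $2$ for $\|V_t\|$ on a short interval $[0,T_0)$, and then invoke integrality of the Brakke flow (Theorem \ref{main1}) to conclude unit density, at which point Theorem \ref{main4} closes the argument.

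First I would verify that Huisken's monotonicity applies to the flow of Theorem \ref{main1}: for fixed $x_0\in\mathbb R^{n+1}$ and $s>0$, setting
\[
\rho_{(x_0,s)}(y,t):=\frac{1}{(4\pi(s-t))^{n/2}}\exp\!\Big(-\frac{|y-x_0|^2}{4(s-t)}\Big),\qquad t<s,
\]
the map $t\mapsto \Phi_{(x_0,s)}(t):=\int \rho_{(x_0,s)}(y,t)\,d\|V_t\|(y)$ is non-increasing. This follows from Brakke's inequality \eqref{brakineq} applied to a cutoff of $\rho_{(x_0,s)}$; the tail errors are controlled by the global bound $\|V_t\|(\mathbb R^{n+1})\leq \mathcal H^n(\Gamma_0)<\infty$ in Theorem \ref{main1}(4). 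Next I would bound $\Phi_{(x_0,s)}(0)$ uniformly in $x_0$ for $s$ small. Writing $\alpha_{x_0}(r):=\mathcal H^n(\Gamma_0\cap B_r(x_0))/(\omega_n r^n)$, a layer-cake computation yields
\[
\Phi_{(x_0,s)}(0)=\omega_n\int_0^\infty \alpha_{x_0}(r)\,\frac{r^{n+1}}{2s}\,\rho_{(x_0,s)}(r,0)\,dr.
\]
Splitting at $r_0$: for $r\leq r_0$, $\alpha_{x_0}(r)\leq 2-\delta_0$ by \eqref{decon}, and using the normalization $\int \rho_{(x_0,s)}(y,0)\,d\mathcal H^n\mres_P(y)=1$ for any $n$-plane $P\ni x_0$, this portion contributes at most $2-\delta_0$; for $r>r_0$, $\alpha_{x_0}(r)\leq \mathcal H^n(\Gamma_0)/(\omega_n r_0^n)$, and the contribution is of order $\mathcal H^n(\Gamma_0)\,s^{-n/2}e^{-r_0^2/(8s)}$, which decays to $0$ as $s\to 0^+$. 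Hence there is $T_0=T_0(n,r_0,\delta_0,\mathcal H^n(\Gamma_0))>0$ with $\Phi_{(x_0,s)}(0)\leq 2-\delta_0/2$ for every $x_0\in\mathbb R^{n+1}$ and $s\in (0,2T_0]$.

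By monotonicity, $\Phi_{(x,t+\tau)}(t)\leq \Phi_{(x,t+\tau)}(0)\leq 2-\delta_0/2$ for all $t\in [0,T_0)$ and $\tau\in (0,T_0)$. Letting $\tau\to 0^+$ and using the standard upper bound $\Theta^n(\|V_t\|,x)\leq \liminf_{\tau\to 0^+}\Phi_{(x,t+\tau)}(t)$ valid $\|V_t\|$-a.e., I obtain $\Theta^n(\|V_t\|,x)\leq 2-\delta_0/2$ for $\|V_t\|$-a.e.\ $x$. Since $V_t$ is integral for a.e.\ $t$ by Theorem \ref{main1}, $\Theta^n(\|V_t\|,\cdot)$ is a positive integer $\|V_t\|$-a.e., hence identically $1$: the flow is unit density on $[0,T_0)$. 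Theorem \ref{main4} then supplies \eqref{4-eq} and condition (d'') of Definition \ref{d:BV flow} with $v_i=h\cdot\nu_i$; conditions (a''), (b''), (c'') come from Theorem \ref{main3}(6); and the dissipation inequality (e) follows by testing \eqref{brakineq} with $\phi\equiv 1$ (admissible by the global mass bound), rewriting $\|V_t\|$ as $\tfrac{1}{2}\sum_{i\neq j}\mathcal H^n\mres_{I_{i,j}(t)}$ via unit density together with \eqref{chap1}--\eqref{chap2}, and identifying $|h|^2$ with $|v_i|^2$ on $I_{i,j}(t)$ through the reflection property.

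The main obstacle is the uniform-in-$x_0$ Gaussian density bound at $t=0$: one must convert the scale-wise density bound \eqref{decon}, valid only for $r<r_0$, into a Gaussian integral bound strictly below $2$ uniformly in $x_0$ and for all small enough $s$, with the waiting time $T_0$ depending only on the stated parameters. The hypothesis $\mathcal H^n(\Gamma_0)<\infty$ plays an essential role in the far-field tail estimate and in justifying the cutoff in Huisken's monotonicity; the $n$-plane normalization gives the sharp constant on the near field.
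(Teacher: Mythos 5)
Your proposal is correct and follows essentially the same route as the paper's proof: use Huisken's monotonicity to propagate the initial Gaussian density bound $\int_{\Gamma_0}\rho_{(y,s)}(\cdot,0)\,d\mathcal H^n<2-\delta_0/2$ forward in time, then rule out any point of integer density $\geq 2$ on $[0,T_0)$, and invoke Theorem \ref{main4} together with Brakke's inequality (with $\phi\equiv 1$) for the BV conclusions. The only notable difference is that you spell out the layer-cake conversion of the scale-wise bound \eqref{decon} into a uniform Gaussian density bound with the near-field/far-field split, a computation the paper leaves implicit when asserting the existence of $T_0$.
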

If we set ``the maximal existence time'' of unit density Brakke flow as
$\hat T:=\sup\{t\geq 0\,:\, \int_0^t \|V_s\|(\{x\,:\,\theta_s(x)\geq 2
\})\,ds=0\mbox{ and }\|V_t\|\neq 0\}$, under the assumption of Theorem \ref{main5}, either $\|V_t\|=0$ before $T_0$, or we have
$\hat T\geq T_0$ and $V_t$ is a non-trivial unit density flow on $[0,\hat T)$ and $\{(\chi_{E_1(t)},\ldots,\chi_{E_N(t)})\}_{t\in[0,\hat T)}$ 
is also a BV solution of MCF. 

\smallskip

Finally, the validity of the identity \eqref{th:BV_identity} allows to employ a simple argument to deduce a lower bound on the \emph{extinction time} for $V_t$, namely
on the quantity
\begin{equation} \label{def:extinct}
    T_* := \sup\lbrace t \in \R^+ \, \colon \, \|V_t\| \neq 0 \rbrace\,.
\end{equation}
Before stating the result, let us notice that if $\Ha^n(\Gamma_0) < \infty$ then, by an argument using the relative isoperimetric inequality, there is only one $i_0 \in \{1,\ldots,N\}$ such that $|E_{0,i_0}|=\infty$, and we can let $i_0=N$ without loss of generality. We then set $E(t):=\bigcup_{i=1}^{N-1}E_i(t)$.
\begin{theorem} \label{t:extinction}
If $\Ha^n(\Gamma_0) < \infty$, then the extinction time $T_*$ satisfies
\begin{equation}\label{e:extinction}
    T_* \geq 2 \left( \frac{|E(0)|}{\Ha^n(\Gamma_0)} \right)^2\,.
\end{equation}
\end{theorem}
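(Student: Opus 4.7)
The plan is to differentiate $|E(t)|=\sum_{i=1}^{N-1}|E_i(t)|$ via the BV identity \eqref{th:BV_identity}, bound the derivative by $\sqrt{A(t)\,m(t)}$ with $A(t):=\int|h|^2\,d\|V_t\|$ and $m(t):=\|V_t\|(\R^{n+1})$, and then close the loop through the Brakke mass-decay provided by Theorem \ref{main1}(4). Combined with the Euclidean isoperimetric inequality and Theorem \ref{main3}(4), this will force $\|V_T\|\neq 0$ for every $T<2|E(0)|^2/\Ha^n(\Gamma_0)^2$, giving \eqref{e:extinction}.

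To implement the first step, I would apply \eqref{th:BV_identity} for every $i<N$ with the compactly supported cutoff $\phi=\eta_R(x)\in C^1_c(\R^{n+1})$ satisfying $\eta_R\equiv 1$ on $B_R$ and $0\le\eta_R\le 1$, and let $R\to\infty$. The left-hand side converges to $|E_i(t)|-|E_i(0)|$ by monotone convergence (recall $|E_i(0)|<\infty$ for $i<N$), while the right-hand side converges by dominated convergence: Cauchy-Schwarz together with Theorem \ref{main3}(4) and Theorem \ref{main1}(4) gives
\[
\int_0^t\!\!\int_{\partial^*E_i(s)}|h|\,d\Ha^n\,ds\le\Bigl(\int_0^t A(s)\,ds\Bigr)^{1/2}\Bigl(\int_0^t m(s)\,ds\Bigr)^{1/2}\le\sqrt{t}\,M,\qquad M:=\Ha^n(\Gamma_0),
\]
which both justifies the limit and ensures $|E_i(t)|<\infty$ for all $t$. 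Summing the resulting identities over $i=1,\ldots,N-1$ and exploiting the pairwise cancellation $\nu_i+\nu_j=0$ $\Ha^n$-a.e.~on each $I_{i,j}(s)$ (see \eqref{chap1}), only the interfaces $I_{i,N}(s)$, on which $\nu_i=-\nu_N$, survive, yielding
\[
|E(t)|-|E(0)|=-\int_0^t\!\!\int_{\partial^*E_N(s)}h\cdot\nu_N\,d\Ha^n\,ds.
\]
A Cauchy-Schwarz in $x$, combined with $\Ha^n\mres_{\partial^*E_N(s)}=|\nabla\chi_{E_N(s)}|\le\|V_s\|$ from Theorem \ref{main3}(4), then gives $\bigl||E(t)|-|E(0)|\bigr|\le\int_0^t\sqrt{A(s)\,m(s)}\,ds$.

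Next I would apply Brakke's inequality \eqref{brakineq} with the same cutoff $\phi=\eta_R$; the gradient-error term vanishes as $R\to\infty$ because $\int_0^t\!\!\int|h|\,d\|V_s\|\,ds<\infty$, and one obtains $m(t_2)+\int_{t_1}^{t_2}A(s)\,ds\le m(t_1)\le M$. In distributional sense $m'\le -A$, so $\tfrac{d}{ds}m^2\le -2\,m\,A$ and hence $\int_0^t m(s)A(s)\,ds\le M^2/2$. A second Cauchy-Schwarz, this time in $s$, yields, for every $t<T_*$,
\[
\bigl||E(t)|-|E(0)|\bigr|\le\int_0^t\sqrt{A(s)\,m(s)}\,ds\le\sqrt{t}\,\Bigl(\int_0^t m(s)\,A(s)\,ds\Bigr)^{1/2}\le\frac{M\sqrt{t}}{\sqrt{2}}.
\]
For any $T<2|E(0)|^2/M^2$ this forces $|E(T)|>0$, so some $E_i(T)$ with $i<N$ has positive and finite volume; the standard Euclidean isoperimetric inequality gives $|\nabla\chi_{E_i(T)}|(\R^{n+1})>0$, whence $\|V_T\|\neq 0$ by Theorem \ref{main3}(4), i.e.~$T\le T_*$. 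Taking the supremum over such $T$ produces \eqref{e:extinction}.

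The two cutoff steps (to use $\phi\equiv 1$ in \eqref{th:BV_identity} and in Brakke's inequality) are the only technical subtlety, but they are routine given the integrability provided by Theorem \ref{main1}(4). The sharp constant $2$ in \eqref{e:extinction} is what I expect to require the most care: it relies on the pairwise cancellation at multi-junctions in the summation above (which replaces the naive bound $\sum_{i<N}\Ha^n(\partial^*E_i)\le 2m(t)$ by the sharper $\Ha^n(\partial^*E_N)\le m(t)$) together with the identity $\int_0^t mA\,ds\le M^2/2$ produced by integrating $\tfrac{d}{ds}m^2\le -2mA$; any coarser estimate reproduces the inequality \eqref{e:extinction} but with a worse multiplicative constant.
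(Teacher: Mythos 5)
Your proof is correct and follows essentially the same route as the paper's: sum the identity \eqref{th:BV_identity} over $i<N$ to express $\tfrac{d}{dt}|E(t)|$ as the mean-curvature flux across $\partial^*E(t)=\partial^*E_N(t)$, combine Cauchy--Schwarz with Brakke's dissipation inequality, and conclude via the isoperimetric inequality together with Theorem \ref{main3}(4). The paper organizes the two Cauchy--Schwarz applications slightly differently (differentiating $v(t)=|E(t)|$ and $a(t)^2=\|V_t\|(\R^{n+1})^2$ before integrating in time rather than integrating $\tfrac{d}{ds}m^2\le -2mA$ first), but the ingredients, the cancellation at the junctions, and the sharp constant $2$ are identical.
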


Observe that if $\Gamma_0$ is bounded then $T_*$ must necessarily be finite, as one can see using the avoidance lemma of Brakke 
flows (see for example \cite[10.6 and 10.7]{Ilm1}) and a comparison with a 
shrinking sphere. The estimate \eqref{e:extinction} was first proved by Giga and Yama-uchi in \cite{Giga-Yamauchi} for two-phase MCF for the solution by level-set method, and it is sharp. In contrast, as far as the authors are aware of, such sharp lower bound had never been established before in the context of multi-phase MCF. Theorem \ref{t:extinction} will be proved in Subsection \ref{extinct}.

%%%%%%%%%%%%%%%%%%%%%%%%%%%%%%%%%%%%%%%%%%
\subsection{Outline of the proofs and plan of the paper}
In what follows, we assume to have fixed
\begin{itemize}
    \item a function $\Omega$ as in Definition \ref{def:weight},
    \item a set $\Gamma_0$ and domains $\{E_{0,i}\}_{i=1}^N$ as in Assumption \ref{as1}.
\end{itemize}

The strategy towards the proof of the existence of the Brakke flow $\{V_t\}_{t \ge 0}$ and of the one-parameter families $\{E_i (t)\}_{t \ge 0}$ of open sets for $i \in \{1,\ldots,N\}$ from Theorems \ref{main1} and \ref{main3} respectively is analogous to that employed by Kim and the second-named author in \cite{KimTone}, with an important technical modification which is crucial to gain enough control on the change of volume of the grains and conclude the identity \eqref{th:BV_identity}. Let us explain this point in further detail. The scheme introduced in \cite{KimTone} can be roughly summarized as follows. One constructs, starting from $\{E_{0,i}\}_{i=1}^N$, a sequence (indexed by $j \in \mathbb N$) of piecewise constant-in-time flows of \emph{open partitions} of $\R^{n+1}$ of $N$ elements (see Section \ref{ssec:further notation}): more precisely, for each $j$ the corresponding flow $\{\E_j (t)\}_{t \in \left[0,j\right]}$ consists of constant open partitions $\E_{j}(t) = \E_{j,k} = \{E_{j,k,i}\}_{i=1}^N$ for $t$ in intervals (\emph{epochs}) $\left( (k-1)\, \Delta t_j, k\,\Delta t_j \right]$ of length $\Delta t_j \to 0^+$. The goal is then to show that if the partition $\E_{j,k+1}$ is constructed from the partition $\E_{j,k}$ appropriately then, along a suitable subsequence $j_\ell \to \infty$, the (varifolds associated to) the boundaries $\partial \E_{j_\ell}(t)$ converge to the desired Brakke flow $V_t$. 

The open partition $\E_{j,k+1}$ at a given epoch is constructed from the open partition $\E_{j,k}$ at the previous epoch by applying two operations, which we call \emph{steps}. The first step is a small \emph{Lipschitz deformation} of partitions with the effect of regularizing singularities by locally minimizing the area of the boundary of partitions at a suitably small scale; the second step consists of flowing the boundary of partitions by an appropriately defined \emph{approximate mean curvature vector}, obtained by smoothing the surfaces' first variation via convolution with a localized heat kernel. 

The only difference between the scheme employed in \cite{KimTone} and the one devised in the present paper is in the choice of the Lipschitz deformations in the first step. While in \cite{KimTone} one only requires that the change of volumes of the grains due to Lipschitz deformation is small (for a certain smallness scale), in the present paper we shall require that the change of volume is small \emph{compared to the reduction in surface measure}. We define such new class of ``\emph{volume-controlled}'' Lipschitz deformations 
in Section \ref{s:Lipschitz} and then we claim that the construction of \cite{KimTone} can be carried over using volume-controlled deformations, as outlined in
Section \ref{s:algorithm}: by the results of \cite{KimTone}, this completes the proof of Theorem \ref{main1}(1)-(4) (with some additional remarks), Theorem \ref{main2}(1), and Theorem \ref{main3}(1)-(5). 
There
are technical details for verifying that the 
modifications to the volume-controlled deformations
do not pose any difficulties, and we defer it to Appendix \ref{appendix:Lipschitz}.

In Section \ref{s: boundary evo}, the main 
result is Theorem \ref{main3}(6), namely the identity \eqref{th:BV_identity}. For that, we first prove that
any Brakke flow is $L^2$ flow in Section \ref{s:L2}, which 
is Theorem \ref{main1}(5). 
This fact seems to be first noted in \cite{Bertini}
but we include the proof for completeness. 
Next Section \ref{ssec:chi is BV} is the main
part of the proof, showing the existence of
measure-theoretic velocities. The proof consists of calculating explicitly the rate of change of integrals of test functions across different epochs in the approximating flows. 
We will prove the validity of an approximate identity for the sets $E_{j_\ell,i} (t)$, and the use of volume-controlled Lipschitz deformations will be crucial to conclude that the errors vanish in the limit as $\ell \to \infty$. The last Section \ref{ssec:derivative characterization} shows the existence of tangent
space of $\mu$ on the reduced boundary of grains (seen
as a set in space-time) and concludes the proof of 
Theorem \ref{main3}(6). 

In Section \ref{s:Ilmanen} we prove Theorem \ref{main2}(2)-(4), which, in particular, implies that the support $\spt\|V_t\|$ of the evolving varifolds is $\Ha^n$-equivalent to the boundary of partition $\Gamma (t) = \bigcup_{i=1}^N \pa E_i (t)$. This result is used
in Section \ref{s:two sided}.
The argument we follow was suggested by Ilmanen in \cite[Section 7.1]{Ilm_AC}, and we include it for the sake of completeness. It is based on the so-called \emph{Clearing Out Lemma} (see Lemma \ref{l:col}), a very robust result, which in turn follows from Huisken's monotonicity formula, stating roughly that if the localized density of $\|V_t\|$ at some point $y$ is too small at a scale $r > 0$ then necessarily the point $(y,t+r^2)$ does not belong to the support of the space-time measure $\mu$.

Section \ref{s:two sided} contains the proof of
Theorem \ref{main3}(7)(8).
They are improved version of the integrality theorem of
\cite[Theorem 8.6]{KimTone} in that the behaviors of 
approximating flows and their grains are tracked 
more in detail. 
A characterization in \cite[Section 4]{KimTone2}
of limiting behavior within a 
length scale of $o(1/j^2)$, where measure minimizing property dominates, is essentially used. At the end,
we describe the proofs of Theorem \ref{main4}
and \ref{main5}. 

Finally, section \ref{s:final remarks} contains some final remarks on the notion of generalized ${\rm BV}$ solutions introduced in Theorem \ref{main3}(6) and on the construction of canonical multi-phase Brakke flows with fixed boundary conditions in the spirit of \cite{ST19}, as well as the proof of Theorem \ref{t:extinction}.

%%%%%%%%%%%%%%%%%%%%%%%%%%%%%%%%%%%%%%%%%%
\subsection{Further notation} \label{ssec:further notation}

We collect here some further notation and results which will be extensively used throughout the paper. We begin with the notion of open partitions of $\R^{n+1}$ of $N$ elements and corresponding admissible maps.

\begin{definition}
Let $N \ge 2$ be an integer, and let $\Omega$ be as in Definition \ref{def:weight}. A collection $\mathcal E=\{E_1,\ldots,E_N\}$ of subsets $E_i \subset \mathbb R^{n+1}$
is called an \emph{$\Omega$-finite open partition of $N$ elements} if
\begin{itemize}
    \item[(a)] $E_1,\ldots,E_N$ are open and mutually disjoint,
    \item[(b)] $\int_{\mathbb R^{n+1}\setminus\cup_{i=1}^N E_i}\Omega(x)\,
    d\mathcal H^n(x)<\infty$,
    \item[(c)] the set $\cup_{i=1}^N\partial E_i$ is countably $n$-rectifiable.
\end{itemize}
The set of all $\Omega$-finite open partitions of $N$ elements is denoted by
$\mathcal{OP}_\Omega^N$. 
\end{definition}
Note that it is allowed for some $E_i$ to be the empty set $\emptyset$. Since $\Omega>0$ everywhere, the property (b) implies that the closed set $\mathbb R^{n+1}\setminus \cup_{i=1}^N E_i$ has locally finite $\mathcal H^n$-measure, and thus no interior point. In particular, it holds 
$\R^{n+1} \setminus \bigcup_{i=1}^N E_i= \cup_{i=1}^N\partial E_i$. 
We define $$\partial\mathcal E:=\bigcup_{i=1}^N
\partial E_i\,,$$
and with a slight abuse of notation we use the same symbol to denote the varifold
\[{\bf var}\left(\bigcup_{i=1}^N
\partial E_i,1\right)\in {\bf IV}_n(\mathbb R^{n+1})\,,\] namely the unit density varifold induced from the countably $n$-rectifiable set $\pa\E$. 

\begin{definition}
Given $\E=\{E_i\}_{i=1}^N\in\mathcal{OP}_\Omega^N$, a function $f \colon \R^{n+1} \to \R^{n+1}$ is \emph{$\E$-admissible} if it is Lipschitz and if it satisfies the following. Set, for every $i$, $\tilde E_i := {\rm int}(f(E_i))$. Then:
\begin{itemize}
    \item[(a)] $\tilde E_1, \ldots , \tilde E_N$ are pairwise disjoint;
    \item[(b)] $\R^{n+1} \setminus \bigcup_{i=1}^N \tilde E_i \subset f\left( \bigcup_{i=1}^N \pa E_i \right) $;
    \item[(c)] $\sup_{x \in \R^{n+1}} \abs{f (x) - x} < \infty$.
\end{itemize}
\end{definition}

The following is \cite[Lemma 4.4]{KimTone}.

\begin{lemma}
Let $\E=\{E_i\}_{i=1}^N \in \mathcal{OP}_\Omega^N$, and let $f$ be $\E$-admissible. Set $\tilde\E := \{\tilde E_i\}_{i=1}^N$, where $\tilde E_i = {\rm int}(f(E_i))$. Then, $\tilde \E \in \mathcal{OP}_\Omega^N$. We will call $\tilde \E$ the \emph{push-forward} of $\E$ through $f$, denoted $\tilde \E =: f_\star\E$.
\end{lemma}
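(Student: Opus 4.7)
The plan is to verify the three defining properties of $\mathcal{OP}_\Omega^N$ for $\tilde\E$. Property (a), that the $\tilde E_i$ are open and mutually disjoint, is essentially built into the hypotheses: each $\tilde E_i = \Int f(E_i)$ is open by definition, and pairwise disjointness is exactly clause (a) of $\E$-admissibility. The real content is establishing property (b), the $\Omega$-finiteness of the boundary set, and once that is done, property (c) will follow almost for free.

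For (b), the starting observation is the inclusion $\R^{n+1}\setminus\bigcup_{i=1}^N\tilde E_i \subset f(\partial\E)$ provided by clause (b) of admissibility. Since $f$ is Lipschitz and $\partial\E$ is countably $n$-rectifiable, the image $f(\partial\E)$ is also countably $n$-rectifiable, so I would estimate using the area formula (or equivalently the standard bound $\Ha^n(f(A))\le (\Lip f)^n \Ha^n(A)$ applied with the weight $\Omega$), namely
\[
\int_{\R^{n+1}\setminus\bigcup_i\tilde E_i}\Omega\,d\Ha^n\;\le\;\int_{f(\partial\E)}\Omega(y)\,d\Ha^n(y)\;\le\;(\Lip f)^n\int_{\partial\E}\Omega(f(x))\,d\Ha^n(x).
\]
The remaining issue is to absorb the weight at the displaced point $f(x)$ in terms of the weight at $x$.

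This is where clause (c) of admissibility combined with the structural assumption $|\nabla\Omega|\le c_1\Omega$ from Definition \ref{def:weight} comes in. Let $M:=\sup_{x}|f(x)-x|<\infty$. Writing $\log\Omega$ as a line integral along the segment from $x$ to $f(x)$ gives the Gronwall-type bound
\[
\Omega(f(x))\;\le\;\Omega(x)\exp(c_1|f(x)-x|)\;\le\;e^{c_1 M}\,\Omega(x),
\]
so the previous inequality becomes $(\Lip f)^n e^{c_1 M}\,\Ha^n\mres_\Omega(\partial\E)<\infty$, which is finite by the definition of $\mathcal{OP}_\Omega^N$ for $\E$. This proves (b).

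Finally for (c), note that each $\tilde E_i$ is open and the $\tilde E_j$ are pairwise disjoint, so no boundary point of $\tilde E_i$ can lie in any $\tilde E_j$; hence $\bigcup_i\partial\tilde E_i\subset\R^{n+1}\setminus\bigcup_j\tilde E_j\subset f(\partial\E)$. As observed above, $f(\partial\E)$ is countably $n$-rectifiable because it is the Lipschitz image of a countably $n$-rectifiable set, and rectifiability is inherited by subsets, giving (c). I do not anticipate a genuine obstacle here; the only step requiring mild care is the weight-comparison inequality, since one must exploit both hypotheses on $\Omega$ in Definition \ref{def:weight} and the uniform sup bound $\sup|f-x|<\infty$ in admissibility (c) simultaneously. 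Without the latter, the global finiteness of $\Ha^n\mres_\Omega(\partial\tilde\E)$ could fail even though $f$ is Lipschitz.
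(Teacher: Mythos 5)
Your proof is correct, and it takes the essentially unique natural route — verify (a) directly from the admissibility definition, verify (b) via the Lipschitz push-forward estimate for Hausdorff measure combined with the weight comparison $\Omega(f(x))\le e^{c_1\sup|f-\mathrm{id}|}\,\Omega(x)$ coming from $|\nabla\Omega|\le c_1\Omega$ and clause (c) of admissibility, and verify (c) by observing that $\bigcup_i\partial\tilde E_i$ is a closed (hence $\Ha^n$-measurable) subset of the Lipschitz image $f(\partial\E)$ of a countably $n$-rectifiable set. This is the same argument as in the cited reference \cite[Lemma 4.4]{KimTone}; the weight-comparison trick you highlight is indeed the one small idea that makes the $\Omega$-weighted estimate go through, and it recurs throughout the paper.
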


Next, we define a class of test functions with good properties. For $j \in \Na$, we set
\begin{eqnarray}
  \cA_j := \Big\{  \phi \in C^2 (\R^{n+1};\R^+) &\colon& \phi(x) \le \Omega (x)\,,\, \abs{\nabla\phi(x)} \leq j\,\phi(x)\,, \nonumber \\\label{the class Aj}
 && \|\nabla^2\phi(x)\| \le j\,\phi(x)\quad \mbox{for every $x \in \R^{n+1}$} \Big\}\,.
\end{eqnarray}
Note that $\Omega \in \cA_j$ for all $j \ge c_1$. 

\medskip

Finally, we give the notion of smoothed mean curvature vector of a varifold. Let $\psi \in C^\infty(\R^{n+1})$ be a radially symmetric cut-off function such that
\begin{eqnarray*}
&& \psi (x) = 1 \mbox{ for $|x|\le 1/2$}\,, \qquad \psi(x) = 0 \mbox{ for $|x|\ge 1$} \,, \\
&& 0 \le \psi(x) \le 1\,, \qquad |\nabla\psi(x)| \le 3 \,, \qquad \|\nabla^2\psi(x)\| \le 9 \mbox{ for all $x\in\R^{n+1}$}\,.
\end{eqnarray*}
Then, for every $\eps\in\left(0,1\right)$ we define
\begin{equation} \label{smoothing kernel}
    \hat\Phi_\eps(x) := \frac{1}{(2\pi\eps^2)^{\frac{n+1}{2}}} \exp\left( - \frac{|x|^2}{2\eps^2}\right)\,, \qquad \Phi_\eps(x) := c(\eps)\,\psi(x)\, \hat\Phi_\eps(x)\,,
\end{equation}
where $1 < c(\eps) \leq c(n)$ is a normalization constant chosen in such a way that
\[
\int_{\R^{n+1}} \Phi_\eps(x) \, dx = 1\,.
\]

We shall call $\Phi_\eps$ the smoothing kernel at scale $\eps$. For a varifold $V \in \V_n(\R^{n+1})$, we then define the $\eps$-smoothed mean curvature vector of $V$ to be the vector field $h_\eps(\cdot,V) \in C^\infty(\R^{n+1};\R^{n+1})$ defined by
\begin{equation} \label{smoothed mc vector}
    h_\eps(\cdot,V) := - \Phi_\eps \ast \left( \frac{\Phi_\eps \ast \delta V}{\Phi_\eps \ast \|V\| + \eps\,\Omega^{-1}} \right)\,.
\end{equation}

In the above formula, $\Phi_\eps \ast \|V\|$ is the measure on $\R^{n+1}$ defined by
\[
(\Phi_\eps \ast \|V\|) (\phi) := \|V\| (\Phi_\eps \ast \phi) = \int_{\R^{n+1}} \int_{\R^{n+1}} \Phi_\eps (x-y) \, \phi(y) \, dy \, d\|V\|(x)
\]
for all $\phi \in C_c(\R^{n+1})$, identified with the smooth function 
\[
(\Phi_\eps\ast\|V\|)(x) := \int_{\R^{n+1}} \Phi_\eps(y-x) \, d\|V\|(y)
\]
by means of the identity
\[
(\Phi_\eps \ast \|V\|) (\phi) = \langle \Phi_\eps \ast \|V\|\,,\,\phi\rangle_{L^2(\R^{n+1})}\,.
\]

Analogously, $\Phi_\eps \ast \delta V$ is the $\R^{n+1}$-valued measure on $\R^{n+1}$ defined by
\[
(\Phi_\eps \ast \delta V)(g) := \delta V (\Phi_\eps \ast g) = \int_{\R^{n+1}} g(y) \cdot \int_{\R^{n+1}\times {\bf G}(n+1,n)} S \left( \nabla\Phi_\eps (x-y) \right) \,dV(x,S)\, dy
\]
for all $g \in C_c(\R^{n+1};\R^{n+1})$, identified with the smooth vector field 
\[
(\Phi_\eps \ast \delta V)(x) := \int_{\R^{n+1} \times {\bf G}(n+1,n)} S (\nabla\Phi_\eps(y-x)) \, dV(y,S)\,.
\]

We state the following lemma concerning the smoothed mean curvature vector to be used in the 
subsequent sections. For the proof, the reader can consult \cite[Lemma 5.1]{KimTone}. 

\begin{lemma} \label{l:smoothing estimates}
For every $M > 0$, there exists a constant $\eps_1 \in \left( 0,1 \right)$, depending only on $n$, $c_1$ and $M$ such that the following holds. Let $V \in \V_{n}(\R^{n+1})$ be an $n$-dimensional varifold in $\R^{n+1}$ such that $\|V\|(\Omega) \leq M$, and for every $\eps \in \left( 0, \eps_1 \right)$, let $h_{\eps}(\cdot, V)$ be its smoothed mean curvature vector. Then:
\begin{equation} \label{e:h in L infty}
\abs{h_\eps(x, V)} \leq 2 \, \eps^{-2}\,,
\end{equation}
\begin{equation} \label{e:nabla h in L infty}
\|\nabla h_\eps(x,V)\| \leq 2\, \eps^{-4}\,.
\end{equation}
\end{lemma}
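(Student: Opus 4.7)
The plan is to reduce both estimates to a pointwise control on the \emph{inner quotient}
\[
R(y):=\frac{|(\Phi_\eps\ast\delta V)(y)|}{(\Phi_\eps\ast\|V\|)(y)+\eps\,\Omega^{-1}(y)}
\]
and then absorb the outer convolution by $\Phi_\eps$ (for \eqref{e:h in L infty}) or by $\nabla\Phi_\eps$ (for \eqref{e:nabla h in L infty}). The whole point of the regularizer $\eps\,\Omega^{-1}$ in the denominator, matched against the weight hypothesis $\|V\|(\Omega)\leq M$, is that it forces a uniform bound on $R$ depending only on $n$, $c_1$, $M$ once $\eps$ is sufficiently small.

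First I would establish a pointwise kernel inequality of the form
\[
|\nabla\Phi_\eps(x)|\leq \eps^{-2}\,\Phi_\eps(x)+T_\eps(x),
\]
where $T_\eps$ is supported on the annulus $\{1/2\leq |x|\leq 1\}$ and is bounded there by $C(n)\,e^{-1/(8\eps^{2})}$. This follows directly from $\nabla\hat\Phi_\eps(x)=-x\,\eps^{-2}\,\hat\Phi_\eps(x)$ and $|x|\leq 1$ on $\spt\,\psi$, together with the exponential smallness of $\hat\Phi_\eps$ on $\{|x|\geq 1/2\}$, which absorbs the contribution coming from $\nabla\psi$. From the definition of $\Phi_\eps\ast\delta V$ and the bound $|S(v)|\leq|v|$ for $S\in{\bf G}(n+1,n)$, this yields
\[
|(\Phi_\eps\ast\delta V)(y)|\leq \int|\nabla\Phi_\eps(x-y)|\,d\|V\|(x)\leq \eps^{-2}(\Phi_\eps\ast\|V\|)(y)+\int T_\eps(x-y)\,d\|V\|(x).
\]
The last term is controlled via the weight: the assumption $|\nabla\Omega|\leq c_1\Omega$ gives $\Omega(x)\geq e^{-c_1}\Omega(y)$ for $|x-y|\leq 1$, hence $\|V\|(B_1(y))\leq e^{c_1}M/\Omega(y)$, so it is at most $C(n,c_1,M)\,e^{-1/(8\eps^{2})}\,\Omega(y)^{-1}$. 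Dividing by the denominator of $R(y)$, which is at least $\eps\,\Omega(y)^{-1}$, the factor $\Omega(y)^{-1}$ cancels and one obtains
\[
R(y)\leq \eps^{-2}+C(n,c_1,M)\,e^{-1/(8\eps^{2})}\,\eps^{-1}\leq 2\eps^{-2},
\]
the last inequality holding for every $\eps$ below a threshold $\eps_1(n,c_1,M)$, since the exponential decay dominates any power of $\eps$.

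With $R\leq 2\eps^{-2}$ in hand, \eqref{e:h in L infty} follows at once from $|h_\eps(y,V)|\leq\int\Phi_\eps(y-x)\,R(x)\,dx$ and $\int\Phi_\eps=1$. For \eqref{e:nabla h in L infty} I would differentiate under the integral sign, obtaining $\|\nabla h_\eps(y,V)\|\leq 2\eps^{-2}\int|\nabla\Phi_\eps(y-x)|\,dx$, and then integrate the pointwise kernel inequality to get $\int|\nabla\Phi_\eps|\leq \eps^{-2}+C(n)\,e^{-1/(8\eps^{2})}$; after possibly shrinking $\eps_1$, this gives $\|\nabla h_\eps\|\leq 2\eps^{-4}$. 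The only genuinely delicate point of the argument is that the pointwise ratio $|\nabla\Phi_\eps|/\Phi_\eps$ is \emph{not} globally bounded because of the truncation $\psi$; the scheme only closes because the exponential smallness of $\hat\Phi_\eps$ on $\{|x|\geq 1/2\}$ kills the boundary remainder $T_\eps$, and the weight in $\eps\,\Omega^{-1}$ matches the one in $\|V\|(\Omega)\leq M$ so that the factor $\Omega(y)^{-1}$ cancels precisely when $R(y)$ is formed.
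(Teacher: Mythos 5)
Your approach is the natural one, and indeed the paper does not prove this lemma itself but cites \cite[Lemma 5.1]{KimTone}; the mechanisms you identify (the pointwise kernel inequality $|\nabla\Phi_\eps|\le \eps^{-2}\Phi_\eps + T_\eps$ with $T_\eps$ exponentially small coming from the cutoff, and the exact cancellation of $\Omega(y)^{-1}$ between the bound $\|V\|(B_1(y))\lesssim M\,\Omega(y)^{-1}$ and the regularizer $\eps\,\Omega^{-1}$ in the denominator) are precisely the point of the lemma and you have them right. A couple of small inaccuracies are worth noting. First, the exponentially small remainder is not $C(n)\,e^{-1/(8\eps^2)}$ but $C(n)\,\eps^{-(n+1)}\,e^{-1/(8\eps^2)}$ because of the prefactor $(2\pi\eps^2)^{-(n+1)/2}$ in $\hat\Phi_\eps$; this is harmless since it is still $o(\eps^K)$ for every $K$, but you should carry the power of $\eps$ honestly.

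Second, and more importantly, your final combination for the gradient bound does not literally close: having already rounded $R\le 2\eps^{-2}$, multiplying by $\int|\nabla\Phi_\eps|\le\eps^{-2}+C(n,\eps)\,e^{-1/(8\eps^2)}$ yields $\|\nabla h_\eps\|\le 2\eps^{-4}+2C\eps^{-2}e^{-1/(8\eps^2)}$, which is strictly greater than $2\eps^{-4}$; no choice of $\eps_1$ can make that $\le 2\eps^{-4}$. The fix is to keep the slack you already have: one genuinely has $R\le(1+o(1))\eps^{-2}$, and since $\Phi_\eps$ is supported in $B_1$ one may write $\int\frac{|x|}{\eps^2}\Phi_\eps\,dx\le c(\eps)\int\frac{|x|}{\eps^2}\hat\Phi_\eps\,dx = C(n)\,\eps^{-1}$, so in fact $\int|\nabla\Phi_\eps|\lesssim\eps^{-1}$ for small $\eps$, comfortably below $\eps^{-2}$. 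With (say) $R\le\frac32\eps^{-2}$ and $\int|\nabla\Phi_\eps|\le\frac43\eps^{-2}$ for $\eps<\eps_1$, the product is $\le 2\eps^{-4}$ and the argument closes. This is a bookkeeping slip rather than a conceptual gap, but as written the last line of your proof of \eqref{e:nabla h in L infty} is false.
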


\section{Existence of multi-phase Brakke flow} \label{s:existence}

\subsection{Volume-controlled Lipschitz deformations} \label{s:Lipschitz}

In this subsection we introduce the modified class of Lipschitz deformations used in the present paper to gain improved control on the volume change of partitions. For $\E \in \mathcal{OP}_\Omega^N$ and $j \in \Na$, the class is denoted by ${\bf E}^{vc}(\mathcal{E},j)$ (upperscript $vc$
indicating ``volume controlled''), and it replaces the class ${\bf E}(\mathcal{E},j)$ defined in \cite[Definition 4.8]{KimTone}. 
%%%%%%%%%%%%%%%%

\begin{definition}\label{ar8}
For $\mathcal{E}=\{E_i\}_{i=1}^N\in\mathcal{OP}_\Omega^N$ and $c_1\leq j\in\mathbb N$, define
${\bf E}^{vc}(\mathcal{E},j)$ to be the set of all $\mathcal E$-admissible functions
$f\,:\,\mathbb R^{n+1}\rightarrow\mathbb R^{n+1}$ such that, writing $\{\tilde E_i\}_{i=1}^N:=
f_{\star}\mathcal E
$, we have
\begin{itemize}
\item[(a)] $|f(x)-x|\leq 1/j^2$ for all $x\in\mathbb R^{n+1}$,
\item[(b)] ${\mathcal L}^{n+1}(\tilde E_i\triangle E_i)\leq \{\|\partial\mathcal E\|(\Omega)-\|\partial f_{\star}\mathcal E\|(\Omega)\}/j$ for all $i=1,\ldots,N$,
\item[(c)] $\|\partial f_{\star}\mathcal E\|(\phi)\leq \|\partial\mathcal E\|(\phi)$
for all $\phi\in \mathcal A_j$. 
\end{itemize}
\end{definition}
The difference between the above class ${\bf E}^{vc}(\mathcal E,j)$ and the class ${\bf E}(\mathcal E,j)$ in \cite[Definition 4.8]{KimTone} lies in the condition (b), which in \cite{KimTone} was simply ${\mathcal L}^{n+1}(\tilde E_i\triangle E_i)\leq 1/j$.
Since $\Omega\in\mathcal A_j$ for all $j\geq c_1$, (c) implies $\|\partial\mathcal E\|(\Omega)\geq \|\partial f_{\star}\mathcal E\|(\Omega)$, and thus the right-hand side of (b) is non-negative. 
In particular, the identity map $f(x)=x$ belongs to ${\bf E}^{vc}(\mathcal E,j)$ for $j\geq c_1$. We 
similarly modify the definitions of $\Delta_j\|\partial\mathcal E\|(\Omega)$, ${\bf E}(\mathcal E, C,j)$ and
$\Delta_j\|\partial\mathcal E\|(C)$ in \cite[(4.11)-(4.13)]{KimTone} by introducing their 
volume-controlled counterparts, as follows. 
\begin{definition}
For $\mathcal E\in \mathcal{OP}_\Omega^N$ and $c_1\leq j\in\mathbb N$, define
\begin{equation}\label{ar1}
\Delta_j^{vc}\|\partial\mathcal E\|(\Omega):=\inf_{f\in{\bf E}^{vc}(\mathcal E,j)}
\{\|\partial f_{\star}\mathcal E\|(\Omega)-\|\partial\mathcal E\|(\Omega)\} \le 0\,,
\end{equation}
and for a compact set $C\subset \mathbb R^{n+1}$, 
\begin{eqnarray}\label{ar2}
{\bf E}^{vc}(\mathcal E,C,j)&:=&\left\lbrace f\in{\bf E}^{vc}(\mathcal E,j)\,:\,\{x\,:\,f(x)\neq x\}\cup
\{f(x)\,:\,f(x)\neq x\}\subset C\right\rbrace\,, \\ \label{ar3}
\Delta_j^{vc}\|\partial\mathcal E\|(C)&:=&\inf_{f\in{\bf E}^{vc}(\mathcal E,C,j)}
\{\|\partial f_\star\mathcal E\|(C)-\|\partial\mathcal E\|(C)\}\,.
\end{eqnarray}
\end{definition}
Even with this modification, claims in \cite[Section 4]{KimTone} remain essentially the same:
\begin{lemma}\label{ar7}
For compact sets $C\subset \tilde C$, we have
\begin{equation}\label{ar4}
\Delta_j^{vc}\|\partial\mathcal E\|(\tilde C)\leq \Delta_j^{vc}\|\partial\mathcal E\|(C)
\end{equation}
and
\begin{equation}\label{ar5}
\Delta_j^{vc}\|\partial\mathcal E\|(\Omega)\leq (\max_{C}\Omega)\{\Delta_j^{vc}\|\partial\mathcal E\|
(C)+(1-\exp(-c_1{\rm diam}\,C))\|\partial\mathcal E\|(C)\}\,.
\end{equation}
\end{lemma}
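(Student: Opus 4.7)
The proof of \eqref{ar4} will be essentially immediate from the locality built into the class ${\bf E}^{vc}(\mathcal{E},C,j)$. First I would observe that every $f \in {\bf E}^{vc}(\mathcal{E},C,j)$ is the identity on $\mathbb{R}^{n+1}\setminus C$ and satisfies $f(C) \subset C$, which implies that the partitions $f_\star\mathcal{E}$ and $\mathcal{E}$ coincide on $\mathbb{R}^{n+1}\setminus C$. Consequently, for any compact $\tilde C \supset C$,
\begin{equation*}
\|\partial f_\star \mathcal{E}\|(\tilde C) - \|\partial \mathcal{E}\|(\tilde C) = \|\partial f_\star \mathcal{E}\|(C) - \|\partial \mathcal{E}\|(C)\,.
\end{equation*}
Then I use the inclusion ${\bf E}^{vc}(\mathcal{E}, C, j) \subset {\bf E}^{vc}(\mathcal{E}, \tilde C, j)$: taking the infimum over this (smaller) subclass bounds the left-hand side of \eqref{ar4} from above by its right-hand side.

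For \eqref{ar5}, the key ingredient is a Harnack-type estimate for $\Omega$. From $|\nabla \Omega| \leq c_1 \Omega$ and the positivity of $\Omega$, I get $|\nabla \log \Omega| \leq c_1$, and integrating this along a straight segment joining two points of $C$ yields $\min_C \Omega \geq (\max_C \Omega)\exp(-c_1\,\diam C)$. Combined with the locality observation above, this gives, for each $f \in {\bf E}^{vc}(\mathcal{E}, C, j)$,
\begin{equation*}
\|\partial f_\star\mathcal{E}\|(\Omega) - \|\partial \mathcal{E}\|(\Omega) = \int_C \Omega\, d\|\partial f_\star \mathcal{E}\| - \int_C \Omega\, d\|\partial \mathcal{E}\| \leq (\max_C\Omega)\bigl[\|\partial f_\star\mathcal{E}\|(C) - e^{-c_1\diam C}\|\partial \mathcal{E}\|(C)\bigr].
\end{equation*}
Rewriting $e^{-c_1 \diam C}\|\partial \mathcal{E}\|(C) = \|\partial \mathcal{E}\|(C) - (1 - e^{-c_1\diam C})\|\partial \mathcal{E}\|(C)$ and taking the infimum over the subclass ${\bf E}^{vc}(\mathcal{E}, C, j) \subset {\bf E}^{vc}(\mathcal{E}, j)$ will deliver \eqref{ar5}.

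I do not anticipate any real obstacle. The statement is essentially bookkeeping, and it is worth emphasizing that neither \eqref{ar4} nor \eqref{ar5} uses the new condition (b) in Definition \ref{ar8}: the argument only exploits the support-localization built into the definition of ${\bf E}^{vc}(\mathcal{E},C,j)$, so the proof is structurally identical to the one given for the analogous statement in \cite[Section 4]{KimTone}. The one point to verify carefully is the Harnack inequality for $\Omega$, but this is a direct consequence of the bound $|\nabla\Omega|\leq c_1\Omega$ in Definition \ref{def:weight}.
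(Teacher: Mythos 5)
Your proof is correct and follows exactly the approach the paper implicitly adopts: the paper gives no proof of Lemma \ref{ar7}, remarking only that the claims of \cite[Section 4]{KimTone} carry over unchanged, and your locality and Harnack arguments reproduce that standard argument. Your observation that condition (b) of Definition \ref{ar8} plays no role here — so that nothing new needs to be checked relative to \cite{KimTone} — is precisely the point the authors are implicitly making.
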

\begin{lemma}
Suppose that $\{C_l\}_{l=1}^\infty$ is a sequence of compact sets which are mutually
disjoint and suppose that $C$ is a compact set with $\cup_{l=1}^\infty C_l\subset C$. Then
\begin{equation}\label{ar6}
\Delta_j^{vc}\|\partial\mathcal E\|(C)\leq \sum_{l=1}^\infty\Delta_j^{vc}\|\partial\mathcal E\|(C_l).
\end{equation}
\end{lemma}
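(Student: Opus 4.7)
The plan is to follow the standard strategy for proving subadditivity of this kind of infimum: for each piece $C_l$ choose a near-optimal volume-controlled deformation $f_l$ supported in $C_l$, glue them into a single admissible map $f^L$ by using the identity off $\bigcup_{l\leq L}C_l$, and then pass to the limit $L\to\infty$ in the resulting bound. Concretely, given $\eps>0$ I would use the definition of the infimum in \eqref{ar3} to pick, for each $l$, a map $f_l\in\bE^{vc}(\E,C_l,j)$ with
\[
\|\pa(f_l)_\star\E\|(C_l)-\|\pa\E\|(C_l)<\Delta_j^{vc}\|\pa\E\|(C_l)+\eps\,2^{-l},
\]
and then set $f^L(x):=x+\sum_{l=1}^L(f_l(x)-x)$. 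Since the $C_l$ are pairwise disjoint, each $f_l$ is the identity off $C_l$, and $f_l(C_l)\subset C_l$, at most one summand is nonzero at any point and $f^L$ is a globally Lipschitz map with $\{x:f^L(x)\neq x\}\cup f^L(\{x:f^L(x)\neq x\})\subset\bigcup_{l\leq L}C_l\subset C$.

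The main technical task is to verify that $f^L\in\bE^{vc}(\E,C,j)$, i.e.\ to check the three items of Definition \ref{ar8}. Item (a) is immediate from (a) for each $f_l$ together with disjointness. For (c), given $\phi\in\cA_j$, the identity $(f_l)_\star\E=\E$ on $\R^{n+1}\setminus C_l$ together with disjointness yields the decomposition
\[
\|\pa(f^L)_\star\E\|(\phi)=\|\pa\E\|\bigl(\phi\,\chi_{\R^{n+1}\setminus\bigcup_{l\leq L}C_l}\bigr)+\sum_{l=1}^L\|\pa(f_l)_\star\E\|(\phi\,\chi_{C_l}),
\]
and applying (c) for each $f_l$ in the localized form $\|\pa(f_l)_\star\E\|(\phi\,\chi_{C_l})\leq\|\pa\E\|(\phi\,\chi_{C_l})$ (obtained from (c) for $f_l$ by subtracting the common contribution of $\phi\,\chi_{\R^{n+1}\setminus C_l}$) gives $\|\pa(f^L)_\star\E\|(\phi)\leq\|\pa\E\|(\phi)$. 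Item (b) is the crucial step and the one where the volume-controlled formulation really pays off: writing $\tilde E_i^L:=\mathrm{int}(f^L(E_i))$ and $\tilde E_i^l:=\mathrm{int}(f_l(E_i))$, disjointness gives $\tilde E_i^L\triangle E_i=\bigsqcup_{l=1}^L(\tilde E_i^l\triangle E_i)$ and therefore
\[
\Leb^{n+1}(\tilde E_i^L\triangle E_i)=\sum_{l=1}^L\Leb^{n+1}(\tilde E_i^l\triangle E_i)\leq\sum_{l=1}^L\frac{\|\pa\E\|(\Omega)-\|\pa(f_l)_\star\E\|(\Omega)}{j}=\frac{\|\pa\E\|(\Omega)-\|\pa(f^L)_\star\E\|(\Omega)}{j},
\]
the middle inequality being (b) for $f_l$ and the final equality following because each $(f_l)_\star\E$ differs from $\E$ only inside $C_l$, so the $\Omega$-surface deficits on disjoint pieces telescope.

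With $f^L\in\bE^{vc}(\E,C,j)$ in hand, the definition of $\Delta_j^{vc}\|\pa\E\|(C)$ combined with the same telescoping identity yields
\[
\Delta_j^{vc}\|\pa\E\|(C)\leq\|\pa(f^L)_\star\E\|(C)-\|\pa\E\|(C)=\sum_{l=1}^L\bigl(\|\pa(f_l)_\star\E\|(C_l)-\|\pa\E\|(C_l)\bigr)<\sum_{l=1}^L\Delta_j^{vc}\|\pa\E\|(C_l)+\eps.
\]
Since every $\Delta_j^{vc}\|\pa\E\|(C_l)\leq 0$, the partial sums are non-increasing in $L$ and converge in $[-\infty,0]$; letting $L\to\infty$ and then $\eps\to 0$ yields \eqref{ar6}. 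The main obstacle — and the reason the volume-controlled refinement is needed for this kind of subadditivity at all — is precisely item (b): the uniform bound $\Leb^{n+1}(\tilde E_i\triangle E_i)\leq 1/j$ used in \cite{KimTone} would degrade to $L/j$ under combination of $L$ pieces and hence fail to persist, whereas the $vc$ bound involves a quantity which is itself additive over disjoint pieces and therefore remains valid for $f^L$ regardless of how many pieces are glued together.
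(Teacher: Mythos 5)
Your proof is correct and follows essentially the same route as the paper's: glue the near-optimal $f_l$ on the finitely many disjoint pieces into a single map, verify Definition \ref{ar8}(a)--(c), and pass to the limit. The only cosmetic difference is that you use the weights $\eps\,2^{-l}$ so the total error stays bounded by $\eps$ uniformly in $L$ (letting $L\to\infty$ then $\eps\to0$), whereas the paper uses a fixed $\eps$ per piece and therefore sends $\eps\to0$ before $m\to\infty$; either ordering works.
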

We remark that the corresponding statement in \cite[Lemma 4.11]{KimTone} contains the additional assumption $\mathcal L^{n+1}(C)<1/j$, which we do not need to assume in the present setting. For completeness, we provide the proof.
\begin{proof}
By \eqref{ar3}, $\Delta_j^{vc}\|\partial\mathcal E\|(C)\geq -\|\partial\mathcal E\|(C)>-\infty$ for any compact set $C$.
Let $m\in\mathbb N$ and $\varepsilon\in(0,1)$ be
arbitrary. For all $l\leq m$, choose $f_l\in{\bf E}^{vc}(\mathcal E,C_l,j)$ such that 
$\Delta_j^{vc}\|\partial\mathcal E\|(C_l)+\varepsilon\geq \|\partial(f_l)_\star\mathcal E\|(C_l)
-\|\partial\mathcal E\|(C_l)$. We define a map $f\,:\,\mathbb R^{n+1}\rightarrow\mathbb R^{n+1}$
by setting $f\mres_{C_l}(x)=(f_l)\mres_{C_l}(x)$ for $l=1,\ldots,m$ and $f\mres_{\mathbb R^{n+1}\setminus\cup_{l=1}^m C_l}
(x)=x$. The $\mathcal E$-admissibility of $f$ follows from that of $f_l$ and from the fact that $\{C_l\}$ are mutually disjoint. To prove $f\in{\bf E}^{vc}(\mathcal E,j)$, we need to check Definition \ref{ar8}(a)-(c) and (a) follows immediately. 
Writing $\{\tilde E_i\}_{i=1}^N:=f_{\star}\mathcal E$, we have 
$\tilde E_i\triangle E_i=\cup_{l=1}^m C_l\cap(\tilde E_i\triangle E_i)$, and 
\begin{equation*}
\begin{split}
\mathcal L^{n+1}(\tilde E_i\triangle E_i)&=\sum_{l=1}^m \mathcal L^{n+1}(C_l\cap(\tilde E_i\triangle 
E_i))\leq \sum_{l=1}^m \{\|\partial\mathcal E\|(\Omega)-\|\partial(f_l)_\star\mathcal E\|(\Omega)\}/j \\
&=\sum_{l=1}^m\{\|\partial\mathcal E\|\mres_{C_l}(\Omega)-\|\partial(f_l)_\star\mathcal E\|\mres_{C_l}(\Omega)\}/j =\{\|\partial\mathcal E\|(\Omega)-\|\partial f_\star\mathcal E\|(\Omega)\}/j\,,
\end{split}
\end{equation*}
where we used (b) for $f_l\in {\bf E}^{vc}(\mathcal E,C_l,j)$ to conclude (b) for $f$. The condition
(c) can be checked similarly. These show that $f\in {\bf E}^{vc}(\mathcal E,j)$, and in fact $f\in {\bf E}^{vc}
(\mathcal E,C,j)$ by construction. By the definition of $\Delta_j^{vc}\|\pa\E\|(C)$, we have
\begin{equation*}
\begin{split}
\Delta_j^{vc}\|\partial\mathcal E\|(C) & \leq \|\partial f_\star\mathcal E\|(C)-\|\partial\mathcal E\|(C)
=\sum_{l=1}^m (\|\partial(f_l)_\star\mathcal E\|(C_l)-\|\partial\mathcal E\|(C_l)) \\
&\leq m\varepsilon+\sum_{l=1}^m \Delta_j^{vc}\|\partial\mathcal E\|(C_l).
\end{split}
\end{equation*}
By letting $\varepsilon\rightarrow 0^+$ first and then $m\rightarrow\infty$, we obtain \eqref{ar6}. 
\end{proof}
The following is similar to \cite[Lemma 4.12]{KimTone} with a minor change. The proof is identical. 
\begin{lemma}\label{adcheck}
Suppose that $\mathcal E=\{E_i\}_{i=1}^N\in\mathcal{OP}_\Omega^N$, $c_1\leq j\in\mathbb N$, $C$ is a compact subset of
$\mathbb R^{n+1}$, $f:\mathbb R^{n+1}\rightarrow\mathbb R^{n+1}$ is a $\mathcal E$-admissible function
such that, writing $\{\tilde E_i\}_{i=1}^N:=f_\star\mathcal E$, 
\begin{itemize}
\item[(a)] $\{x\,:\, f(x)\neq x\}\cup\{f(x)\,:\,f(x)\neq x\}\subset C$,
\item[(b)] $|f(x)-x|\leq 1/j^2$ for all $x\in\mathbb R^{n+1}$,
\item[(c)] $\mathcal L^{n+1}(\tilde E_i\triangle E_i)\leq \{\|\partial\mathcal E\|(\Omega)
-\|\partial f_\star\mathcal E\|(\Omega)\}/j$ for all $i=1,\ldots,N$,
\item[(d)] $\|\partial f_\star\mathcal E\|(C)\leq \exp(-j\,{\rm diam}\,C)\|\partial\mathcal E\|(C)$.
\end{itemize}
Then we have $f\in {\bf E}^{vc}(\mathcal E,C,j)$. 
\end{lemma}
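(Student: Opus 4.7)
The goal is to verify that the hypotheses (a)--(d) imply that $f$ lies in ${\bf E}^{vc}(\mathcal E, C, j)$, i.e., that $f$ satisfies the three defining properties (a)--(c) of Definition \ref{ar8} and that the support condition in \eqref{ar2} holds. The support condition is exactly hypothesis (a) of the lemma, and properties (a), (b) of Definition \ref{ar8} are precisely hypotheses (b), (c) of the lemma. Thus the only nontrivial thing to check is that
\begin{equation*}
\|\partial f_\star \mathcal E\|(\phi) \leq \|\partial \mathcal E\|(\phi) \qquad \text{for every } \phi \in \mathcal A_j\,.
\end{equation*}

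First I would observe that, by hypothesis (a), $f$ equals the identity on $\mathbb R^{n+1}\setminus C$, so in a neighborhood of any point $x_0\notin C$ we have $f(E_i)=E_i$ and consequently $\tilde E_i = \mathrm{int}(f(E_i))$ agrees with $E_i$ there. This forces $\partial\mathcal E\cap(\mathbb R^{n+1}\setminus C) = \partial f_\star\mathcal E\cap(\mathbb R^{n+1}\setminus C)$, and hence, as Radon measures,
\begin{equation*}
\|\partial f_\star \mathcal E\|\mres_{\mathbb R^{n+1}\setminus C} = \|\partial\mathcal E\|\mres_{\mathbb R^{n+1}\setminus C}\,.
\end{equation*}
Since $C$ is closed and its complement is open, splitting the integrals over $C$ and $\mathbb R^{n+1}\setminus C$ reduces the claim to the inequality $\|\partial f_\star\mathcal E\|\mres_C(\phi)\le \|\partial\mathcal E\|\mres_C(\phi)$.

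The remaining ingredient is the logarithmic gradient bound $|\nabla\phi|\le j\,\phi$ built into the class $\mathcal A_j$: integrating this along segments inside $C$ yields $\max_C\phi\le \exp(j\,\mathrm{diam}\,C)\,\min_C\phi$. Combining this with hypothesis (d) then gives
\begin{equation*}
\|\partial f_\star\mathcal E\|\mres_C(\phi)\le (\max_C\phi)\,\|\partial f_\star\mathcal E\|(C)\le \exp(j\,\mathrm{diam}\,C)\,(\min_C\phi)\cdot\exp(-j\,\mathrm{diam}\,C)\,\|\partial\mathcal E\|(C)\le \|\partial\mathcal E\|\mres_C(\phi)\,,
\end{equation*}
which is exactly what we needed. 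The conceptual step is the exponential cancellation between the $\mathcal A_j$-oscillation bound and the decay factor in hypothesis (d); everything else is bookkeeping. The only mildly delicate point, and hence the main thing to be careful about, is justifying that the measures $\|\partial\mathcal E\|$ and $\|\partial f_\star\mathcal E\|$ genuinely coincide outside $C$ starting from the fact that $f=\mathrm{id}$ there and using the interior-of-image definition of $\tilde E_i$; this is what allows the global inequality to be reduced to the localized one over $C$.
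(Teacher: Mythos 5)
Your proof is correct and takes essentially the same route as the paper, which simply refers to \cite[Lemma 4.12]{KimTone}: you verify Definition \ref{ar8}(a)--(b) directly from hypotheses (b)--(c), note that $f$ is the identity off $C$ so the boundary measures agree there, and close with the exponential cancellation between the $\mathcal A_j$-oscillation bound $\max_C\phi\le\exp(j\,\mathrm{diam}\,C)\min_C\phi$ and the decay factor in hypothesis (d). One cosmetic slip: the oscillation bound is obtained by integrating $|\nabla\log\phi|\le j$ along straight segments of $\mathbb R^{n+1}$ joining pairs of points of $C$, and these segments need not lie \emph{inside} $C$ (which is compact but not necessarily convex) --- it is the fact that the gradient bound in the definition of $\mathcal A_j$ holds on all of $\mathbb R^{n+1}$ that makes the estimate legitimate.
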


\subsection{The constructive scheme} \label{s:algorithm} 

In this subsection we provide the detailed construction of the sequence of piecewise constant-in-time approximating flows of open partitions leading to the existence result of a multi-phase Brakke flow. We let the weight function $\Omega$ be as in Definition \ref{def:weight}, and we consider an initial rectifiable set $\Gamma_0$ with a corresponding $\Omega$-finite open partition of $N$ elements $\E_0$ as in Assumption \ref{as1}. For every natural number $j \ge c_1$, and for times $t \in \left[0,j\right]$, we define open partitions $\E_j (t) = \{E_{j,1}(t),\ldots,E_{j,N}(t)\}$ according to the following rule:
\begin{eqnarray} \label{app:0} 
    \E_j(0)&=&\E_0 \,, \\ \label{app:future}
    \E_j(t) &=& \E_{j,k} \qquad \mbox{for all $t \in \left( (k-1) \Delta t_j, k \Delta t_j \right]$}\,.
\end{eqnarray}
In \eqref{app:future}, the epoch length is $\Delta t_j = 2^{-p_j}$ for some $p_j \in \Na$, and $k\in\{1,\ldots,j\,2^{p_j}\}$. For each $k$, the open partition $\E_{j,k}$ is obtained from the open partition $\E_{j,k-1}$ (with the convention $\E_{j,0} = \E_0$) through successive modifications, encoded in the following two-step algorithm:
\begin{itemize}
    \item[(1)] First, one chooses $f_1 \in \mathbf{E}^{vc}(\E_{j,k-1}, j)$ with the property that
\begin{equation} \label{e:almost optimal}
\| \partial (f_1)_{\star} \E_{j,k-1} \|(\Omega) - \| \partial \E_{j,k-1} \|(\Omega) \leq  (1-j^{-5}) \, \Delta^{vc}_j \| \partial \E_{j,k-1} \|(\Omega)\,,
\end{equation}
and sets
\begin{equation}\label{first step}
\E^*_{j,k} := (f_1)_\star(\E_{j,k-1});
\end{equation}
thus, in particular,
\begin{equation}\label{first step set}
E^*_{j,k,i} := {\rm int}(f_1(E_{j,k-1,i})) \quad \mbox{for every $i \in \{1,\ldots,N\}$}\,.
\end{equation} 

\item[(2)] Next, one defines the map

\begin{equation} \label{motion by smoothed mean curvature}
f_2(x) := x + \Delta t_j \, h_{\eps_j}(x, \partial\E^*_{j,k})\,,
\end{equation} 
where $\eps_j \in \left(0,1\right)$, and $h_{\eps_j}(\cdot,\pa\E^*_{j,k})$ is the $\eps_j$-smoothed mean curvature vector of the multiplicity one varifold on $\pa\E^*_{j,k}$. Notice that $f_2$ is a diffeomorphism of $\R^{n+1}$ due to Lemma \ref{l:smoothing estimates} as soon as $\Delta t_j \ll \eps_j^4$. We set
 \begin{equation} \label{second step}
 \E_{j,k} := (f_2)_{\star}\E^*_{j.k}\,, 
 \end{equation}
 and thus
 \begin{equation}\label{second step set}
 E_{j,k,i} := f_2(E^*_{j,k,i})\quad \mbox{for every $i\in\{1,\ldots,N\}$}\,.
 \end{equation}
\end{itemize}

Notice that the scheme just defined differs from that adopted in \cite{KimTone} only in step (1) of the algorithm, where volume-controlled Lipschitz deformations are used. In spite of such modification, we claim that the proof from \cite{KimTone} can be essentially carried over to this framework, thus leading to the following theorem.

\begin{theorem}\label{p:properties and limit}
There is a constant $c_2=c_2(n) \gg 1$ with the following property. Let $\Omega$, $\Gamma_0$, and $\E_0 \in \mathcal{OP}_\Omega^N$ be as in Definition \ref{def:weight} and Assumption \ref{as1}. Then there exist 
\begin{itemize}
\item a subsequence $j_\ell$ of $\mathbb N$,
\item reals $\eps_{j_\ell} \in \left( 0, j_\ell^{-6}\right)$ with $\lim_{\ell\to\infty} \eps_{j_\ell}=0$,
\item integers $p_{j_\ell} \in \mathbb N$ with $\Delta t_{j_\ell}=2^{-p_j} \in \left( 2^{-1} \eps_{j_\ell}^{c_2}, \eps_{j_\ell}^{c_2} \right]$,
\item a family $\{\mu_t\}_{t \in \R^+}$ of Radon measures on $\R^{n+1}$,
\item and a family $\E(t)=\{E_1(t),\ldots,E_N(t)\}_{t \ge 0}$ of open sets
\end{itemize}
such that the approximating flow of open partitions $\E_{j_\ell}(t)$ defined by \eqref{app:0}-\eqref{app:future} satisfies for all $T < \infty$:
\begin{align} \label{e:mass bound}
    &\limsup_{\ell \to \infty} \sup_{t \in \left[0,T\right]} \| \partial \E_{j_\ell}(t)\|(\Omega) \leq \| \partial\E_0\|(\Omega)\,\exp(c_1^2 T / 2)\,,\\ \label{e:mean curvature bounds}
    &\limsup_{\ell\to\infty} \int_0^T \left( \int_{\R^{n+1}} \frac{\abs{\Phi_{\eps_{j_\ell}} \ast \delta (\partial \E_{j_\ell}(t))}^2\,\Omega}{\Phi_{\eps_{j_\ell}} \ast \|\partial \E_{j_\ell}(t)\| + \eps_{j_\ell}\,\Omega^{-1}} \, dx - \frac{1}{\Delta t_{j_\ell}}\, \Delta_{j_\ell}^{vc}\|\partial \E_{j_\ell}(t)\|(\Omega) \right) \, dt < \infty \,,    \\ \label{e:speed of lmm lip}
    &\lim_{\ell\to\infty} j_\ell^{2(n+1)} \,\Delta_{j_\ell}^{vc} \| \partial\E_{j_\ell}(t) \| (\Omega) = 0 \qquad \mbox{for a.e. $t \in \R^+$}\,,\\ \label{e:boundary convergence}
    &\lim_{\ell\to\infty} \| \partial \E_{j_\ell}(t) \| (\phi) = \mu_t (\phi) \qquad \mbox{for all $\phi \in C_c (\R^{n+1})$ and any $t \in \R^+$}\,, \\ \label{e:partitions convergence}
    &  \mbox{$\chi_{E_{j_\ell,i}(t)} \to \chi_{E_i(t)}$ in $L^1_{{\rm loc}}(\R^{n+1})$ as $\ell \to \infty$ for every $i \in \{1,\ldots,N\}$ and for every $t \in\mathbb R^+$}\,.
\end{align}
Furthermore, the following holds:
\begin{itemize}
    \item[(a)] There exists a subset $Z \subset \R^+$ with $\Leb^1(Z)=0$ such that, for every $t \in \R^+\setminus Z$, $\mu_t$ is integral: that is, there exists $V_t \in \IV_n(\R^{n+1})$ such that $\|V_t\|=\mu_t$;
    \item[(b)] If $V_t$ is defined to be an arbitrary varifold in $\V_n(\R^{n+1})$ with $\|V_t\|=\mu_t$ also for $t \in Z$, then the family $\{V_t\}_{t \in \R^+}$ is an $n$-dimensional Brakke flow in $\R^{n+1}$ satisfying the conclusions of Theorem \ref{main1}(1)-(4) and Theorem \ref{main2}(1);
    \item[(c)] The flow of grains $E_i(t)$ satisfies the conclusions of Theorem \ref{main3}(1)-(5).
\end{itemize}
\end{theorem}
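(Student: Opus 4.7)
My plan is to mirror the constructive scheme of Kim--Tonegawa \cite{KimTone} essentially verbatim, with the only change being the replacement of the admissible class $\mathbf{E}(\E,j)$ by the volume-controlled subclass $\mathbf{E}^{vc}(\E,j)$ in step (1) of the algorithm. The main conceptual point is that all the \emph{a priori} estimates and the limit extraction in \cite{KimTone} rely on two features of step (1) only: (i) the non-increase of $\|\pa\E\|$ tested against $\Omega$-like functions, which is built into Definition \ref{ar8}(c) so it holds for $\mathbf{E}^{vc}$ automatically; and (ii) a quantitative upper bound on $\Delta^{vc}_j\|\pa\E\|(\Omega)$ showing it is negative enough to regularize the interface. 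Provided (ii) is comparable to its analogue in \cite{KimTone}, every estimate there transfers, because step (2) (motion by smoothed mean curvature) is unchanged.

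The first task is therefore to show that, for any $\E \in \mathcal{OP}_\Omega^N$ and $j$ large, a near-optimal deformation $f_1 \in \mathbf{E}^{vc}(\E_{j,k-1},j)$ satisfying \eqref{e:almost optimal} exists and that $\Delta^{vc}_j$ is comparable with its unconstrained counterpart over $\mathbf{E}(\E,j)$. Existence is immediate from the infimum definition; the comparability is the genuine new content. By the criterion in Lemma \ref{adcheck}, it suffices to modify any sufficiently area-reducing admissible Lipschitz map so that its volume change meets the bound in Definition \ref{ar8}(b); I plan to carry this out in Appendix \ref{appendix:Lipschitz} by a local perturbation argument that trades a small loss of area-reduction efficiency for the sharp estimate $\Leb^{n+1}(\tilde E_i \triangle E_i) \leq (\|\pa\E\|(\Omega) - \|\pa f_\star\E\|(\Omega))/j$. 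This is the only real obstacle of the whole theorem.

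With the modified step (1) in place, the remaining steps follow \cite{KimTone}. Summing the one-epoch inequality (combining the non-increase of area in step (1) with the approximate gradient-flow decrease of area in step (2), the latter via a Taylor expansion of the area functional along $f_2(x) = x + \Delta t_j\, h_{\eps_j}(x,\pa\E^*_{j,k})$) yields \eqref{e:mass bound} after discrete Gr\"onwall, and also gives the integral bound \eqref{e:mean curvature bounds} on the smoothed mean curvature. Telescoping the step (1) area drop and invoking Fatou produces \eqref{e:speed of lmm lip} along a suitable subsequence, which I would extract jointly with $\eps_{j_\ell} \to 0$ and $\Delta t_{j_\ell} \in \left(2^{-1}\eps_{j_\ell}^{c_2}, \eps_{j_\ell}^{c_2}\right]$ (the latter ensuring that $f_2$ is a smooth diffeomorphism by Lemma \ref{l:smoothing estimates}). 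Weak-$*$ compactness of Radon measures together with a diagonal extraction in time then produce the limit family $\{\mu_t\}_{t\in\R^+}$ with \eqref{e:boundary convergence}, while BV compactness applied to the indicator functions of $E_{j_\ell,i}(t)$ (whose perimeters are uniformly controlled by \eqref{e:mass bound}) delivers the limit grains and \eqref{e:partitions convergence}.

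Finally, I would identify the limit. The integrality (a) follows from Brakke's rectifiability and integrality theorems applied pointwise in $t$ using \eqref{e:mean curvature bounds}. The Brakke inequality in (b) is obtained by testing the one-epoch area inequality against $\phi \in \cA_{j_\ell}$, recognizing it as an approximate version of \eqref{brakineq} with error terms controlled by $\eps_{j_\ell}$, $\Delta t_{j_\ell}$, and $\Delta^{vc}_{j_\ell}\|\pa\E_{j_\ell}(t)\|(\Omega)/\Delta t_{j_\ell}$, and passing to the limit. The properties listed in Theorems \ref{main1}(1)-(4), \ref{main2}(1), and \ref{main3}(1)-(5) are either immediate from the construction (initial condition, pairwise disjointness, openness of $E_i(t)$, the inequality $\|V_t\| \geq |\nabla\chi_{E_i(t)}|$, the support relation $\spt\|V_t\| \subset (\spt\mu)_t$) or follow from the energy bounds in combination with standard lower-semicontinuity, exactly as in \cite{KimTone}.
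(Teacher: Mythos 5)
Your overall framework—run the Kim--Tonegawa scheme with $\mathbf{E}^{vc}$ in place of $\mathbf{E}$, use condition (c) of Definition \ref{ar8} for all the a priori estimates, then pass to the limit—matches the paper. But you have misidentified the key technical point. You propose to prove a \emph{general comparability} between $\Delta^{vc}_j\|\pa\E\|(\Omega)$ and the unconstrained $\Delta_j\|\pa\E\|(\Omega)$, by a ``local perturbation argument that trades a small loss of area-reduction efficiency'' to make an arbitrary admissible map satisfy Definition \ref{ar8}(b). This claim is unproven, is not obviously true (an admissible $f$ constrained only by $|f(x)-x|\le j^{-2}$ can move a lot of volume while achieving only a marginal area drop, and there is no evident way to repair such a map without destroying the drop), and is \emph{not} what the paper does. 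No comparability between the two infima is ever established or needed.

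What the paper actually verifies in Appendix \ref{appendix:Lipschitz} is much more surgical: in each of the contradiction arguments of \cite{KimTone} where the smallness of $\Delta_j$ is invoked (rectifiability, integrality, the slab/hole-expansion lemma, etc.), the \emph{specific} competitor deformation that is constructed there reduces the local measure by a definite fraction of $\|\pa\E\|(B_r)\sim r^n$, while the volume it moves is $\lesssim r^{n+1}$; since the relevant radii are $r=O(j^{-2})$, one has $r^{n+1}\sim r^n/j^2\ll r^n/j$, so Definition \ref{ar8}(b) holds automatically and the same map already lies in $\mathbf{E}^{vc}$. Thus the only new work is this case-by-case membership check, not a perturbation result for arbitrary maps. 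Relatedly, your statement that integrality (a) follows from ``Brakke's rectifiability and integrality theorems applied pointwise in $t$ using \eqref{e:mean curvature bounds}'' elides the fact that the lower density bound and the integrality argument both depend on this almost-minimizing structure via $\Delta^{vc}_j\to0$, which is exactly where the $\mathbf{E}^{vc}$-membership of the constructed competitors is indispensable. You should replace the comparability step with the per-deformation verification; everything else in your outline then goes through as stated.
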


The proofs of the claims contained in the statement of Theorem \ref{p:properties and limit} are analogous to the corresponding ones outlined in \cite{KimTone}, modulo few technical modifications in consequence of the use of volume-controlled Lipschitz deformations. More precisely:
\begin{itemize}
    \item the conclusions \eqref{e:mass bound} to \eqref{e:boundary convergence} are contained in \cite[Proposition 6.1, Proposition 6.4]{KimTone};
    \item the existence of the flow of grains $E_i (t)$, the convergence in \eqref{e:partitions convergence}, and the conclusion (c) is \cite[Theorem 3.5]{KimTone};
    \item the conclusion (a) is \cite[Lemma 9.1]{KimTone};
    \item the conclusion (b) is \cite[Theorem 3.2, Proposition 3.4]{KimTone}, except that 
    Theorem \ref{main1}(2) follows from the argument in 
    \cite[Proposition 6.10]{ST19} and
    Theorem \ref{main1}(4) follows from 
    Brakke's inequality and Theorem \ref{main1}(3) with $\Omega=1$: Fix $\phi\in C_c^\infty(\mathbb R^{n+1};[0,1])$ with $\phi(x)=1$ on $B_1$ and $\phi(x)=0$
    on $\mathbb R^{n+1}\setminus B_2$, and for $k\in\mathbb N$, set $\phi_k(x):=\phi(x/k)$. Use $\phi_k$ in 
    \eqref{brakineq} with $t_1=0$ and arbitrary $t=t_2>0$
    and let $k\rightarrow\infty$. In doing so, 
    \begin{equation*}
    \int_{0}^{t}\int|\nabla\phi_k||h|\,d\|V_s\|ds
    \leq \Big(\int_0^{t}\int |\nabla\phi_k|^2\,d\|V_s\|ds\Big)^{\frac12}\Big(\int_0^t\int |h|^2\,d\|V_s\|ds\Big)^{\frac12}\rightarrow 0
    \end{equation*}
    as $k\rightarrow\infty$ due to $|\nabla\phi_k|
    =|\nabla\phi|/k$ and uniform bounds from
    Theorem \ref{main1}(3). Then one can see that Theorem \ref{main1}(4) holds true.

\end{itemize}

We will not repeat the proofs of the above results, but we will detail the aforementioned changes due to volume-controlled Lipschitz deformations in Appendix \ref{appendix:Lipschitz}. For the time being, we will assume the validity of Theorem \ref{p:properties and limit}, and we will proceed with the derivation of all remaining results claimed in Section \ref{s:notation}.

\section{\textrm{BV} flow: proof of Theorem \ref{main3}(6)} \label{s: boundary evo}
 The main result of this section is the proof of Theorem \ref{main3}(6), which we isolate as the following

\begin{theorem} \label{t:velocity of boundary}
The one-parameter family $\{E_i(t)\}_{t\in\R^+}$ of open partitions defined in Theorem \ref{p:properties and limit} is a generalized ${\rm BV}$ solution to multi-phase MCF with scalar velocities
$v_i=h \cdot \nu_i$. More precisely, for every $i \in \{1,\ldots,N\}$ it holds
\begin{equation}\label{e:evo_eq}
\begin{split}
 \left.\int_{E_i(t)} \phi(x,t) \, dx\right|_{t=t_1}^{t_2} = & \int_{t_1}^{t_2} \int_{E_i(t)} \frac{\partial\phi}{\partial t}(x,t) \, dx\,dt  \\
& + \int_{t_1}^{t_2} \int_{\pa^*E_i(t)} \phi(x,t) \, h(x,V_t) \cdot \nu_{E_i(t)}(x)\,d\Ha^n(x)\,dt 
\end{split}
\end{equation}
for every $0 \le t_1 < t_2 < \infty$ and all test functions $\phi \in C^1_c(\R^{n+1}  \times \R^+)$.
\end{theorem}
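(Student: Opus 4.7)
The plan is to deduce \eqref{e:evo_eq} as the limit of an approximate identity at the level of the time-discrete flows $\E_{j_\ell}(t)$ constructed in Theorem \ref{p:properties and limit}. Fix $i \in \{1,\ldots,N\}$ and $\phi \in C^1_c(\R^{n+1}\times\R^+)$, and write $\phi_k(\cdot) := \phi(\cdot, k\Delta t_{j_\ell})$. Choosing $k_1, k_2$ such that $t_1\simeq k_1\Delta t_{j_\ell}$ and $t_2\simeq k_2\Delta t_{j_\ell}$, the left-hand side of \eqref{e:evo_eq} at the $j_\ell$-level becomes a telescoping sum in which each increment $\int_{E_{j_\ell,k,i}}\phi_k\,dx-\int_{E_{j_\ell,k-1,i}}\phi_{k-1}\,dx$ splits into three terms: a Lipschitz step (from $E_{j_\ell,k-1,i}$ to $E^*_{j_\ell,k,i}$ with $\phi_{k-1}$ frozen), a smoothed mean curvature step (from $E^*_{j_\ell,k,i}$ to $E_{j_\ell,k,i}$ with $\phi_{k-1}$ frozen), and a time-variation of $\phi$.

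The Lipschitz term is the one which crucially exploits the innovation of the present paper. The contribution is bounded by $\|\phi\|_\infty\,\mathcal L^{n+1}(E^*_{j_\ell,k,i}\triangle E_{j_\ell,k-1,i})$, and the volume-controlled axiom (b) of Definition \ref{ar8} yields the further bound $\|\phi\|_\infty\,j_\ell^{-1}\bigl(\|\partial\E_{j_\ell,k-1}\|(\Omega)-\|\partial\E^*_{j_\ell,k}\|(\Omega)\bigr)$. Summing in $k$, the almost-optimal selection \eqref{e:almost optimal} together with the uniform integral bound \eqref{e:mean curvature bounds} produce a total Lipschitz error of order $O(\|\phi\|_\infty/j_\ell)$, which vanishes as $\ell\to\infty$.

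For the smoothed mean curvature term, I would change variables along the diffeomorphism $f_2(x) = x + \Delta t_{j_\ell}\,h_{\eps_{j_\ell}}(x,\partial\E^*_{j_\ell,k})$, expand
\[
\phi_{k-1}(f_2(x))\det Df_2(x) - \phi_{k-1}(x) = \Delta t_{j_\ell}\,\di(\phi_{k-1}\,h_{\eps_{j_\ell}})(x) + O(\Delta t_{j_\ell}^2\|h_{\eps_{j_\ell}}\|_{C^1}^2),
\]
and apply the divergence theorem on the set of finite perimeter $E^*_{j_\ell,k,i}$ to obtain the leading-order contribution $\Delta t_{j_\ell}\int_{\partial^*E^*_{j_\ell,k,i}}\phi_{k-1}\,h_{\eps_{j_\ell}}\cdot\nu_{E^*_{j_\ell,k,i}}\,d\mathcal H^n$. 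The second-order remainder totals $O(T\,\Delta t_{j_\ell}\,\eps_{j_\ell}^{-8})$ by Lemma \ref{l:smoothing estimates} and vanishes thanks to the scale relation $\Delta t_{j_\ell}\leq\eps_{j_\ell}^{c_2}$ granted by Theorem \ref{p:properties and limit}. Summing over $k$ yields a space-time Riemann sum of boundary integrals, while the time-variation of $\phi$ piece produces $\int_{t_1}^{t_2}\int_{E_i(t)}\partial_t\phi\,dx\,dt$ in the limit by Fubini and \eqref{e:partitions convergence}.

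The main obstacle is to show that the aforementioned Riemann sum converges to $\int_{t_1}^{t_2}\int_{\partial^*E_i(t)}\phi\,h(\cdot,V_t)\cdot\nu_{E_i(t)}\,d\mathcal H^n\,dt$. I would proceed in two stages. First, since all other terms in the approximate identity converge, the Riemann sum must converge to some distribution; using the $L^2$ flow structure from Theorem \ref{main1}(5) and the uniform $L^2$ bounds on $h_{\eps_{j_\ell}}$ coming from \eqref{e:mean curvature bounds}, one can represent this limit as $\int_{t_1}^{t_2}\int \phi\,v_i\,d|\nabla\chi_{E_i(t)}|\,dt$ for some scalar velocity $v_i\in L^2(|\nabla\chi_{E_i(t)}|\otimes dt)$. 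Second, one must identify $v_i$ with $h\cdot\nu_{E_i(t)}$: this relies on the tangent-plane analysis of Section \ref{ssec:derivative characterization} combined with the domination $|\nabla\chi_{E_i(t)}|\leq\|V_t\|$ from Theorem \ref{main3}(4), which together force $T_x|\nabla\chi_{E_i(t)}|$ to coincide with $T_x\|V_t\|=\nu_{E_i(t)}^\perp$ for $\mathcal H^n$-a.e.~$x\in\partial^*E_i(t)$, so that the normal component of the $L^2$ generalized velocity $h$ of $V_t$ must agree with $v_i$ on $\partial^*E_i(t)$, yielding \eqref{e:evo_eq}.
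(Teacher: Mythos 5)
Your overall architecture matches the paper's: you correctly break each epoch increment into a Lipschitz step, a smoothed-mean-curvature step and a time-variation step; you correctly exploit the volume-controlled condition in Definition \ref{ar8}(b) to kill the Lipschitz contribution in the limit; and the divergence-theorem expansion of the mean-curvature step, with the errors $O(\Delta t_{j_\ell}^2\|h_{\eps_{j_\ell}}\|_{C^1}^2)$ killed by $\Delta t_{j_\ell}\le\eps_{j_\ell}^{c_2}$, is exactly what the paper does. The problems lie in your two-stage passage to the limit.

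First, the intermediate representation $\int\phi\,v_i\,d|\nabla\chi_{E_i(t)}|\,dt$ is not justified. The quantity you pass to the limit is $\int\phi\,h_{\eps_{j_\ell}}\cdot\nu_{E_{j_\ell,i}(t)}\,d\Ha^n\mres_{\pa^*E_{j_\ell,i}(t)}\,dt$, and because $\Ha^n\mres_{\pa^*E_{j_\ell,i}(t)}\le\|\pa\E_{j_\ell}(t)\|$ with the latter converging to $\|V_t\|$ (not to $|\nabla\chi_{E_i(t)}|$, which is only lower-semicontinuous under the convergence), the weak-$\ast$ limit of these signed measures is a priori only absolutely continuous with respect to $d\|V_t\|\,dt$. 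Since $\|V_t\|$ can strictly dominate $|\nabla\chi_{E_i(t)}|$ (higher multiplicity or interior boundary), you cannot simply write the limit as $v_i\,d|\nabla\chi_{E_i(t)}|\,dt$; the paper instead proves the weaker Proposition \ref{p:existence of a measure derivative} producing $u_i\in L^2(\mu\mres_\Omega)$ and the identity $\pa_t\chi_i = u_i\,d\|V_t\|\,dt$, and only later shows this measure concentrates on the reduced boundaries.

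Second, your identification $v_i=h\cdot\nu_{E_i(t)}$ is asserted but not proved. Observing that $T_x|\nabla\chi_{E_i(t)}|=T_x\|V_t\|$ on $\pa^*E_i(t)$ is not sufficient; the velocity in the BV sense is a time-derivative, while $h$ is a spatial mean curvature, and tying them together requires the full space-time argument of Lemma \ref{l:key technical}: one must use the BV structure of $S(i)$ in $\R^{n+1}\times\R^+$, the decomposition $T_{(x,t)}\mu=(T_x\|V_t\|\times\{0\})\oplus\,\mathrm{span}(h,1)$ on $\pa^*S(i)$, the resulting formula $\nu_{S(i)}=(1+|h|^2)^{-1/2}(\nu_{E_i(t)},-h\cdot\nu_{E_i(t)})$, and then the coarea formula with the matching factor $|\nabla^{\pa^*S(i)}\mathbf{q}|=(1+|h|^2)^{-1/2}$ that disintegrates $\Ha^{n+1}\mres_{\pa^*S(i)}$ into $\Ha^n\mres_{\pa^*E_i(t)}\,dt$. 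You point at the right section, but the mechanism is not spelled out and the statement ``the normal component of the $L^2$ generalized velocity $h$ must agree with $v_i$'' is exactly the conclusion you need to prove, not a premise.
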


\begin{remark} \label{rmk:what is BV}
Introducing the notation $\chi_i$ for the indicator function $\chi_i(x,t) := \chi_{E_i(t)} (x)$, and recalling that $ \nabla\chi_{E_i(t)} = - \nu_{E_i(t)}\, \Ha^n \mres_{\pa^*E_i(t)}$ as $\R^{n+1}$-valued Radon measures on $\R^{n+1}$, we see that the identity in \eqref{e:evo_eq} can be rephrased as 
\begin{equation}\label{e:evo_eq_BV}
\begin{split}
 \left.\int_{\R^{n+1}} \phi(x,t) \, \chi_i(x,t) \, dx\right|_{t=t_1}^{t_2} = & \int_{t_1}^{t_2} \int_{\R^{n+1}} \frac{\partial\phi}{\partial t}(x,t) \, \chi_i(x,t) \,dx\,dt  \\
& - \int_{t_1}^{t_2} \int_{\R^{n+1}} \phi(x,t) \, h(x,V_t) \cdot d\nabla\chi_{E_i(t)}(x)\,dt\,.
\end{split}
\end{equation}
This implies, in particular, that $\chi_i \in {\rm BV}_{{\rm loc}}(\R^{n+1} \times \R^+)$ with derivative
\begin{equation} \label{e:the BV derivative}
\begin{split}
    \nabla'\chi_i(x,t) &= \left( \nabla\chi_{E_i(t)}(x), -h(x,V_t) \cdot \nabla\chi_{E_i(t)}(x) \right) \, dt \\
            &= \left(-\nu_i(x,t), h(x,V_t) \cdot \nu_i(x,t)   \right) \, d\Ha^n \mres_{\pa^*E_i(t)} \, dt\,,
\end{split}
\end{equation}
where $\nabla'=(\nabla,\pa_t)$ and the identity holds in the sense of $(\R^{n+1}\times\R)$-valued Radon measures on $\R^{n+1} \times \R^+$. \\
 \end{remark}

The proof of Theorem \ref{t:velocity of boundary} will be obtained in three main steps, which are the content of Subsection \ref{s:L2} to Subsection \ref{ssec:derivative characterization}, respectively. First, we check 
that Brakke flow implies $L^2$ flow and recall the important observation 
Corollary \ref{cor:tangent vector} due to Mugnai-R\"{o}ger \cite{Mugnai}. Second, we will prove that, for each $i \in \{1,\ldots,N\}$, $\chi_i \in {\rm BV}_{{\rm loc}}(\R^{n+1} \times \R^+)$, see Proposition \ref{p:existence of a measure derivative}; then, we will characterize the measure-theoretic time-derivative of $\chi_i$ to show that \eqref{e:the BV derivative} holds.

\subsection{Characterization as $L^2$ flow} \label{s:L2}

In this subsection, we first prove Theorem \ref{main1}(4), which we isolate as the following
\begin{theorem} \label{t:Brakke implies L2}
Let $\{V_t\}_{t \in \R^+}$ be the Brakke flow defined in Theorem \ref{p:properties and limit}. Then, for every $0 < T < \infty$ and for every open and bounded $U \subset \R^{n+1}$, the varifolds $V_t\, \mres_{(U\times {\bf G}(n+1,n))}$ ($t \in \left[0,T\right)$) are an $n$-dimensional $L^2$ flow with generalized velocity vector $v(\cdot,t) = h (\cdot, V_t)$.
\end{theorem}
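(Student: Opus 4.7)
The plan is to verify, one by one, the conditions (a), (b), (c'), (d'1), (d'2) of Definition \ref{d:L2-flow} for the restriction $\{V_t \mres_U\}$ with the choice of generalized velocity vector $v(x,t) := h(x, V_t)$.

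Conditions (a) and (b) are inherited verbatim from Definition \ref{d:Brakke flow}(a),(b). For (c'), the statement $h(\cdot,V_t) \in L^2(\|V_t\|;\R^{n+1})$ for a.e.\ $t$ follows directly from Brakke's property (c), and since $U$ is bounded the local boundedness of $\sup_{s \in [0,t]}\|V_s\|(K)$ on $K := \clos U$ guarantees that $d\mu = d\|V_t\|\,dt$ is a finite (hence Radon) measure on $U \times (0,T)$. For (d'1), the orthogonality $h(\cdot,V_t) \perp T_x \|V_t\|$ for $\|V_t\|$-a.e.\ $x$ and a.e.\ $t$ is Brakke's orthogonality theorem (see \cite{Brakke}), applicable because $V_t \in \mathbf{IV}_n$ has first variation absolutely continuous with respect to $\|V_t\|$.

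The only nontrivial condition is (d'2). The key observation is that Brakke's inequality \eqref{brakineq} only handles non-negative test functions, whereas (d'2) requires a two-sided bound for signed $\phi \in C^1_c(U \times (0,T))$. To bridge this, given $\phi \in C^1_c(U \times (0,T))$ I would fix a non-negative cutoff $\tilde\zeta \in C^1_c(U \times (0,T))$ with $\tilde\zeta \equiv 1$ on $\spt \phi$, and set $\zeta := \|\phi\|_{C^0}\, \tilde\zeta$, so that both $\zeta - \phi \ge 0$ and $\zeta + \phi \ge 0$ lie in $C^1_c(U \times [0,T);\R^+)$ with vanishing traces at $t=0$ and $t=T$. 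Applying Brakke's inequality \eqref{brakineq} to each of $\zeta \pm \phi$ between $t_1 = 0$ and $t_2 = T$ yields
\begin{equation*}
\pm \int_0^T \int_U (\partial_t\phi + \nabla\phi \cdot h)\, d\|V_t\|\,dt \leq \int_0^T \int_U (\partial_t\zeta + \nabla\zeta \cdot h)\, d\|V_t\|\,dt - \int_0^T \int_U (\zeta \mp \phi)|h|^2\, d\|V_t\|\,dt.
\end{equation*}
Since $\zeta \pm \phi \ge 0$ and $|h|^2 \ge 0$, the last term is non-positive, and dividing by the (bounded) constant $\|\phi\|_{C^0}$ one recovers
\begin{equation*}
\left| \int_0^T \int_U (\partial_t\phi + \nabla\phi \cdot h)\, d\|V_t\|\,dt \right| \leq \|\phi\|_{C^0} \int_0^T \int_U (\partial_t\tilde\zeta + \nabla\tilde\zeta \cdot h)\, d\|V_t\|\,dt =: C\,\|\phi\|_{C^0}.
\end{equation*}
The constant $C$ is finite by the already-established Cauchy--Schwarz bound $\int |\nabla\tilde\zeta \cdot h| d\|V_t\|\,dt \le \|\nabla\tilde\zeta\|_{C^0}\,\mu(\spt\tilde\zeta)^{1/2}\|h\|_{L^2(\mu)}$, together with $\mu(\spt\tilde\zeta)<\infty$.

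The main (mild) obstacle is the manipulation of signed test functions against the one-sided Brakke inequality; the cutoff trick above is what makes this work. A subsidiary point worth noting is that the constant $C$ produced this way depends on $\tilde\zeta$, hence on the support of $\phi$; this is the standard interpretation of (d'2) as a local linear bound, which is what is actually needed to conclude that $v = h$ serves as a generalized velocity vector in the sense of Mugnai--R\"oger.
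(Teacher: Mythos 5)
Your verification of (a), (b), (c'), (d'1) and the positivity/monotonicity idea for (d'2) track the paper's proof closely: applying Brakke's inequality to the non-negative combinations $\zeta\pm\phi$ is exactly the same trick that the paper packages by saying the functional $L\phi=\int_0^T\int_U(\partial_t\phi+h\cdot\nabla\phi)\,d\|V_t\|\,dt$ is positive, hence monotone, and then sandwiching $\phi/\|\phi\|_{C^0}$ between $-\psi$ and $+\psi$ for a cutoff $\psi$. The algebra in your displayed inequality is correct.

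There is, however, a genuine gap in your closing claim that the constant $C$ in (d'2) ``depends on $\tilde\zeta$, hence on the support of $\phi$'' and that this is ``the standard interpretation.'' The quantifier structure of Definition \ref{d:L2-flow}(d') is $\exists\,v,\,\exists\,C,\,\forall\,\phi\in C^1_c(U\times(0,T))$: the constant must be \emph{uniform} over all compactly supported test functions in $U\times(0,T)$, not merely locally finite. If $C$ were allowed to depend on $\spt\phi$, the statement $|L\phi|\le C\|\phi\|_{C^0}$ would only say that $L$ is a locally finite signed measure, which is substantially weaker than the finite-total-variation conclusion the definition (and the theorem) asserts. The problematic regime is precisely when $\spt\phi$ accumulates at $t=0$: your $\tilde\zeta$ then needs to equal $1$ on an interval beginning at some small $\eps>0$, and $\|\partial_t\tilde\zeta\|_{C^0}$ necessarily blows up like $1/\eps$. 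Your bound $C\le\|\partial_t\tilde\zeta\|_{C^0}\,\mu(\spt\tilde\zeta)+\|\nabla\tilde\zeta\|_{C^0}\,\mu(\spt\tilde\zeta)^{1/2}\|h\|_{L^2(\mu)}$ then diverges, because it does not exploit that $\partial_t\tilde\zeta$ is supported on a slab of $t$-length $O(\eps)$. The paper's proof handles exactly this point: choosing the product cutoff $\psi_{U,\eps}(x,t)=\psi_U(x)\psi_\eps(t)$ with $\psi_\eps'$ supported in $[\eps/2,\eps]$, one has $\frac{C}{\eps}\int_{\eps/2}^{\eps}\|V_t\|(U_1)\,dt\le\frac{C}{2}\sup_{t\in[0,T]}\|V_t\|(U_1)$, which is independent of $\eps$, and hence $\sup_{\eps>0}L\psi_{U,\eps}<\infty$. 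This uniform estimate is the missing ingredient; without it you have not verified (d'2) as stated.
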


\begin{proof}
We verify the requirements of Definition \ref{d:L2-flow}. Conditions (a) and (b) are satisfied by $V_t$ in $\R^{n+1}$, and thus they are trivially satisfied when the varifolds are restricted to the open set $U$. Concerning (c'), we have immediately that
\[
\int_0^T \int_U \abs{h(\cdot,V_t)}^2\,d\|V_t\|\,dt \leq \left(\max_{{\rm clos}\,U} \Omega^{-1} \right) \, \int_0^T \int_{\R^{n+1}} \abs{h(\cdot, V_t)}^2\, \Omega \, d\|V_t\|\,dt < \infty \,, 
\]
so that $h(\cdot, V_t) \in L^2(\|V_t\| \mres_U; \R^{n+1})$ for a.e.\,$t \in \left[0,T\right)$. Analogously, $d\|V_t\|\,dt$ is a Radon measure on $U \times \left( 0, T \right)$, and in fact we have
\[
\left|\int_0^T\int_U \phi(x,t) \, d\|V_t\|(x)\, dt\right| \le \|\phi\|_{C^0} \, \left(\max_{{\rm clos}\,U} \Omega^{-1}\right) \, \Ha^n \mres_{\Omega}(\Gamma_0) \, \int_0^T \exp(c_1^2 t / 2) \, dt < \infty
\]
for every $\phi \in C_c(U \times \left( 0, T \right))$. To complete the proof, we are then only left with checking that \eqref{e:L^2-flow def} holds with $v(\cdot, t) = h(\cdot, V_t)$: with this choice, indeed, the condition in (d'1) is automatically satisfied due to the perpendicularity of mean curvature
(see \cite[Chapter 5]{Brakke}). Consider first a test function $\phi \in C^1_c(\R^{n+1} \times \left( 0, T \right];\R^+)$. Brakke's inequality \eqref{brakineq} then implies that
\begin{equation}\label{brL2_1}
\begin{split}
 & \int_0^T \int_{\R^{n+1}} \frac{\partial\phi}{\partial t}(x,t) + h(x,V_t) \cdot \nabla\phi(x,t) \, d\|V_t\|(x) \, dt \\
 & \qquad \ge \int_0^T \int_{\R^{n+1}} \phi(x,t) \, \abs{h(x,V_t)}^2 \, d\|V_t\|(x)\, dt \ge 0\,.
\end{split}
\end{equation}
In particular, the assignment 
\[
\phi \mapsto L\phi := \int_0^T \int_{\R^{n+1}} \frac{\partial\phi}{\partial t}(x,t) + h(x,V_t) \cdot \nabla\phi(x,t) \, d\|V_t\|(x) \, dt
\]
defines a positive linear functional on $C^1_c (\R^{n+1} \times \left(0,T\right])$: hence, $L$ is monotone, that is $L\phi_1 \le L \phi_2$ whenever $\phi_1 \leq \phi_2$ everywhere. For every $\eps \in \left(0,1\right)$, let $\psi_{U,\eps} \in C^1_c (\R^{n+1} \times \left(0,T\right])$ be the cut-off function $\psi_{U,\eps}(x,t) = \psi_U(x)\psi_\eps(t)$ defined according to the following prescriptions:
\begin{itemize}
    \item[(i)] $0 \le \psi_{U} \le 1$ and $0\le \psi_\eps\le 1$,
    \item[(ii)] $\psi_U \equiv 1$ on ${\rm clos}\, U$ and $\psi_U \equiv 0$ on $\dist(x,{\rm clos}\, U) \ge 1$,
    \item[(iii)] $\psi_\eps \equiv 1$ on $\left[\eps,T\right]$ and $\psi_\eps \equiv 0$ on $\left(0, \eps/2\right]$,
    \item[(iv)] $ \| \nabla \psi_U\|_{C^0} \le C$ and $ \|\psi_\eps' \|_{C^0} \leq C/\eps $  for a geometric constant $C$.
\end{itemize}

Let now $\phi \in C^1_c( U \times \left(0,T\right))$ be a non-zero function such that $\spt(\phi) \subset U \times \left[\eps,T\right]$. For such a function, by the definition of $\psi_{U,\eps}$, it holds 
\[
-\psi_{U,\eps} \le \frac{\phi}{\|\phi\|_{C^0}} \leq \psi_{U,\eps}\,,
\]
so that the linearity and monotonicity of $L$ yield
\[
\abs{L \phi} \le L \psi_{U,\eps} \, \|\phi\|_{C^0}\,.
\]
Notice that for such a $\phi$ the space integration can be restricted to $U$ (as the -- continuous -- derivatives of $\phi$ are necessarily zero on $U^c$ for every $t$), and thus the above argument shows that whenever $\phi \in C^1_c(U \times \left(0,T\right))$ is supported on $U \times \left[ \eps, T \right]$ it holds
\begin{equation} \label{brL2_2}
    \left|\int_0^T \int_{U} \frac{\partial\phi}{\partial t}(x,t) + h(x,V_t) \cdot \nabla\phi(x,t) \, d\|V_t\|(x) \, dt\right| \leq L \psi_{U,\eps}\, \|\phi\|_{C^0}\,.
\end{equation}
We next proceed to estimate $L\psi_{U,\eps}$. We have, setting $U_1 := \{\dist(x,{\rm clos}\,U) < 1\}$,
\[
\begin{split}
L\psi_{U,\eps} & \le \frac{C}{\eps} \int_{\eps/2}^\eps \|V_t\|(U_1)  \, dt + \int_0^T \int_{\R^{n+1}} \abs{h(\cdot,V_t)} \, \abs{\nabla\psi_U} \, d\|V_t\|\, dt \\ 
& \le C \, \left( \max_{{\rm clos}\, U_1} \Omega^{-1} \right)\, \left( \Ha^n \mres_{\Omega}(\Gamma_0) + (T\Ha^n \mres_{\Omega}(\Gamma_0))^{1/2} \, \left( \int_0^T \int_{\R^{n+1}} \abs{h(\cdot,V_t)}^2 \, \Omega\,d\|V_t\|\,dt \right)^{1/2} \right)\,.
\end{split}
\]
We see then that $\sup_{\eps > 0} L \psi_{U,\eps} < \infty$: in particular, thanks to \eqref{brL2_2}, we conclude that \eqref{e:L^2-flow def} holds with $v(\cdot,t)=h(\cdot,V_t)$ for every $\phi \in C^1_c(U \times \left( 0, T \right))$, thus completing the proof.
\end{proof}

The following is a simple corollary of Theorem \ref{t:Brakke implies L2} and \cite[Proposition 3.3]{Mugnai}.

\begin{corollary} \label{cor:tangent vector}
Let $\{V_t\}_{t \in \R^+}$ be the Brakke flow defined in Theorem \ref{p:properties and limit}, and let $\mu$ be the space-time measure $d\mu=d\|V_t\|\,dt$. Then,
\begin{equation} \label{tangent vector}
    \begin{pmatrix}
    h(x_0,V_{t_0}) \\
    1
    \end{pmatrix} \in T_{(x_0,t_0)}\mu
\end{equation}
at $\mu$-a.e.~$(x_0,t_0)$ such that the tangent space $T_{(x_0,t_0)}\mu$ exists.
\end{corollary}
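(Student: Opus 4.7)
My plan is to derive Corollary \ref{cor:tangent vector} as an immediate consequence of the just-established Theorem \ref{t:Brakke implies L2} combined with \cite[Proposition 3.3]{Mugnai}. The idea is that Theorem \ref{t:Brakke implies L2} upgrades the Brakke flow $\{V_t\}_{t \in \R^+}$ to an $L^2$ flow in the sense of Mugnai--R\"oger on every bounded open set $U \subset \R^{n+1}$ over any finite time horizon $T$, with the generalized velocity vector $v$ explicitly identified with the generalized mean curvature $h(\cdot,V_t)$. Once the $L^2$-flow structure is in hand, the geometric inclusion \eqref{tangent vector} at tangentiable points is exactly the content of the Mugnai--R\"oger result, which identifies the space-time velocity vector $(v,1)^\top$ with a tangent direction of the space-time weight measure $\mu$.

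Concretely, first I would fix a bounded open set $U$ and $T < \infty$; invoking Theorem \ref{t:Brakke implies L2} produces an $L^2$ flow on $U$ over $[0,T)$ whose generalized velocity is $h(\cdot,V_t)$, and then \cite[Proposition 3.3]{Mugnai} immediately delivers \eqref{tangent vector} at $\mu$-a.e.~$(x_0,t_0) \in U \times (0,T)$ where $T_{(x_0,t_0)}\mu$ exists. Then I would globalize by covering $\R^{n+1} \times \R^+$ by a countable collection of product sets $U_k \times (0,T_k)$ with $U_k$ bounded and $T_k \uparrow \infty$, and discarding the countable union of the resulting $\mu$-null exceptional sets, which remains $\mu$-negligible.

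The only point requiring a minor check is the perpendicularity condition (d'1) of Definition \ref{d:L2-flow}, namely $v \perp T_x\|V_t\|$ for $\mu$-a.e.~$(x,t)$. With $v = h(\cdot,V_t)$ this follows from Brakke's perpendicularity theorem for the generalized mean curvature of an integral varifold (see \cite[Chapter 5]{Brakke}), and indeed this same perpendicularity has already been invoked in the proof of Theorem \ref{t:Brakke implies L2}; no additional argument is needed. I do not foresee any substantive obstacle, since the corollary is essentially a bookkeeping combination of a result proved earlier in this section with an existing result from the literature; the substantive content, namely that the smoothing-kernel approximate mean curvature and the generalized velocity of the evolving grain boundaries coalesce in the limit, is encoded already in Theorem \ref{t:Brakke implies L2}.
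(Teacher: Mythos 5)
Your proposal is exactly the paper's argument: the paper states Corollary \ref{cor:tangent vector} as ``a simple corollary of Theorem \ref{t:Brakke implies L2} and \cite[Proposition 3.3]{Mugnai},'' and you have simply filled in the (correct) bookkeeping details — localizing to $U\times(0,T)$ to invoke the $L^2$-flow structure and Mugnai--R\"oger, then exhausting by a countable collection of such product sets. The observation about perpendicularity is also consistent with what is already invoked in the proof of Theorem \ref{t:Brakke implies L2}.
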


\subsection{Existence of measure-theoretic velocities} \label{ssec:chi is BV}

We have the following

\begin{proposition} \label{p:existence of a measure derivative}
Let $\{\E(t)\}_{t \in \R^+}$ and $\{V_t\}_{t\in\R^+}$ be as in Theorem \ref{p:properties and limit}, and let $\mu\mres_{\Omega}$ denote the Radon measure $\mu\mres_\Omega := \Omega\,d\|V_t\|\,dt$. Also, for every $i\in\{1,\ldots,N\}$, set
\[
S(i) := \left\lbrace \left(x,t\right) \in \R^{n+1} \times \R^+ \, \colon \, x \in E_i(t) \right\rbrace\,.
\]
Then, for every $i$ there exists a function $u_i = u_i (x,t) \in L^2(\mu \mres_{\Omega})$ (on $\mathbb R^{n+1}\times [0,T]$
for arbitrary $T>0$) with $\spt(u_i) \subset \pa S(i)$ and such that
\begin{equation} \label{e:first version of BV}
\begin{split}
 \left.\int_{\R^{n+1}} \phi(x,t) \, \chi_i(x,t) \, dx\right|_{t=t_1}^{t_2} = & \int_{t_1}^{t_2} \int_{\R^{n+1}} \frac{\partial\phi}{\partial t}(x,t) \, \chi_i(x,t) \,dx\,dt  \\
& + \int_{t_1}^{t_2} \int_{\R^{n+1}} \phi(x,t) \, u_i(x,t) \, d\|V_t\|\,dt
\end{split}\end{equation}
for every $0 \leq t_1 < t_2 < \infty$ and all test functions $\phi \in C^1_c(\R^{n+1} \times \mathbb R^+)$.
\end{proposition}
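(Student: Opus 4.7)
The plan is to derive \eqref{e:first version of BV} by telescoping the one-epoch variation of $\int \phi\,\chi_{E_{j_\ell,i}(t)}\,dx$ along the approximating flow of Theorem \ref{p:properties and limit}, and passing to the limit $\ell\to\infty$. Fix $\phi\in C^1_c(\mathbb R^{n+1}\times\mathbb R^+)$ and $0\le t_1<t_2<\infty$ aligned with epochs (the general case follows by approximation). For each epoch $k$, the transition from $E_{j_\ell,k-1,i}$ to $E_{j_\ell,k,i}$ factors through $E^*_{j_\ell,k,i}$, and I would treat the two half-steps separately: the Lipschitz deformation $f_1$ is handled through the volume-controlled property (b) of Definition \ref{ar8} (this is the decisive use of our modification), while the smoothing step $f_2=\mathrm{id}+\Delta t_{j_\ell}\,h_{\varepsilon_{j_\ell}}(\cdot,\partial\mathcal E^*_{j_\ell,k})$ is analyzed via the change of variables formula followed by the divergence theorem.

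Concretely, Definition \ref{ar8}(b) combined with \eqref{e:almost optimal} gives
\[
\left|\int \phi(\cdot,k\Delta t_{j_\ell})\,(\chi_{E^*_{j_\ell,k,i}} - \chi_{E_{j_\ell,k-1,i}})\,dx\right| \leq \|\phi\|_{C^0}\,\frac{|\Delta^{vc}_{j_\ell}\|\partial\mathcal E_{j_\ell,k-1}\|(\Omega)|}{j_\ell},
\]
while the change of variables formula for $f_2$ (which is a diffeomorphism close to the identity by Lemma \ref{l:smoothing estimates} and the choice $\Delta t_{j_\ell}\le\varepsilon_{j_\ell}^{c_2}$) together with the divergence theorem yields
\[
\int \phi(\cdot,k\Delta t_{j_\ell})\,(\chi_{E_{j_\ell,k,i}}-\chi_{E^*_{j_\ell,k,i}})\,dx = \Delta t_{j_\ell} \int_{\partial^* E^*_{j_\ell,k,i}} \phi\, h_{\varepsilon_{j_\ell}}\cdot\nu_{E^*_{j_\ell,k,i}}\,d\mathcal H^n + r_{\ell,k},
\]
with $|r_{\ell,k}|=O\bigl((\Delta t_{j_\ell})^2\,\varepsilon_{j_\ell}^{-4}\bigr)\,\|\phi\|_{C^1}\,\|\partial\mathcal E^*_{j_\ell,k}\|(\operatorname{spt}\phi)$. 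Summing over the epochs of $[t_1,t_2]$ and replacing the discrete time increments of $\phi$ by $\partial_t\phi$ up to an $O(\Delta t_{j_\ell})$ error, the total Lipschitz contribution is bounded by $\|\phi\|_{C^0}\,j_\ell^{-1}\sum_k |\Delta^{vc}_{j_\ell}\|\partial\mathcal E_{j_\ell,k-1}\|(\Omega)|=O(j_\ell^{-1})$ by \eqref{e:mean curvature bounds}, and $\sum_k |r_{\ell,k}|=O(\varepsilon_{j_\ell}^{c_2-4})\to 0$ provided $c_2$ is large enough. This produces the approximate identity
\[
\left.\int \phi\,\chi_{E_{j_\ell,i}(t)}\,dx\right|_{t=t_1}^{t_2} = \int_{t_1}^{t_2}\!\int \partial_t\phi\,\chi_{E_{j_\ell,i}(t)}\,dx\,dt + \sigma_{j_\ell,i}(\phi) + o(1),
\]
where $\sigma_{j_\ell,i}$ is the signed Radon measure on $\mathbb R^{n+1}\times\mathbb R^+$ defined by
\[
\sigma_{j_\ell,i} := \bigl[h_{\varepsilon_{j_\ell}}(\cdot,\partial\mathcal E^*_{j_\ell}(t))\cdot\nu_{E^*_{j_\ell,i}(t)}\bigr]\,d\mathcal H^n\mres_{\partial^* E^*_{j_\ell,i}(t)}\,dt.
\]

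The last step is to identify a limit of $\sigma_{j_\ell,i}$ with a density in $L^2(\mu\mres_\Omega)$. Cauchy--Schwarz with weight $\Omega$ gives
\[
|\sigma_{j_\ell,i}(\phi)|\le \left(\int \phi^2\,\Omega^{-1}\,d\mathcal H^n\mres_{\partial\mathcal E^*_{j_\ell}(t)}\,dt\right)^{\!1/2}\!\left(\int |h_{\varepsilon_{j_\ell}}|^2\,\Omega\,d\mathcal H^n\mres_{\partial\mathcal E^*_{j_\ell}(t)}\,dt\right)^{\!1/2};
\]
after the standard identification of $\int|h_\varepsilon|^2\Omega\,d\|V\|$ with the convolution expression in \eqref{e:mean curvature bounds}, the second factor is uniformly bounded, while the first passes to $(\int \phi^2\,\Omega^{-1}\,d\mu)^{1/2}$ by \eqref{e:boundary convergence}. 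Along a suitable subsequence $\sigma_{j_\ell,i}\rightharpoonup\sigma_i$ weakly-$*$, and the bound survives: $|\sigma_i(\phi)|\le C\,(\int \phi^2\,\Omega^{-1}\,d\mu)^{1/2}$. The Riesz representation theorem on $L^2(\Omega^{-1}d\mu)$ then produces $\tilde u_i$ with $\sigma_i(\phi)=\int \phi\,\tilde u_i\,\Omega^{-1}\,d\mu$, and setting $u_i:=\Omega^{-1}\tilde u_i$ gives $\sigma_i(\phi)=\int \phi\,u_i\,d\mu$ together with $\int u_i^2\,\Omega\,d\mu=\int \tilde u_i^2\,\Omega^{-1}\,d\mu<\infty$, i.e.\ $u_i\in L^2(\mu\mres_\Omega)$. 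The inclusion $\spt u_i\subset\partial S(i)$ follows from a cut-off argument: if $\phi$ is compactly supported inside $S(i)$ or inside the open interior of its complement, then for a.e.\,$t$ and large $\ell$ we have $\operatorname{spt}\phi(\cdot,t)\cap\partial^* E^*_{j_\ell,i}(t)=\emptyset$ by \eqref{e:partitions convergence} and the openness of $E_i(t)$, so $\sigma_{j_\ell,i}(\phi)=0$ eventually. Passing to the limit in the approximate identity via \eqref{e:partitions convergence} and dominated convergence (with \eqref{e:mass bound} as envelope) yields \eqref{e:first version of BV}.

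The principal technical obstacle is the bookkeeping that keeps $u_i$ inside $L^2(\mu\mres_\Omega)$: the precise $\Omega/\Omega^{-1}$ split in Cauchy--Schwarz is what lands the limit in the correct weighted space, and one must simultaneously argue that $h_{\varepsilon_{j_\ell}}$, originally defined on the intermediate varifold $\partial\mathcal E^*_{j_\ell}(t)$, interacts cleanly in the limit with $\mu$, which is the weak-$*$ limit of the post-smoothing measure $\|\partial\mathcal E_{j_\ell}(t)\|\,dt$. Both the volume-controlled estimate of Definition \ref{ar8}(b) and the smallness $\|f_2-\mathrm{id}\|_{C^0}\le 2\Delta t_{j_\ell}\varepsilon_{j_\ell}^{-2}$ from Lemma \ref{l:smoothing estimates} are essential to make this asymptotic interchange harmless.
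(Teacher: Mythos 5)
Your plan matches the paper's proof in structure: telescope over epochs, split each epoch into the Lipschitz and smoothing half-steps, bound the Lipschitz contribution through Definition \ref{ar8}(b) and \eqref{e:mean curvature bounds}, Taylor-expand the change of variables for $f_2$ and apply the divergence theorem, then extract a limit measure by a weighted Cauchy--Schwarz argument. Two points deserve attention before the argument is complete.

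First, after the divergence theorem you keep the main term on the \emph{pre}-smoothing reduced boundary $\partial^* E^*_{j_\ell,k,i}$ with the curvature $h_{\varepsilon_{j_\ell}}(\cdot,\partial\mathcal E^*_{j_\ell,k})$. The paper instead pays an additional error ${\rm Err}_2$ (see \eqref{err2 definition}--\eqref{error2}, using \eqref{smc error 1}) to replace both the base point $g(x)\mapsto x$ and the argument varifold $\partial\mathcal E^*_{k+1}\mapsto\partial\mathcal E_{k+1}$; this is what makes the resulting $\sigma_{j_\ell,i}$ carried by $\Ha^n\mres_{\partial^* E_{j_\ell,i}(t)}\le\|\partial\mathcal E_{j_\ell}(t)\|$ and the curvature be $h_{\varepsilon_{j_\ell}}(\cdot,\partial\mathcal E_{j_\ell}(t))$, so that the uniform $L^2$ bound is exactly \eqref{e:mean curvature bounds} and the passage to $\mu$ is exactly \eqref{e:boundary convergence}. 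You acknowledge this ``asymptotic interchange'' in your closing paragraph but do not actually carry it out: either you must prove the pre-smoothing analogues (that $\|\partial\mathcal E^*_{j_\ell}(t)\|\,dt\to\mu$ and that the convolution quotient with $\partial\mathcal E^*$ enjoys the same uniform integrability), or you should switch to the post-smoothing objects as the paper does. Either is doable, but one of them must be done.

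Second, your support argument for $u_i$ overreaches: $L^1_{\rm loc}$ convergence of $\chi_{E_{j_\ell,i}(t)}$ to $\chi_{E_i(t)}$ does not force ${\rm spt}\,\phi(\cdot,t)\cap\partial^* E^*_{j_\ell,i}(t)=\emptyset$ for large $\ell$; thin filaments of the approximating boundary could persist inside $E_i(t)$. The cleaner route (which the paper takes) is to work directly with the limiting measure $\sigma_i$: once $\sigma_i\ll\mu$, a point $(\hat x,\hat t)$ in $S(i)$ or in the interior of its complement either lies outside ${\rm spt}\,\mu$, or else has a space-time neighborhood on which the evolution identity \eqref{evolution with time} forces $\sigma_i$ to vanish (since $\chi_i$ is locally constant there).

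Two small corrections that do not affect the outcome: the Jacobian remainder from $f_2$ is controlled by $\|\nabla G\|^2=O\bigl((\Delta t_{j_\ell})^2\varepsilon_{j_\ell}^{-8}\bigr)$, so your $\varepsilon_{j_\ell}^{-4}$ should be $\varepsilon_{j_\ell}^{-8}$ (the paper writes $\Delta t\,\varepsilon^{c_2-9}$); this is absorbed by taking $c_2$ large. And invoking Riesz on $L^2(\Omega^{-1}\,d\mu)$ rather than Radon--Nikod\'ym is a harmless equivalent.
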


Proposition \ref{p:existence of a measure derivative} readily implies the following

\begin{corollary} \label{cor:chi is BV}
For every $i\in \{1,\ldots,N\}$, $\chi_i \in {\rm BV}_{{\rm loc}}(\R^{n+1} \times \R^+)$, and
\begin{equation} \label{e:first version derivative}
    \nabla'\chi_i(x,t) = \left( \nabla\chi_{E_i(t)}\,dt, u_i(x,t)\,d\|V_t\|\,dt \right)\,.
\end{equation}
In particular, since $\chi_i$ is the indicator function of $S(i)$, the latter is a set of locally finite perimeter in $\R^{n+1} \times \R^+$.
\end{corollary}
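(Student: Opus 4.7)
The proof consists of interpreting identity \eqref{e:first version of BV} distributionally and checking local finiteness of the resulting measures. First I would compute the spatial distributional gradient of $\chi_i$. For any vector field $g\in C^1_c(\R^{n+1}\times\R^+;\R^{n+1})$, Fubini's theorem applied to $-\int \chi_i\,\di_x g\,dx\,dt$ together with the Gauss--Green formula for $E_i(t)$ at each slice gives
\[
-\int \chi_i(x,t)\,\di_x g(x,t)\,dx\,dt = \int_0^\infty\Bigl(\int g(x,t)\cdot d\nabla\chi_{E_i(t)}(x)\Bigr)\,dt,
\]
so that $\nabla_x \chi_i = \nabla\chi_{E_i(t)}\,dt$ as a vector-valued distribution. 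To see that this is a locally finite Radon measure, I would invoke Theorem \ref{main3}(4), which gives $|\nabla\chi_{E_i(t)}|\le\|V_t\|$, together with Theorem \ref{main1}(3): for every compact $K\subset\R^{n+1}$ and every $T<\infty$,
\[
\int_0^T|\nabla\chi_{E_i(t)}|(K)\,dt \le \Bigl(\max_K \Omega^{-1}\Bigr)\int_0^T\|V_t\|(\Omega)\,dt < \infty.
\]

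For the time component, I would plug into \eqref{e:first version of BV} an arbitrary test function $\phi\in C^1_c(\R^{n+1}\times(0,\infty))$ whose temporal support lies in an interval $[a,b]$ with $0<a<b$, and pick $t_1<a$, $t_2>b$. Then $\phi(\cdot,t_1)\equiv\phi(\cdot,t_2)\equiv 0$, so the boundary terms vanish and rearrangement yields
\[
-\int\partial_t\phi\,\chi_i\,dx\,dt = \int\phi\,u_i\,d\|V_t\|\,dt,
\]
which is exactly the distributional identity $\partial_t\chi_i = u_i\,d\|V_t\|\,dt$. Local finiteness of this signed measure follows from $u_i\in L^2(\mu\mres_\Omega)$ by Cauchy--Schwarz: on any $K\times[0,T]$,
\[
\int_0^T\int_K|u_i|\,d\|V_t\|\,dt \le \Bigl(\int u_i^2\,\Omega\,d\mu\Bigr)^{1/2}\Bigl(\int_0^T\int_K\Omega^{-1}\,d\|V_t\|\,dt\Bigr)^{1/2}<\infty,
\]
where the second factor is controlled as in the spatial estimate above.

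Combining the two components yields the measure derivative formula \eqref{e:first version derivative} and hence $\chi_i\in{\rm BV}_{\rm loc}(\R^{n+1}\times\R^+)$. The final assertion that $S(i)$ is a set of locally finite perimeter is then immediate from the identity $\chi_i=\chi_{S(i)}$ and the definition of such sets. Since Proposition \ref{p:existence of a measure derivative} has already performed the essential distributional identification, no serious obstacle remains; the only point requiring minor care is the restriction to test functions supported away from $\{t=0\}$, which is what allows the terminal contributions in \eqref{e:first version of BV} to be discarded.
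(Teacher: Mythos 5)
Your proof is correct and fills in precisely the routine details the paper leaves implicit when it states that Proposition \ref{p:existence of a measure derivative} "readily implies" the corollary: the time component is read off from \eqref{e:first version of BV} via test functions supported away from $\{t=0\}$, the space component via slicing and Gauss--Green, and local finiteness from Theorem \ref{main3}(4), Theorem \ref{main1}(3), and $u_i\in L^2(\mu\mres_\Omega)$.
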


\begin{proof}[Proof of Proposition \ref{p:existence of a measure derivative}]

We shall divide the proof into several steps. At first, we will drop the dependence of the test function on time, which will be introduced again at a later step. The idea of the proof is to obtain an evolution identity for the approximating flow $\pa\E_{j_\ell}(t)$ defined in Subsection \ref{s:algorithm} (where $j_\ell$ is the sequence of Theorem \ref{p:properties and limit}), and then to prove that, as $\ell \to \infty$, such identity converges to \eqref{e:first version of BV}. Notice that we can assume without loss of generality that $t_1=0$, since then the seemingly more general identity can be retrieved by simply taking differences. We will then also rename $t_2=T \in \left( 0, \infty \right)$.

\medskip

\textit{Step one: preliminary reductions.} By density, it is evidently sufficient to show that the identity \eqref{e:first version of BV} holds when $\phi=\phi(x)$ is a function in $C^2_c(\R^{n+1})$. We may assume without loss of generality that $|\phi|\leq \Omega$. Consider the approximating flow $\E_{j_\ell}(t)$, and fix $\ell$ so large that the flow is defined on the interval $\left[0,T\right]$. We will deduce the validity of an approximate identity for the approximating flow, with vanishing errors in the limit as $\ell \to \infty$. We fix the index $i \in \{1,\ldots, N \}$, and, for the sake of simplicity in the notation, we drop the corresponding subscripts, so to write $\E(t)$ in place of $\E_{j_\ell}(t)$ and $E(t)$ in place of $E_{j_\ell,i}(t)$. Recalling the construction of $\E(t)$, we let $k_T$ be the integer in $\{1,\ldots,j_\ell\,2^{p_{j_\ell}}\}$ such that $T \in \left( (k_T-1) \Delta t, k_T\Delta t\right]$ (with $\Delta t=\Delta t_{j_\ell}$), so that $E(T)=E(k_T\,\Delta t) = E_{k_T}:=E_{j_\ell,k_T,i}$. We then have the discretization 
\begin{equation}\label{discretization1}
\int_{E(T)} \phi(x) \, dx - \int_{E_0}  \phi(x) \, dx = \sum_{k=0}^{k_T-1} \left\lbrace\int_{E_{k+1}} \phi(x) \, dx - \int_{E_{k}}  \phi(x) \, dx\right\rbrace\,,
\end{equation}
and we can further decompose each summand on the right-hand side of \eqref{discretization1} as 
\begin{equation} \label{calculation1}
\int_{E_{k+1}} \phi(x) \, dx - \int_{E_{k}}  \phi(x) \, dx = D_1 + D_2\,,
\end{equation}
  where
 \begin{eqnarray} \label{first summand}
  D_1 &:=& \int_{E^*_{k+1}} \phi(x) \, dx - \int_{E_{k}}  \phi(x) \, dx\,, \\ \label{second summand}
  D_2 &:=& \int_{E_{k+1}} \phi(x) \, dx - \int_{E^*_{k+1}}  \phi(x) \, dx\,.
 \end{eqnarray}
  Here, $E^*_{k+1} = {\rm int}(f_1(E_k))$ for some Lipschitz function $f_1 \in \bE^{vc}(\E_k,j_\ell)$ satisfying the almost-optimality condition
 \begin{equation} \label{alm_opt in BV}
     \|\pa(f_1)_\star\E_k\|(\Omega) - \| \pa\E_k\|(\Omega) \leq (1-j_\ell^{-5})\, \Delta^{vc}\|\pa\E_k\|(\Omega)\,,
 \end{equation}
where $\Delta^{vc}\|\pa\E_k\|(\Omega):=\Delta^{vc}_{j_\ell}\|\pa\E_k\|(\Omega)$, and $ E_{k+1} = f_2(E^*_{k+1})$, with $f_2 (x) = x + \Delta t\, h_\eps(x,\pa\E^*_{k+1})$ ($\eps=\eps_{j_\ell}$). We will now proceed evaluating $D_1$ and $D_2$ separately. 

\medskip

\textit{Step two: evaluation of $D_1$ and $D_2$.} For $D_1$, we simply observe that
 \[
 D_1 = \int_{E^*_{k+1} \setminus E_{k}} \phi(x)\, dx - \int_{E_{k} \setminus E^*_{k+1}} \phi(x)\, dx\,,
 \]
 so that, using Definition \ref{ar8}(b) and \eqref{ar1}, it holds
 \begin{equation} \label{first error}
 |D_1| \leq 2\, \|\phi\|_{C^0}\,\mathcal{L}^{n+1}(E_{k} \triangle E^*_{k+1}) \leq - 2\,\|\phi\|_{C^0}\, \frac{\Delta^{vc} \| \partial \E_{k} \|(\Omega)}{\Delta t} \, \frac{\Delta t}{j_\ell}\,.
 \end{equation}
We then proceed with $D_2$, and in order to further ease the notation we set $f(x) := f_2(x)$, $F(x) = f(x) - x$, as well as $g := f^{-1}$ and $G(x) := g(x) - x$. We also simply write
\[
E := E_{k+1}\,, \qquad E^*:= E^*_{k+1} = g(E)\,.
\]
Using that $g$ is a diffeomorphism and changing variable in the second integral in $D_2$, we have
\begin{equation} \label{e:splitting D2}
\begin{split}
D_2 &= \int_{E} \phi(x)  \, dx - \int_{E} \phi(g(x))\, \mathbf{J}g(x) dx \\
 &= \int_{E} \{\phi(x) - \phi(g(x))\} \, dx + \int_{E} \phi(x) \, \{1-\mathbf{J}g(x)\}\, dx + \int_{E} \left( \phi(x) - \phi(g(x)) \right)\,\left( \mathbf{J}g(x)-1 \right) \, dx\,,
\end{split}
\end{equation}
where $\mathbf J g$ denotes the Jacobian determinant of $g$. Using that $G(x) = - F(g(x))$ and that $F(y) = \Delta t \, h_{\eps}(y)$ ($h_{\eps}(\cdot) = h_{\eps}(\cdot, \partial  \E^*_{k+1})$) together with \eqref{e:h in L infty}, we have that if $(\spt\,\phi)_1 := \left\lbrace \dist(x,\spt\,\phi) \leq 1 \right\rbrace$ then
\begin{align} \label{Aj zero order}
&\abs{\phi(g(x)) - \phi(x)}  \leq  \|\nabla\phi\|_{C^0}\, \abs{G(x)}\, \chi_{(\spt\,\phi)_1}(x) \leq \Delta t\,\eps^{-3}\, \chi_{(\spt\,\phi)_1}(x),  \\ \label{Aj first order}
&\abs{\phi(g(x)) - \phi(x) - \nabla\phi(x) \cdot G(x)} \leq \|\nabla^2\phi\|_{C^0}\, \abs{G(x)}^2 \, \chi_{(\spt\,\phi)_1}(x)  \leq \Delta t \, \eps^{c_2-6}\,\chi_{(\spt\,\phi)_1}(x)\,.
\end{align}
Notice that, in order to apply the estimate \eqref{e:h in L infty}, we are using that $\|\pa\E^*_{k+1}\|(\Omega) \leq \|\pa\E_k\|(\Omega)$ and we are assuming that $\eps=\eps_{j_\ell}$ is smaller than the $\eps_1$ given by Lemma \ref{l:smoothing estimates} corresponding to the choice $M=\sup_{t \in \left[0,T\right]}\|\pa\E_{j_\ell}(t)\|(\Omega)$, which is bounded by a quantity depending only on $T$, the initial datum and $c_1$ due to \eqref{e:mass bound}. In deducing \eqref{Aj zero order}-\eqref{Aj first order}, we have also used that, by the definition of $g$ and \eqref{e:h in L infty}, and for all $\ell$ large enough, if $x \notin (\spt\,\phi)_1$ then $g(x) \notin \spt\,\phi$, and that $\eps$ can be assumed sufficiently small, so that $\eps\,\|\nabla\phi\|_{C^0}$ and $\eps\,\|\nabla^2\phi\|_{C^0}$ are both bounded by $1$. Next, we also estimate
\begin{align}  \label{jacobian zero order}
\abs{\mathbf{J}g(x)-1} &\leq c(n) \, \|\nabla G(x) \| \leq c(n)\, \Delta t \, \eps^{-4}\,, \\ \label{jacobian first order}
\abs{\mathbf{J}g(x) - 1 - {\rm div}\,(G(x))} &\leq c(n)\, \|\nabla G(x)\|^2 \leq \Delta t \, \eps^{c_2-9}\,,
\end{align}
where we have used that $\nabla G = - [(\nabla F) \circ g]\, \nabla g = - [(\nabla F) \circ g] \, [(\nabla f)^{-1} \circ g]$, that $\nabla F = \Delta t\, \nabla h_{\eps}$, and the estimate \eqref{e:nabla h in L infty}.
We can then conclude from \eqref{Aj first order} and \eqref{jacobian first order} that the sum of the first two integrals in \eqref{e:splitting D2} is
\[
= - \int_E \{\nabla \phi(x) \cdot G(x) + \phi(x)\, {\rm div}\,(G(x))\}\, dx + {\rm Err_1}\,,
\]
where 
\begin{equation} \label{error1}
\abs{{\rm Err}_1} \leq \Delta t \, \eps^{c_2-10} 
\end{equation}
as soon as $\eps$ is small enough to absorb the constants depending on $\Leb^{n+1}((\spt\,\phi)_1)$. On the other hand, using that $\nabla \phi(x) \cdot G(x) + \phi(x)\, {\rm div}\,(G(x)) = {\rm div}\, (\phi(x) G(x))$ and the divergence theorem we have
\begin{equation} \label{the leading term}
\begin{split}
-\int_E \{ \nabla \phi(x) \cdot G(x) + \phi(x)\, {\rm div}\,(G(x))\}\, dx &= - \int_{\partial^* E} \phi(x) \, G(x) \cdot \nu_E(x) \, d\Ha^n(x) \\
&= \Delta t \int_{\partial^*E} \phi(x) \,   h_{\eps}(g(x))  \cdot \nu_E(x)   \, d\Ha^n(x)\,.
\end{split}
\end{equation}
Recall that, in the right-hand side of \eqref{the leading term}, $h_{\eps}(\cdot) = h_{\eps}(\cdot, \partial\E^*_{k+1})$. We then continue the chain of identities in \eqref{the leading term} as
\[
= \Delta t \int_{\partial^*E} \phi(x) \,   h_{\eps}(x,\partial \E_{k+1})  \cdot \nu_E(x)   \, d\Ha^n(x) + {\rm Err}_2\,,
\]
where
\[
{\rm Err}_2 := \Delta t \, \int_{\partial^*E} \phi(x)\, \nu_E(x) \cdot \left( h_{\eps}(g(x),\partial \E^*_{k+1}) - h_{\eps}(x,\partial \E_{k+1})  \right) \, d\Ha^n(x)\,.
\]
Since $\E_{k+1} = f_\star\E^*_{k+1}$, and $f$ is a diffeomorphism, we have that $\partial \E_{k+1} = f_\sharp \partial\E^*_{k+1}$, where $f_\sharp$ denotes the push-forward operator on integral varifolds through $f$. Calling, for simplicity, $V=\pa\E^*_{k+1}$ and $\hat V = \pa\E_{k+1} = f_\sharp V$, we can write 
\begin{equation} \label{err2 definition}
h_\eps(x, \hat V) - h_\eps(g(x),V) = [h_\eps(x,\hat V) - h_\eps(g(x),\hat V)] + [h_\eps(g(x),\hat V) - h_\eps(g(x),V)] \,.
\end{equation}
Recall, now, that the $\eps$-smoothed mean curvature vector $h_\eps(\cdot,V)$ is defined by
\[
h_\eps(\cdot,V) = - \Phi_\eps \ast \frac{\Phi_\eps \ast \delta V}{\Phi_\eps \ast \|V\| + \eps\,\Omega^{-1}}\,.
\]
In particular, using \cite[Equation (5.74) and Lemma 4.17]{KimTone}, it is easy to estimate the second summand on the right-hand side of \eqref{err2 definition} by
\begin{equation}\label{smc error 1}
\abs{h_\eps(g(x),\hat V) - h_\eps(g(x),V)} \leq \eps^{c_2-2n-14}(\|V\|(\Omega)+\|V\|(\Omega)^2)\,.
\end{equation}
For the first summand on the right-hand side of \eqref{err2 definition}, instead,
\[
|h_\eps(x,\hat V) - h_\eps(g(x),\hat V)| 
\leq |g(x)-x|\|\nabla h_\eps\|_{C^0} \leq \Delta t\|h_\eps\|_{C^0}
\|\nabla h_\eps\|_{C^0}\leq 4\eps^{c_2-6}
\]
by Lemma \ref{l:smoothing estimates}. 
Thus, we can finally estimate
\begin{equation} \label{error2}
\begin{split}
\abs{{\rm Err}_2} &\leq  \Delta t \, \eps^{c_2-2n-14}\, \|\hat V\|(\Omega) \left( \|V\|(\Omega) + \|V\|(\Omega)^2 + \eps^{2n+7} \right) \\ 
&\leq  \Delta t \, \eps^{c_2-2n-14}\, \|\hat V\|(\Omega) \left( \|\hat V'\|(\Omega) + \|\hat V'\|(\Omega)^2 +\eps^{2n+7} \right)\,,
\end{split}
\end{equation}
where $\hat V'=\partial\mathcal E_k$ and we used $\|V\|(\Omega)=\|\partial(f_1)_\star \mathcal E_{k}\|(\Omega)\leq \|\partial\mathcal E_k\|(\Omega)$. 
 Concerning the third integral in \eqref{e:splitting D2}, instead, we use \eqref{Aj zero order} and \eqref{jacobian zero order} to estimate 
\[
{\rm Err}_3 := \int_{E} \left( \phi(x) - \phi(g(x)) \right)\,\left( \mathbf{J}g(x)-1 \right) \, dx
\]
by
\begin{equation} \label{error3}
\abs{{\rm Err}_3} \leq \Delta t \, \eps^{c_2-8}  
\end{equation}
for all $\eps$ sufficiently small. We have thus proved that 
\begin{equation} \label{ready for sum}
\int_{E_{k+1}} \phi(x) \, dx - \int_{E_{k}} \phi(x) \, dx = \Delta t \, \left(  \int_{\pa^*E_{k+1}} \phi(x)\, \nu_{E_{k+1}} \cdot h_{\eps}(x,\pa\E_{k+1}) \, d\Ha^n(x) + {\rm Err}_{k+1}  \right)\,,
\end{equation}
where
\[
{\rm Err}_{k+1} := (\Delta t)^{-1}\, \left( D_1 + {\rm Err}_1 + {\rm Err}_2 + {\rm Err}_3 \right)
\]
satisfy the estimates \eqref{first error}, \eqref{error1}, \eqref{error2}, and \eqref{error3}.

\medskip

\textit{Step three: sum of contributions and limit $\ell \to \infty$.} Now we sum the identities \eqref{ready for sum} for $k = 0, \ldots, k_T-1$. Introducing back the subscripts $j_\ell$ and $i$, and using that (see \eqref{app:future})
\[
\partial \E_{j_\ell}(t) = \partial \E_{j_\ell,k+1} \qquad \mbox{for all $t \in \left( k \Delta t_{j_\ell}, (k+1) \Delta t_{j_\ell} \right]$}\,,
\]
we obtain from \eqref{discretization1} that
\begin{equation} \label{discretization final}
\int_{E_{j_\ell,i}(T)} \phi(x) \, dx - \int_{E_{0,i}} \phi(x) \, dx = \int_0^{k_T\,\Delta t_{j_\ell}}   \int_{\R^{n+1}} \phi(x)\,  h_{\eps_{j_\ell}}(x,\pa \E_{j_\ell}(t)) \cdot d\mu_{E_{j_\ell,i}(t)}(x) \, dt + {\rm Err}_{j_\ell}\,,
\end{equation}
with $\mu_{E_{j_\ell,i}(t)} = -\nabla \chi_{E_{j_\ell,i}(t)} = \nu_{E_{j_\ell,i}(t)} \, \Ha^n\mres_{\pa^* E_{j_\ell,i}(t)}$ and, since $k_T\, \Delta t_{j_\ell} \le T +1$,
\begin{align} 
\abs{{\rm Err}_{j_\ell}} &\leq \frac{1}{j_\ell} \int_0^{T+1} - \frac{\Delta^{vc}_{j_\ell} \| \partial \E_{j_\ell}(t) \| (\Omega)}{\Delta t_{j_\ell}} \, dt - \frac{\Delta^{vc}_{j_\ell}\|\pa\E_0\|(\Omega)}{j_\ell} \nonumber \\
&\quad+ \eps_{j_\ell}^{c_2 - 11} (T+1)  \,+\, \eps_{j_\ell}^{c_2-2n-14}\, (T+1)\, (M_{j_\ell}^2+M_{j_\ell}^3+
\eps^{2n+7}_{j_\ell}), \label{error spacetime} 
\end{align}
where $M_{j_\ell}:=\sup_{t\in[0,T+1]}\|\partial\mathcal E_{j_\ell}(t)\|(\Omega)$. Next, Lemma \ref{l:smoothing estimates} implies that
\[
\left|\int_T^{k_T\Delta t_{j_\ell}} \int_{\R^{n+1}} \phi\,  h_{\eps_{j_\ell}}(\cdot,\pa \E_{j_\ell}(t)) \cdot d\mu_{E_{j_\ell,i}(t)}\right| \leq M_{j_\ell}\,\eps_{j_\ell}^{c_2-3}\,,
\]
so that the end-point $k_T\,\Delta t_{j_\ell}$ in the time integral of \eqref{discretization final} can be replaced by $T$. Letting $\ell \to \infty$ in the identity \eqref{discretization final}, we have that the left-hand side converges to 
\begin{equation}\label{the lhs}
    \int_{E_i (T)} \phi(x)\, dx - \int_{E_{i,0}} \phi(x)\, dx
\end{equation}
due to \eqref{e:partitions convergence}, whereas ${\rm Err}_{j_\ell} \to 0$ thanks to \eqref{error spacetime}, \eqref{e:mass bound} and \eqref{e:mean curvature bounds}. We are then left with studying the limit
\begin{equation} \label{what we are missing}
    \lim_{\ell \to \infty} \int_0^T \int_{\R^{n+1}} \phi(x)\,h_{\eps_{j_\ell}}(x,\pa\E_{j_\ell}(t)) \cdot d\mu_{E_{j_\ell,i}(t)}(x)\,dt\,.
\end{equation}
To this aim, we first observe that, since
\[
h_{\eps}(x, V) = - \Phi_\eps \ast \left(\frac{\Phi_\eps \ast \delta V}{\Phi_\eps \ast \|V\| + \eps \, \Omega^{-1}}\right)(x)\,,
\]
and using that, by definition, $\Omega(x) \le \Omega (y) \,\exp(c_1\,\abs{x-y})$ for all $x,y \in \R^{n+1}$ together with the properties of the kernel $\Phi_\varepsilon$, we can then estimate
\begin{align}
&  \int_{\pa^*E_{j_\ell,i}(t)} \Omega \, \abs{h_{\eps_{j_\ell}}(\cdot,\pa\E_{j_\ell}(t))}^2 \,d\Ha^n \nonumber\\
&\qquad \leq C \int_{\R^{n+1}} \frac{\abs{\Phi_{\eps_{j_\ell}} \ast \delta (\pa\E_{j_\ell}(t))}^2\, \Omega(y)}{\Phi_{\eps_{j_\ell}} \ast \| \pa\E_{j_\ell}(t)\| + \eps_{j_\ell} \Omega^{-1}} \, \frac{\Phi_{\eps_{j_\ell}} \ast (\Ha^n \llcorner_{\pa^*E_{j_\ell,i}(t)})}{\Phi_{\eps_{j_\ell}} \ast \| \pa\E_{j_\ell}(t)\| + \eps_{j_\ell} \Omega^{-1}} \, dy \nonumber\\ \label{uniform L2 estimate}
&\qquad \leq C  \int_{\R^{n+1}} \frac{\abs{\Phi_{\eps_{j_\ell}} \ast \delta (\pa\E_{j_\ell}(t))}^2\, \Omega(y)}{\Phi_{\eps_{j_\ell}} \ast \| \pa\E_{j_\ell}(t)\| + \eps_{j_\ell} \Omega^{-1}} \, dy\,.
\end{align}

Therefore, for every $T>0$ and for every $\phi \in C_c(\R^{n+1}\times\left[0,T\right])$ it holds
\begin{align}
&\left|\int_0^T \int_{\R^{n+1}} \phi(x,t)\, \Omega(x)\, \abs{h_{\eps_{j_\ell}}(x,\pa\E_{j_\ell}(t)) \cdot d\mu_{E_{j_\ell,i}(t)}(x)}\, dt \right| \nonumber\\
&\qquad \leq  \left(\int_0^T \int_{\pa^*E_{j_\ell,i}(t)} \Omega\, \abs{h_{\eps_{j_\ell}}(x,\pa\E_{j_\ell}(t))}^2 \,d\Ha^n\,dt \right)^{\sfrac{1}{2}} \left( \int_0^T \int_{\pa^*E_{j_\ell,i}(t)} \Omega\, \abs{\phi(x,t)}^2\, d\Ha^n\,dt\right)^{\sfrac{1}{2}} \nonumber \\ \label{uniform measure bound}
& \qquad \overset{\eqref{uniform L2 estimate}}{\leq} C\,   \left( \int_0^T \int_{\R^{n+1}} \frac{\abs{\Phi_{\eps_{j_\ell}} \ast \delta (\pa\E_{j_\ell}(t))}^2\, \Omega(y)}{\Phi_{\eps_{j_\ell}} \ast \| \pa\E_{j_\ell}(t)\| + \eps_{j_\ell} \Omega^{-1}} \, dy\right)^{\sfrac{1}{2}}  \left( \int_0^T \int_{\pa^*E_{j_\ell,i}(t)}\Omega \, \abs{\phi(x,t)}^2\, d\Ha^n\,dt \right)^{\sfrac{1}{2}}\,.
\end{align}
In particular, using \eqref{e:mass bound} and \eqref{e:mean curvature bounds}, for any fixed $T > 0$,
\[
\Omega\,h_{\eps_{j_\ell}}(\cdot,\pa\E_{j_\ell}(t)) \cdot d\mu_{E_{j_\ell,i}(t)}\, dt
\]
are (signed) Radon measures in $\R^{n+1} \times \left[0,T\right]$ with uniformly bounded total variation, and we can let $\sigma_i$ denote a subsequential limit as $\ell  \to \infty$. Since $\Ha^n \mres_{\pa^*E_{j_\ell,i}(t)} \leq \| \pa\E_{j_\ell}(t) \|$, and the latter measures converge, as $\ell \to \infty$, to $\|V_t\|$ for every $t \in \R^+$, it is clear from \eqref{uniform measure bound} that $\sigma_i$ is absolutely continuous with respect to the measure $\mu\mres_{\Omega}=\Omega\,d\|V_t\|\,dt$. We will let $u_i$ denote the Radon-Nikod\'ym derivative of $\sigma_i$ with respect to $\mu\mres_{\Omega}$, so that
\begin{equation}\label{who is Vi}
\sigma_{i} = u_i\,\Omega\,d\|V_t\|\,dt
\end{equation}
in the sense of measures. The estimate \eqref{uniform measure bound} also implies that $u_i \in L^2(\mu\mres_{\Omega})$, with 
\begin{equation} \label{L2 bound}
    \|u_i\|_{L^2(\mu\mres_{\Omega})} \leq C\,   \limsup_{\ell \to \infty} \left( \int_0^T \int_{\R^{n+1}} \frac{\abs{\Phi_{\eps_{j_\ell}} \ast \delta (\pa\E_{j_\ell}(t))}^2\, \Omega(y)}{\Phi_{\eps_{j_\ell}} \ast \| \pa\E_{j_\ell}(t)\| + \eps_{j_\ell} \Omega^{-1}} \, dy\right)^{\sfrac{1}{2}}\,.
\end{equation}
Hence, we can now calculate the limit in \eqref{what we are missing} (along the aforementioned subsequence, not relabeled), which gives
\[
\lim_{\ell \to \infty} \int_0^T\int_{\R^{n+1}} (\phi\,\Omega^{-1}) \, \Omega\,h_{\eps_{j_\ell}}(\cdot,\pa\E_{j_\ell}(t)) \cdot d\mu_{E_{j_\ell,i}(t)}\, dt = \int_0^T\int_{\R^{n+1}} \phi\,u_i\,d\|V_t\|\,dt\,.
\]

We have then concluded the existence of $u_i \in L^2 (\mu \mres_{\Omega})$ such that
\begin{equation} \label{evolution rough}
 \int_{E_i (T)}  \phi(x)\, dx - \int_{E_{i,0}} \phi(x)\, dx = \int_0^T\int_{\R^{n+1}} \phi(x)\, u_i(x,t) \, d\|V_t\|(x)\,dt
 \end{equation}
 for all $\phi \in C_c(\R^{n+1})$, that is we have obtained \eqref{e:first version of BV} when $\phi$ does not depend on $t$.

\medskip

\textit{Step four: the case of time-dependent $\phi$.} We now consider the general case of a test function $\phi \in C^1_c(\R^{n+1} \times \left[ 0,T \right])$. The proof is analogous, with a few minor modifications which take the dependence on $t$ into account. First, formula \eqref{discretization1} becomes
\begin{equation}\label{discretization1_time}
    \int_{E(T)} \phi(x,k_T \Delta t) \, dx - \int_{E_0}  \phi(x,0) \, dx = \sum_{k=0}^{k_T-1} \left\lbrace\int_{E_{k+1}} \phi(x,(k+1)\Delta t) \, dx - \int_{E_{k}}  \phi(x,k\Delta t) \, dx\right\rbrace\,,
\end{equation}
whereas formula \eqref{calculation1} becomes
\begin{equation} \label{calculation1_time}
 \int_{E_{k+1}} \phi(x,(k+1)\Delta t) \, dx - \int_{E_{k}}  \phi(x,k\Delta t) \, dx = D_1 + D_2 + D_3\,,
\end{equation}
where
\begin{align} \label{first summand_time}
  D_1 &:= \int_{E^*_{k+1}} \phi(x,k\Delta t) \, dx - \int_{E_{k}}  \phi(x,k\Delta t) \, dx\,, \\ \label{second summand_time}
  D_2 &:= \int_{E_{k+1}} \phi(x,k\Delta t) \, dx - \int_{E^*_{k+1}}  \phi(x,k\Delta t) \, dx\,,\\ \label{third summand_time}
  D_3 &:= \int_{E_{k+1}} \phi (x,(k+1)\Delta t) \, dx - \int_{E_{k+1}} \phi (x, k\Delta t) \, dx\,.
 \end{align}
 By the Fundamental theorem of Calculus and Fubini's theorem we can immediately calculate
 \begin{equation} \label{third summand explicit}
     D_3 = \int_{k\Delta t}^{(k+1)\Delta t} \int_{E_{k+1}} \frac{\partial\phi}{\partial t} (x,t) \, dxdt  \,.
 \end{equation}
The summands $D_1$ and $D_2$ are, instead, treated as in the time-independent case, with $\phi$ replaced by $\phi(\cdot,k\Delta t)$. The identity \eqref{ready for sum} then becomes 
\begin{align}
& \int_{E_{k+1}} \phi(x,(k+1)\Delta t) \, dx - \int_{E_{k}} \phi(x,k\Delta t) \, dx = \int_{k\Delta t}^{(k+1)\Delta t} \int_{E_{k+1}} \frac{\partial\phi}{\partial t} (x,t) \, dxdt \nonumber \\ \label{really ready for sum_time}
& \qquad \qquad \qquad +\Delta t \, \left(  \int_{\pa^*E_{k+1}} \phi(x,k\Delta t)\, \nu_{E_{k+1}} \cdot h_{\eps}(x,\pa\E_{k+1}) \, d\Ha^n(x) + {\rm Err}_{k+1}  \right)\,.
\end{align}
Introducing back the subscripts $_{j_\ell}$ and $_i$, we can then write
\begin{align}
   & \Delta t \int_{\pa^*E_{k+1}} \phi(x,k\Delta t)\, \nu_{E_{k+1}} \cdot h_{\eps}(x,\pa\E_{k+1}) \, d\Ha^n(x) \nonumber \\ \label{time error}
&    \qquad \qquad =\int_{k\Delta t_{j_\ell}}^{(k+1)\Delta t_{j_\ell}} \int_{\pa^*E_{j_\ell,i}(t)} \phi(x,t) \, \nu_{E_{j_\ell,i}(t)} \cdot h_{\eps_{j_\ell}}(x,\pa\E_{j_\ell}(t)) \, d\Ha^n(x)\,dt + \widetilde{{\rm Err}}_{k+1}\,,
\end{align}
where
\begin{equation} \label{time error estimate}
    \abs{\widetilde{{\rm Err}}_{k+1}} \leq \Delta t_{j_\ell}\, \|\partial_t\phi\|_{C^0} \, \left(\max_{\spt \,\phi} \Omega^{-1}\right) \, \int_{k\Delta t_{j_\ell}}^{(k+1)\Delta t_{j_\ell}} \int_{\pa^*E_{j_\ell,i}(t)} \Omega\,\abs{h_{\eps_{j_\ell}}(x,\pa\E_{j_\ell}(t))} \,d\Ha^n(x)dt\,.
\end{equation}
Summing over $k=0,\ldots,k_T-1$, we can then replace \eqref{discretization final} with 
\begin{align} 
    &\int_{E_{j_\ell,i}(T)} \phi(x,k_T\Delta t_{j_\ell}) \, dx - \int_{E_{0,i}} \phi(x,0) \, dx = \int_0^{k_T\Delta t_{j_\ell}}\int_{E_{j_\ell,i}(t)} \frac{\partial\phi}{\partial t}(x,t) \, dx\,dt \nonumber \\ \label{discretization final_time}
   &\qquad \qquad + \int_0^{k_T\Delta t_{j_\ell}}   \int_{\R^{n+1}} \phi(x,t)\,  h_{\eps_{j_\ell}}(x,\pa \E_{j_\ell}(t)) \cdot d\mu_{E_{j_\ell,i}(t)}(x)  \, dt + \widetilde{{\rm Err}}_{j_\ell}\,,
\end{align}
where $\widetilde{{\rm Err}}_{j_\ell}$ contains, with respect to ${\rm Err}_{j_\ell}$, an additional error term which can be estimated by 
\begin{align} 
    \abs{\widetilde{{\rm Err}}_{j_\ell} - {\rm Err}_{j_\ell}} &\leq C\,\eps_{j_\ell}^{c_2-2} \, \|\partial_t\phi\|_{C^0} \, \left( \max_{\spt \,\phi} \Omega^{-1} \right)\, M_{j_\ell} \, (T+1), \label{final time error estimate}
\end{align}
where we have used \eqref{e:h in L infty}. In particular, when $\ell \to \infty$ also $\widetilde{{\rm Err}}_{j_\ell} \to 0$. Next, also in this case we can replace $k_T \Delta t_{j_\ell}$ with $T$ in the integrals on both sides of \eqref{discretization final_time}, paying an additional error term which we estimate by
\[
\left| \int_T^{k_T\Delta t_{j_\ell}} \int_{E_{j_\ell,i}(t)} \frac{\partial\phi}{\partial t} \,dx\,dt \right| \leq C\,\eps_{j_\ell}^{c_2} \, \|\partial_t\phi\|_{C^0} \, \Leb^{n+1}((\spt\,\phi)_1)\,,
\]
and similarly for others, and they all vanish as $\ell\to\infty$.
We can then finally conclude that 
\begin{equation} \label{evolution with time}
\begin{split}
    \int_{E_i (T)} \phi(x,T) \, dx - \int_{E_{i,0}} \phi(x,0) \, dx = & \int_0^T \int_{E_i(t)} \frac{\partial\phi}{\partial t}(x,t) \, dx\,dt \\ &+ \int_0^T \int_{\R^{n+1}} \phi(x,t)\,u_i(x,t) \, d\|V_t\|(x)\,dt\,,
\end{split}
\end{equation}
where $u_i \in L^2(\mu\mres_\Omega)$ is defined by \eqref{who is Vi}.

\medskip

\textit{Step five: support of $u_i$.} To conclude the proof of Proposition \ref{p:existence of a measure derivative}, it only remains to show that $\spt(u_i) \subset \partial S(i)$, where
\[
S(i) = \left\lbrace (x,t)\in \R^{n+1} \times \R^+ \, \colon \, x \in E_i(t)\right\rbrace\,.
\]
Notice that $S(i)$ is open (relatively to $\R^{n+1}\times \R^+$) by \cite[Theorem 3.5(5)]{KimTone}. Suppose now that $(\hat x, \hat t)\notin \pa S(i)$. Then, either $(\hat x, \hat t) \in S(i)$ or $(\hat x, \hat t) \notin {\rm clos}(S(i))$. In the first case, there is a neighborhood $(\hat x, \hat t) \in U \subset \R^{n+1}\times \R^+$ such that $x \in E_i(t)$ for all $(x,t) \in U$. In particular, $(\hat x, \hat t) \notin \spt\mu \cup (\spt\|V_0\| \times \{0\})$, and since $\sigma_i$ is absolutely continuous with respect to $\mu$, also $(\hat x, \hat t) \notin \spt (\sigma_i) = \spt(u_i)$.

\noindent
In the second case, we have that, necessarily, $\hat x \notin {\rm clos}(E_i(\hat t))$. Of course, if $\hat x \in E_{i'}(\hat t)$ for some $i' \neq i$ then $(\hat x,\hat t) \in S(i')$, and the above proof implies again that $(\hat x,\hat t) \notin \spt\mu \cup (\spt\|V_0\| \times \{0\})$, which then gives $(\hat x,\hat t) \notin \spt(u_i)$. Thus, we can assume that $\hat x \in \Gamma(\hat t) \setminus {\rm clos}(E_i(\hat t))$. Since $(\hat x, \hat t) \notin {\rm clos}(S(i))$, there is an open neighborhood $(\hat x,\hat t) \in U \subset \R^{n+1} \times \R^+$ such that $U \cap {\rm clos}(S (i)) = \emptyset$, and we can choose $U$ of the form $U=U_r(\hat x) \times \left[0,r^2\right)$ if $\hat t=0$ or $U = U_r(\hat x) \times \left( \hat t - r^2, \hat t + r^2 \right)$ if $\hat t > 0$. In both cases, if $(x,t) \in U$ then $x \notin E_i(t)$. For any function $\phi \in C^1_c (U)$, the identity \eqref{evolution with time} then gives
\begin{equation} \label{the key identity}
    0 = \iint_U \phi(x,t) \, u_i(x,t) \, d\|V_t\|(x)\,dt = \iint_U \phi(x,t) \, \Omega^{-1}(x)\, d\sigma_i(x,t)\,.
\end{equation}
Since $\phi$ is arbitrary, we deduce then that $\abs{\sigma_i}(U)=0$, and thus that $(\hat x, \hat t) \notin \spt(\sigma_i)=\spt(u_i)$. The proof of the theorem is now complete.
\end{proof}

\subsection{Boundaries move by their mean curvature} \label{ssec:derivative characterization}

In this subsection we complete the proof of Theorem \ref{t:velocity of boundary}, by achieving the representation for the measure-theoretic time derivative $\partial_t \chi_i$ as specified in Remark \ref{rmk:what is BV}. The crucial step is to prove the following lemma, for which the $L^2$ flow property of $\{V_t\}$ plays a pivotal role. We shall denote $\mathbf{p}$ and $\mathbf{q}$ the projections of $\R^{n+1}\times\R$ onto its factors, so that $\mathbf{p}(x,t)=x$ and $\mathbf{q}(x,t)=t$.
\begin{lemma} \label{l:key technical}
For every $i \in \{1,\ldots,N\}$, there is a set $G_i \subset \pa^*S(i)$ with $\Ha^{n+1}(\pa^*S(i)\setminus G_i)=0$ such that the following holds. For every $(x,t) \in G_i$:
\begin{itemize}
    \item[(1)] the tangent $T_{(x,t)}\mu$ exists, and $T_{(x,t)}\mu=T_{(x,t)}(\pa^*S(i))$;
    \item[(2)] \eqref{tangent vector} holds;
    \item[(3)] $x \in \pa^*E_i(t)$, and $T_x\|V_t\| = T_x(\pa^*E_i(t))$;
    \item[(4)] $\mathbf{p}(\nu_{S(i)}(x,t)) \ne 0$, and $\nu_{E_i(t)}(x)=\abs{\mathbf{p}(\nu_{S(i)}(x,t))}^{-1}\,\mathbf{p}(\nu_{S(i)}(x,t))$;
    \item[(5)] $T_x(\pa^*E_i(t)) \times \{0\}$ is a linear subspace of $T_{(x,t)}\mu$.
\end{itemize}
\end{lemma}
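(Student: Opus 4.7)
The plan is to construct $G_i$ as a progressively refined subset of $\pa^* S(i)$ of full $\Ha^{n+1}$-measure, leveraging the BV-in-space-time structure from Corollary \ref{cor:chi is BV}, the tangent-vector information of Corollary \ref{cor:tangent vector}, and Vol'pert-type slicing. By Corollary \ref{cor:chi is BV}, $S(i)$ has locally finite perimeter in $\R^{n+1}\times\R^+$, so De Giorgi's structure theorem gives $|\nabla'\chi_i| = \Ha^{n+1}\mres_{\pa^* S(i)}$ together with the existence of the approximate tangent $(n+1)$-plane $T_{(x,t)}(\pa^* S(i))$ at $\Ha^{n+1}$-a.e.\ point of $\pa^*S(i)$. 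From \eqref{e:first version derivative} I read off $|\nabla'\chi_i|\ll\mu$: the spatial component $\nabla\chi_{E_i(t)}\,dt$ is absolutely continuous with respect to $d\|V_t\|\,dt$ by Theorem \ref{main3}(4), and the time component $u_i\,d\|V_t\|\,dt$ trivially so. Denoting $\theta:=d|\nabla'\chi_i|/d\mu$, the set
\[
G_i^{(1)} := \left\{(x,t) \in \pa^* S(i) : \theta(x,t) \in (0,\infty),\ T_{(x,t)}(\pa^*S(i)) \text{ exists}\right\}
\]
has full $\Ha^{n+1}$-measure in $\pa^* S(i)$, and a standard measure-theoretic fact---if $\mu_1\ll\mu_2$ with positive finite Radon--Nikodym derivative at $x$ and $\mu_1$ admits an approximate tangent $k$-plane $P$ at $x$, then so does $\mu_2$, with the same $P$---then yields claim~(1) on $G_i^{(1)}$.

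Claim~(2) follows by intersecting $G_i^{(1)}$ with the $\mu$-full-measure set on which Corollary \ref{cor:tangent vector} applies; since $\theta>0$ on $G_i^{(1)}$, any $\mu$-null set is also $\Ha^{n+1}\mres_{\pa^* S(i)}$-null, so no mass of $\pa^* S(i)$ is lost. Writing $\nu_{S(i)}=(\nu^x,\nu^t)\in\R^{n+1}\times\R$, the orthogonality $(h,1)\perp\nu_{S(i)}$ gives $\nu^t=-h\cdot\nu^x$; since the last component of $(h,1)$ equals $1\ne 0$, the tangent space cannot be purely spatial, forcing $\nu_{S(i)}\ne(0,\pm 1)$ and hence $\nu^x=\mathbf{p}(\nu_{S(i)})\ne 0$. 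This is the first half of~(4).

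To identify $\nu_{E_i(t)}$ with $\nu^x/|\nu^x|$ and confirm $x\in\pa^* E_i(t)$, I compare the two representations of the $\R^{n+1}$-valued part of $\nabla'\chi_i$: on one side the projection $-\nu^x\,\Ha^{n+1}\mres_{\pa^* S(i)}$, on the other the slicing expression $\nabla\chi_{E_i(t)}\otimes dt=-\nu_{E_i(t)}\Ha^n\mres_{\pa^*E_i(t)}\otimes dt$ from \eqref{e:first version derivative}. The coarea formula applied to the time projection on the rectifiable $(n+1)$-set $\pa^* S(i)$ yields $|\nu^x|\,\Ha^{n+1}\mres_{\pa^* S(i)}=\Ha^n\mres_{(\pa^* S(i))_t}\otimes dt$ (note $|\nabla^{\pa^* S(i)}\pi_t|=|\nu^x|$), and Vol'pert's slicing theorem identifies $(\pa^* S(i))_t$ with $\pa^* E_i(t)$ up to $\Ha^n$-null sets for a.e.\ $t$. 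Matching the two expressions yields $\nu_{E_i(t)}(x)=\nu^x/|\nu^x|$ and $x\in\pa^* E_i(t)$, completing~(4) and the first half of~(3). The $T_x\|V_t\|$ statement in~(3) follows by rerunning the Radon--Nikodym/tangent-transfer argument from the first paragraph in the purely spatial setting: for a.e.\ $t$, $V_t$ is rectifiable by Theorem \ref{main2}(3)--(4) and $\|V_t\|\ge\Ha^n\mres_{\pa^*E_i(t)}$ by Theorem \ref{main3}(4), so both measures share the approximate tangent plane $T_x(\pa^*E_i(t))$ at $\Ha^n$-a.e.\ $x\in\pa^* E_i(t)$; a Fubini argument in $t$ then upgrades this to full $\Ha^{n+1}$-measure, producing the final $G_i$.

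Claim~(5) is then immediate by linear algebra: any $v\in T_x(\pa^* E_i(t))$ is perpendicular to $\nu_{E_i(t)}$, hence to $\nu^x$ by~(4), so $(v,0)\cdot(\nu^x,\nu^t)=0$, placing $(v,0)$ in $T_{(x,t)}(\pa^*S(i))=T_{(x,t)}\mu$. The main technical obstacle is the third paragraph: the careful deployment of the coarea formula on the general rectifiable set $\pa^* S(i)$ and the Vol'pert-type identification of its time-slices with $\pa^* E_i(t)$, together with the verification that the Lebesgue-null bad set of exceptional times contributes no $\Ha^{n+1}$-mass when lifted to space-time. By contrast, (1), (2), (4), and (5) are essentially direct consequences of De Giorgi's structure theorem, Corollary \ref{cor:tangent vector}, and elementary orthogonality.
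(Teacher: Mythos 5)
The main structural idea here—reduce via the BV-in-space-time structure, transfer tangent planes between $\Ha^{n+1}\mres_{\pa^*S(i)}$ and $\mu$, then slice by time—matches the paper's proof, and your coarea/Vol'pert argument for (3)–(4) and the linear algebra for (5) are sound and essentially identical to what the paper does. However, the first paragraph contains a genuine gap: the ``standard measure-theoretic fact'' you invoke to deduce (1) is \emph{false as stated}. Knowing that $d|\nabla'\chi_i|/d\mu\in(0,\infty)$ at a single point $(x,t)$ and that $|\nabla'\chi_i|$ has an approximate tangent $(n+1)$-plane there does \emph{not} imply that $\mu$ has one. Consider $\mu_1=\Ha^1\mres_M$ and $\mu_2=\Ha^1\mres_M+\Ha^1\mres_N$ on $\R^2$, where $M$ and $N$ are two distinct lines through the origin: then $\mu_1\ll\mu_2$, $d\mu_1/d\mu_2(0)=1/2\in(0,\infty)$, $\mu_1$ has tangent plane $M$ at $0$, but $\mu_2$ does not have an approximate tangent plane at $0$ (its tangent measure is a cross). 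The issue is precisely that $\mu$ may carry a part singular with respect to $\Ha^{n+1}\mres_{\pa^*S(i)}$ that fails to have vanishing $(n+1)$-density at points of $\pa^*S(i)$.

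The paper closes this gap by an ingredient your proposal never uses: the density upper bound $\Theta^{*(n+1)}(\mu,(x,t))<\infty$ (a consequence of Huisken's monotonicity formula via Lemma \ref{l:upper}), which in turn yields $\mu\ll\Ha^{n+1}$. With that, the decomposition $\mu=g_i\,\Ha^{n+1}\mres_{\pa^*S(i)}+\mu\mres_{\Sigma_i}$ (with $\Ha^{n+1}(\pa^*S(i)\cap\Sigma_i)=0$) forces $\mu\mres_{\pa^*S(i)}=g_i\,\Ha^{n+1}\mres_{\pa^*S(i)}$, and the density comparison theorem (Simon, Theorem 3.5) shows that the residual measure $\mu\mres_{(\pa^*S(i))^c}$ has zero $(n+1)$-density at $\Ha^{n+1}$-a.e.~point of $\pa^*S(i)$; only then do the tangent planes of $\mu$ and $\Ha^{n+1}\mres_{\pa^*S(i)}$ coincide. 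Without establishing $\mu\ll\Ha^{n+1}$ (or an equivalent density upper bound) your argument for (1) is incomplete, and since (2)–(5) all flow through (1), the whole proof rests on this missing input.
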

The proof of Lemma \ref{l:key technical} can be deduced with relatively little effort from arguments already contained in \cite{Mugnai}, but we include it for the reader's convenience. Before proceeding, we will need some consequences of the celebrated monotonicity formula by Huisken for MCF. Since said consequences will be needed also later on in the paper, we record them here. First, let us set some notation. For $(y,s) \in \R^{n+1} \times \R^+$, we define the $n$-dimensional backwards heat kernel $\rho_{(y,s)}$ by
\begin{equation} \label{Huisken heat}
\rho_{(y,s)}(x,t) := \frac{1}{(4\pi (s-t))^{n/2}}\, \exp\left( - \frac{\abs{x-y}^2}{4(s-t)} \right) \qquad \mbox{for all $t < s$ and $x \in \R^{n+1}$}\,,
\end{equation}
as well as the truncated kernel
\begin{equation}\label{Huisken truncated heat}
    \hat \rho^r_{(y,s)} (x,t) := \eta \left( \frac{x-y}{r} \right) \, \rho_{(y,s)} (x,t)\,,
\end{equation}
where $r > 0$ and $\eta \in C^\infty_c (U_2;\R^+)$ is a radially symmetric function such that $\eta \equiv 1$ on $B_1$, $0 \leq \eta \leq 1$, $\abs{\nabla\eta}\leq 2$ and $\|\nabla^2\eta\| \leq 4$. The following is \cite[Lemma 10.3]{KimTone}, and it is a variant of Huisken's monotonicity formula for MCF.
\begin{lemma} \label{l:monotonicity}
There exists $c(n) > 0$ with the following property. For every $0 \leq t_1 < t_2 < s < \infty$, $y \in \R^{n+1}$ and $r > 0$ it holds
\begin{equation}\label{e:monotonicity}
    \left. \|V_t\| (\hat\rho^r_{(y,s)} (\cdot, t)) \right|_{t=t_1}^{t_2} \leq c(n) \, r^{-2} \, (t_2 - t_1) \, \sup_{t \in \left[ t_1,t_2\right]} r^{-n} \, \|V_t\| (B_{2r} (y))\,. 
\end{equation}
\end{lemma}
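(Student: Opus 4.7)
The plan is to apply Brakke's inequality \eqref{brakineq} to $\{V_t\}_{t \ge 0}$ with the test function $\phi(x,t):=\hat\rho^r_{(y,s)}(x,t) = \eta_r(x)\,\rho_{(y,s)}(x,t)$, where $\eta_r(x):=\eta((x-y)/r)$. This choice is admissible on $[t_1,t_2]$ since $t_2<s$ makes $\rho_{(y,s)}$ smooth on this slab and $\eta_r$ supplies compact support in $B_{2r}(y)$. Splitting $\nabla\phi = \eta_r\,\nabla\rho_{(y,s)} + \rho_{(y,s)}\,\nabla\eta_r$ (and noting that $\partial_t\eta_r = 0$), the right-hand side of \eqref{brakineq} decomposes into a \emph{main} integral $\int \eta_r\,(-\rho|h|^2 + \nabla\rho\cdot h + \partial_t\rho)\,d\|V_t\|$ and an \emph{error} integral $\int \rho\,\nabla\eta_r\cdot h\,d\|V_t\|$, with $\rho := \rho_{(y,s)}$.

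For the main integral I would invoke the classical Huisken cancellation. A direct computation from $\nabla\rho = -\frac{x-y}{2(s-t)}\rho$ yields the pointwise identity
\[
\partial_t\rho + \mathrm{tr}_S(D^2\rho) + \frac{|\nabla^\perp_S\rho|^2}{\rho} = 0 \qquad \text{for every $n$-plane $S \subset \R^{n+1}$.}
\]
Combining this with the first-variation formula $\int \eta_r\,\nabla\rho\cdot h\,d\|V_t\| = -\int \mathrm{div}_S(\eta_r\,\nabla\rho)\,dV_t(x,S)$ and completing the square in $h$, the main integral is rewritten as
\[
-\int_{t_1}^{t_2}\!\!\int \eta_r\,\rho\,\left|h + \frac{(x-y)^\perp}{2(s-t)}\right|^2 d\|V_t\|\,dt \;+\; R,
\]
where $R$ collects all residual terms that carry a factor of $\nabla\eta_r$ (arising from $\mathrm{div}_S(\eta_r\nabla\rho) = \eta_r\,\mathrm{tr}_S D^2\rho + \nabla^S\eta_r\cdot\nabla\rho$). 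I discard the manifestly non-positive square.

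It remains to estimate $R$ together with the error integral $\int \rho\,\nabla\eta_r\cdot h\,d\|V_t\|$. To remove the $h$ in the latter, I apply the first-variation formula once more, this time to $g = \rho\,\nabla\eta_r$, obtaining $\int \rho\,\nabla\eta_r\cdot h\,d\|V_t\| = -\int(\rho\,\mathrm{tr}_S D^2\eta_r + \nabla^S\rho\cdot\nabla\eta_r)\,dV_t(x,S)$. After this second conversion every remaining integrand is a pointwise product of $\rho$ (or $\nabla\rho$) with $\nabla\eta_r$ or $\nabla^2\eta_r$, and is therefore supported on the annulus $A_r(y) := B_{2r}(y)\setminus B_r(y)$ where $|\nabla\eta_r|\le 2/r$ and $\|\nabla^2\eta_r\|\le 4/r^2$. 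The elementary Gaussian bound $(s-t)^{-a}\exp(-r^2/(4(s-t))) \le C(a)\,r^{-2a}$, valid for every $t<s$, gives $\rho\le C(n)r^{-n}$ and $|\nabla\rho|\le C(n)r^{-n-1}$ pointwise on $A_r(y)$. Each residual integrand is thus bounded by $C(n)\,r^{-n-2}$, so
\[
\int_{t_1}^{t_2}\!\!\int_{A_r(y)} |\text{integrand}|\,d\|V_t\|\,dt \le c(n)\,r^{-n-2}\int_{t_1}^{t_2}\|V_t\|(B_{2r}(y))\,dt \le c(n)\,r^{-2}(t_2-t_1)\sup_{t\in[t_1,t_2]} r^{-n}\|V_t\|(B_{2r}(y)),
\]
which is the asserted estimate.

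The main obstacle, though standard in the Brakke-flow setting, is that \eqref{brakineq} provides only one-sided control, so Huisken's pointwise cancellation cannot be invoked directly on the integrand; every $h$-containing contribution must first be passed through the first-variation formula so that it matches the pointwise identity for $\rho_{(y,s)}$, after which the sign-definite square may be discarded.
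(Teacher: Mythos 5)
Your outline is, in substance, the standard truncated Huisken computation; note that the paper does not prove this lemma at all but imports it verbatim from \cite[Lemma 10.3]{KimTone}, and that proof follows the same route you sketch: test \eqref{brakineq} with $\hat\rho^r_{(y,s)}$, realize the Gaussian cancellation through the first-variation identity, discard the non-positive square, and bound the cutoff terms on the annulus via $\tau^{-a}e^{-r^2/(4\tau)}\le C(a)r^{-2a}$. Your device of passing the error term $\int \rho\,\nabla\eta_r\cdot h\,d\|V_t\|$ through the first variation as well (so that no $|\nabla\eta_r|^2/\eta_r$ ever appears) is exactly right, and the final bookkeeping does give \eqref{e:monotonicity}.

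There is, however, one step that does not go through as written. You claim that, after substituting $\int\eta_r\nabla\rho\cdot h\,d\|V_t\|=-\int\mathrm{div}_S(\eta_r\nabla\rho)\,dV_t(x,S)$ and ``completing the square in $h$'', the main integral becomes $-\int\eta_r\rho\,\bigl|h+\tfrac{(x-y)^\perp}{2(s-t)}\bigr|^2$ plus terms carrying only $\nabla\eta_r$. The cross term produced by that square is $2\,h\cdot\nabla^{\perp}\rho$ (projection of $\nabla\rho$ onto $(T_x\|V_t\|)^{\perp}$), whereas Brakke's inequality supplies $h\cdot\nabla\rho$; these coincide only if $h(\cdot,V_t)\perp T_x\|V_t\|$. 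Without this, the leftover tangential contribution is, after completing the square in the tangential direction as well, $+\eta_r\,|\nabla^{S}\rho|^2/\rho$, which is \emph{not} supported on the annulus and is of size $(s-t)^{-1}$ near $y$, hence cannot be absorbed into the right-hand side of \eqref{e:monotonicity} uniformly as $s\downarrow t_2$. (If instead you convert the \emph{entire} term $h\cdot\nabla\rho$ through the first variation, the $h$'s are gone but the remaining $\eta_r(\partial_t\rho-\mathrm{tr}_S D^2\rho)=\eta_r\bigl(2\partial_t\rho+|\nabla^{\perp}_S\rho|^2/\rho\bigr)$ is again uncontrolled, so neither extreme works by itself.) The missing ingredient is Brakke's perpendicularity theorem \cite[Chapter 5]{Brakke}: for a.e.~$t$ the varifold $V_t$ is integral with $\delta V_t$ locally bounded and absolutely continuous, so $h(\cdot,V_t)\perp T_x\|V_t\|$ for $\|V_t\|$-a.e.~$x$, and the a.e.-$t$ integrality also identifies the plane $S$ in $dV_t(x,S)$ with $T_x\|V_t\|$, so that the pointwise Gaussian identity and the projection in the square refer to the same plane. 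Since only a.e.~$t$ matters under the time integral in \eqref{brakineq}, invoking these facts (as the paper itself does, e.g.\ in the proof of Theorem \ref{t:Brakke implies L2}) closes your argument; they should be stated explicitly rather than left implicit.
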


As a consequence, one has the following \cite[Lemma 10.4]{KimTone}, which provides a uniform upper bound on mass density ratios.

\begin{lemma} \label{l:upper}
For every $L > 1$ there exists $\Lambda = \Lambda (n,L,\Omega,\|\pa\E_0\| (\Omega)) \in \left( 1, \infty \right)$ such that
\begin{equation} \label{e:upper}
    \sup\left\lbrace r^{-n} \, \|V_t\| (B_r (x)) \, \colon \, r \in \left( 0, 1 \right] \, , x \in B_L (0)\, , t \in [ L^{-1} , L ] \right\rbrace \leq \Lambda \,. 
\end{equation}
\end{lemma}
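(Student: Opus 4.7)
\textbf{Proof plan for Lemma \ref{l:upper}.}

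The plan is to apply the local (cut-off) Huisken monotonicity formula of Lemma \ref{l:monotonicity} with a cutoff radius \emph{independent} of $r$, which is the key trick: choosing the cutoff to be of order one decouples the error term in the monotonicity inequality from the density ratio we are trying to bound. Concretely, fix $x\in B_L(0)$, $t\in[L^{-1},L]$, $r\in(0,1]$, and set $s:=t+r^2$ and cutoff radius $R:=1$. For $z\in B_r(x)$ one has $\eta((z-x)/R)=1$ and
\[
    \rho_{(x,s)}(z,t)=(4\pi r^2)^{-n/2}\exp\bigl(-|z-x|^2/(4r^2)\bigr)\ge (4\pi)^{-n/2}e^{-1/4}\,r^{-n},
\]
so that
\[
    r^{-n}\|V_t\|(B_r(x))\le (4\pi)^{n/2}e^{1/4}\,\|V_t\|\bigl(\hat\rho^{\,1}_{(x,s)}(\cdot,t)\bigr).
\]

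Next, I apply Lemma \ref{l:monotonicity} with $t_1=0$, $t_2=t$, $y=x$, and cutoff radius $R=1$, to obtain
\[
    \|V_t\|\bigl(\hat\rho^{\,1}_{(x,s)}(\cdot,t)\bigr)\le \|V_0\|\bigl(\hat\rho^{\,1}_{(x,s)}(\cdot,0)\bigr)+c(n)\,t\,\sup_{\tau\in[0,t]}\|V_\tau\|\bigl(B_2(x)\bigr).
\]
It remains to estimate the two terms on the right by constants depending only on $n$, $L$, $\Omega$ and $\|V_0\|(\Omega)=\Ha^n\mres_\Omega(\Gamma_0)$. Setting $C_{L,\Omega}:=\sup_{z\in B_{L+2}(0)}\Omega(z)^{-1}$ (finite since $\Omega>0$ and continuous), the pointwise bound $\hat\rho^{\,1}_{(x,s)}(\cdot,0)\le (4\pi s)^{-n/2}\chi_{B_2(x)}$ together with $s\ge L^{-1}$ yields
\[
    \|V_0\|\bigl(\hat\rho^{\,1}_{(x,s)}(\cdot,0)\bigr)\le (4\pi/L)^{-n/2}\cdot C_{L,\Omega}\cdot\|V_0\|(\Omega),
\]
using $\|V_0\|(B_2(x))\le C_{L,\Omega}\int_{B_2(x)}\Omega\,d\|V_0\|\le C_{L,\Omega}\,\|V_0\|(\Omega)$. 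For the error term, Theorem \ref{main1}(3) gives
\[
    \sup_{\tau\in[0,t]}\|V_\tau\|(B_2(x))\le C_{L,\Omega}\sup_{\tau\in[0,L]}\|V_\tau\|(\Omega)\le C_{L,\Omega}\,\|V_0\|(\Omega)\,\exp(c_1^2 L/2).
\]
Combining the three displays produces a constant $\Lambda=\Lambda(n,L,\Omega,\|V_0\|(\Omega))$ as required.

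There is really no serious obstacle here; the only subtle point is the choice of cut-off radius. If one had naively applied Lemma \ref{l:monotonicity} with cutoff equal to $r$ itself, the error term would contain $r^{-n}\|V_\tau\|(B_{2r}(x))$, which is exactly the kind of small-scale density ratio one is trying to control, leading to a circular estimate. Decoupling by using the fixed cutoff $R=1$ (any fixed positive radius would do, up to adjusting constants) turns the error term into an absolute mass on $B_2(x)$, which is tamed uniformly by the weighted mass bound of Theorem \ref{main1}(3). The restriction $r\le 1$ in the statement is precisely what makes $R=1\ge r$ admissible so that the Gaussian lower bound step is valid.
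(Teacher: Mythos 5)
Your argument is correct: the fixed unit cutoff in the truncated monotonicity formula, the Gaussian lower bound on $B_r(x)$ with $s=t+r^2$, and the control of both the initial term and the error term via $\Omega^{-1}$ being bounded on $B_{L+2}(0)$ together with the weighted mass bound of Theorem \ref{main1}(3) give exactly the claimed $\Lambda(n,L,\Omega,\|\pa\E_0\|(\Omega))$. The paper itself gives no proof here but cites \cite[Lemma 10.4]{KimTone}, whose argument is this same monotonicity-based one, so your proposal follows essentially the same route as the source.
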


\begin{proof}[Proof of Lemma \ref{l:key technical}]
Fix $i \in \{1,\ldots,N\}$, and observe that, since $\chi_i \in {\rm BV}_{{\rm loc}}(\R^{n+1} \times \R^+)$ is the characteristic function of $S(i)$, it holds $|\nabla'\chi_i| = \Ha^{n+1} \mres_{\pa^*S(i)}$. We then let $g_i$ be the Radon-Nikod\'ym derivative of $\mu$ with respect to $|\nabla'\chi_i|$, namely
\begin{equation} \label{RN to boundary}
    g_i(x,t) = \frac{d\mu}{d|\nabla'\chi_i|}(x,t):= \lim_{r \to 0^+} \frac{\mu(B^{n+2}_r(x,t))}{|\nabla'\chi_i|(B^{n+2}_r(x,t))}\,,
\end{equation}
where $B^{n+2}_r(x,t)$ is the closed ball with radius $r$ and center $(x,t)$ in $\R^{n+1}\times \R$. By the Lebesgue-Radon-Nikod\'ym theorem, it holds $g_i(x,t) < \infty$ for $\Ha^{n+1}$-a.e. $(x,t) \in \pa^*S(i)$, and
\begin{equation} \label{e:Lebesgue decomposition}
    \mu = g_i \, \Ha^{n+1}\mres_{\pa^*S(i)} + \mu \mres_{\Sigma_i} \,\, \mbox{ for a set $\Sigma_i$ with $\Ha^{n+1}(\pa^*S(i) \cap \Sigma_i)=0$}\,.
\end{equation}
On the other hand, it is not difficult to see that 
\begin{equation} \label{abs cont}
\mu \ll \Ha^{n+1}\,.
\end{equation}
Indeed, first notice that, as an immediate corollary of Lemma \ref{l:upper}, one has
\begin{equation} \label{e:density upper bound for mu}
    \Theta^{* n+1}(\mu, (x,t)) := \limsup_{r \to 0^+} \frac{\mu(B^{n+2}_r(x,t))}{\omega_{n+1}r^{n+1}} < \infty \qquad \mbox{for every $(x,t) \in \R^{n+1}\times\left(0,\infty\right)$}\,.
\end{equation}
Let then $A$ be a set with $\Ha^{n+1}(A)=0$, and set, for $Q \in \mathbb N$:
\[
D_Q:= \left\lbrace (x,t) \in \R^{n+1}\times \left(0,\infty\right) \,\colon\, \Theta^{* n+1}(\mu,(x,t)) \leq Q \right\rbrace\,.
\]
Then, by \cite[Theorem 3.2]{Simon}, one has
\[
\mu (D_Q \cap A)\leq 2^{n+1}\,Q \, \Ha^{n+1}(D_Q \cap A) =0\,,
\]
so that the conclusion $\mu (A) = 0$ follows because 
$\R^{n+1}\times(0,\infty)=\bigcup_{Q \in \mathbb N} D_Q$
due to \eqref{e:density upper bound for mu}.\\
Combining \eqref{e:Lebesgue decomposition} with \eqref{abs cont}, we immediately have that
\begin{equation} \label{e:key fact for char}
    \mu \mres_{\pa^*S(i)} = g_i \, \Ha^{n+1} \mres_{\pa^*S(i)}\,.
\end{equation}
Similarly, by taking the Radon-Nikod\'ym derivative of $|\nabla'\chi_i|$ with respect to $\mu$ we see that
\begin{equation} \label{RN to boundary 2}
\frac{d|\nabla'\chi_i|}{d\mu}(x,t) \quad \mbox{is finite for $\mu$-a.e. $(x,t)$}\,.
\end{equation}
Since $|\nabla'\chi_i| \ll \mu$ by Theorem \ref{main3}(4) and Corollary \ref{cor:chi is BV}, this is also finite for $|\nabla'\chi_i|$-a.e.~and 
\eqref{RN to boundary} shows that this is equal to $1/g(x,t)$, so that, in particular,
$g(x,t)>0$ for $\Ha^{n+1}$-a.e. $(x,t) \in \pa^*S(i)$. Together with \eqref{e:key fact for char}, the latter information implies that $\mu \mres_{\pa^*S(i)}$ is an $(n+1)$-rectifiable measure, so that $T_{(x,t)}(\mu\mres_{\pa^*S(i)})$ exists and is equal to $T_{(x,t)}(\pa^*S(i))$ for $\Ha^{n+1}$-a.e. $(x,t) \in \pa^*S(i)$. On the other hand, by \cite[Theorem 3.5]{Simon}, for $\Ha^{n+1}$-a.e. $(x,t) \in \pa^*S(i)$ it holds
\begin{align} \label{e:classical fact}
    \lim_{r \to 0^+} \frac{\mu(B^{n+2}_r(x,t) \setminus \pa^*S(i) )}{r^{n+1}} =0\,. 
\end{align}
If $(x_0,t_0) \in \pa^*S(i)$ is such that \eqref{e:classical fact} holds and $\phi \in C_c(B^{n+2}_1)$ is arbitrary then
\begin{equation} \label{with or without you}
\begin{split}
   & \limsup_{r \to 0^+} \left| \int_{(\R^{n+1}\times\R^+)\setminus\pa^*S(i)} \phi(r^{-1}(x-x_0,t-t_0)) \, r^{-(n+1)} \, d\mu(x,t) \right| \\
   &\qquad \leq \|\phi\|_{C^0}  \, \lim_{r \to 0^+} \frac{\mu(B^{n+2}_r(x_0,t_0) \setminus \pa^*S(i) )}{r^{n+1}}  = 0\,.
\end{split}
\end{equation}
This shows that
\[
\begin{split}
&\lim_{r\to 0^+} \int \phi(r^{-1}(x-x_0,t-t_0)) \, r^{-(n+1)} \, d\mu(x,t) \\ & \qquad\qquad\qquad = \lim_{r\to 0^+} \int_{\pa^*S(i)} \phi(r^{-1}(x-x_0,t-t_0)) \, r^{-(n+1)} \, d\mu(x,t) \\
& \qquad\qquad\qquad = g_i(x_0,t_0) \, \int_{T_{(x_0,t_0)}(\pa^*S(i))} \phi(x,t) \, d\Ha^{n+1}(x,t)
\end{split}
\]
for every $\phi \in C_c(\R^{n+1}\times\R)$ and for $\Ha^{n+1}$-a.e. $(x_0,t_0) \in \pa^*S(i)$. This completes the proof of (1), whereas (2) is an immediate consequence of (1) and Corollary \ref{cor:tangent vector}, since, on $\pa^* S(i)$, $\mu$ and $\Ha^{n+1}$ have the same null sets by \eqref{e:key fact for char}. 

\smallskip

Next, to prove (3) and (4), we are going to use the coarea formula for sets of finite perimeter, slicing $\pa^*S(i)$ according to hyperplanes $\{\mathbf{q}=t\}$. Set, for $t \in \R^+$, $(\pa^*S(i))_t := \pa^*S(i)\cap\{\mathbf{q}=t\} = \{x \, \colon \, (x,t)\in \pa^*S(i)\}$. Then, using e.g. \cite[Theorem 18.11]{Maggi_book} together with the fact that $S(i) \cap \{\mathbf{q}=t\}=E_i(t)$ we have that 
\begin{equation}\label{e:sofp_slicing}
    \Ha^n ((\pa^*S(i))_t\,\Delta\,\pa^*E_i(t))=0
\end{equation}
for a.e. $t\in\R^+$; moreover, for such $t$
\begin{align} 
      \label{e:slicing_normal}
    &\mathbf{p}(\nu_{S(i)}(x,t)) \neq 0 \,, \\ \label{e:slicing measure}
   & \nu_{E_i(t)}(x) = \frac{\mathbf{p}(\nu_{S(i)}(x,t))}{\abs{\mathbf{p}(\nu_{S(i)}(x,t))}}\,,
\end{align}
for $\Ha^n$-a.e. $x\in(\pa^*S(i))_t$. Let 
\[
Z:=\left\lbrace t\in\R^+\,\colon\, \eqref{e:sofp_slicing}\,\mbox{fails}\right\rbrace\,,
\]
and for every $t\in\R^+$ set
\[
Z_t := \left\lbrace x\in(\pa^*S(i))_t \,\colon\, x \notin \pa^*E_i(t) \, \mbox{or \eqref{e:slicing_normal}-\eqref{e:slicing measure} fail} \right\rbrace\,,
\]
so that $\Leb^1(Z)=0$ and, for every $t\notin Z$, $\Ha^n(Z_t)=0$. Consider then the Borel function $\kappa(x,t):=\chi_{Z_t}(x)$ on $\R^{n+1}\times\R^+$. We have:
\[
\begin{split}
    0 &= \int_0^\infty \chi_{\R^+\setminus Z}(t)\, \Ha^n(Z_t)\,dt = \int_0^\infty  \Ha^n(Z_t) \, dt\\
    &= \int_0^\infty \int_{\pa^*S(i) \cap \{\mathbf{q}=t\}} \kappa(x,t)\, d\Ha^n(x)\,dt \\
    &=\int_{\pa^*S(i)} \kappa(x,t) \, \left|\nabla^{\pa^*S(i)}\mathbf{q}(x,t)\right| d\Ha^{n+1}(x,t)\,,
\end{split}
\]
where in the last identity we have used the coarea formula \cite[Eq. (2.72)]{AFP}, and where $\nabla^{\pa^*S(i)}\mathbf{q}(x,t)$ is the tangential gradient, along $\pa^*S(i)$, of $\mathbf{q}$ at $(x,t)$. Combining now (1) and Corollary \ref{cor:tangent vector}, we see that
\begin{equation} \label{non-trivial time projection}
    \begin{pmatrix}
    h(x,V_t) \\
    1
    \end{pmatrix}
    \in T_{(x,t)}(\pa^*S(i)) \qquad \mbox{at $\Ha^{n+1}$-a.e. $(x,t) \in \pa^*S(i)$}\,,
\end{equation}
which readily implies that $\left|\nabla^{\pa^*S(i)}\mathbf{q}(x,t)\right|>0$ for $\Ha^{n+1}$-a.e. $(x,t) \in \pa^*S(i)$. Hence, it must be $\kappa(x,t)=0$ for $\Ha^{n+1}$-a.e. $(x,t)\in\pa^*S(i)$, thus proving the first part of (3) and (4). At such points, the identity $T_x\|V_t\|=T_x(\pa^*E_i(t))$ is then obtained by repeating the argument in (1) at fixed $t$. 

\smallskip

Finally, (5) is always true at points satisfying (1)-(4): indeed, if $\xi \in T_x(\pa^*E_i(t))$ then
\[
\begin{pmatrix}\xi\\0\end{pmatrix} \cdot \nu_{S(i)}(x,t) = \xi \cdot \mathbf{p}(\nu_{S(i)}(x,t)) = \abs{\mathbf{p}(\nu_{S(i)}(x,t))} \, \xi \cdot \nu_{E_i(t)}(x)=0\,.
\]
This completes the proof.
\end{proof}

\begin{proof}[Proof of Theorem \ref{t:velocity of boundary}]
By virtue of Proposition \ref{p:existence of a measure derivative}, we only need to show the validity of \eqref{e:the BV derivative}. Fix $i\in \{1,\ldots, N\}$.
Using that $S(i)$ is a set of locally finite perimeter in $\R^{n+1}\times\R^+$, for any $0 \le t_1 < t_2 < \infty$ and any $\phi \in C^1_c(\R^{n+1} \times \left( t_1, t_2 \right))$ we have that
\begin{equation} \label{e:characterization1}
\begin{split}
\int_{\R^{n+1}\times\R^+} \frac{\partial\phi}{\partial t}(x,t) \, \chi_i(x,t) \, dxdt &= \int_{S(i)} \frac{\partial\phi}{\partial t}(x,t)  \, dxdt \\ &= \int_{\pa^*S(i)} \phi(x,t) \, \mathbf{q}(\nu_{S(i)}(x,t)) \, d\Ha^{n+1}(x,t)\,.
\end{split}
\end{equation}

Let $G_i$ be the set of Lemma \ref{l:key technical}. For every $(x,t) \in G_i$, we have
\begin{equation} \label{e:the tangent}
    T_{(x,t)}\mu = \left( T_x(\pa^*E_i(t)) \times \{0\} \right) \oplus {\rm span} \begin{pmatrix}
    h(x,V_t) \\
    1
    \end{pmatrix}\,,
\end{equation}
and thus
\begin{equation} \label{e:the normal}
    \nu_{S(i)}(x,t) = \frac{1}{\sqrt{1 + \abs{h(x,V_t)}^2}}
    \begin{pmatrix}
     \nu_{E_i(t)}(x) \\
     - h(x,V_t) \cdot \nu_{E_i(t)}(x)
    \end{pmatrix}\,,
\end{equation}
where we have used that $h(x,V_t) \perp T_x\|V_t\|=T_x(\pa^*E_i(t))$. In particular, since $\Ha^{n+1}(\pa^*S(i)\setminus G_i)=0$, \eqref{e:characterization1} yields
\begin{equation} \label{e:characterization4}
\begin{split}
   & \int_{\R^{n+1}\times\R^+} \frac{\partial\phi}{\partial t}(x,t) \, \chi_i(x,t) \, dxdt \\
   &\qquad\qquad= - \int_{G_i} \phi(x,t) \, h(x,V_t) \cdot \nu_{E_i(t)}(x) \, \frac{1}{\sqrt{1+\abs{h(x,V_t)}^2}} \,d\Ha^{n+1}(x,t)\,.
   \end{split}
\end{equation}
Due to \eqref{e:the normal}, at points $(x,t)\in G_i$ the coarea factor with respect to the slicing via $\mathbf{q}$ is precisely
\[
\abs{\nabla^{\pa^*S(i)}\mathbf{q}(x,t)} = \frac{1}{\sqrt{1+\abs{h(x,V_t)}^2}}\,,
\]
so that the chain of identities in \eqref{e:characterization4} can be continued as
\begin{equation} \label{e:characterization5}
\begin{split}
    &= - \int_{0}^{\infty}\int_{G_i \cap \{\mathbf{q}=t\}} \phi(x,t) \, h(x,V_t) \cdot \nu_{E_i(t)}(x) \, d\Ha^n(x)\,dt \\
    &=- \int_{0}^{\infty}\int_{\pa^*E_i(t)} \phi(x,t) \, h(x,V_t) \cdot \nu_{E_i(t)}(x) \, d\Ha^n(x)\,dt\,,
 \end{split}
\end{equation}
again by the coarea formula. This completes the proof.
\end{proof}

%%%%%%%%%%%%%%%%%%%%%%%%%%%%%%%%%%%%%%%%%%
\section{Ilmanen's density lower bound} \label{s:Ilmanen}

This section contains a proof of the following fact (Theorem \ref{t:ilm_lb} below),
from which we may conclude the proofs of Theorem \ref{main2}(2)-(4): if $\{V_t\}_{t \in \R^+}$ is as in Proposition \ref{p:properties and limit}, then there exists a threshold $\theta_0 > 0$ such that, at ``most'' points (in a sense to be suitably specified) on the support of the evolving varifolds the mass density is not smaller than $\theta_0$. 
Such density lower bound is already stated in the work of Ilmanen \cite[Section 7.1]{Ilm_AC}, and we include here its proof only for the sake of completeness. 

\medskip

Using Lemma \ref{l:monotonicity} and Lemma \ref{l:upper}, we prove next the following lemma, which is a variant of Brakke's clearing out lemma (cf. \cite[Section 6.3]{Brakke}, \cite[Section 6.1]{Ilm_AC} and \cite[Lemma 10.6]{KimTone}). 
Recalling the notation set in \eqref{Huisken heat}-\eqref{Huisken truncated heat}, we define, for $\delta > 0$,
\begin{equation} \label{r delta kernel}
    \hat \rho^{r,\delta}_{y} (x) := \hat\rho^r_{(y,t+\delta\,r^2)}(x,t) = \eta\left(\frac{x-y}{r} \right) \, \frac{1}{(4\pi\delta r^2)^{n/2}} \, \exp\left( - \frac{\abs{x-y}^2}{4\delta r^2}\right)\,.
\end{equation}

\begin{lemma} \label{l:col}
For any $L > 1$ there exists $\delta_0 = \delta_0 (n,L,\Omega,\|\pa\E_0\| (\Omega)) \in \left( 0, 1 \right)$ with the following property. If $(y,t) \in B_L (0) \times [L^{-1},L]$ and $r \in \left( 0, \frac12 \right)$ are such that
\begin{equation} \label{col:hp}
    \|V_t\| (\hat\rho^{r,\delta_0}_y) < \frac12\,,
\end{equation}
then $(y,t+\delta_0 r^2) \notin \spt \mu$.
\end{lemma}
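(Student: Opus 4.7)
The proof combines the almost-monotonicity formula (Lemma \ref{l:monotonicity}) with the uniform mass density upper bound (Lemma \ref{l:upper}) to produce a strict upper bound (strictly below $1$) on Huisken's density functional at the space-time point $(y, s)$ with $s := t + \delta_0 r^2$. An integrality argument then converts that density estimate into the desired vanishing of $\mu$ on a neighborhood of $(y,s)$.

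First, I would apply Lemma \ref{l:monotonicity} on $[t, t_2]$ for each $t_2 \in [t, s)$ with the kernel $\hat\rho^r_{(y,s)}$. Observing that $\hat\rho^r_{(y,s)}(\cdot,t) \equiv \hat\rho^{r,\delta_0}_y$ exactly matches the hypothesis \eqref{col:hp}, and invoking Lemma \ref{l:upper} on a slightly enlarged parabolic region (with some $L' > L$) to produce a uniform constant $\Lambda' = \Lambda'(n, L, \Omega, \|\partial\E_0\|(\Omega))$ with $r^{-n}\|V_{t'}\|(B_{2r}(y)) \leq \Lambda'$ for every $t' \in [t,s]$ and $r \in (0, \frac12]$, I obtain
\begin{equation*}
    \|V_{t_2}\|\bigl(\hat\rho^r_{(y,s)}(\cdot, t_2)\bigr) \;<\; \tfrac12 \,+\, c(n)\,\delta_0\,\Lambda'.
\end{equation*}
Choosing $\delta_0$ small enough that $c(n)\delta_0\Lambda' < \frac14$ keeps this quantity below $\frac34$ uniformly on $[t, s)$.

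Second, I would repeat the same estimate with the kernel $\hat\rho^r_{(y', s')}$ centered at nearby points $(y', s')$. Continuous dependence of $\|V_t\|(\hat\rho^r_{(y', s')}(\cdot, t))$ on $(y', s')$ (via dominated convergence) and of the monotonicity error term on $s'$ yield a small neighborhood $U$ of $(y, s)$ on which
\begin{equation*}
\limsup_{t_2 \uparrow s'} \|V_{t_2}\|\bigl(\hat\rho^r_{(y', s')}(\cdot, t_2)\bigr) \;\leq\; \theta_0 \;<\; 1 \qquad \text{for every } (y', s') \in U.
\end{equation*}
Since the truncation $\eta$ becomes irrelevant as the Gaussian concentrates at $y'$, this bound is identified with Huisken's density $\Theta^H(V, (y', s'))$, which is therefore uniformly $\leq \theta_0 < 1$ on $U$.

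Finally, by Theorem \ref{p:properties and limit}(a), $V_{t'}$ is integral for $\Leb^1$-a.e.\,$t' > 0$. For such $t'$, every $y' \in \spt\|V_{t'}\|$ has classical density $\Theta^n(\|V_{t'}\|, y') \geq 1$, and from Huisken's monotonicity one deduces $\Theta^H(V, (y', s')) \geq 1$ for every $s' > t'$ sufficiently close to $t'$ (cf.\ \cite{Ilm_AC}). This contradicts the preceding uniform bound, forcing $B_\sigma(y) \cap \spt\|V_{t'}\| = \emptyset$ for $\Leb^1$-a.e.\,$t' \in (s-\sigma, s+\sigma)$ for some $\sigma > 0$. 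By Fubini, $\mu(B_\sigma^{n+2}(y,s))=0$, hence $(y, s) \notin \spt\mu$.

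\textbf{Main obstacle.} The genuinely non-trivial point is the last step: converting a strict upper bound on Huisken's density in a space-time neighborhood of $(y, s)$ into the full vanishing of $\mu$ there. This requires a careful interplay between the a.e.\,integrality of the Brakke flow, the classical lower density bound for integral varifolds, and the upper semicontinuity of Huisken's density functional; each ingredient is standard but their combination must be set up so as to yield the stronger space-time conclusion rather than merely $y \notin \spt\|V_s\|$ at the single time $s$.
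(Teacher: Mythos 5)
Your Steps~1--2 correctly reproduce the paper's use of Lemma~\ref{l:monotonicity} and Lemma~\ref{l:upper} to propagate the smallness of the Gaussian density forward from the anchor time $t$, and you correctly noticed that $\hat\rho^r_{(y,s)}(\cdot,t) = \hat\rho^{r,\delta_0}_y$ matches hypothesis~\eqref{col:hp}. However, Step~5 contains a gap that cannot be repaired without changing the argument's structure.

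The problematic claim is that, for an integral time $t'$ with $\Theta^n(\|V_{t'}\|,\tilde y') \ge 1$, one has $\Theta^H(V,(\tilde y',s'')) \ge 1$ for $s'' > t'$ close to $t'$, where $\Theta^H(V,(\tilde y',s'')) := \lim_{t_2 \uparrow s''} \int \rho_{(\tilde y',s'')}(\cdot,t_2)\,d\|V_{t_2}\|$. This confuses two quantities. What \emph{does} converge to $\Theta^n(\|V_{t'}\|,\tilde y')$ as $s''\downarrow t'$ is $\int \rho_{(\tilde y',s'')}(\cdot,t')\,d\|V_{t'}\|$, i.e.\ the monotone quantity evaluated at the \emph{fixed} time $t'$. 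Huisken's monotonicity then says that for $t_2 \in (t',s'')$ the quantity $\int \rho_{(\tilde y',s'')}(\cdot,t_2)\,d\|V_{t_2}\|$ is \emph{smaller} (up to error) than its value at $t'$ — the inequality runs the wrong way. Consequently $\Theta^H(V,(\tilde y',s'')) \le \Theta^n(\|V_{t'}\|,\tilde y') + o(1)$, which provides no contradiction with the upper bound $\Theta^H < \theta_0 < 1$ established in Steps~3--4 (for instance, a shrinking sphere has $\Theta^n = 1$ at each time yet $\Theta^H = 0$ at later off-center space-time points). A second, minor, imprecision is the assertion that every $y' \in \spt\|V_{t'}\|$ has $\Theta^n(\|V_{t'}\|,y') \ge 1$; for an integral varifold with only $L^2$ mean curvature this need not hold pointwise on the support, and one must perturb to an $\Ha^n$-generic point of the underlying rectifiable carrier where the approximate tangent plane exists and the density is a positive integer.

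The paper avoids both issues by anchoring the monotonicity at the hypothesis time $t$ rather than passing through the Huisken density at the target. Concretely: suppose $(y,s) \in \spt\mu$ with $s = t+\delta_0 r^2$, pick $(y_i,t_i)\to(y,s)$ with $V_{t_i}$ integral and (after perturbation) $\Theta^n(\|V_{t_i}\|,y_i)\ge 1$, and apply Lemma~\ref{l:monotonicity} on $[t,t_i]$ with center $(y_i,t_i+\varepsilon)$. Sending $\varepsilon\to 0$ gives $\|V_{t_i}\|(\hat\rho^r_{(y_i,t_i+\varepsilon)}(\cdot,t_i))\to \Theta^n(\|V_{t_i}\|,y_i)\ge 1$ on the left, while on the right the term at time $t$ converges to $\|V_t\|(\hat\rho^r_{(y_i,t_i)}(\cdot,t))$; then $i\to\infty$ produces $1 \le \|V_t\|(\hat\rho^{r,\delta_0}_y) + c(n)\delta_0\Lambda$, contradicting~\eqref{col:hp} once $\delta_0$ is chosen so the error is at most $\tfrac12$. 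In other words, the comparison must be between the density-$\ge 1$ datum at $(y_i,t_i)$ (with kernel centered at $(y_i,t_i+\varepsilon)$ and evaluated at $t_i$) and the same kernel evaluated at the fixed earlier time $t$ — the direction in which monotonicity actually gives a lower bound. With this re-anchoring, the rest of your plan (the uniform $\Lambda$-bound from Lemma~\ref{l:upper} and the choice of $\delta_0$) goes through, and in fact there is no need for the neighborhood bound in Steps~3--4, which only adds complexity.
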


\begin{proof}
For $\delta_0 \in \left(0,1\right)$ to be specified later, suppose towards a contradiction that $(y,t + \delta_0 r^2) \in \spt \mu$. Then, by \cite[Lemma 10.1(2)]{KimTone} there is a sequence $(y_i, t_i)$ converging to $(y,t+\delta_0 r^2)$ such that $V_{t_i} \in {\bf IV}_n (\R^{n+1})$ and $y_i \in \spt \|V_{t_i}\|$. Since $V_{t_i}$ is an integral varifold $\var (M_{t_i}, \theta_{t_i})$, in any arbitrarily small neighborhood of a point $x \in \spt\|V_{t_i}\|$ there is a point $\tilde x \in M_{t_i}$ where $M_{t_i}$ has an approximate tangent plane ${\rm Tan}(M_{t_i}, \tilde x)$ and the density $\Theta_{V_{t_i}}(\tilde x) = \theta_{t_i}(\tilde x)$ is an integer. Thus, we can assume without loss of generality that ${\rm Tan}(M_{t_i}, y_i)$ exists and $\Theta_{V_{t_i}}(y_i) \geq 1$. Assume also without loss of generality that $t_i > t$ for all $i$, and fix $\varepsilon > 0$. Then, apply Lemma \ref{l:monotonicity} with $t_1 = t$, $t_2 = t_i$, $s = t_i + \varepsilon$, and $y = y_i$ to get
\begin{equation} \label{col1}
    \left. \|V_s\| (\hat \rho^r_{(y_i,t_i + \varepsilon)} (\cdot, s)) \right|_{s=t}^{t_i} \leq c(n) \, r^{-2} \, (t_i - t) \, \sup_{s \in \left[ t, t_i \right]} r^{-n} \|V_s\| (B_{2r}(y_i))\,.
\end{equation}
Consider first the term
\begin{eqnarray*}
\|V_{t_i}\| (\hat \rho^r_{(y_i, t_i + \varepsilon)}(\cdot, t_i)) &=& \int_{M_{t_i}} \eta \left(\frac{x-y_i}{r} \right) \, \rho_{(y_i,\varepsilon)} (x,0) \, \theta_{t_i}(x) \, d\Ha^n(x) \\ 
&\geq& \int_{M_{t_i} \cap B_{r} (y_i)} \rho_{(y_i,\varepsilon)} (x,0) \, \theta_{t_i}(x) \, d\Ha^n(x)\,.
\end{eqnarray*}
By changing variable $z = \frac{x-y_i}{\sqrt{\varepsilon}}$ in the integral on the right-hand side, it is not difficult to see that, in the limit as $\varepsilon \to 0^+$, the latter converges to 
\[
\theta_{t_i} (y_i) \, \int_{{\rm Tan}(M_{t_i},y_i)} \rho_{(0,1)} (z,0) \, d\Ha^n (z) = \theta_{t_i} (y_i) \geq 1\,.
\]
We can then conclude
\begin{equation} \label{col2}
    1 \leq \|V_t\| (\hat \rho^r_{(y_i,t_i)}(\cdot, t)) + c(n) \, r^{-2} \, (t_i - t) \, \sup_{s \in \left[ t, t_i \right]} r^{-n} \|V_s\| (B_{2r}(y_i))\,.
\end{equation}
We let then $i \to \infty$: using that $(y_i,t_i) \to (y, t + \delta_0 r^2)$, and recalling \eqref{r delta kernel} we get from \eqref{e:upper}
\begin{eqnarray} 
    1 &\leq& \|V_t\| (\hat\rho^{r,\delta_0}_y) + c(n) \,\delta_0 \, \sup_{s \in \left[ t, t+\delta_0 r^2 \right]} r^{-n} \|V_s\| (B_{2r}(y)) \nonumber \\ \label{col3}
    &\leq& \|V_t\| (\hat\rho^{r,\delta_0}_y) + c(n)\, \delta_0 \, \Lambda (n,L+1,\Omega,\|\pa\E_0\| (\Omega))\,.
\end{eqnarray}
Choosing $\delta_0$ such that the second summand in the right-hand side of \eqref{col3} is $\leq \frac12$ leads to a contradiction with \eqref{col:hp}.
\end{proof}

\begin{remark} \label{rmk:kernel density}
Observe that, since
\begin{equation*} \label{est kernel density}
\| V_t \| (\hat \rho^{r,\delta_0}_y) \leq (4\pi\delta_0)^{-n/2} \, r^{-n} \, \|V_t\| (U_{2r} (y))\,,
\end{equation*}
Lemma \ref{l:col} immediately implies the following: if $(y,t) \in B_L (0) \times [L^{-1},L]$ is such that, for some $r \in \left( 0, 1 \right)$,
\begin{equation} \label{col density:hp}
    r^{-n} \, \|V_t\| (U_{r} (y)) < \theta_0 := \frac{(4\pi\delta_0)^{n/2}}{2^{n+1}}\,,
\end{equation}
then $(y,t + 4^{-1} \delta_0 r^2) \notin \spt\mu$.
\end{remark}

\begin{theorem} \label{t:ilm_lb}

For $L > 1$, let $\theta_0 = \theta_0 (n,L+1,\Omega,\|\pa\E_0\|(\Omega))$ be the number defined in \eqref{col density:hp}, and define, for $t \geq 0$, the sets
\begin{eqnarray*}
\mathcal{Z}^0 &:=& \left\lbrace (y,t) \in \spt\mu \cap (B_L (0) \times [L^{-1},L]) \, \colon \, \limsup_{r \to 0^+} r^{-n} \, \|V_t\| (U_r (y)) < \theta_0 \right\rbrace\,,\\
\mathcal{Z}^0_t &:=& \mathcal{Z}^0 \cap (\R^{n+1} \times \{t\})\,.
\end{eqnarray*}
Then, there exists $G \subset \R^+$ with $\Leb^1(G)=0$ such that $\Ha^{n-1+\alpha}(\mathcal Z^0_t) = 0$ for every $\alpha > 0$ for every $t \in \R^+ \setminus G$.
\end{theorem}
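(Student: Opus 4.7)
The strategy combines the space-time rigidity furnished by Remark~\ref{rmk:kernel density} with a Vitali-type covering argument, in the spirit of the sketch in \cite[Section~7.1]{Ilm_AC}.

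\emph{Step 1: Countable decomposition.} For each $k \in \mathbb{N}$, set
\[
F_k := \left\{ (y,t)\in \spt\mu \cap (B_L(0) \times [L^{-1},L]) \,:\, r^{-n}\|V_t\|(U_r(y)) < \theta_0 \text{ for every } r \in (0, 1/k) \right\}.
\]
The strict $\limsup$ condition in the definition of $\mathcal{Z}^0$ forces the density ratio at any $(y,t) \in \mathcal{Z}^0$ to be sub-threshold at all sufficiently small scales, so $\mathcal{Z}^0 \subset \bigcup_{k} F_k$. By countable subadditivity of $\Ha^{n-1+\alpha}$ it then suffices to show, for each fixed $k \in \mathbb{N}$ and $\alpha > 0$, that $\Ha^{n-1+\alpha}(F_{k,t}) = 0$ for $\Leb^1$-a.e.~$t \in [L^{-1},L]$. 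Taking a countable union over $\alpha \in \Q \cap (0,\infty)$ and $k \in \mathbb{N}$ produces the exceptional set $G$.

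\emph{Step 2: Space-time holes.} Fix $k$. By Remark~\ref{rmk:kernel density}, every $(y,t) \in F_k$ satisfies $(y, t+\delta_0 r^2/4) \notin \spt\mu$ for all $r \in (0, 1/k)$. Since $r \mapsto \delta_0 r^2/4$ sweeps out the entire interval $(0, \delta_0/(4k^2))$, one actually has $(y, t+s) \notin \spt\mu$ for every $s \in (0, \delta_0/(4k^2))$: each point of $F_{k,t}$ is ``right-isolated in time'' inside $\spt\mu$. Using the upper density bound of Lemma~\ref{l:upper} together with Huisken's monotonicity Lemma~\ref{l:monotonicity}, the single-point clearing can moreover be upgraded to a quantitative neighborhood statement: there exists $c = c(n) > 0$ such that whenever $r^{-n}\|V_t\|(U_r(y)) < \theta_0$ for some $r \in (0,1/k)$, one has
\[
\mu\bigl(U_{cr}(y) \times (t+\delta_0 r^2/4 - cr^2, t+\delta_0 r^2/4 + cr^2)\bigr) = 0.
\]

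\emph{Step 3: Slice dimension bound.} Fix $\alpha > 0$ and pick $t \in [L^{-1},L]$ in the full-measure set of times at which $h(\cdot, V_t) \in L^2_{\mathrm{loc}}(\|V_t\|)$ and $\eta \mapsto \int_t^{t+\eta^2}\!\int_{B_{L+1}} |h|^2\,d\|V_s\|\,ds$ vanishes as $\eta \to 0^+$ (a full-measure condition by Theorem~\ref{main1}(3)). For any $\eta > 0$, apply the Besicovitch covering theorem to extract from
\[
\{U_r(y)\,:\,y \in F_{k,t},\,r \in (0, \min(\eta, 1/k)),\,r^{-n}\|V_t\|(U_r(y)) < \theta_0\}
\]
a subfamily $\{U_{r_i}(y_i)\}_i$ of controlled overlap covering $F_{k,t}$. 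Brakke's inequality applied on each parabolic cylinder $U_{r_i}(y_i) \times [t, t+\delta_0 r_i^2/4]$, together with the vanishing of $\|V_s\|$ at the top of the cylinder guaranteed by Step~2, converts each ball into a contribution to the space-time $L^2$-integral of $h$. Summing the resulting estimates and interpolating with the factor $\eta^\alpha$ against the trivial covering estimate $\sum_i r_i^n \lesssim \theta_0^{-1}\|V_t\|(B_{L+1})$ produces
\[
\sum_i r_i^{n-1+\alpha} \le C(n,\alpha)\, \eta^{\alpha} \left( \|V_t\|(B_{L+1}) + \int_t^{t+\eta^2}\!\int_{B_{L+1}} |h|^2\,d\|V_s\|\,ds \right).
\]
Letting $\eta \to 0^+$ yields $\Ha^{n-1+\alpha}(F_{k,t}) = 0$, as desired.

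\emph{Main obstacle.} The genuinely delicate step is the third: the naive covering argument only produces $\Ha^n(F_{k,t}) < \infty$, which is insufficient. The improvement by a full dimension requires exploiting the quantitative clearing-out --- the fact that mass must truly be dissipated, and not merely translated, inside the short time interval $[t, t+\delta_0 r_i^2/4]$ --- in combination with the global-in-time $L^2$-integrability of the mean curvature, so that the sum of all such small dissipations remains under control. This is precisely the point at which the Brakke-flow structure, as opposed to the mere existence of the moving partition, enters the argument essentially.
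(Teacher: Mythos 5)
Your Steps 1 and 2 contain a sound beginning --- the countable decomposition and the observation that each point of $F_k$ is ``right-isolated in time'' inside $\spt\mu$ are both correct and in the spirit of the paper's proof. But Step 3 has two genuine gaps that I do not see how to repair along the lines you propose.

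First, the covering estimate $\sum_i r_i^n \lesssim \theta_0^{-1}\|V_t\|(B_{L+1})$ uses the density inequality in the wrong direction. A bound of this type would require a \emph{lower} density bound $\|V_t\|(U_{r_i}(y_i)) \gtrsim \theta_0\, r_i^n$, whereas the defining property of $(y_i,t) \in F_{k}$ is precisely the \emph{upper} bound $\|V_t\|(U_{r_i}(y_i)) < \theta_0\, r_i^n$. From this you only get $\sum_i r_i^n > \theta_0^{-1}\sum_i\|V_t\|(U_{r_i}(y_i))$, which is an unusable lower bound. (A controlled-overlap covering gives $\sum_i r_i^{n+1}\lesssim 1$, one dimension off from what you want.) Second, the Brakke-inequality step is too vague to assess: you know only that the single point $(y_i, t+\delta_0 r_i^2/4)$ has left $\spt\mu$, not that $\|V_s\|$ vanishes on the whole ball $U_{r_i}(y_i)$ at the top of the cylinder, so the intended cancellation in Brakke's inequality is not available. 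And even granting it, the fact that the ball has small density ratio means the mass dissipated is \emph{small}, which does not control the number of balls or $\sum_i r_i^{n-1+\alpha}$. The ``quantitative neighborhood'' clearing in Step 2 is also asserted without derivation and is stronger than what the clearing-out lemma delivers.

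The paper's argument avoids Brakke's inequality and the $L^2$ mean curvature entirely. The key geometric fact (which goes beyond the vertical-line exclusion you note) is a \emph{double paraboloid exclusion}: if $(y,t)\in\mathcal Z^0_{\theta,\sigma}$, then a full truncated two-sided paraboloid $\{|y'-y|^2 \lesssim |t'-t| \lesssim \sigma^2\}$ around $(y,t)$ contains no other point of $\mathcal Z^0_{\theta,\sigma}$. The forward half comes from clearing out starting at nearby $y'$ at time $t$; the backward half comes from the contrapositive (if $(y',t')$ with $t'<t$ were in the set, clearing out from it would eject $(y,t)$ from $\spt\mu$). This paraboloid gap makes each spatial fiber $\{y\}\times\R$ hit $\mathcal Z^0_{\theta,\sigma}$ in at most one point. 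One then covers the \emph{spatial projection} $\mathbf{p}(\mathcal Z^0_{\theta,\sigma})$ by balls of small radius $r_i$ with $\sum_i r_i^{n+1}\lesssim 1$ (a trivial Lebesgue-measure bound, which is the right order), lifts each to a thin parabolic cylinder of height $\sim r_i^2$ covering $\mathcal Z^0_{\theta,\sigma}$, and estimates $\int \Ha^{n-1+\alpha}_\delta(\mathcal Z^0_t)\,dt \lesssim \sum_i r_i^2\cdot r_i^{n-1+\alpha} \leq \delta^\alpha\sum_i r_i^{n+1}\lesssim\delta^\alpha$. Letting $\delta\to 0$ gives $\Ha^{n-1+\alpha}(\mathcal Z^0_t)=0$ for a.e.\ $t$, entirely by Fubini. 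Your approach is missing this paraboloid structure and the shift of perspective to covering the spatial projection.
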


\begin{proof}

For every $\theta < \theta_0$, and for every $\sigma \in \left( 0,1 \right)$, define
\[
\mathcal Z^0_{\theta,\sigma} := \left\lbrace (y,t) \in \spt\mu \cap (B_L (0) \times [L^{-1},L]) \, \colon \, r^{-n} \, \|V_t\| (U_r (y)) < \theta \quad \mbox{for every $r \in \left( 0 , \sigma \right)$} \right\rbrace\,,
\]
so that
\begin{equation} \label{ilm spacchetto}
\mathcal{Z}^0 = \bigcup_{\theta < \theta_0\,, \sigma \in \left( 0, 1 \right)} \mathcal Z^0_{\theta,\sigma}\,.
\end{equation}

Let $(y,t) \in \mathcal Z^0_{\theta,\sigma}$. For any $t' \in \left( t , t + 4^{-2}\delta_0 \sigma^2 \right]$, let $r>0$ be such that $r^2 = 4\, \delta_0^{-1} \, (t'-t)$ (so that necessarily $r \in \left( 0, \sigma/2 \right]$), and let $y' \in B_{\gamma r} (y)$ (with $\gamma \in \left(0,1\right)$ to be specified) be arbitrary. It holds then $(y',t) \in B_{L+1} (0) \times [(L+1)^{-1}, L+1]$. Furthermore, since $U_{r}(y') \subset U_{(1+\gamma)r}(y)$, and since $(1+\gamma)r < 2r < \sigma$, it holds
\begin{equation} \label{shifted point}
   r^{-n}\, \|V_t\|(U_r(y')) < (1+\gamma)^n\,\theta\,.
\end{equation}
Hence, for $\gamma$ small enough (depending on the ratio $\theta_0/\theta$) it holds $r^{-n}\, \|V_t\|(U_r(y'))< \theta_0$, so that Remark \ref{rmk:kernel density} implies that $(y',t+4^{-1}\delta_0 r^2) = (y',t') \notin \spt\mu$. In particular, $(y',t') \notin \mathcal Z^0_{\theta,\sigma}$. Analogously, if $t' \in \left[t-4^{-2}\delta_0\sigma^2 ,t\right)$ and $y' \in B_{\gamma r}(y)$ for $r^2 = 4\delta_0^{-1}(t-t')$ and $\gamma$ small enough, we have that necessarily $(y',t') \notin \mathcal Z^0_{\theta,\sigma}$. For otherwise, it would be $\left( (1+\gamma) r \right)^{-n}\,\|V_{t'}\| (U_{(1+\gamma)r} (y')) < \theta$, and thus
\begin{equation} \label{shifted point backwards}
    r^{-n} \, \|V_{t'}\| (U_r (y)) < (1+\gamma)^n\,\theta < \theta_0\,,
\end{equation}
which would then imply $(y,t'+4^{-1}\delta_0 r^2) = (y,t) \notin \spt\mu$, a contradiction.

We have then concluded the following dichotomy:
\begin{equation} \label{dichotomy}
    \begin{split}
        & \mbox{either $(y,t) \notin \mathcal Z^0_{\theta,\sigma}$}\,, \\
        & \mbox{or $(y',t') \notin \mathcal Z^0_{\theta,\sigma}$ whenever $0<\abs{t-t'}\leq 4^{-2}\delta_0\sigma^2$ and $\abs{y'-y}^2 \leq 4\,\gamma^2\,\delta_0^{-1}\,\abs{t-t'}$}\,.
    \end{split}
\end{equation}
Therefore, if $(y,t) \in \mathcal Z^0_{\theta,\sigma}$ then the truncated double paraboloid 
\begin{equation} \label{paraboloid}
\mathcal P(y,t) := \{\abs{y'-y}^2 \leq 4\,\gamma^2\,\delta_0^{-1} \, \abs{t-t'} \leq 4^{-1}\,\gamma^2\,\sigma^2\}
\end{equation}
intersects $\mathcal Z^0_{\theta,\sigma}$ only in $(y,t)$. Next, setting $2\,\tau := 4^{-1}\,\gamma^2\,\sigma^2$, we consider sets
\[
\mathcal Z^0_{\theta,\sigma,y_0,t_0} := \mathcal Z^0_{\theta,\sigma} \cap \left( B_1 (y_0) \times \left[ t_0 - \tau, t_0 + \tau \right] \right)\,, \qquad (y_0 \in B_L (0)\,, t_0 \in [L^{-1},L ])\,,  
\]
so that a countable union of such sets covers $\mathcal Z^0_{\theta,\sigma}$. Fix any such set, and call it $\mathcal Z'$ for the sake of simplicity: it will suffice to show that, setting $\mathcal Z_t' := \mathcal Z' \cap (\R^{n+1} \times \{t\})$, it holds $\Ha^{n-1+\alpha}(\mathcal Z_t') = 0$ for every $\alpha > 0$ and a.e.~$t \geq 0$. Notice that if $(y,t) \in \mathcal Z'$ then, by the definition of $\tau$, the set $\mathcal Z' \cap \left( \{y\} \times \R \right)$ is contained in $\mathcal P (y,t)$: in particular, for $y \in B_1(y_0)$ the fiber $\{y\}\times\R$ intersects $\mathcal Z'$ in at most one point. Let $\mathbf p$ be the coordinate projection $\mathbf p (x,t) = x$, fix $\delta > 0$, and cover the set $\mathbf p(\mathcal Z') \subset B_1(y_0)$ by countably many open balls $U_{r_i}(y_i)$ so that
\begin{equation} \label{covering}
  r_i \leq \delta \,, \qquad  \sum_i \omega_{n+1}\,r^{n+1}_i \leq 2\,\Leb^{n+1} (B_1 (y_0))\,.
\end{equation}

For every center $y_i$ of the balls in the covering, let $t_i$ be the only point such that $(y_i,t_i) \in \mathcal Z'$, and notice that, as a consequence of the first part of the proof, if $(y,t) \in \mathcal Z'$ with $y \in U_{r_i}(y_i)$ then necessarily $\abs{t-t_i} < 4^{-1}\gamma^{-2}\delta_0 r_i^2$. In other words, for $\delta$ suitably small the cylinders $U_{r_i}(y_i) \times \left( t_i - 4^{-1}\gamma^{-2}\delta_0 r_i^2, t_i + 4^{-1}\gamma^{-2}\delta_0 r_i^2 \right)$ are a covering of $\mathcal Z'$. We can then estimate
\begin{eqnarray*}
    \int_{t_0-\tau}^{t_0+\tau} \Ha^{n-1+\alpha}_{\delta}(\mathcal Z_t') \, dt &\leq& \int_{t_0-\tau}^{t_0+\tau} \sum_{i\,\colon\, \abs{t-t_i} < 4^{-1}\gamma^{-2}\delta_0 r_i^2} \omega_{n-1+\alpha} \, r_{i}^{n-1+\alpha} \, dt \\ 
    &\leq& \sum_i \int_{t_i - 4^{-1}\gamma^{-2}\delta_0 r_i^2}^{t_i + 4^{-1}\gamma^{-2}\delta_0 r_i^2} \omega_{n-1+\alpha} \, r_i^{n-1+\alpha} \, dt \\
    &\leq& C(n,\alpha)\,\gamma^{-2}\, \delta_0\,\delta^\alpha\,,
\end{eqnarray*}
where we have used \eqref{covering}. Letting $\delta \to 0^+$, we find then 
\begin{equation} \label{conclusion ilm}
    \int_{t_0-\tau}^{t_0+\tau} \Ha^{n-1+\alpha} (\mathcal Z_t') \, dt = 0
\end{equation}
by monotone convergence, and hence, by taking countable unions,
\begin{equation} \label{final ilm}
    \int_{0}^\infty \Ha^{n-1+\alpha} ((\mathcal Z^0_{\theta,\sigma})_t) \, dt = 0\,, 
\end{equation}
with the obvious meaning of the symbols, and taking into account that $\mathcal Z_t'$ is empty when $t < L^{-1}$ or $t > L$. The conclusion follows from \eqref{ilm spacchetto}.
\end{proof}

The following is the immediate corollary of Theorem \ref{t:ilm_lb}, which proves
Theorem \ref{main2}(2). 

\begin{corollary} \label{cor:global_lb}
There exists $G \subset \R^+$ with $\Leb^1 (G) = 0$ such that for every $t \in \R^+ \setminus G$:
\begin{equation} \label{few zero density}
    \Ha^{n-1+\alpha}\left(\left\lbrace y \in \R^{n+1} \, \colon \, (y,t) \in \spt\mu \quad \mbox{and} \quad \lim_{r\to 0^+} r^{-n}\, \|V_t\| (U_r (y)) = 0\right\rbrace\right) = 0 \qquad \forall\,\alpha > 0\,.
\end{equation}
In particular, recalling from Theorem \ref{main2}(1) and Theorem \ref{main3}(3) that $\spt\|V_t\| \subset \{x \, \colon \, (x,t) \in \spt\,\mu\} = \Gamma (t)$, the set $\Gamma(t) = \R^{n+1} \setminus \bigcup_{i=1}^N E_i (t) = \bigcup_{i=1}^N \partial E_i (t)$ is $\Ha^n$-equivalent to $\spt \|V_t\|$ for a.e. $t \geq 0$, and in fact 
\begin{equation} \label{final conclusion ilmanen}
\dim_{\Ha}(\Gamma(t) \setminus \spt\|V_t\|) \leq n-1 \qquad \mbox{for a.e. $t \geq 0$}\,.
\end{equation}
\end{corollary}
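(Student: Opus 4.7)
\medskip
\noindent\textbf{Proof proposal (Corollary \ref{cor:global_lb}).} The plan is to derive the three claims essentially as immediate consequences of Theorem \ref{t:ilm_lb}, together with the previously established identification $\spt\|V_t\| \subset (\spt\,\mu)_t = \Gamma(t)$ from Theorem \ref{main2}(1) and Theorem \ref{main3}(3).

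\smallskip

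First I would prove \eqref{few zero density} by exhausting $\R^{n+1} \times (0,\infty)$ by the boxes $B_L(0) \times [L^{-1},L]$ with $L \in \Na$, $L \ge 2$. For each such $L$, Theorem \ref{t:ilm_lb} supplies a threshold $\theta_0(L) > 0$ and a Lebesgue-null exceptional set $G_L \subset \R^+$ such that, for every $t \in \R^+ \setminus G_L$ and every $\alpha > 0$, the set
\[
(\mathcal Z^0(L))_t = \left\{ y \in B_L(0) \,:\, (y,t) \in \spt\,\mu,\ \limsup_{r \to 0^+} r^{-n}\,\|V_t\|(U_r(y)) < \theta_0(L) \right\}
\]
has $\Ha^{n-1+\alpha}$-measure zero, provided $t \in [L^{-1},L]$. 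I then set $G := \bigcup_{L \ge 2} G_L$, which is still $\Leb^1$-null. For any $t \in \R^+ \setminus G$ and any point $y$ in the set appearing in \eqref{few zero density} with $y \in B_L(0)$ for some $L$ with $t \in [L^{-1},L]$, the hypothesis $\lim_{r \to 0^+} r^{-n}\|V_t\|(U_r(y)) = 0$ is certainly stronger than $\limsup_{r \to 0^+} r^{-n}\|V_t\|(U_r(y)) < \theta_0(L)$, so $y \in (\mathcal Z^0(L))_t$. Hence the set in \eqref{few zero density} is contained in $\bigcup_{L} (\mathcal Z^0(L))_t$, which is a countable union of $\Ha^{n-1+\alpha}$-null sets, and is therefore itself $\Ha^{n-1+\alpha}$-null for every $\alpha > 0$.

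\smallskip

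Next I would deduce \eqref{final conclusion ilmanen} by observing that if $y \in \Gamma(t) \setminus \spt\|V_t\|$, then on the one hand $y \in \Gamma(t) = (\spt\,\mu)_t$, and on the other hand there exists some $r_0 > 0$ such that $\|V_t\|(U_{r_0}(y)) = 0$, which forces $\lim_{r \to 0^+} r^{-n}\|V_t\|(U_r(y)) = 0$. Consequently $\Gamma(t) \setminus \spt\|V_t\|$ is a subset of the set appearing on the left-hand side of \eqref{few zero density}, and the $\Ha^{n-1+\alpha}$-measure-zero conclusion for every $\alpha > 0$ is inherited; this is precisely the statement $\dim_\Ha(\Gamma(t) \setminus \spt\|V_t\|) \le n-1$. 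Taking $\alpha = 1$ (or any $\alpha \in (0,1]$) and using the inclusion $\spt\|V_t\| \subset \Gamma(t)$ recalled above, we also obtain $\Ha^n(\Gamma(t) \,\triangle\, \spt\|V_t\|) = 0$ for a.e. $t$, which is the $\Ha^n$-equivalence asserted in the statement.

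\smallskip

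I do not expect a genuine obstacle here: the deduction is essentially bookkeeping around Theorem \ref{t:ilm_lb}. The only mild subtlety is to make sure that the threshold $\theta_0$ and the exceptional null set depend on the compact window $B_L(0) \times [L^{-1},L]$, so that the construction via countable exhaustion yields a single null set $G$ that works simultaneously for every point of $\R^{n+1} \times (0,\infty)$ and every $\alpha > 0$; this is handled cleanly by taking $G = \bigcup_L G_L$ and, for the $\alpha$-dependence, by noting that the statement of Theorem \ref{t:ilm_lb} already holds for every $\alpha > 0$ on the same $G_L$.
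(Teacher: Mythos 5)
Your proof is correct and is essentially the argument the paper has in mind when calling the statement ``the immediate corollary of Theorem~\ref{t:ilm_lb}'': exhaust $\R^{n+1}\times(0,\infty)$ by the compact windows $B_L(0)\times[L^{-1},L]$, take $G$ to be the union of the corresponding exceptional null sets $G_L$, note that the density-zero condition in \eqref{few zero density} is strictly stronger than the threshold condition $\limsup < \theta_0(L)$ defining $\mathcal Z^0(L)$, and conclude via a countable union of $\Ha^{n-1+\alpha}$-null sets. The second part also follows as you describe: $y \notin \spt\|V_t\|$ trivially forces zero density, so $\Gamma(t)\setminus\spt\|V_t\|$ sits inside the set of \eqref{few zero density}, giving $\dim_\Ha \le n-1$, and taking $\alpha=1$ together with $\spt\|V_t\|\subset\Gamma(t)$ yields the $\Ha^n$-equivalence. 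The only (harmless) point you do not mention explicitly is $t=0$: Theorem~\ref{t:ilm_lb} only covers $t\in[L^{-1},L]$ with $L>1$, so the conclusion at $t=0$ is not obtained from it, but $\{0\}$ is a single point and can simply be adjoined to $G$.
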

Since $V_t\in {\bf IV}_n(\mathbb R^{n+1})$ for 
a.e.~$t\geq 0$, there exists a countably $n$-rectifiable
set $\tilde \Gamma(t)$ such that $V_t={\bf var}
(\tilde\Gamma(t),\theta_t)$ and $\theta_t(x)=\Theta^n(\|V_t\|,x)$. By definition,
we have $\mathcal H^n(\tilde\Gamma(t)\setminus{\rm spt}\|V_t\| )=0$ and by \cite[Theorem 3.5]{Simon}, $\Theta^{n}(\|V_t\|,x)=0$
for $\mathcal H^n$-a.e.~$x\in {\rm spt}\|V_t\|\setminus\tilde \Gamma(t)$.
The latter claim shows ${\rm spt}\|V_t\|\setminus\tilde \Gamma(t)$ is included in the set appearing in \eqref{few zero density}, and we can conclude that $\tilde\Gamma(t)$ is $\mathcal H^n$-equivalent to ${\rm spt}\|V_t\|$. This proves Theorem \ref{main2}(3)(4). 
%%%%%%%%%%%%%%%%%%%%%%%%%%%%%%%%%%%%%%%%%%

\section{Two-sidedness at unit density point} \label{s:two sided}
In this section, we prove Theorem \ref{main3}(7)(8), which are
restated in Prposition \ref{lay2} and \ref{evenodd}, respectively. To do so, we 
need to analyze the behavior of approximating flows. The first Lemma
\ref{two} shows that, for a.e.~$t$, only 
the reduced boundaries of approximating grains contribute to the limit
measure and that the measures coming from ``interior boundaries'' $\partial E_{j_\ell,k}(t)
\setminus\partial^* E_{j_\ell,k}(t)$ vanish
in the limit. Roughly speaking, this is due to the measure minimizing property 
in the length scale of $o(1/j^2)$ 
which has the effect of eliminating the interior boundaries. 
\begin{lemma}\label{two}
For a.e.~$t\in\R^+$ and ${\mathcal E}_{j_\ell}(t)=\{E_{j_\ell,k}(t)\}_{k=1}^N$ in Theorem \ref{p:properties and limit}, we have 
\begin{equation}\label{bcv0}
\lim_{\ell \rightarrow\infty}\sum_{k=1}^N \|\partial^* E_{j_\ell,k}(t)\|=2\mu_t.
\end{equation}
\end{lemma}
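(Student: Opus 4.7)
The plan is to identify $\mu_t$, for a.e.~$t$, as twice the pairwise interface measure of the open partitions $\mathcal{E}_{j_\ell}(t)$ and to show that the interior--boundary remainder of $\partial\mathcal{E}_{j_\ell}(t)$ is negligible in the limit. Fix $t$ and $\ell$, set $I^{(\ell)}_{i,k}(t):=\partial^*E_{j_\ell,i}(t)\cap\partial^*E_{j_\ell,k}(t)$ for $i\neq k$, and let $\Sigma^{(\ell)}(t):=\bigcup_m\partial E_{j_\ell,m}(t)\setminus\bigcup_m\partial^*E_{j_\ell,m}(t)$. Since each $E_{j_\ell,k}(t)$ has locally finite perimeter, the same structure result of \cite[Proposition 29.4]{Maggi_book} that underlies \eqref{chap1}--\eqref{chap2} gives, up to $\mathcal{H}^n$-null sets,
\begin{equation*}
\sum_{k=1}^N\|\partial^*E_{j_\ell,k}(t)\|\;=\;2\sum_{i<k}\mathcal{H}^n\mres I^{(\ell)}_{i,k}(t)\;=\;2\,\|\partial\mathcal{E}_{j_\ell}(t)\|-2\,\mathcal{H}^n\mres\Sigma^{(\ell)}(t),
\end{equation*}
where I use that $\|\partial\mathcal{E}_{j_\ell}(t)\|=\mathcal{H}^n\mres\bigcup_m\partial E_{j_\ell,m}(t)$. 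Since $\|\partial\mathcal{E}_{j_\ell}(t)\|\to\mu_t$ by \eqref{e:boundary convergence}, the identity \eqref{bcv0} reduces to showing that $\mathcal{H}^n\mres\Sigma^{(\ell)}(t)\to 0$ locally for a.e.~$t\in\R^+$.

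To establish this vanishing, the plan is to use the sharp almost--minimality \eqref{e:speed of lmm lip}, which selects a full--measure set of times on which $j_\ell^{2(n+1)}\Delta^{vc}_{j_\ell}\|\partial\mathcal{E}_{j_\ell}(t)\|(\Omega)\to 0$. Fix such a $t$. At $\mathcal{H}^n$-a.e.~$x_0\in\Sigma^{(\ell)}(t)$ the rectifiable set $\bigcup_m\partial E_{j_\ell,m}(t)$ admits an approximate tangent $n$-plane $T$, and by the definition of $\Sigma^{(\ell)}(t)$ a single grain $E_{j_\ell,k_0}(t)$ fills both sides of $T$ inside $B_r(x_0)$ for all sufficiently small $r$. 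One can then construct, at a length scale $r\ll 1/j_\ell^2$, an $\mathcal{E}_{j_\ell}(t)$-admissible Lipschitz map $f$ supported in $B_r(x_0)$ that retracts the hidden sheet onto a lower--dimensional skeleton and extends $E_{j_\ell,k_0}(t)$ across the resulting gap. Such an $f$ reduces the mass $\|\partial\mathcal{E}_{j_\ell}(t)\|$ by an amount comparable to $\mathcal{H}^n(B_r(x_0)\cap\Sigma^{(\ell)}(t))\sim\omega_n r^n$, while changing each $\mathcal{L}^{n+1}(\tilde E_j\triangle E_j)$ by at most $\omega_{n+1}r^{n+1}$. For $r$ of order $1/j_\ell^2$ all three conditions of Definition \ref{ar8} are satisfied -- in particular (b) holds because the area drop of order $r^n$ dominates $j_\ell\cdot r^{n+1}$ -- so that $f\in\mathbf{E}^{vc}(\mathcal{E}_{j_\ell}(t),j_\ell)$.

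Assembling these local retractions through a Besicovitch covering of $\Sigma^{(\ell)}(t)\cap K$ for each compact $K\subset\R^{n+1}$ at scale $1/j_\ell^2$ produces a global volume--controlled competitor whose total area drop dominates a constant multiple of $\mathcal{H}^n(\Sigma^{(\ell)}(t)\cap K)$. This bound, combined with the rate \eqref{e:speed of lmm lip} -- which gives $|\Delta^{vc}_{j_\ell}\|\partial\mathcal{E}_{j_\ell}(t)\|(\Omega)|=o(j_\ell^{-2(n+1)})$ on a set of full measure -- forces $\mathcal{H}^n(\Sigma^{(\ell)}(t)\cap K)\to 0$ as $\ell\to\infty$, which yields \eqref{bcv0}. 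The hard part will be the local construction of the Lipschitz retraction and the verification of the volume--controlled condition (b) of Definition \ref{ar8} at the correct length scale $1/j_\ell^2$; this is precisely the sub-$1/j^2$ measure--minimizing phenomenon analyzed in \cite[Section 4]{KimTone2}, whose conclusions I would invoke directly rather than reproduce.
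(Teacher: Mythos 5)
Your opening reduction is exactly the paper's: since $2\|\partial\E_{j_\ell}(t)\|=\sum_k\|\partial^*E_{j_\ell,k}(t)\|+2\,\Ha^n\mres_{\Sigma^{(\ell)}(t)}$ and $\|\partial\E_{j_\ell}(t)\|\to\mu_t$ by \eqref{e:boundary convergence}, everything comes down to showing that $\Ha^n\mres_{\Sigma^{(\ell)}(t)}$ vanishes locally for a.e.\ $t$. The problem is the mechanism you propose for that vanishing.

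You want to construct, at $\Ha^n$-a.e.\ $x_0\in\Sigma^{(\ell)}(t)$ and at a scale $r\lesssim 1/j_\ell^2$, a volume-controlled Lipschitz competitor that ``swallows'' the redundant sheet and reduces the area by $\sim\omega_n r^n$. This is precisely the step you acknowledge as ``the hard part'' and defer to \cite[Section 4]{KimTone2}, but that reference does not supply it. \cite[Section 4]{KimTone2} is a \emph{blow-up} analysis whose conclusions (unit density, two-sidedness, regularity) concern the \emph{limit varifold} $V$ obtained after sending $\ell\to\infty$ along a rescaling; it does not hand you a Lipschitz competitor at a fixed $\ell$ and a uniform scale. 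The structural facts you cite for a point $x_0\in\Sigma^{(\ell)}(t)$ --- existence of an approximate tangent plane and one grain having full density --- are ``for all sufficiently small $r$ depending on $x_0$'' statements, with no control at the specific scale $1/j_\ell^2$, and even granting approximate flatness at that scale, removing a disc with the same grain on both sides via a Lipschitz map is not the operation performed in \cite[Proposition 7.2]{KimTone} (which deletes only low-density content) nor in \cite[Lemma 8.1]{KimTone} (which horizontally expands a pre-existing hole). No citation you invoke actually builds the competitor.

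The paper's proof avoids exactly this difficulty by splitting $U_R\cap\spt\|\partial\E_{j_\ell}(t)\|$ into the low-density set $Z_\ell$ (where the explicit retraction of \cite[Proposition 7.2]{KimTone} applies precisely because the local density is below the threshold $c_3$, and one then combines Besicovitch with Lemma~\ref{adcheck} and the rate \eqref{e:mean curvature bounds}/\eqref{bcv4}) and the high-density set $Z_\ell^c$, where \emph{no competitor is constructed at all}: instead one argues by contradiction, rescales by $r_\ell=j_\ell^{-2.5}$, passes to a blow-up limit $V\neq 0$, and derives the contradiction from the two-sidedness of $V$ (\cite[Lemma 4.8]{KimTone2}) together with lower semicontinuity of the perimeters of the rescaled grains. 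So the compactness step is not a shortcut you can replace with a competitor argument without doing genuinely new work. As a side remark, your rate $|\Delta^{vc}_{j_\ell}\|\partial\E_{j_\ell}(t)\|(\Omega)|=o(j_\ell^{-2(n+1)})$ from \eqref{e:speed of lmm lip} would indeed close the estimate \emph{if} the competitor existed (the final inequality $\Ha^n(\Sigma^{(\ell)}(t)\cap K)\lesssim -\Delta^{vc}_{j_\ell}\|\partial\E_{j_\ell}(t)\|(\Omega)$ has no adverse $r^{-(n+1)}$ factor), but it is strictly weaker than the rate $\Delta^{vc}_{j_\ell}\|\partial\E_{j_\ell}(t)\|(\Omega)/(j_\ell\Delta t_{j_\ell})\to 0$ that the paper extracts from \eqref{e:mean curvature bounds} and actually needs to run its cruder $Z_\ell$ estimate with the $r_\ell^{-(n+1)}$ loss.
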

\begin{proof}
We fix $t$ such that 
\begin{equation}\label{bcv4}
\lim_{\ell\rightarrow\infty}\frac{\Delta_{j_\ell}^{vc}\|\partial\mathcal E_{j_\ell}(t)\|(\Omega)}{j_\ell \Delta t_{j_\ell}}
=0,
\end{equation}
which holds for a.e.~$t\in\R^+$ due to \eqref{e:mean curvature bounds}
and drop $t$ for simplicity in the following. 
It is sufficient to prove that $$\lim_{\ell\rightarrow\infty} \Ha^n(U_R\cap \spt\|\partial\mathcal E_{j_\ell}\|
\setminus \cup_{k=1}^N
\partial^* E_{j_\ell,k}))=0$$ for arbitrary $R\geq 1$ since $\spt\|\partial\mathcal E_{j_\ell}\|\subset
\cup_{k=1}^N\partial E_{j_\ell,k}$, $\Ha^n(\cup_{k=1}^N\partial E_{j_\ell,k}\setminus \spt\|\partial\mathcal E_{j_\ell}\|)=0$ and 
$$2\|\partial\mathcal E_{j_\ell}\|=\sum_{k=1}^N\|\partial^* E_{j_\ell,k}\|
+2\Ha^n\mres_{\spt\|\partial\mathcal E_{j_\ell}\|\setminus \cup_{k=1}^N
\partial^* E_{j_\ell,k}}.$$
As in \cite[Section 4]{KimTone2}, let $r_\ell:=1/(j_\ell)^{2.5}$. Define 
$$
Z_\ell:=\{x\in U_R\cap \spt\|\partial\mathcal E_{j_\ell}\|\,:\, \|\partial\mathcal E_{j_\ell}\|(B_{r_\ell}(x)) 
\leq c_3 r_\ell^n\}\mbox{ and }Z_\ell^c:=U_R\cap \spt\|\partial\mathcal E_{j_\ell}\|\setminus Z_\ell,
$$
where $c_3$ is the same constant appearing in \cite[Proposition 7.2]{KimTone} 
and apply it to each ball $B_{r_\ell}(\hat x)$ with $\hat x\in Z_\ell$ 
to estimate $\|\partial\mathcal E_{j_\ell}\|(B_{r_\ell/2}(\hat x))$ : there exists $c_4=c_4(n)$ as
in the claim and a $\mathcal E_{j_\ell}$-admissible function $f$ and $r\in [r_\ell/2,r_\ell]$ such that
\begin{enumerate}
\item[(1)]
$f(x)=x$ for $x\in \mathbb R^{n+1}\setminus U_{r}(\hat x)$,
\item[(2)]
$f(x)\in B_{r}(\hat x)$ for $x\in B_{r}(\hat x)$,
\item[(3)]
$\|\partial f_{\star}\mathcal E_{j_\ell}\|(B_{r}(\hat x))\leq\frac12\|\partial\mathcal E_{j_\ell}\|
(B_{r}(\hat x))$,
\item[(4)]
$\mathcal L^{n+1}(E_{j_\ell,k}\triangle \tilde{E}_{j_\ell,k})\leq c_4(\|\partial\mathcal E_{j_\ell}\|(B_{r}(\hat x)))^{\frac{n+1}{n}}$ for all $k$, where $\{{\tilde E}_{j_\ell,k}\}_{k=1}^N=f_{\star}\mathcal E_{j_\ell}$. 
\end{enumerate} 
We may use Lemma \ref{adcheck} with $C=B_{r}(\hat x)$ and above (1)-(4) to check that 
$f\in {\bf E}^{vc}(\mathcal E_{j_\ell},j_\ell)$ as long as $j_\ell$ is large enough (actually if $\exp(-j_\ell
/2r_\ell)=\exp(-j_\ell^{\sfrac32}/2)<1/2$). Then, by the definition of $\Delta_{j_\ell}^{vc}\|\partial\mathcal E_{j_\ell}\|(\Omega)$, (1) and (3) as well as $\max_{B_r(\hat x)}\Omega\leq \exp(2c_1 r_\ell)
\min_{B_r(\hat x)}\Omega$, we have
\begin{equation}
\begin{split}
\Delta_{j_\ell}^{vc}\|\partial\mathcal E_{j_\ell}\|(\Omega)&\leq \|\partial f_{\star}\mathcal E_{j_\ell}\|(\Omega)
-\|\partial\mathcal E_{j_\ell}\|(\Omega)\\
&\leq (\min_{B_r(\hat x)}\Omega)\big\{\exp(2c_1 r_\ell)\|\partial f_{\star}\mathcal E_{j_\ell}\|(B_r(\hat x))
-\|\partial\mathcal E_{j_\ell}\|(B_r(\hat x))\big\} \\ 
&\leq (\min_{B_r(\hat x)}\Omega)(\frac12 \exp(2c_1 r_\ell)-1) \|\partial\mathcal E_{j_\ell}\|(B_r(\hat x))
\\ &\leq -\frac{\min_{B_{2R}}\Omega}{4}\|\partial\mathcal E_{j_\ell}\|(B_r(\hat x))\leq -\frac{\min_{B_{2R}}\Omega}{4}\|\partial\mathcal E_{j_\ell}\|(B_{r_\ell/2}(\hat x))
\end{split}
\label{bcv1}
\end{equation}
for all sufficiently large $j_\ell$. By the Besicovitch covering theorem, we have a mutually disjoint
set of closed balls $\{B_{r_\ell/2}(\hat x_i)\}_{\hat x_i\in Z_\ell}$ (whose number is at most 
$c(n)r_\ell^{-n-1}$) such that 
\begin{equation}\label{bcv2}
\|\partial\mathcal E_{j_\ell}\|(Z_\ell)\leq {\bf B}(n)\sum \|\partial\mathcal E_{j_\ell}\|(B_{r_\ell/2}(\hat x_i))
\end{equation}
and \eqref{bcv1} and \eqref{bcv2} show that
\begin{equation}\label{bcv3}
\|\partial\mathcal E_{j_\ell}\|(Z_\ell)\leq - \frac{c(n)}{r_\ell^{n+1}\min_{B_{2R}}\Omega} \Delta_{j_\ell}^{vc}\|\partial
\mathcal E_{j_\ell}\|(\Omega).
\end{equation}
Since $-\Delta_{j_\ell}^{vc}\|\partial\mathcal E_{j_\ell}\|(\Omega)\ll r_\ell^{n+1}$ due to \eqref{bcv4}, the right-hand side of 
\eqref{bcv3} converges to $0$ as $\ell\rightarrow\infty$. 
By \eqref{bcv3}, we need to prove
\begin{equation}\label{bcv5}
\lim_{\ell\rightarrow\infty} \Ha^n(Z_\ell^c
\setminus \cup_{k=1}^N
\partial^* E_{j_\ell,k})=0.
\end{equation}
By the Besicovitch covering theorem, we have a set of mutually disjoint balls $\{B_{r_\ell}(x_i)\}$ with
$x_i\in Z_\ell^c\setminus \cup_{k=1}^N
\partial^* E_{j_\ell,k}$ such that 
\begin{equation}\label{bcv7}
\Ha^n(Z_\ell^c
\setminus \cup_{k=1}^N
\partial^* E_{j_\ell,k})\leq {\bf B}(n)\sum_{i}\Ha^n(B_{r_\ell}(x_i)\cap Z_\ell^c
\setminus \cup_{k=1}^N
\partial^* E_{j_\ell,k}).
\end{equation}
Because of the lower bound of measure $\|\partial\mathcal E_{j_\ell}\|(B_{r_\ell}(x_i))\geq c_3 r_\ell^n$
for $x_i\in Z_\ell^c$, the number of disjoint balls is bounded by $c_3^{-1}r_\ell^{-n}\|
\partial\mathcal E_{j_\ell}\|(B_{2R})$. If we prove that 
\begin{equation}
\label{bcv6}
\lim_{\ell\rightarrow\infty}\sup_i r_\ell^{-n}\Ha^n(B_{r_\ell}(x_i)\cap Z_\ell^c\setminus\cup_{k=1}^N
\partial^* E_{j_\ell,k})=0,
\end{equation}
combined with \eqref{bcv7}, we would prove \eqref{bcv5}, ending the proof. Assume for a contradiction that \eqref{bcv6} were not true and we had a 
subsequence (denoted by the same index) 
$x_\ell\in Z_\ell^c$ such that 
\begin{equation}\label{bcv8}
0<\alpha\leq r_\ell^{-n}\Ha^n(B_{r_\ell}(x_\ell)\cap Z_\ell^c\setminus\cup_{k=1}^N\partial^* E_{j_\ell,k}).
\end{equation}
Consider a rescaling $F_\ell:\mathbb R^{n+1}\rightarrow\mathbb R^{n+1}$ defined by $F_\ell(x)=(x-x_\ell)/r_\ell$
and consider a sequence $V_\ell:=(F_\ell)_{\sharp}(\partial\mathcal E_{j_\ell})$ and $F_\ell(E_{j_\ell,k})$ ($k=1,\ldots,N$). This rescaling was
discussed in \cite[Section 4]{KimTone2}, and there exists a subsequence (denoted by the same index) 
and a limit $V\neq 0$ with the properties stated in \cite[Theorem 4.1]{KimTone2}, which shows that $V$ is 
a unit density varifold and $\spt\|V\|$ is a real-analytic minimal hypersurface away from a closed 
lower-dimensional singularity. Moreover, $\spt\|V\|$ is two-sided in the following sense. Let 
$E_1,\ldots,E_N\subset\mathbb R^{n+1}$ be limits of $F_\ell(E_{j_\ell,1}),\ldots,F_\ell(E_{j_\ell,N})$
(see \cite[p.517]{KimTone2}). By the lower-semicontinuity of BV function, we have $\|\partial^*
E_k\|\leq \|V\|$ and $\{E_1,\ldots,E_N\}$ can be defined so that they are mutually disjoint open sets such 
that $\mathbb R^{n+1}\setminus\spt\|V\|=\cup_{k=1}^N E_k$. The two-sidedness means that, at each
regular point $x$ of $\spt\|V\|$, there are two distinct indices $k_1,k_2\in\{1,\ldots,N\}$ such that 
$x\in {\rm clos}\,E_{k_1}\cap{\rm clos}\,E_{k_2}$ (\cite[Lemma 4.8]{KimTone2}). In particular, this implies that
\begin{equation}\label{bcv9}
\|V\|=\frac12 \sum_{k=1}^N\|\partial^* E_k\|.
\end{equation}
On the other hand, \eqref{bcv8} implies that
\begin{equation}\label{bcv10}
\begin{split} \|V_\ell\|(U_2)=
&\|V_\ell\|(U_2\setminus \cup_{k=1}^N \partial^*(F_\ell(E_{j_\ell,k})))+\|V_\ell\|
(U_2\cap \cup_{k=1}^N \partial^* (F_\ell(E_{j_\ell,k}))) \\ 
&\geq r_\ell^{-n} \Ha^n(U_{2r_\ell}(x_\ell)\cap Z_\ell^c\setminus \cup_{k=1}^N\partial^*E_{j_\ell,k}) 
+\|V_\ell\|
(U_2\cap \cup_{k=1}^N \partial^* (F_\ell(E_{j_\ell,k})))\\
&\geq \alpha+\frac12\sum_{k=1}^N
\|\partial^*(F_\ell(E_{j_\ell,k}))\|(U_2).
\end{split}
\end{equation}
Since 
\begin{equation}\label{bcv11}
\|\partial^*E_k\|(U_2)\leq \liminf_{\ell\rightarrow\infty}\|\partial^*(F_\ell(E_{j_\ell,k}))\|(U_2)
\end{equation}
for each $k$, \eqref{bcv9}-\eqref{bcv11} show
\begin{equation*}\label{bcv12}
\|V\|(U_2)\leq \liminf_{\ell\rightarrow\infty}\|V_\ell\|(U_2)-\alpha\leq \|V\|(B_2)-\alpha.
\end{equation*}
Since $\|V\|(\partial U_2)=0$, this is a contradiction. The argument up to this point shows
\eqref{bcv6}, which in turn shows the claim \eqref{bcv0}.
\end{proof}

%%%%%%%%%%%%%%%%%%%%%%%%%%%%%%%%%%%%%%%%%%%%%%%%%%%%%%%%%%%%%

The next Lemma \ref{lay1} shows that 
$\partial\mathcal E_{j_\ell}(t)$ is locally and subsequencially close to a ``$\theta$-layered 
sheets'' after appropriate blow-ups, for almost all times and places. 
\begin{lemma} \label{lay1}
For $\mathcal L^1$-a.e.~$t\in\mathbb R^+$ and 
$\mathcal H^n$-a.e.~$x\in{\rm spt}\,\|V_t\|$, there exist
$\theta:=\Theta^n(\|V_t\|,x)\in \mathbb N$, $T:={\rm Tan} (\|V\|,x)\in{\bf G}(n+1,n)$, $r_\ell\rightarrow 0+$,
a subsequence $\{j'_\ell\}_{\ell=1}^\infty\subset\{j_\ell\}_{\ell=1}^\infty$ and $\mathcal H^n$-measurable sets $W_{\ell}\subset T\cap B_{r_\ell}$ with the following property (after a change of variable, we may assume
that $x=0$ in the following). 
\newline
Define $f_{(r_\ell)}(y):=y/r_\ell$ for $y\in \mathbb R^{n+1}$
and $\{E_{j'_\ell,1},\ldots,E_{j'_\ell,N}\}:=\mathcal E_{j'_\ell}(t)$. Then we have
\begin{equation}\label{ts1.6}
    \lim_{\ell\rightarrow\infty} (f_{(r_\ell)})_{\sharp} \partial
    \mathcal E_{j'_\ell}(t) ={\bf var}(T,\theta)
\end{equation}
as varifolds,
\begin{equation}\label{ts1.4}
    \mathcal H^0(B_{r_\ell}\cap T^{-1}(a)\cap \cup_{i=1}^N
    \partial E_{j'_\ell,i})=\theta
\end{equation}
for all $a\in W_\ell$, 
\begin{equation}\label{ts1.5}
    \lim_{\ell\rightarrow\infty} \sup_{a\in W_\ell} \{|T^{\perp}(x/r_\ell)|\,:\, x\in 
    B_{r_\ell}\cap T^{-1}(a)\cap \cup_{i=1}^N
    \partial E_{j'_\ell,i}\}=0,
\end{equation}
\begin{equation} \label{ts1.3}
    \lim_{\ell\rightarrow\infty}\frac{\mathcal H^n(W_\ell)}{\omega_n r_\ell^n}=1.
\end{equation}
The claims \eqref{ts1.4} and \eqref{ts1.5} also hold with $\partial^* E_{j'_\ell,i}$ in place of 
$\partial E_{j'_\ell,i}$. 
\end{lemma}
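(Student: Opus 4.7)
My approach would be a diagonal blow-up of the approximating flows from Theorem~\ref{p:properties and limit} coupled with a BV slicing argument that exploits Lemma~\ref{two} and the partition structure \eqref{chap1}.

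\textbf{Setup and blow-up.} First I would restrict attention to the full-measure set of times $t$ at which: $V_t \in \mathbf{IV}_n(\R^{n+1})$ with $\|V_t\|=\mu_t$ (Theorem~\ref{main2}(4)); $\|\partial \E_{j_\ell}(t)\| \to \|V_t\|$ as Radon measures (from \eqref{e:boundary convergence}); and $\sum_k \|\partial^* E_{j_\ell,k}(t)\| \to 2\|V_t\|$ (Lemma~\ref{two}). By integrality and rectifiability of $V_t$, at $\Ha^n$-a.e.\ $x \in \spt\|V_t\|$ the density $\theta := \Theta^n(\|V_t\|,x) \in \mathbb{N}$ and the tangent plane $T := T_x\|V_t\|$ exist, and the varifold blow-ups of $V_t$ at $x$ converge to $\mathbf{var}(T,\theta)$ as the scale goes to $0$. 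After translation I take $x=0$, so that $f_{(r)\sharp}V_t \to \mathbf{var}(T,\theta)$ as $r\to 0^+$. For each fixed $r>0$, \eqref{e:boundary convergence} under the rescaling $f_{(r)}$ gives $\|f_{(r)\sharp}\partial\E_{j_\ell}(t)\| \to \|f_{(r)\sharp}V_t\|$ as Radon measures; combined with the density upper bound of Lemma~\ref{l:upper} and Allard's compactness for integral varifolds, one upgrades this to varifold convergence along a subsequence. A standard diagonal extraction then produces $r_\ell\to 0^+$ and $j'_\ell\to\infty$ with $f_{(r_\ell)\sharp}\partial\E_{j'_\ell}(t) \to \mathbf{var}(T,\theta)$ as varifolds, which is \eqref{ts1.6}. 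Passing to a further subsequence also yields $\sum_k f_{(r_\ell)\sharp}\|\partial^* E_{j'_\ell,k}(t)\| \to 2\theta\,\Ha^n\mres T$.

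\textbf{Slicing for \eqref{ts1.4}, \eqref{ts1.5}, \eqref{ts1.3}.} Work in $U_1$ after rescaling and write $F^k_\ell := f_{(r_\ell)}(E_{j'_\ell,k})$. The varifold convergence \eqref{ts1.6} implies tilt decay $|p_T\nu_{F^k_\ell}| \to 1$ in $L^2$ on $\partial^*F^k_\ell\cap U_1$, where $p_T:\R^{n+1}\to T$ denotes orthogonal projection. By the BV coarea formula for $p_T$,
\begin{equation*}
\sum_k \int_{T\cap U_1} \Ha^0\bigl(\partial^*F^k_\ell \cap p_T^{-1}(a)\cap U_1\bigr)\,d\Ha^n(a)
= \sum_k \int_{\partial^*F^k_\ell\cap U_1} |p_T\nu_{F^k_\ell}|\,d\Ha^n \longrightarrow 2\theta\,\omega_n.
\end{equation*}
By \eqref{chap1}, $\Ha^n$-a.e.\ point of $\cup_i\partial^*F^i_\ell$ belongs to exactly two of the $\partial^*F^k_\ell$, so the slice count on the union is half of the sum. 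Lower semicontinuity of slice counts for rectifiable varifolds (applied to the limit $\mathbf{var}(T,2\theta)$) gives $\liminf_\ell \sum_k\Ha^0(\partial^*F^k_\ell\cap p_T^{-1}(a)\cap U_1) \geq 2\theta$ for $\Ha^n$-a.e.\ $a$, which combined with the $L^1$-convergence of these integer-valued counts to the constant $2\theta$ forces (via Fatou and integer-valuedness, after extracting a further subsequence) a pointwise equality to $2\theta$ on a set $\tilde W_\ell\subset T\cap U_1$ with $\Ha^n(\tilde W_\ell)\to\omega_n$, hence a count of exactly $\theta$ on $\cup_i\partial^*F^i_\ell$. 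For the height bound, testing \eqref{ts1.6} against $y\mapsto|p_{T^\perp}(y)|^2$ times a cutoff gives $\int_{\cup_k\partial^*F^k_\ell\cap U_1}|p_{T^\perp}|^2\,d\Ha^n \to 0$, whence by coarea
\begin{equation*}
\int_T\!\sum_{y\in\cup_k\partial^*F^k_\ell\cap p_T^{-1}(a)\cap U_1}\!|p_{T^\perp}(y)|^2\,d\Ha^n(a)\longrightarrow 0,
\end{equation*}
and a further subsequential extraction yields $\max_y|p_{T^\perp}(y)|\to 0$ for $\Ha^n$-a.e.\ $a$, giving \eqref{ts1.5}. Absorbing all exceptional null sets into the complement and rescaling by $r_\ell$ defines $W_\ell$ and yields \eqref{ts1.3} as well as \eqref{ts1.4}. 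The analogous conclusion for $\partial E_{j'_\ell,i}$ in place of $\partial^* E_{j'_\ell,i}$ follows because the interior-boundary $\Ha^n$-mass $\Ha^n\bigl((\cup_i\partial E_{j'_\ell,i})\setminus(\cup_i\partial^*E_{j'_\ell,i})\bigr)\cap B_{r_\ell}$, rescaled, vanishes by Lemma~\ref{two}.

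\textbf{Main obstacle.} The crux is upgrading the $L^1$-type convergence of integer-valued slice counts to the constant $2\theta$ into a pointwise equality on a set of almost full measure in $T\cap U_1$. This hinges on three ingredients used in combination: tilt decay from varifold convergence, lower semicontinuity of slice counts for rectifiable varifolds, and integer-valuedness of the counts, synthesized via Fatou. A secondary technical issue is the uniform control of the interior-boundary portion $\partial E_{j'_\ell,i}\setminus\partial^* E_{j'_\ell,i}$ on the rescaled ball, which is precisely what Lemma~\ref{two} is designed to handle.
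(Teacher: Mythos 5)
Your proposal has a genuine gap at the critical slicing step. You invoke ``lower semicontinuity of slice counts for rectifiable varifolds'' to deduce
$\liminf_\ell \sum_k\Ha^0(\partial^*F^k_\ell\cap p_T^{-1}(a)\cap U_1)\ge 2\theta$ for $\Ha^n$-a.e.\,$a$, and then combine this with $L^1$-convergence and integer-valuedness via Fatou. This lower-semicontinuity property is not a known fact about varifold convergence, and it is in fact \emph{false} under exactly the hypotheses you have assembled (varifold convergence of the rescaled boundaries to $\mathbf{var}(T,\theta)$, tilt decay, plus vanishing interior boundary from Lemma~\ref{two}). A concrete counterexample: in $\R^2$ with $n=1$, $N=2$, take the rescaled boundary to be a ``row of boxes''
\[
\bigcup_{m}\partial\Big(\Big[\tfrac{2m}{\ell},\tfrac{2m+1}{\ell}\Big]\times[0,h_\ell]\Big),\qquad h_\ell=o(1/\ell),
\]
with one phase inside the boxes and the other outside. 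All the boundary is reduced boundary, the tilt excess tends to zero (the vertical segments carry mass $O(\ell h_\ell)\to 0$), and the weight has uniform density $1$ at any scale $\gg 1/\ell$, so the sequence converges as varifolds to $\mathbf{var}(T\cap U_1,1)$ and $\sum_k\|\partial^*F^k_\ell\|\to 2\,\Ha^1\mres T$. Yet the slice count is $2$ on half of $T\cap U_1$ and $0$ on the other half, for every $\ell$. Thus there is no pointwise a.e.\,lower bound, and your chain of deductions breaks.

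What is missing is the almost-measure-minimizing property of the approximating flows, which you use only indirectly (through Lemma~\ref{two}) and which the example above does not satisfy (a volume-controlled Lipschitz deformation would collapse the boxes). The paper obtains a pointwise \emph{upper} bound $\tilde g_\ell(a)\le\theta$ on a good set $G_\ell^{**}$ (the inequality \eqref{contr6}) from the $\Delta^{vc}_{j}$-minimality via \cite[Lemma 8.5]{KimTone}, and it is this upper bound, together with $\int\tilde g_\ell\to\theta\omega_n$ and integer-valuedness, that forces equality on a set of nearly full measure without any liminf claim. A secondary issue: to get \eqref{ts1.6} you invoke Allard's compactness, but the approximating varifolds $\partial\E_{j_\ell}$ do not have locally bounded first variation (only the $\eps_{j_\ell}$-smoothed first variation is controlled), so that route does not apply; the paper goes instead through the sets $A_{j,q}$ of controlled smoothed first-variation ratio from \cite[Theorem 8.6]{KimTone}, which again relies on the almost-minimizing structure.
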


\begin{proof}
Without loss of generality, assume that $t\in\mathbb R^+$ satisfies 
\begin{equation}\label{ts1}
   \liminf_{\ell\rightarrow\infty} \Big(\int_{\mathbb{R}^{n+1}} \frac{|\Phi_{\varepsilon_{j_\ell}} \ast\delta(\partial
    \mathcal E_{j_\ell}(t))|^2\Omega}{\Phi_{\varepsilon_{j_\ell}}\ast\|\partial\mathcal E_{j_\ell}(t)\|+\varepsilon_{j_\ell}\Omega^{-1}}\,dx-\frac{1}{\Delta t_{j_\ell}}
    \Delta_{j_\ell}^{vc}\|\partial\mathcal E_{j_\ell}(t)\|(\Omega)\Big)<\infty,
\end{equation}
which holds for a.e.\,$t\in\mathbb R^+$ by \eqref{e:mean curvature bounds} and
Fatou's lemma. By the compactness theorem of 
\cite[Theorem 8.6]{KimTone}, we may conclude that there exists a converging 
subsequence in the sense of varifold  
$\{\partial\mathcal E_{j_\ell}(t)\}_{\ell=1}^\infty$ (denoted by the same index)
and the limit $V_t\in {\bf IV}_n(\mathbb R^{n+1})$ with $\mu_t=\|V_t\|$. 
By \eqref{e:partitions convergence}, each $\{E_{j_\ell,i}(t)\}_{\ell=1}^\infty$ also converges
to $E_i(t)$ in $L^1_{loc}(\mathbb R^{n+1})$ for $i=1,\ldots,N$.

By Corollary \ref{cor:global_lb},  %and the unit density assumption, 
for a.e.\,$t\in\mathbb R^+$, we have
\begin{equation}
    \theta_t\,\mathcal H^n\mres_{\cup_{i=1}^N\partial E_i(t)}=\theta_t\,\mathcal H^n\mres_{{\rm spt}\,\|V_t\|}=\|V_t\|,
\end{equation}
where $\theta_t(x):=\Theta^n(\|V_t\|,x)$ is $\mathcal H^n$-a.e.~integer-valued. 
Note in particular that ${\rm spt}\,\|V_t\|$ as well as
$\cup_{i=1}^N\partial E_i(t)$ are countably
$n$-rectifiable. 
We fix such a generic $t$ and subsequently
drop the dependence of $t$. 

In the following, we use the same argument in
the proof of \cite[Theorem 8.6]{KimTone}. 
Let $\{\mathcal E_{j_\ell}\}_{\ell=1}^\infty$ be a subsequence (denoted by 
the same index) such that
the quantity in \eqref{ts1} is uniformly bounded. Let $U\subset\mathbb R^{n+1}$ be a bounded open set. It is sufficient
to prove the claim on $U$. 
As in \cite[p.112-113]{KimTone}, for each $j,q\in\mathbb N$, let $A_{j,q}$ be a set 
consisting of all $x\in {\rm clos}\,U$ such that
\begin{equation}
    \|\delta(\Phi_{\varepsilon_j}\ast\partial\mathcal E_j)\|(B_r(x))
    \leq q \|\Phi_{\varepsilon_j}\ast\partial\mathcal E_j\|(B_r(x))
\end{equation}
for all $r\in (j^{-2},1)$ and additionally define 
\begin{equation}
    A_q:=\{x\in {\rm clos}\,U\,:\, \mbox{there exist }x_\ell\in A_{j_\ell,q}\mbox{ for
    infinitely many $\ell$ with $x_\ell\rightarrow x$}\}.
\end{equation}
Following the argument in \cite[p.113]{KimTone}, one can prove that 
\begin{equation}\label{contr3}
    \|V\|(U\setminus \cup_{q=1}^\infty A_q)=0.
\end{equation}
Thus, for $\mathcal H^n$-a.e.~$x\in{\rm spt}\,\|V\|$, we have 
some $q\in\mathbb N$ such that $x\in A_q$, and additionally,
the approximate tangent space exists with multiplicity 
$\theta\in\mathbb N$. % and either (a) or (b) holds.  
Without loss of generality, we may 
assume that $x=0$ and write $T:={\rm Tan}(\|V\|,x)$.
Since $0\in A_q$, there exists a further subsequence of 
$\{j_\ell\}_{\ell=1}^\infty$ (denoted by the same index) such that 
$x_{j_\ell}\in A_{j_\ell,q}$ with $\lim_{\ell\rightarrow\infty}x_{j_\ell}=0$.
Set $r_\ell=1/\ell$ and define $f_{(r_\ell)}(x):=x/r_\ell$ and
\begin{equation}\label{cont3.1}
    V_{j_\ell}:=(f_{(r_\ell)})_\sharp\partial\mathcal E_{j_\ell},\,\,
    \tilde V_{j_\ell}:=(f_{(r_\ell)})_\sharp(\Phi_{\varepsilon_{j_\ell}} 
    \ast\partial\mathcal E_{j_\ell}), \,\,V_{j_\ell}^*:=
    (f_{(r_\ell)})_\sharp\partial^*\mathcal E_{j_\ell},
\end{equation}
where $\partial^*\mathcal E_{j_\ell}$ denotes the unit density varifold defined from
$\cup_{i=1}^N\partial^* E_{j_\ell,i}$. 
We may choose a further subsequence with the
following properties:
\begin{equation}\label{contr7}
    \lim_{\ell\rightarrow\infty}
    V_{j_\ell} =\lim_{\ell\rightarrow\infty}\tilde V_{j_\ell}={\bf var}(T,\theta),
\end{equation}
\begin{equation}
    \lim_{\ell\rightarrow\infty}\frac{x_{j_\ell}}{r_\ell}=0,\hspace{.5cm}
    \lim_{\ell\rightarrow\infty}\frac{j_\ell^{-1}}{r_\ell}=0,
\end{equation}
\begin{equation}\label{contr5}
    \lim_{\ell\rightarrow\infty} r_\ell^{-n} \mathcal H^n(B_{r_\ell}\cap \partial\mathcal E_{j_\ell}
\setminus\partial^*\mathcal E_{j_\ell})=0.
\end{equation}
Note that the choice with \eqref{contr5} is possible due to 
\eqref{bcv0}. We then proceed verbatim as in \cite[p.114-120]{KimTone} with $\nu=\theta+1$ and $d=\theta$. In particular, we 
fix $\lambda\in(1,2)$ such that $\lambda^{n+1}\theta<\theta+1$ (as in \cite[(8.111)]{KimTone}) and use \cite[Lemma 8.5]{KimTone}. In summary, with the same notation 
as in \cite{KimTone}, we obtain a sequence of sets
$G_\ell^{**}\subset\partial\mathcal E_{j_\ell}\cap B_{(\lambda-1)r_\ell}$  with the following properties for all large enough $\ell$;
\begin{equation}\label{contr9}
    \lim_{\ell\rightarrow\infty}r_\ell^{-n}\|\partial\mathcal E_{j_\ell}\|
    (B_{(\lambda-1)r_\ell}\setminus G_\ell^{**})=0,
\end{equation}
\begin{equation}\label{contr9.1}
    \lim_{\ell\rightarrow\infty} \sup\,\{|T^{\perp}(x/r_\ell)|\,:\, x\in G_\ell^{**}\}=0,
\end{equation}
\begin{equation}\label{contr6}
    \sup_{a\in T\cap B_{(\lambda-1)r_\ell}}\mathcal H^0(\{x\in G_\ell^{**}\, :\, T(x)=a\})\leq\theta.
\end{equation}
These claims are, respectively, (8.154), (8.156) (stated differently) and
(8.159) of \cite{KimTone}. The measurability of $G_\ell^{**}$ 
is not stated in \cite{KimTone}. However, since each step to
estimate $\|\partial\mathcal E_{j_\ell}\|(B_{(\lambda-1)r_\ell}\setminus G_\ell^{**})$ uses covering arguments, if necessary, we may simply 
take the complement of these coverings with no change of estimates and 
obtain a possibly smaller $G_\ell^{**}$ which is a Borel set. 
We next prove that, writing $\tilde g_\ell(a):=\mathcal H^0(\{x\in G_\ell^{**}\,:\, T(x)=a\})$ for $a\in T$, we have
\begin{equation}\label{contr8}
    \lim_{\ell\rightarrow\infty} \frac{\mathcal H^n 
    (\{a\in T\cap B_{(\lambda-1)r_\ell}\,:\,\tilde g_\ell(a)=\theta\}) }{\omega_n (\lambda-1)^n r_\ell^n}=1.
\end{equation}
Note that $\tilde g_\ell(a)$ as above on $T$ is $\mathcal H^n$-measurable (see
\cite[Lemma 5.8]{EG}). 
To see \eqref{contr8}, first, by \eqref{contr7} and \eqref{contr9}, we have
\begin{equation}\label{unid1}
    \lim_{\ell\rightarrow\infty} (f_{(r_\ell)})_\sharp\big(\partial
    \mathcal E_{j_\ell}\mres_{G_\ell^{**}\times{\bf G}(n+1,n)}\big)
    ={\bf var}(T\cap B_{\lambda-1},\theta)
\end{equation}
as varifolds. 
We also have
\begin{equation}\label{unid2}
    \|T_{\sharp}\circ(f_{(r_\ell)})_\sharp (\partial
    \mathcal E_{j_\ell}\mres_{G_\ell^{**}\times{\bf G}(n+1,n)})\|(B_{\lambda-1})
    =(r_\ell)^{-n}\int_{T} \tilde g_\ell(a)\,
    d\mathcal H^n(a).
\end{equation}
Since $T_\sharp$ commutes with $\lim_{\ell\rightarrow\infty}$ and  
$T_\sharp{\bf var}(T\cap B_{\lambda-1},1)={\bf var}(T\cap B_{\lambda-1},1)$, \eqref{unid1}
and \eqref{unid2} show that
\begin{equation}\label{unid2.5}
    \theta\omega_n(\lambda-1)^n=\lim_{\ell\rightarrow\infty}
    (r_\ell)^{-n}\int_T \tilde g_\ell(a)\,d\mathcal H^n(a).
\end{equation}
Since $\tilde g_\ell\leq \theta$ due to
\eqref{contr6} (note also $G_\ell^{**}\subset B_{(\lambda-1)r_\ell}$),
\eqref{unid2.5} shows
\begin{equation}\label{contr6sub}
    0=\lim_{\ell\rightarrow\infty}(r_\ell)^{-n} \int_{T\cap B_{(\lambda-1)r_\ell}} (\theta- \tilde g_\ell)\,d\mathcal H^n\geq
    \lim_{\ell\rightarrow\infty} (r_\ell)^{-n}\mathcal H^n
    (T\cap B_{(\lambda-1)r_\ell}\cap\{\tilde g_\ell\leq 
    \theta-1\}).
\end{equation}
Then \eqref{contr6} and \eqref{contr6sub} show \eqref{contr8}. 
Finally, we define
\begin{equation}\label{wdef1}
    W_\ell:=\{a\in T\,:\,\tilde g(a)=\theta\}\setminus \big(T(\partial\mathcal E_{j_\ell}\cap B_{(\lambda-1)r_\ell}\setminus
    G_\ell^{**})\cup T(\partial\mathcal E_{j_\ell}\cap B_{(\lambda-1)r_\ell}\setminus
    \partial^*\mathcal E_{j_\ell})\big).
\end{equation}
Since $\tilde g$ is $\mathcal H^n$-measurable, so is $W_\ell$. Due to \eqref{contr9},
\eqref{contr5} and \eqref{contr8}, we can deduce \eqref{ts1.3}.
For any $a\in W_\ell$, $T^{-1}(a)\cap \partial\mathcal E_{j_\ell}\cap B_{(\lambda-1)r_\ell}$ consists
of $\theta$ points belonging to $G_\ell^{**}\cap \partial^*\mathcal E_{j_\ell}$
by \eqref{wdef1}. This proves \eqref{ts1.4} (both with 
$\partial\mathcal E_{j_\ell}$ and $\partial^*\mathcal E_{j_\ell}$). 
This combined with \eqref{contr9.1} also proves \eqref{ts1.5},
and we may conclude the proof after renaming $(\lambda-1)r_\ell$ as $r_\ell$. 
\end{proof}

\begin{proposition}
\label{lay2}
For $V_t$ and $\{E_i(t)\}_{i=1}^N$ in Proposition \ref{p:properties and limit},
for a.e.~$t\in\mathbb R^+$, we have
\begin{equation}\label{aun1}
    \mathcal H^n(\{x\,:\,\theta_t(x)=1\}\setminus\cup_{i=1}^N\partial^*
    E_i(t))=0.
\end{equation}
\end{proposition}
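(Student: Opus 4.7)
The plan is to argue by contradiction. Suppose $\mathcal H^n(S_t)>0$ where $S_t:=\{x:\theta_t(x)=1\}\setminus \bigcup_{i=1}^N\partial^* E_i(t)$. For $\mathcal H^n$-a.e.~$x\in S_t$ I would apply Lemma \ref{lay1} with $\theta=1$ to obtain a tangent plane $T$, a subsequence $j'_\ell\to\infty$, a null sequence $r_\ell\to 0^+$, and measurable sets $W_\ell\subset T\cap B_{r_\ell}(x)$ with $\mathcal H^n(W_\ell)/(\omega_n r_\ell^n)\to 1$ such that each line $T^{-1}(a)$ with $a\in W_\ell$ meets $\bigcup_i \partial^* E_{j'_\ell,i}$ at exactly one point $p_\ell(a)$, which is moreover $o(r_\ell)$-close to $T$ by \eqref{ts1.5}.

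The first key step is a combinatorial reduction. By \cite[Proposition 29.4]{Maggi_book}, for $\mathcal H^n$-a.e.~$a\in W_\ell$ the point $p_\ell(a)$ lies on the reduced boundaries of a unique pair of distinct grains $E_{j'_\ell,k_1(a)}$ and $E_{j'_\ell,k_2(a)}$, each occupying one side of $T^{-1}(a)\cap B_{r_\ell}(x)\setminus\{p_\ell(a)\}$. Since there are only finitely many such pairs, pigeonhole combined with a further subsequence yields a fixed unordered pair $\{k_1,k_2\}$ with $k_1\neq k_2$ such that
\[
\mathcal H^n(W_\ell^{k_1,k_2}\cap B_{r_\ell/2}(x))\geq c\,r_\ell^n,
\]
for some $c=c(n,N)>0$; here I use \eqref{ts1.3} to guarantee that most of $W_\ell$ concentrates inside $B_{r_\ell/2}(x)$.

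The second step is a Fubini-type volume estimate and its transfer to the limit. For each $a$ in the pigeonhole set, the two segments of $T^{-1}(a)\cap B_{r_\ell}(x)$ split by $p_\ell(a)$ each have length at least $(\sqrt{3}/2-o(1))r_\ell$ thanks to \eqref{ts1.5}, one contained in $E_{j'_\ell,k_1}$ and the other in $E_{j'_\ell,k_2}$. Integrating yields
\[
|E_{j'_\ell,k_i}\cap B_{r_\ell}(x)|\geq c'\,r_\ell^{n+1},\qquad i=1,2,
\]
for some $c'=c'(n,N)>0$. Exploiting the freedom to take $r_\ell\to 0$ arbitrarily slowly in Lemma \ref{lay1} (by passing to a faster subsequence of $j_\ell$), combined with the $L^1_{\rm loc}$-convergence from \eqref{e:partitions convergence}, a diagonal argument arranges $|E_{j'_\ell,i}\triangle E_i(t)|\cap B_1(x)=o(r_\ell^{n+1})$. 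This transfers the lower bound: $|E_{k_i}(t)\cap B_{r_\ell}(x)|\geq (c'/2)r_\ell^{n+1}$, so $\Theta^{*(n+1)}(\chi_{E_{k_i}(t)},x)>0$ for both $i=1,2$.

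To conclude, disjointness of $E_{k_1}(t)$ and $E_{k_2}(t)$ together with the positivity of both upper densities forces $x\in\partial^m E_{k_1}(t)\cap \partial^m E_{k_2}(t)$, since then $\Theta^{*(n+1)}(\chi_{E_{k_i}(t)^c},x)\geq \Theta^{*(n+1)}(\chi_{E_{k_{3-i}}(t)},x)>0$. By De Giorgi's structure theorem, $\mathcal H^n(\partial^m E_i(t)\setminus\partial^* E_i(t))=0$ for each $i$, whence $\mathcal H^n$-a.e.~$x\in S_t$ actually lies in $\bigcup_i\partial^* E_i(t)$, contradicting the definition of $S_t$. The main technical obstacle I foresee is verifying that the scale $r_\ell$ in Lemma \ref{lay1} can indeed be chosen slowly enough to beat the rate of $L^1$-convergence of the approximating grains; this requires revisiting the proof of Lemma \ref{lay1}, where $r_\ell=1/\ell$ is only one convenient choice, and checking that any $r_\ell\to 0$ is admissible provided $j'_\ell$ is extracted rapidly enough.
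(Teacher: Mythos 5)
Your proposal has a genuine gap at the \emph{combinatorial reduction} step, and this gap is precisely the heart of the proposition. You assert that for $\mathcal H^n$-a.e.~$a\in W_\ell$, the unique point $p_\ell(a)\in T^{-1}(a)\cap\bigcup_i\partial^*E_{j'_\ell,i}$ lies on the reduced boundaries of a distinct pair $E_{j'_\ell,k_1(a)},E_{j'_\ell,k_2(a)}$ ``each occupying one side of $T^{-1}(a)\cap B_{r_\ell}(x)\setminus\{p_\ell(a)\}$.'' The first part (uniqueness of the adjacent pair) does follow from \cite[Proposition 29.4]{Maggi_book} plus a slicing argument, but the second part does not: being a reduced-boundary point of two grains controls the $\Leb^{n+1}$-density picture under blow-up, and says nothing about the behavior of the grains along a fixed $1$-dimensional segment through the point, which has $\Leb^{n+1}$-measure zero. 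It is entirely consistent with $p_\ell(a)\in\partial^*E_{j'_\ell,k_1}\cap\partial^*E_{j'_\ell,k_2}$ that \emph{both} components of the punctured segment lie in a single grain $E_{j'_\ell,m}$ (possibly $m\notin\{k_1,k_2\}$, as an open set can send a measure-zero tentacle up to the boundary point). In that scenario your pigeonhole produces $k_1=k_2$, the disjointness argument collapses, and you only learn that some $E_m(t)$ has positive upper density at $x$ --- no contradiction.

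The paper's proof handles exactly this difficulty by a different route. It first invokes Federer's theorem for the \emph{limiting} partition: since $x\notin\bigcup_i\partial^*E_i(t)$, for $\mathcal H^n$-a.e.~such $x$ there is a unique $k$ with $\lim_{r\to 0}|E_k(t)\cap B_r(x)|/|B_r(x)|=1$. Transferring this via the $L^1_{\mathrm{loc}}$ convergence \eqref{e:partitions convergence}, it works with the measure-theoretic interior $I_\ell$ and exterior $O_\ell$ of the approximating grain $E_{j_\ell,k}$ and uses the crossing property from Step 5 of the proof of \cite[Theorem 5.23]{EG}: for $\mathcal H^n$-a.e.~$a$, between an interior and an exterior point along $T^{-1}(a)$ there must lie a measure-theoretic boundary point. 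Since $\theta=1$ forces a single boundary point on the slice, no transition $I_\ell\to O_\ell\to I_\ell$ can occur; the punctured segment lies entirely in $I_\ell$, and finally a Lebesgue-density argument at a generic $a$ shows $p_\ell(a)\in I_\ell$, contradicting $I_\ell\cap\partial_*\mathcal E_{j_\ell}=\emptyset$. That crossing lemma is the tool you are implicitly appealing to when you say ``each occupying one side,'' but it must be applied with care and in the direction used by the paper; your proposal does not supply this step.

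Your diagonal argument for choosing $r_\ell$ slow enough to beat the $L^1$-rate of \eqref{e:partitions convergence} is fine and is indeed used (in similar form) in the paper. That technical point is not where the problem lies.
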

\begin{proof}    
We prove \eqref{aun1} for a.e.~$t$ such that the conclusion of 
Lemma \ref{lay1} 
holds. With such $t$ fixed, we drop $t$ and 
suppose that \eqref{aun1} were not true for a contradiction. 
We may apply the result of Lemma \ref{lay1} and find a
point $x\in{\rm spt}\,\|V\|\setminus\cup_{i=1}^N\partial^* E_i$ with $\Theta^n(\|V\|,x)=1$
and $T:={\rm Tan}(\|V\|,x)$. Moreover, by the 
well-known property of
the set of finite perimeter, there exists some $k\in\{1,\ldots,N\}$
such that $\lim_{r\rightarrow 0}\frac{\mathcal L^{n+1}(E_k\cap 
B_r(x))}{\omega_{n+1}r^{n+1}}=1$. Without loss of generality,
we may assume that $x=0$, $T=\{x_{n+1}=0\}$ and $k=1$.
Since $\chi_{E_{j_\ell,1}}\rightarrow \chi_{E_1}$ in 
$L^1_{loc}$ as $\ell\rightarrow\infty$, in choosing the subsequence
(denoted by the same index) in Lemma \ref{lay1}, we may additionally arrange the choice
so that we have 
\begin{equation}\label{contr4}
    \lim_{\ell\rightarrow\infty}\frac{\mathcal L^{n+1}(E_{j_\ell,1}\cap B_{r_\ell})}{\omega_{n+1}r_\ell^{n+1}}=1. 
\end{equation}
For simplicity, write the union of the measure-theoretic boundaries as $\partial_* \mathcal E_{j_\ell}
:=\cup_{i=1}^N\partial_* E_{j_\ell,i}$. Since $\partial\mathcal E_{j_\ell}$ is closed, we remind the reader that \begin{equation}\label{unid3}
    \partial^*\mathcal E_{j_\ell} \subset\partial_*\mathcal E_{j_\ell} \subset \partial\mathcal E_{j_\ell}.
\end{equation} 
We also define the measure-theoretic interior and exterior of $E_{j_\ell,1}$ (\cite[Definition 5.13]{EG}) as:
\begin{equation}
    I_{\ell}:=\Big\{x\in\mathbb R^{n+1}\,:\, \lim_{r\rightarrow 0}
    \frac{\mathcal L^{n+1}(B_r(x)\setminus E_{j_\ell,1})}{r^{n+1}}=0\Big\}
\end{equation}
and 
\begin{equation}
  O_{\ell}:=\Big\{x\in\mathbb R^{n+1}\,:\, \lim_{r\rightarrow 0}
    \frac{\mathcal L^{n+1}(B_r(x)\cap E_{j_\ell,1})}{r^{n+1}}=0\Big\}. 
\end{equation}
We note from the definition that 
\begin{equation} \label{unid8}
    I_\ell\cap \partial_*\mathcal E_{j_\ell}=\emptyset. 
\end{equation}
We next use some results in the proof of \cite[Theorem 5.23]{EG} 
on the property of measure-theoretic interior and exterior. For each
$m,k\in\mathbb N$, define
\begin{equation}\label{aunid1}
\begin{split}
    & G_\ell(k):=\Big\{x\in\mathbb R^{n+1}\,:\, \mathcal L^{n+1}(B_r(x)\cap 
    O_\ell)\leq \frac{\omega_{n}r^{n+1}}{3^{n+2}}\mbox{ for }0<r<\frac{3}{k}\Big\}, \\
    & H_\ell(k):=\Big\{x\in\mathbb R^{n+1}\,:\, \mathcal L^{n+1}(B_r(x)\cap 
    I_\ell)\leq \frac{\omega_{n}r^{n+1}}{3^{n+2}}\mbox{ for }0<r<\frac{3}{k}\Big\},
    \end{split}
\end{equation}
and
\begin{equation}\label{aunid2}
\begin{split}
    & G_\ell^{\pm}(k,m):=G_\ell(k)\cap \Big\{x\,:\,x\pm se_{n+1}\in O_\ell\mbox{ for }0<s<\frac{3}{m}\Big\}, \\
    & H_\ell^{\pm}(k,m):=H_\ell(k)\cap \Big\{x\,:\,x\pm se_{n+1}\in I_\ell\mbox{ for }0<s<\frac{3}{m}\Big\}. 
    \end{split}
\end{equation}
Here $e_{n+1}$ is the unit vector pointing towards the positive direction of $x_{n+1}$-axis. These sets have the property (see Step 3 of \cite[Theorem
5.23]{EG}) that
\begin{equation}
    \mathcal H^{n}(T(G_\ell^{\pm}(k,m)))=\mathcal H^n(T(H_\ell^\pm (k,m)))=0
\end{equation}
for all $k,m\in\mathbb N$. Moreover, for all
\begin{equation}
    a\in T\setminus \cup_{k,m=1}^\infty T\big(
    G^+_\ell(k,m)\cup G^-_\ell(k,m)\cup H^+_\ell(k,m)\cup H^-_\ell(k,m)\big)
\end{equation}
and with $\mathcal H^0(T^{-1}(a)\cap \partial_*E_{j_\ell,1})<\infty$, if $x_1,x_2\in T^{-1}(a)$ with $T^{\perp}(x_1)<T^{\perp}(x_2)$ and
$x_1\in I_\ell$ and $x_2\in O_\ell$,
then there exists $x_3\in T^{-1}(a)\cap \partial_* E_{j_\ell,1}$ such that $T^{\perp}(x_1)<
T^{\perp}(x_3)<T^{\perp}(x_2)$ (see Step 5 of \cite[Theorem 5.23]{EG}). Here, $x_1$ is in interior and $x_2$ is in exterior of 
$E_{j_\ell,1}$ over $a$, and the claim is that there must be a
``boundary point'' $x_3$ between these two points. The same claim 
holds if $x_1\in O_\ell$ and $x_2\in I_\ell$ instead. 

Let $W_\ell$ be the set obtained in Lemma \ref{lay1}. 
By \cite[Lemma 5.9]{EG}, we have 
$\mathcal L^{n+1}(I_\ell\triangle E_{j_\ell,1})=0$, so with \eqref{contr4},
\begin{equation}\label{unid5}
    \lim_{\ell\rightarrow\infty}\frac{\mathcal L^{n+1}(I_\ell\cap B_{r_\ell} )}{\omega_{n+1}r_\ell^{n+1}}=1.
\end{equation}
Then, by the Fubini Theorem and \eqref{unid5}, we may choose a sequence $\{b_{\ell}\}_{\ell=1}^\infty 
\subset \mathbb R^+$ such that $b_{\ell}\in [r_\ell/3,r_\ell/2]$ and so that, writing 
\begin{equation*}\begin{split}
    &A_{\ell}^+:=
B_{r_\ell}\cap \{x_{n+1}= b_\ell\},\\
&A_{\ell}^-:=
B_{r_\ell}\cap \{x_{n+1}= -b_\ell\},
\end{split}
\end{equation*}
we have
\begin{equation}\label{unid7}
    \lim_{\ell\rightarrow\infty} \frac{\mathcal H^n(I_\ell
    \cap A_{\ell}^+)}{\mathcal H^n(A_\ell^+)}=\lim_{\ell\rightarrow\infty} \frac{\mathcal H^n(I_\ell
    \cap A_{\ell}^-)}{\mathcal H^n(A_\ell^-)}=1.
\end{equation}
On cylinders $T(A_\ell^+)\times[-b_\ell,b_\ell]\subset
\mathbb R^{n+1}$, by \eqref{aunid1}, \eqref{aunid2} and the stated property 
thereafter, for $\mathcal H^n$-a.e. $a\in W_\ell$, we have the following property:
\begin{equation}\label{unid6}
\begin{split}
    &\mbox{if }(a,s_1)\in I_\ell\mbox{ and }(a,s_2)\in O_\ell
    \mbox{ with } -b_\ell\leq s_1<s_2\leq b_\ell,
    \\ &\mbox{ then there
    exists }\hat s\in (s_1,s_2)\mbox{ such that }(a,\hat s)\in 
    \partial_* E_{j_\ell,1},
\end{split}
\end{equation}
and similarly if $(a,s_1)\in O_\ell$ and $(a,s_2)\in I_\ell$. 
On the other hand, we know that $T^{-1}(a)\cap B_{r_\ell}\cap 
\partial_*\mathcal E_{j_\ell}$ is a singleton located close to $T$
due to \eqref{ts1.4} and \eqref{ts1.5}. Here, we used the fact
that \eqref{ts1.4} is satisfied for both $\partial\mathcal E_{j_\ell}$
and $\partial^*\mathcal E_{j_\ell}$ as well as \eqref{unid3}.
We use this fact to 
\begin{equation*}
    a\in  W_\ell\cap T(I_\ell\cap A_\ell^+)\cap 
    T(I_\ell\cap A_\ell^-)=:W_\ell^*.
\end{equation*}
Note that $(a,r_\ell)$ and $(a,-r_\ell)$ are both in $I_\ell$ 
and $(\{a\}\times[-r_\ell,r_\ell])\cap \partial_* \mathcal E_{j_\ell}$
is a singleton due to the way $ W_\ell$ is defined. 
If $(\{a\}\times [-r_\ell,r_\ell])\cap O_\ell\neq \emptyset$,
\eqref{unid6} implies that there must be at least two points of
$\partial_* E_{j_\ell,1}$ in $\{a\}\times[-r_\ell,r_\ell]$, since
both crossing from $I_\ell$ to $O_\ell$ and the other way
around have to happen. 
Since $\partial_*\mathcal E_{j_\ell}=\cup_{i=1}^N\partial_*
E_{j_\ell,i}$, this is a contradiction. Combined with
\eqref{unid8}, we conclude that for $\mathcal H^n$-a.e.\,$a\in W_{\ell}^*$, $\{a\}\times[-r_\ell,r_\ell]$ is a disjoint union of
one point of $\partial_* \mathcal E_{j_\ell}$ and two line segments included
in $I_\ell$, with no point of $O_\ell$. 
Because of \eqref{ts1.3} and \eqref{unid7}, one also 
sees 
\begin{equation}
    \lim_{\ell\rightarrow\infty}\frac{\mathcal H^n(W_\ell^*)}{\mathcal H^n(A_\ell^+)}=1.
\end{equation}
In particular, $W_\ell^*$ has a positive $\mathcal H^n$ 
measure in $T\subset \mathbb R^n\times\{0\}$ 
and there must
be a Lebesgue point $a$ of $W_\ell^*$ such that 
$(\{a\}\times[-r_\ell,r_\ell])\cap \partial_* \mathcal E_{j_\ell}$ is a 
singleton, say, $\{(a,s)\}$. Then, by the Fubini theorem and 
the property of $W_\ell^*$,
\begin{equation*}
\begin{split}
    r^{-(n+1)}\mathcal L^{n+1}(O_\ell\cap B_r((a,s)))&\leq 
    r^{-(n+1)}\mathcal L^{n+1}((B^n_r(a)\setminus W_\ell^*)\times[s-r,s+r]) \\
    &\leq 2r^{-n} \mathcal H^n(B_r^n(a)\setminus W_\ell^*)
\end{split}
\end{equation*}
which converges to $0$ as $r\rightarrow 0$ since $a$ is a 
Lebesgue point of $W_\ell^*$ in $T$. Since $\mathcal L^{n+1}(O_\ell
\cap B_r(x))=\mathcal L^{n+1}(B_r(x)\setminus E_{j_\ell,1})$, 
this implies that $(a,s)\in I_\ell$.
On the other hand, $(a,s)\in 
\partial_* \mathcal E_{j_\ell}$, a contradiction to \eqref{unid8}. This concludes the proof.
\end{proof}
\begin{proposition}\label{evenodd}
Assume $N=2$. For $V_t$ and $\{E_i(t)\}_{i=1}^2$ in Proposition \ref{p:properties and limit}, for a.e.~$t\in \mathbb R^+$, we have 
\begin{equation}
    \theta_t(x)=\left\{\begin{array}{ll} \mbox{odd} & \mathcal H^n\,a.e.\,x
    \in \partial^* E_1(t)(=\partial^*E_2(t)),\\
    \mbox{even} & \mathcal H^n\,a.e.\,x\in {\rm spt}\,\|V_t\|\setminus\partial^*
    E_1(t).
    \end{array}\right.
\end{equation}
\end{proposition}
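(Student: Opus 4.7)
The plan is to count, along a generic vertical line through the blow-up of $V_t$ at a point $x\in\spt\|V_t\|$, the parity of the number of transverse phase switches between $E_{j'_\ell,1}$ and $E_{j'_\ell,2}$ inside the shrinking ball $B_{r_\ell}$ of Lemma~\ref{lay1}, and to match this with the parity of $\theta:=\theta_t(x)$. Fix a generic $t\in\R^+$ for which both Lemma~\ref{lay1} and the integrality conclusion of Theorem~\ref{main2} apply, and a generic $x$ satisfying the conclusion of Lemma~\ref{lay1} with $\theta\in\mathbb N$ and tangent plane $T:=\mathrm{Tan}(\|V_t\|,x)$; up to translation and rotation assume $x=0$ and $T=\{x_{n+1}=0\}$. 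Since $N=2$, at $\mathcal H^n$-a.e.~point of $\partial\mathcal E_{j'_\ell}$ one has $\partial^*E_{j'_\ell,1}=\partial^*E_{j'_\ell,2}$, with the two phases locally occupying complementary half-spaces.

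Applying the slicing theorem for sets of finite perimeter (see e.g.~\cite[Theorem~18.11]{Maggi_book}) to $E_{j'_\ell,1}$, for $\mathcal H^n$-a.e.~$a\in T$ the one-dimensional slice $E_{j'_\ell,1}\cap T^{-1}(a)$ is, up to $\mathcal H^1$-negligible sets, a disjoint union of open intervals in $T^{-1}(a)\cong\R$, whose 1D reduced boundary agrees $\mathcal H^0$-a.e.~with $\partial^*E_{j'_\ell,1}\cap T^{-1}(a)$. Intersecting with $W_\ell$ from Lemma~\ref{lay1}, for $\mathcal H^n$-a.e.~$a\in W_\ell$ the segment $T^{-1}(a)\cap B_{r_\ell}$ contains exactly $\theta$ reduced-boundary points, all concentrated in an $o(r_\ell)$-neighborhood of $T$ by \eqref{ts1.5}; the $\theta+1$ sub-segments they create lie alternately in $E_{j'_\ell,1}$ and $E_{j'_\ell,2}$, since two consecutive sub-segments in the same phase would give $E_{j'_\ell,1}$-density $1$ or $0$ in the 1D slice, contradicting the membership in the 1D reduced boundary. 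Hence the phases of the top and bottom portions $T^{-1}(a)\cap B_{r_\ell}\cap\{\pm x_{n+1}>\epsilon_\ell r_\ell\}$ (with $\epsilon_\ell\to 0^+$ determined by \eqref{ts1.5}) coincide if and only if $\theta$ is even.

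To identify these endpoint phases, I will compare with the blow-up of $E_i(t)$ at $x$. By Federer's structure theorem for sets of finite perimeter, for $\mathcal H^n$-a.e.~$x$ one is in exactly one of: Case~A, $x\in\partial^*E_1(t)$, in which case the blow-up of $E_1(t)$ at $x$ is a half-space $\{y\cdot\nu_{E_1}(x)<0\}$; or Case~B, $x$ is an $E_1(t)$-density point (density $0$ or $1$). In Case~A, using $\mathcal H^n\mres_{\partial^*E_1(t)}=|\nabla\chi_{E_1(t)}|\leq\|V_t\|$ from Theorem~\ref{main3}(4) and the matching of tangent planes at common density points of comparable rectifiable measures, one may arrange $\nu_{E_1}(x)=\pm e_{n+1}$, so the blow-up of $E_1(t)$ is a half-space bounded by $T$; in Case~B, the blow-up of $E_1(t)$ is $\emptyset$ or $\R^{n+1}$ up to $\mathcal L^{n+1}$-null sets. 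A standard diagonal extraction, combining the $L^1_{\mathrm{loc}}$-convergence $\chi_{E_{j'_\ell,i}}\to\chi_{E_i(t)}$ with the existence of the blow-up, then yields (along a further subsequence) $\chi_{E_{j'_\ell,i}}(r_\ell\,\cdot)\to\chi_{\mathrm{BU}_i}$ in $L^1_{\mathrm{loc}}$. Fubini applied in the rescaled picture on a ball of unit radius then implies: in Case~A, for $a$ in a $\mathcal H^n$-asymptotically-full subset of $W_\ell$, the bottom of $T^{-1}(a)\cap B_{r_\ell}$ lies in $E_{j'_\ell,1}$ and the top in $E_{j'_\ell,2}$, forcing $\theta$ odd; in Case~B, both endpoint portions lie in the same phase, forcing $\theta$ even.

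The main technical obstacle will be the diagonal extraction that transfers the $L^1_{\mathrm{loc}}$-convergence at unit scale down to the shrinking scale $r_\ell$, in such a way that Fubini still produces a $\mathcal H^n$-positive subset of $W_\ell$ with unambiguously determined endpoint phases for all sufficiently large $\ell$; this is where one must compatibly refine the subsequence provided by Lemma~\ref{lay1}. A secondary subtle point is the justification that in Case~B no half-space can arise as a subsequential blow-up of $E_1(t)$ at $x$, which follows from the $\mathcal H^n$-a.e.~equality of the essential and reduced boundaries of a set of finite perimeter.
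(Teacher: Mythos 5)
Your proposal follows essentially the same strategy as the paper's proof: work at a generic point via Lemma~\ref{lay1}, refine the subsequence so that the rescaled grains converge in $L^1_{\mathrm{loc}}$ to the blow-up of the limit grain $E_1(t)$ (half-space in Case~A, full/empty in Case~B), identify the phase of the top and bottom arcs of generic vertical slices of $B_{r_\ell}$, and read off the parity of $\theta$ from the number of layer crossings. The one genuine point of divergence is how the \emph{alternation} of the $\theta+1$ sub-intervals is established. You invoke Vol'pert's slicing theorem for sets of finite perimeter directly: for $\mathcal H^n$-a.e.~$a$, the $1$D reduced boundary of $E_{j'_\ell,1}\cap T^{-1}(a)$ is exactly $\bigl(\partial^*E_{j'_\ell,1}\bigr)\cap T^{-1}(a)$, which, restricted to $W_\ell$, is the $\theta$-point set of \eqref{ts1.4}; the alternation is then immediate from the structure of $1$D sets of finite perimeter. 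The paper instead constructs the alternation by hand, introducing the measure-theoretic interior/exterior $I_\ell,O_\ell$ of $E_{j_\ell,1}$, invoking the ``intermediate value'' property of the essential boundary along generic slices (Step~5 of \cite[Theorem~5.23]{EG}), and choosing intermediate levels $b_{\ell,k}$ between consecutive layers; the latter step is what necessitates the tilt-excess estimate \eqref{anti2}--\eqref{anti4} controlling the approximate tangents of $\partial\mathcal E_{j_\ell}$ at layer points, which your route avoids. The step you flag as the main obstacle---synchronizing the $L^1_{\mathrm{loc}}$ convergence with the shrinking-scale Fubini on an $\mathcal H^n$-positive subset of $W_\ell$---is precisely where the paper spends care (the extra requirements \eqref{uns3}--\eqref{uns4} on the subsequence, the choice of levels $\pm b_\ell$ via a Fubini selection as in \eqref{unid7}, and the Lebesgue-point argument at the end), so your diagnosis of where the work lies is accurate, and the details you sketch can be completed along those lines.
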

\begin{proof}
The proof proceeds similarly as the proof of 
Proposition \ref{lay2}, except that we need to localize the argument to
each layers. Fix a bounded 
open set $U\subset\mathbb R^{n+1}$. We may choose a 
generic $t\in\mathbb R^+$ as before and drop $t$.
Since $\partial^* E_1=\partial^* E_2$ by the definition of the
reduced boundary, for $\mathcal H^n$-a.e.~$x\in {\rm spt}\,\|V\|\setminus \partial^* E_1$, we have
\begin{equation}\label{uns1}
    \lim_{r\rightarrow 0}\frac{\mathcal L^{n+1}
(E_i\cap B_r(x))}{\omega_{n+1}r^{n+1}}=1\mbox{ for either }i=1
\mbox{ or }2. 
\end{equation}
On the other hand, for $\mathcal H^n$-a.e.~$x\in \partial^*
E_1$, there exists a unit outer normal $\nu$ to $\partial^*
E_1$ such that, letting $B_r^{+(-)}(x):=\{y\in B_r(x)\,:\, (y-x)\cdot\nu\geq(\leq) 
0\}$, we have 
\begin{equation}\label{uns2}
   \lim_{r\rightarrow 0}\frac{\mathcal L^{n+1}(E_1\cap B_r^+(x))}{\omega_{n+1}r^{n+1}}=0\,\,\mbox{ and }\,\,
     \lim_{r\rightarrow 0}\frac{\mathcal L^{n+1}(E_1\cap B_r^-(x)
   )}{\omega_{n+1}r^{n+1}}=\frac12.
\end{equation}
We use Lemma \ref{lay1}, and 
in the proof of Lemma \ref{lay1} we may additionally assume that
the chosen subsequence satisfies
\begin{equation}\label{uns3}
    \lim_{\ell\rightarrow\infty}\frac{\mathcal L^{n+1}(E_{j_\ell,i}
    \cap B_{r_\ell})}{\omega_{n+1}r_\ell^{n+1}}=1
    \,\,\mbox{for either $i=1$ or $2$}
\end{equation}
if $0\in {\rm spt}\,\|V\|\setminus \partial^*E_1$ and
\begin{equation}\label{uns4}
  \lim_{\ell\rightarrow \infty}\frac{\mathcal L^{n+1}(E_{j_\ell,1}\cap B_{r_\ell}^+)}{\omega_{n+1}r_\ell^{n+1}}=0\,\,\mbox{ and }\,\,
     \lim_{\ell\rightarrow \infty}\frac{\mathcal L^{n+1}(E_{j_\ell,1}\cap B_{r_\ell}^-
   )}{\omega_{n+1}r_\ell^{n+1}}=\frac12
\end{equation}
if $0\in\partial^*E_1$. Without
loss of generality, we may assume $i=1$ in \eqref{uns3}, $T=\{x_{n+1}=0\}$
and $B_{r_\ell}^+=B_{r_\ell}\cap \{x_{n+1}\geq 0\}$ in \eqref{uns4}.
By \eqref{ts1.6}, we have
\begin{equation}\label{anti3}
    \lim_{\ell\rightarrow\infty}(r_\ell)^{-n} \int_{B_{2r_{\ell}}} \|S-T\|\,d(\partial
    \mathcal E_{j_\ell})(x,S)=0.
\end{equation}
Set 
\begin{equation}\label{anti2}
    C_\ell:=\Big\{x\in\partial\mathcal E_{j_\ell}\cap B_{r_\ell}\,:\, \|{\rm Tan}(\|\partial\mathcal E_{j_\ell}\|,x)-T\|\leq 1/10\Big\}.
\end{equation}
Given any $0<\delta<r_\ell$ and $\mathcal H^n$-a.e.~$x\in \partial\mathcal E_{j_\ell}$, by the rectifiability of $\partial\mathcal E_{j_\ell}$, there exists $0<r<\delta$ such that
\begin{equation}\label{anti1}
\frac12\leq \frac{1}{\omega_n r^n}\|\partial\mathcal E_{j_\ell}\|(B_r(x))\mbox{ and }
   \frac{1}{\omega_n r^n} \int_{B_r(x)}\|{\rm Tan}(\|\partial
   \mathcal E_{j_\ell}\|,x)-S\|\,d(\partial\mathcal E_{j_\ell})\leq \frac{1}{40}.
\end{equation}
Then, for $\mathcal H^n$-a.e.~$x\in \partial\mathcal E_{j_\ell}\cap B_{r_\ell}\setminus C_\ell$, 
\eqref{anti1} and \eqref{anti2} show
\begin{equation}
\begin{split}
  %  \frac{1}{\omega_n r^n}
    \int_{B_r(x)} \|T-S\|\,d(\partial\mathcal E_{j_\ell}) &
    \geq \|{\rm Tan}(\|\partial\mathcal E_{j_\ell}\|,x)-T\|\,
    \|\partial\mathcal E_{j_\ell}\|(B_r(x))-\omega_n r^n/40 \\
    & \geq\omega_n r^n(1/20-1/40)=\omega_n r^n/40.
    \end{split}
\end{equation}
We cover $\partial\mathcal E_{j_\ell}\cap B_{r_\ell}\setminus C_\ell$ by 
such balls and use the Besicovitch covering theorem to show that
\begin{equation}\label{anti4}
    \mathcal H^n(\partial\mathcal E_{j_\ell}\cap B_{r_\ell}\setminus C_\ell)
    \leq 40{\bf B}_{n+1}\int_{B_{2r_\ell}} \|T-S\|\,d(\partial\mathcal E_{j_\ell}),
\end{equation}
where ${\bf B}_{n+1}$ is the Besicovitch constant. Then \eqref{anti3} and
\eqref{anti4} show
\begin{equation}
    \lim_{\ell\rightarrow\infty}(r_\ell)^{-n}
    \mathcal H^n(\partial\mathcal E_{j_\ell}\cap B_{r_\ell}\setminus C_\ell)
    =\lim_{\ell\rightarrow\infty}(r_\ell)^{-n}
    \mathcal H^n(T(\partial\mathcal E_{j_\ell}\cap B_{r_\ell}\setminus C_\ell))
    =0,
\end{equation}
so that $T^{-1}(a)\cap \partial\mathcal E_{j_\ell}\cap B_{r_\ell} \subset 
C_\ell$ for $a$ in a large portion of $T\cap B_{r_\ell}$ for large enough $\ell$. 
Now recall the property of $W_\ell$ in Lemma \ref{lay1}. We may redefine $W_\ell$
by $W_\ell\setminus T(\partial\mathcal E_{j_\ell}\cap B_{r_\ell}\setminus C_\ell)$
and keep the properties \eqref{ts1.4}-\eqref{ts1.3}. By this, we additionally 
have that 
\begin{equation}
\|{\rm Tan}( \|\partial\mathcal E_{j_\ell}\|,x)-T\|\leq 1/10\mbox{ for all }x\in T^{-1}(W_\ell)\cap B_{r_\ell}\cap \partial^*\mathcal E_{j_\ell}.
\end{equation}
We now proceed similarly to the previous Proposition \ref{lay2}, and choose $b_\ell$
satisfying \eqref{unid7} in the case of \eqref{uns3}. The case of \eqref{uns4}
can be handled similarly so we discuss the former case. For all large $\ell$, 
we may choose a Lebesgue point $a$ of $W_\ell$ in $T$ such that 
$(a,-b_\ell)$ and $(a,b_\ell)$ are also Lebesgue points of $I_\ell\cap A_\ell^-$
and $I_\ell\cap A_\ell^+$, respectively. By \eqref{ts1.4} and \eqref{ts1.5}, there are 
$-b_\ell:=u_0<u_1<\ldots<u_\theta<u_{\theta+1}:=b_\ell$ such that $\cup_{k=1}^\theta\{(a,u_k)\}
=T^{-1}(a)\cap B_{r_\ell}\cap \partial^*\mathcal E_{j_\ell}$. At each point
$(a,u_k)$, since it is in $\partial^* E_{j_\ell,1}$, 
the blow-up of $E_{j_\ell,1}$ converges to a half-space, with the approximate
tangent space having a small slope relative to $T$ due to \eqref{anti2}. 
Then, for sufficiently small $0<\delta<\min_{0\leq k\leq \theta}
\{|u_{k+1}-u_k|\}$ and $k=1,\ldots, \theta$, we may choose $b_{\ell,k}
\in [\delta/3,\delta/2]$ so that
\begin{equation}
   \frac{ \mathcal H^n (T^{-1}(B^n_\delta(a))\cap \{x_{n+1}=u_k+b_{\ell,k}\}\cap O_\ell)}{
    \omega_n \delta^n}
    \geq 1-\frac{1}{6\theta}
\end{equation}
and 
\begin{equation}
    \frac{\mathcal H^n(T^{-1}(B^n_\delta(a))\cap \{x_{n+1}=u_k-b_{\ell,k}\}\cap I_\ell)}{
    \omega_n \delta^n}
    \geq 1-\frac{1}{6\theta},
\end{equation}
or the inequalities replacing the role of $O_\ell$ and $I_\ell$. We may also 
assume that 
\begin{equation}
    \frac{\mathcal H^n(B_\delta^n(a)\cap W_\ell)}{\omega_n\delta^n}\geq \frac{8}{9}
\end{equation}
since $a$ is a Lebesgue point of $W_\ell$, and similarly
\begin{equation}
    \frac{\mathcal H^n(T^{-1}(B_\delta^n(a))\cap I_\ell\cap A_\ell^{\pm})}{\omega_n
    \delta^n}\geq \frac{8}{9}.
\end{equation}
With these properties, we can make sure that, with respect to $\mathcal H^n$, 
$1/3$ of $W_\ell\cap B_\delta^n(a)$ has the property that, if $\tilde a$ is
in this set, 
\begin{equation}
    (\tilde a,\pm b_\ell)\in I_\ell, \,\, (\tilde a, u_k+b_{\ell,k})\in O_\ell,\,\,
    (\tilde a,u_k-b_{\ell,k})\in I_\ell \mbox{ or vice-versa for } k=1,\ldots,
    \theta.
\end{equation}
Using \eqref{unid6}, for $\mathcal H^n$-a.e.~$\tilde a$ as above, there 
exists some $s_k\in (u_k-b_{\ell,k},u_k+b_{\ell,k})$ with $(\tilde a,s_k)\in 
\partial_* E_{j_\ell,1}$ for each $k=1,\ldots,\theta$, and there are no other point of $\partial_* E_{j_\ell,1}$ along the line segment connecting $(\tilde a,-b_\ell)$ and $(\tilde a,b_\ell)$. Looking at this line segment and the intersection of $O_\ell$
and $I_\ell$, 
since these two endpoints are in $I_\ell$ and each $(\tilde a,s_k)$
is sided by $O_\ell$ and $I_\ell$, $\theta$ has to be necessarily even.
This finishes the proof in the case of \eqref{uns3}. For \eqref{uns4}, the 
similar argument results in the situation that $(\tilde a,-b_\ell)\in I_\ell$ and $(\tilde a, b_\ell)\in O_\ell$,
which necessitates that $\theta$ is odd. This concludes the proof. 
\end{proof}
Finally, we comment on the proofs of Theorem \ref{main4} and \ref{main5}. If $V_t$ is a unit density flow in $U\times(t_1,t_2)$,
then $\theta_t(x)=1$ for $\|V_t\|$-a.e.~$x\in U$ and
a.e.~$t\in(t_1,t_2)$. By Theorem \ref{main2}(2)(4), we may 
assume that $V_t={\bf var}
(\Gamma(t),1)$, and by Theorem \ref{main3}(7), $\Gamma(t)$
may be replaced by $\cup_{i=1}^N\partial^* E_i(t)$. Thus 
\eqref{4-eq} follows immediately. To check that \eqref{BV formulation of mean curvature} holds, since $\sum_{i\neq j}\mathcal H^n\mres_{I_{i,j}(t)}
=2\mathcal H^n\mres_{\Gamma(t)}=2\|V_t\|$
(see \eqref{chap1} and \eqref{chap2}), the left-hand 
side of \eqref{BV formulation of mean curvature} 
is equal to $2\int_0^T \delta V_t(g)\,dt$. Since $v_i\nu_i=(h\cdot\nu_i)\nu_i
=h$ for $\mathcal H^n$-a.e.~$x\in I_{i,j}(t)$ due to the 
perpendicularity of the mean curvature vector, the right-hand
side of \eqref{BV formulation of mean curvature} is 
$-2\int_0^T \int h\cdot g\,d\|V_t\|dt$. Since the 
generalized mean curvature vector exists for a.e.~$t>0$,
they are equal indeed. This proves the claim of Theorem 
\ref{main4}. For Theorem \ref{main5}, under the assumption,
one can show that there exists $T_0=T_0(n,\mathcal H^n(\Gamma_0),r_0,\delta_0)>0$ such that $\int_{\Gamma_0} \rho_{(y,s)}(x,0)\,d\mathcal H^n(x)<2-\delta_0/2$
(recall \eqref{Huisken heat}) for all
$y\in\mathbb R^{n+1}$ and $0<s\leq T_0$. Then, Huisken's
monotonicity formula shows that $\int_{\mathbb R^{n+1}}
\rho_{(y,s)}(x,t)\,d\|V_t\|(x)$ is non-increasing on 
$t\in[0,s)$ and thus $< 2-\delta_0/2$. For a contradiction, if $V_t$ is not unit density on $[0,T_0]$,
there would exist some $t\in (0,T_0)$ with $V_t\in {\bf IV}_n
(\mathbb R^{n+1})$, and $y\in {\rm spt}\|V_t\|$ such that 
$\Theta^n(\|V_t\|,y)\geq 2$ and where $T_y\|V_t\|$ exists. 
Then, one can prove that $\lim_{\epsilon\rightarrow 0+}
\int_{\mathbb R^{n+1}}\rho_{(y,t+\epsilon)}(x,t)\,d\|V_t\|(x)
=\Theta^n(\|V_t\|,y)\geq 2$. Since $t+\epsilon<T_0$ for all
small $\epsilon>0$, this would be a contradiction. Thus, 
$V_t$ is a unit density Brakke flow on $[0,T_0]$. Once this
is proved, by Theorem \ref{main4}, the claim of 
BV solution also follows. This is the outline of proof of
Theorem \ref{main5}. 
%%%%%%%%%%%%%%%%%%%%%%%%%%%%%%%%%%%%%%%%%%%%%%%%%%%

\section{Final remarks} \label{s:final remarks}
\subsection{On generalized BV solutions}\label{on-g-BV}
As explained in Definition \ref{d:BV flow}, a BV flow is classically defined as consisting of two objects: $N$ families of sets of finite perimeter 
$E_i(t)$, and velocities $v_i$. From them, naturally,
one can define a unit density varifold $V_t={\bf var}(\cup_{i=1}^N\partial^*
E_i(t),1)$ and check that \eqref{BV formulation of mean curvature}
implies that the generalized mean curvature $h(\cdot,V_t)$ is equal to $v_i\nu_i$
on $\partial^* E_i(t)$ for $i=1,\ldots,N$. 
The generalized BV flow of Theorem \ref{main3}(6)
involves the accompanying Brakke flow $V_t$ in addition to the families
of sets of finite perimeter, and one may wonder if the definition 
makes sense even without the reference to 
the Brakke flow. In fact, it is interesting to 
observe that each $\partial^* E_i(t)$ for a.e.~$t>0$ is $C^2$-rectifiable
due to Menne's $C^2$-rectifiability theorem \cite[Theorem 4.8]{Menne} and
one can define a unique second fundamental form for $\partial^* E_i(t)$ 
as well as mean curvature vector by the $C^2$-approximability property, independent of $V_t$. The mean curvature vector defined in this sense
coincides with $h(\cdot,V_t)$ $\|V_t\|$-a.e. on $\partial^* E_i(t)$. Thus, $h\cdot\nu_i$ on $\partial^* E_i(t)$ is uniquely defined from the 
$C^2$-rectifiability without reference to $V_t$. 
On the other hand, the summation  $\tilde h:=\frac12\sum_{i=1}^N(h\cdot\nu_i)\nu_i$ may not correspond, in general, to the generalized
mean curvature vector of ${\bf var}(\cup_{i=1}^N\partial^* E_i(t),1)$ if
there is some non-trivial higher multiplicity portion of $V_t$. For example on $\mathbb R^2$, define
$E_+:=\{(x,y)\,:\,y>0\mbox{ if }x\leq 0,\,y>x^2\mbox{ if }x>0\}$ and $E_-:=\{(x,y)\,:\, y<0\mbox{ if }x\leq 0,\,
y<-x^2\mbox{ if }x>0\}$. Then $V:={\bf var}(\partial E_+,1)
+{\bf var}(\partial E_-,1)$ has a bounded generalized
mean curvature while ${\bf var}(\pa^*E_+ \cup \pa^*E_-,1)$ has a singular first variation at the origin. 
Note that $V$ has multiplicity $=2$ on the negative
$x$-axis. 
In this sense, the formula \eqref{BV formulation of mean curvature} does not hold in general. Note that
\eqref{d:e_diss} is relevant only when $\mathcal H^n(\Gamma_0)$ is finite, and it follows from Theorem
\ref{main1}(4) with $v_i=h\cdot\nu_i$.
Over all, for 
generalized BV solution, it makes sense to consider the pair of 
sets of finite perimeter and Brakke flow together, unlike the original
BV solutions discussed in Definition \ref{d:BV flow}.
%%%%%%%%%%%%%%%%%%%%%%%%%%%%

\subsection{MCF with fixed boundary conditions} \label{fixed}
In \cite{ST19}, given a strictly convex bounded
domain $U\subset\mathbb R^{n+1}$ with $C^2$
boundary $\partial U$, a countably $n$-rectifiable set $\Gamma_0
\subset U$ with $\mathcal H^n(\Gamma_0)<\infty$
and an open partition $E_{0,1},\ldots,E_{0,N}$
of $U$ such that $\Gamma_0=U\setminus \cup_{i=1}^N
E_{0,i}$, existence of a Brakke flow and a family of open 
partitions with fixed
boundary condition is established for the given 
initial datum. The construction method is along the lines of \cite{KimTone},
and we may also carry it out using the volume-controlled Lipschitz maps. If one compares the construction in \cite{ST19} with that in \cite{KimTone}, one sees that differences occur only 
near the boundary $\partial U$: more precisely, the approximate smoothed mean curvature vector is damped 
near the portion of $\Gamma_0$ close to $\partial U$, and
there is 
another step in each epoch -- a Lipschitz retraction
step (see \cite[Section 2.6]{ST19}). Hence, the proof
of the present paper works with no essential change
away from $\partial U$, and \eqref{th:BV_identity}
holds for $\phi\in C_c^1(U\times\mathbb R^+)$; since the formula does not involve
$\nabla\phi$, by approximation, the same formula holds even
for $\phi\in C^1({\rm clos}\,U\times[0,T])$
for arbitrary $T>0$.  
Since the existence results in \cite{stu-tone2} are 
based on \cite{ST19}, the same applies to the solutions discussed in \cite{stu-tone2}.

\subsection{A lower bound estimate for extinction time}\label{extinct}

Here we prove Theorem \ref{t:extinction}. Suppose that $\Ha^n(\Gamma_0) < \infty$, and assume without loss of generality that $N$ is the index of the only grain $E_{0,i}$ with infinite volume. Define then the open set $E(t):=\cup_{i=1}^{N-1}
E_i(t)$. Since $\nu_i\cdot h=-\nu_j\cdot h$ on 
$\partial^* E_i(t)\cap\partial^* E_j(t)$ for $i\neq j$ $\mathcal H^n$-a.e., after summing over $i=1,\ldots,N-1$ formula \eqref{th:BV_identity} gives
\begin{equation*}
 |E(t)|-|E(0)|
 =\int_{0}^{t}\int_{\partial^* E(s)}
 \nu\cdot h\,d\mathcal H^n
\end{equation*}
for all $0<t<\infty$,
where $\nu$ is the outer unit normal of $\partial^* E(s)$. In particular, if we set $v(t) := |E(t)|$ then we have
\begin{equation} \label{derivative of volume}
    v'(t) = \int_{\pa^*E(t)} \nu \cdot h \,d\Ha^n \qquad \mbox{for a.e. $t \in \R^+$}\,.
\end{equation}
Next, set $a(t) := \|V_t\|(\R^{n+1})$. By Brakke's inequality \eqref{brakineq}, the upper derivative $a'_+(t)$ of $a(t)$ satisfies
\begin{equation} \label{derivative of area}
   - a'_+(t) \geq \int |h|^2 \, d\|V_t\| \geq \int_{\Gamma(t)} |h|^2\, d\Ha^n \geq \int_{\pa^*E(t)} |h|^2 \,d\Ha^n \qquad \mbox{for a.e. $t \in \R^+$}\,.
\end{equation}
Combining \eqref{derivative of volume} and \eqref{derivative of area} then gives the inequality
\[
-v'(t) \leq \left( \Ha^n (\pa^*E(t)) \right)^{\frac12}\,\left( \int_{\pa^*E(t)} |h|^2\,d\Ha^n \right)^{\frac12} \leq \left(- a(t)\,a'_+(t) \right)^{\frac12} = \left( - \frac{(a^2)'_+(t)}{2} \right)^{\frac12}
\]
for a.e. $t \in \R^+$. Integrating over any interval $\left[0,T\right]$ and using the Cauchy-Schwarz inequality then yields
\begin{equation} \label{ext estimate 1}
    |E(0)| - |E(T)| \leq \sqrt{T/2} \, \left( \|V_0\|(\R^{n+1})^2 - \|V_T\|(\R^{n+1})^2  \right)^{\frac12} \leq \sqrt{T/2} \, \Ha^n(\Gamma_0)\,.
\end{equation}

% If $E(T)=\emptyset$,
% then by the H\"{o}lder inequality, Theorem \ref{main3}(4) and Theorem \ref{main1}(4),
% \begin{equation*}
% |E(0)|\leq \Big(\int_0^T\int_{\partial^* E(t)}|h|^2\,d\mathcal H^n dt\Big)^{\frac12}\Big(\int_0^T
% \mathcal H^{n}(\partial^* E(t))\,dt\Big)^{\frac12} 
% \leq \mathcal H^n(\Gamma_0)\sqrt{T}.
% \end{equation*}

Since $V_t\neq 0$ as long as $E(t)\neq \emptyset$, the extinction time $T_*$ is at least equal to the first time when $E(t)$ becomes the empty set, so that \eqref{ext estimate 1} implies 
\[    T_* \geq 2\, \left( \frac{|E(0)|}{\Ha^n (\Gamma_0)} \right)^2\,,
\]
that is \eqref{e:extinction}. \qed

% this gives the lower bound for the time that
% $E(t)$ can be the empty set, namely, 
% $$
% \mbox{time of extinction of }V_t\geq (|E(0)|/\mathcal H^n(\Gamma_0))^2
% =\big(\sum_{i=1}^{N-1}|E_{0,i}|/\mathcal H^n
% (\Gamma_0)\big)^2.$$ 
% On the other hand,
% if $\Gamma_0$ is bounded, by , $V_t$ must vanish in finite time.  
% See \cite{Giga-Yamauchi} for a related lower bound estimate of
% extinction time in the framework of level set solutions. 

%%%%%%%%%%%%%%%%%%%%%%%%%%%%
\appendix

\section{The existence theorem of \cite{KimTone} revisited} \label{appendix:Lipschitz}

Here we point out places which require a change to
${\bf E}^{vc}(\mathcal E,j)$ from 
${\bf E}(\mathcal E,j)$ in \cite{KimTone,KimTone2}. 
It turned out that the proofs
require no essential change and the only point to 
be checked is that the same Lipschitz maps used in the
proofs satisfy the condition of Definition \ref{ar8}(b).
%%%%%%%%%%%%%%%%
\subsection{Construction of approximate flows}
For the construction of discrete approximate sequence in \cite[Section 6]{KimTone},
we simply replace ${\bf E}(\mathcal E_{j,l},j)$ by ${\bf E}^{vc}(\mathcal E_{j,l},j)$
and $\Delta_j\|\partial\mathcal E_{j,l}\|(\Omega)$ by $\Delta_j^{vc}\|\partial\mathcal E_{j,l}\|(\Omega)$
when $f_1$ is chosen in \cite[(6.9)]{KimTone}. 
As in \cite[(6.10)]{KimTone}, if we define $\{E_{j,l,i}\}_{i=1}^N:=\mathcal E_{j,l}$,
$\mathcal E_{j,l+1}^*:=(f_1)_\star\mathcal E_{j,l}$ and $\{E_{j,l+1,i}^*\}_{i=1}^N=\mathcal E^*_{j,l+1}$, by 
Definition \ref{ar8}(b) and
\eqref{ar1}, we have for each $i=1,\ldots,N$
\begin{equation*}
\mathcal L^{n+1}(E_{j,l+1,i}^*\triangle E_{j,l,i})\leq \{\|\partial\mathcal E_{j,l}\|(\Omega)
-\|\partial \mathcal E_{j,l+1}^*\|(\Omega)\}/j\leq -(\Delta_j^{vc}\|\partial\mathcal E_{j,l}\|(\Omega))/j.
\end{equation*}
The change in \cite[(6.9)]{KimTone} is also reflected in the estimate \cite[(6.4)]{KimTone} and we have
\begin{equation*}
\begin{split}
&\frac{\|\partial\mathcal E_{j,l}\|(\Omega)-\|\partial\mathcal E_{j,l-1}\|(\Omega)}{\Delta t_j}+\frac14 \int_{\mathbb R^{n+1}}
\frac{|\Phi_{\varepsilon_j}\ast\delta(\partial\mathcal E_{j,l})
|^2\Omega}{\Phi_{\varepsilon_j}\ast\|\partial\mathcal E_{j,l}\|
+\varepsilon_j\Omega^{-1}}\,dx \\
&-\frac{(1-j^{-5})}{\Delta t_j}\Delta_j^{vc}\|\partial\mathcal E_{j,l-1}\|(\Omega)\leq
\varepsilon_j^{\frac18}+\frac{c_1^2}{2}\|\partial\mathcal E_{j,l-1}\|(\Omega).
\end{split}
\end{equation*}
This leads to the estimates \eqref{e:mean curvature bounds}
and \eqref{e:speed of lmm lip}. 
%%%%%%%%%%%%%%%%%%%%%%
\subsection{Proofs of rectifiability and integrality}
\label{PRI}
For the proofs of rectifiability and integrality of $\mu_t$ for 
$\mathcal L^1$-a.e.~$t\in\mathbb R^+$, the smallness of 
$\Delta_j\|\partial\mathcal E(t)\|$ is essential in 
\cite[Section 7\&8]{KimTone}. The general idea is that, whenever
$\Delta_j\|\partial\mathcal E(t)\|$ is used in \cite{KimTone}, the proofs
use contradiction arguments and some appropriate Lipschitz deformations with
drastic measure reduction are constructed. Here, 
``drastic'' means that the measure is typically reduced 
by some factor of the measure 
itself, so that the reduction is typically 
much larger than the 
volume change caused by the deformation. Thus, even if 
we impose the additional condition, it is satisfied by the same Lipschitz
deformations in \cite{KimTone} and we only need to check 
that it is indeed the case. We point out the following
three separate places. 

(1) In \cite[Proposition 7.2]{KimTone}, it is proved 
    that there exists a $\mathcal E$-admissible function $f$
    which reduces the measure $\|\partial\mathcal E\|(B_r)$ 
    by the factor of $1/2$ (see (3))
    for some $r\in[R/2,R]$ when the measure in $B_R$ is 
    sufficiently small. Note that (4) 
    gives the desired estimate on the 
    change of volume of each grain in terms of
    $\|\partial\mathcal E\|(B_r)$. Since 
    the radii $r$ of balls used later are typically
    $O(1/j^2)$, thus the volume change is $O(r^{n+1})=O(r^n/j^2)$, and $\|\partial\mathcal E\|(B_r)=O(r^{n})$,
    we can deduce that 
    $f$ belongs to ${\bf E}^{vc}(\mathcal E,j)$. Since 
    the claim of \cite[Proposition 7.2]{KimTone} is about
    the existence of $\mathcal E$-admissible function, no 
    change is required in the proof. 
    
(2) In \cite[Theorem 7.3]{KimTone}, the assumption (4) 
    should be replaced by the volume-controlled counterpart.
    In the proof, on p.94-95, a Lipschitz map $f$ is defined
    with the properties stated in the bottom of p.94 using
    \cite[Proposition 7.2]{KimTone}. Using \cite[(7.40)]{KimTone} which gives
    \begin{equation*}
        (1-2^{-\sfrac12})\|\partial\mathcal E_{j_l}\|\mres_{\Omega}
        (B_{r_x}(x))\leq \|\partial\mathcal E_{j_l}\|\mres_{\Omega}(B_{r_x}(x))-\|\partial
        (f_x)_\star\mathcal E_{j_l}\|\mres_{\Omega}(B_{r_x}(x)),
    \end{equation*}
    one can proceed in \cite[(7.43)]{KimTone} as
    \begin{equation*}
        \begin{split}
            \mathcal L^{n+1}(E_i\triangle\tilde E_i) &
            \leq c_4\sum_{k=1}^\Lambda (\|\partial\mathcal E_{j_l}\|
            (B(k))^{\frac{n+1}{n}} \leq  \frac{c_4 c_3^{1/n}}{2j_l^2} \sum_{k=1}^\Lambda\|\partial \mathcal E_{j_l}\|
            (B(k)) \\
            & \leq \frac{c_4 c_3^{1/n}(\min_{B_3(x_0)}\Omega)^{-1}}{2(1-2^{-\sfrac12})j_l^2} \sum_{k=1}^\Lambda (\|\partial\mathcal E_{j_l}\|\mres_{\Omega}
            (B(k))-\|\partial f_\star \mathcal E_{j_l}\|\mres_{\Omega}(B(k))) \\
            & =\frac{c_4 c_3^{1/n}(\min_{B_3(x_0)}\Omega)^{-1}}{2(1-2^{-\sfrac12})j_l^2}  (\|\partial\mathcal E_{j_l}\|(\Omega)-\|\partial f_\star \mathcal E_{j_l}\|(\Omega)).
            \end{split}
    \end{equation*}
    Thus, for all sufficiently large $l$, Definition \ref{ar8}(b) is satisfied. 
    The use of \cite[Lemma 4.12]{KimTone} is also justified,
    and we have $f\in {\bf E}^{vc}(\mathcal E_{j_l},j_l)$.

(3) Throughout Section 8 of \cite{KimTone}, the only 
    crucial point that needs to be checked is in \cite[Lemma 8.1]{KimTone} which involves the actual construction of 
    a measure-reducing Lipschitz deformation. It proves roughly that when $\partial\mathcal E$ is flat and close to being measure-minimizing within a cylinder of size $O(1/j^2)$, then the measure 
    has to be an integer multiple of discs. The argument proceeds by assuming the contrary. The intuitive picture is that, if
    $\partial\mathcal E$ does not have a measure close to a 
    multiple of discs, then one can locate a hole which can be 
    expanded horizontally. This would cause a drastic reduction of measure and lead to a contradiction to the almost measure-minimizing property. More precisely, besides the 
    change of $\Delta_j\|\partial\mathcal E\|$ to $\Delta_j^{vc}\|\partial\mathcal E\|$ throughout, near 
    the end of the proof of \cite[Lemma 8.1]{KimTone}, p.106, one
    needs to check the ``expansion map'' $f_a$ is in ${\bf E}^{vc}(\mathcal E,E(r_1,\rho_1),j)$. We recall that
    $Y\subset T^{\perp}$ is a set of $\nu$ points with 
    \begin{eqnarray*}
        {\rm diam}\,Y<j^{-2}\,\,\mbox{(\cite[Lemma 8.1(3)]{KimTone})} \\
        r_1<R<j^{-2}/2\,\,\mbox{(4 lines above \cite[(8.7)]{KimTone} and \cite[Lemma 8.1(1)]{KimTone})} \\
        \rho_1=(1+R^{-1}r_1)\rho<2\rho<j^{-2}
        \,\,\mbox{(\cite[(8.7)]{KimTone} and \cite[Lemma 8.1(1)]{KimTone})}
    \end{eqnarray*} 
    and, with $T\in {\bf G}(n+1,n)$ fixed, 
    \begin{equation*}
        E(r_1,\rho_1)=\{x\in\mathbb R^{n+1}\,:\, |T(x)|\leq r_1,\,
        {\rm dist}\,(T^{\perp}(x),Y)\leq \rho_1\}\,\,
        \mbox{ (\cite[(8.1)]{KimTone}).}
    \end{equation*}
 The map $f_a$ is defined to be the identity map
 outside of $E(r_1,\rho_1)$, so the change of volume of grains 
 caused by
 $f_a$ is at most 
 \begin{equation}\label{A1}
 \mathcal L^{n+1}(E(r_1,\rho_1))\leq
 2\omega_{n}\nu r_1^n\rho_1<2\omega_n\nu r_1^n/j^2.
 \end{equation}
 As one can see in \cite[(8.67)]{KimTone}, the reduction of measure is
 \begin{equation}\label{A2}
     \|\partial(f_a)_{\star}\mathcal E\|(E(r_1,\rho_1))
     -\|\partial\mathcal E\|(E(r_1,\rho_1))<-\frac12 (1-\zeta)
     \omega_n r_1^n
 \end{equation}
 and we also have from \cite[(8.8)]{KimTone} that
 \begin{equation} \label{A3}
     \|\partial\mathcal E\|(E(r_1,\rho_1))=(\nu-\zeta)\omega_n r_1^n.
 \end{equation}
 We need to see the difference with the weight $\Omega$, 
 and since ${\rm diam}\,E(r_1,\rho_1)< 4/j^2$,
 \begin{equation}\label{A4}
 \begin{split}
     &\|\partial(f_a)_{\star}\mathcal E\|(\Omega)
     -\|\partial\mathcal E\|(\Omega) \\ &=
     \|\partial(f_a)_{\star}\mathcal E\|\mres_\Omega (E(r_1,\rho_1))
     -\|\partial\mathcal E\|\mres_\Omega (E(r_1,\rho_1)) \\
     &\leq (\max_{E(r_1,\rho_1)}\Omega) \|\partial(f_a)_{\star}\mathcal E\| (E(r_1,\rho_1))-(\min_{E(r_1,\rho_1)}\Omega)\|\partial\mathcal E\| (E(r_1,\rho_1)) \\
     &\leq (\min_{E(r_1,\rho_1)}\Omega)(e^{4c_1/j^2}\|\partial(f_a)_{\star}\mathcal E\| (E(r_1,\rho_1))-\|\partial\mathcal E\| (E(r_1,\rho_1))) \\
     & \leq (\min_{E(r_1,\rho_1)}\Omega)\big\{e^{4c_1/j^2}(\|\partial(f_a)_{\star}\mathcal E\| (E(r_1,\rho_1))-\|\partial\mathcal E\| (E(r_1,\rho_1)))
     \\ &\,\,\,\,+(e^{4c_1/j^2}-1)\|\partial\mathcal E\|(E(r_1,\rho_1)\big\} \\
     &\leq -\frac12 (1-\zeta)\omega_n(\min_{E(r_1,\rho_1)}\Omega)e^{4c_1/j^2}r_1^n
     +\frac{4c_1}{j^2}e^{4c_1/j^2} (\nu-\zeta)\omega_n r_1^n.
     \end{split}
 \end{equation}
 In the last line, we used \eqref{A2} and \eqref{A3}. Note that the first term of the last line is a negative term
 of order $O(r_1^n)$, while the change of volume expressed
 in \eqref{A1} is $O(r_1^n/j^2)$. Thus \eqref{A1} and \eqref{A4} give the desired 
 inequality
 \begin{equation*}
     \mathcal L^{n+1}(E_i\triangle\tilde E_i)
     \leq \mathcal L^{n+1}(E(r_1,\rho_1))
     \leq (\|\partial\mathcal E\|(\Omega)-\|\partial(f_a)_\star\mathcal E\|(\Omega))/j
 \end{equation*}
 for all sufficiently large $j$, and we have $f_a\in {\bf E}^{vc}(\mathcal E,E(r_1,\rho_1),j)$. 
 The rest of the proof is not affected by the change of
 volume-controlled deformation. 
 %%%%%%%%%%%%%%%%
 \subsection{Volume change of grains}
 The motivation of having $\mathcal L^{n+1}
 (E_i\triangle\tilde E_i)<1/j$ in \cite{KimTone} is 
 the use in the proof of \cite[Lemma 10.10]{KimTone}, and
 in fact, it is the only place that this inequality is 
 essentially used to derive any conclusion. 
 In the proof, see the second line from the 
 bottom of \cite[p.134]{KimTone}, it is used to make sure that the 
 volume change of grains is small for each discrete time step
 and the continuity of the labelling of each grain is derived in the end. 
The similar smallness of volume change is available with 
the volume-controlled counterparts since $\|\partial
\mathcal E_{j_\ell}(t)\|(\Omega)$ is uniformly bounded 
for a fixed time interval $[0,T]$ by \eqref{e:mass bound}.
Thus the proof can be carried out similarly. 
\subsection{Changes in \cite{KimTone2}}
 The results from \cite{KimTone2} are used in the present paper 
 and the modifications are needed there as well. On the other hand,
 similarly, one can check that the
 change to the volume-controlled counterpart does
 not cause any difficulties. The part which 
 is relevant to the change is Section 4 of \cite{KimTone2}. In the proof of Lemma 4.2, a Lipschitz retraction map $\hat{F}$ is used, with the reduction of measure
 inside of $B_{r_\ell R}(z^{(\ell)})$ being
 $\beta(r_\ell R)^n$, while the volume of the ball
 is $O((r_\ell R)^{n+1})$ (see (4.7) and (4.9)).
 Since $r_\ell=1/j_\ell^{2.5}$ (see just after (4.3)), one can check that $\hat F\in {\bf E}^{vc}(\mathcal 
 E_{j_\ell},j_\ell)$ for all large $\ell$. 
 The similar argument can be
 applied to the proof of Lemma 4.3 and Lemma 4.5. 
 The proof of Lemma 4.6 uses \cite[Lemma 8.1]{KimTone}, with the modifications discussed above in \ref{PRI}(3). In the proofs of Lemma 4.7--4.9, 
 the Lipschitz maps reduce the measure
 in similar manners, and they belong to the 
 volume-controlled counterparts. 
In particular, all of the results in \cite{KimTone2}
hold true even with the modifications. 

\bibliographystyle{plain}
\bibliography{MCF_Plateau_biblio_dot}

\begin{thebibliography}{10}

\bibitem{Allard}
W.~K. Allard.
\newblock On the first variation of a varifold.
\newblock {\em Ann. of Math. (2)}, 95:417--491, (1972).

\bibitem{ATW}
F.~J. Almgren, Jr., J.~E. Taylor, and L.~Wang.
\newblock Curvature-driven flows: a variational approach.
\newblock {\em SIAM J. Control Optim.}, 31(2):387--438, (1993).

\bibitem{AFP}
L.~Ambrosio, N.~Fusco, and D.~Pallara.
\newblock {\em Functions of bounded variation and free discontinuity problems}.
\newblock Oxford Mathematical Monographs. The Clarendon Press, Oxford
  University Press, New York, 2000.

\bibitem{Bellettini}
G.~Bellettini and S.~Yu. Kholmatov.
\newblock Minimizing movements for mean curvature flow of partitions.
\newblock {\em SIAM J. Math. Anal.}, 50(4):4117--4148, (2018).

\bibitem{Bertini}
L.~Bertini, P.~Butt\`a, and A.~Pisante.
\newblock Stochastic {A}llen-{C}ahn approximation of the mean curvature flow:
  large deviations upper bound.
\newblock {\em Arch. Ration. Mech. Anal.}, 224(2):659--707, (2017).

\bibitem{Brakke}
K.~A. Brakke.
\newblock {\em The motion of a surface by its mean curvature}, volume~20 of
  {\em Mathematical Notes}.
\newblock Princeton University Press, Princeton, N.J., 1978.

\bibitem{Bronsard}
L.~Bronsard and F.~Reitich.
\newblock On three-phase boundary motion and the singular limit of a
  vector-valued {G}inzburg-{L}andau equation.
\newblock {\em Arch. Rational Mech. Anal.}, 124(4):355--379, (1993).

\bibitem{CGG}
Y.~G. Chen, Y.~Giga, and S.~Goto.
\newblock Uniqueness and existence of viscosity solutions of generalized mean
  curvature flow equations.
\newblock {\em J. Differential Geom.}, 33(3):749--786, (1991).

\bibitem{Depner}
D.~Depner, H.~Garcke, and Y.~Kohsaka.
\newblock Mean curvature flow with triple junctions in higher space dimensions.
\newblock {\em Arch. Ration. Mech. Anal.}, 211(1):301--334, (2014).

\bibitem{Esedoglu}
S.~Esedo\={g}lu and F.~Otto.
\newblock Threshold dynamics for networks with arbitrary surface tensions.
\newblock {\em Comm. Pure Appl. Math.}, 68(5):808--864, (2015).

\bibitem{EG}
L.~C. Evans and R.~F. Gariepy.
\newblock {\em Measure theory and fine properties of functions}.
\newblock Textbooks in Mathematics. CRC Press, Boca Raton, FL, revised edition,
  2015.

\bibitem{ES}
L.~C. Evans and J.~Spruck.
\newblock Motion of level sets by mean curvature. {I}.
\newblock {\em J. Differential Geom.}, 33(3):635--681, (1991).

\bibitem{fischer}
J.~Fischer, S.~Hensel, T.~Laux, and T.~M. Simon.
\newblock The local structure of the energy landscape in multiphase mean
  curvature flow: Weak-strong uniqueness and stability of evolutions.
\newblock {\em Preprint arXiv:2003.05478}, (2020).

\bibitem{Freire}
A.~Freire.
\newblock Mean curvature motion of triple junctions of graphs in two
  dimensions.
\newblock {\em Comm. Partial Differential Equations}, 35(2):302--327, (2010).

\bibitem{Giga-Yamauchi}
Y.~Giga and K.~Yama-uchi.
\newblock On a lower bound for the extinction time of surfaces moved by mean
  curvature.
\newblock {\em Calc. Var. Partial Differential Equations}, 1(4):417--428,
  (1993).

\bibitem{Hensel-Laux_AC}
S.~Hensel and T.~Laux.
\newblock A new varifold solution concept for mean curvature flow: Convergence
  of the {A}llen-{C}ahn equation and weak-strong uniqueness.
\newblock {\em Preprint arXiv:2109.04233}, (2021).

\bibitem{Laux2}
S.~Hensel and T.~Laux.
\newblock Weak-strong uniqueness for the mean curvature flow of double bubbles.
\newblock {\em Preprint arXiv:2108.01733}, (2021).

\bibitem{Ilm_AC}
T.~Ilmanen.
\newblock Convergence of the {A}llen-{C}ahn equation to {B}rakke's motion by
  mean curvature.
\newblock {\em J. Differential Geom.}, 38(2):417--461, (1993).

\bibitem{Ilm1}
T.~Ilmanen.
\newblock Elliptic regularization and partial regularity for motion by mean
  curvature.
\newblock {\em Mem. Amer. Math. Soc.}, 108(520):x+90, (1994).

\bibitem{Kasai-Tone}
K.~Kasai and Y.~Tonegawa.
\newblock A general regularity theory for weak mean curvature flow.
\newblock {\em Calc. Var. Partial Differential Equations}, 50(1-2):1--68,
  (2014).

\bibitem{KimTone}
L.~Kim and Y.~Tonegawa.
\newblock On the mean curvature flow of grain boundaries.
\newblock {\em Ann. Inst. Fourier (Grenoble)}, 67(1):43--142, (2017).

\bibitem{KimTone2}
L.~Kim and Y.~Tonegawa.
\newblock Existence and regularity theorems of one-dimensional {B}rakke flows.
\newblock {\em Interfaces Free Bound.}, 22(4):505--550, (2020).

\bibitem{Laux1}
T.~Laux and F.~Otto.
\newblock Convergence of the thresholding scheme for multi-phase mean-curvature
  flow.
\newblock {\em Calc. Var. Partial Differential Equations}, 55(5):Art. 129, 74,
  (2016).

\bibitem{Laux-Simon}
T.~Laux and T.~M. Simon.
\newblock Convergence of the {A}llen-{C}ahn equation to multiphase mean
  curvature flow.
\newblock {\em Comm. Pure Appl. Math.}, 71(8):1597--1647, (2018).

\bibitem{Luckhaus}
S.~Luckhaus and T.~Sturzenhecker.
\newblock Implicit time discretization for the mean curvature flow equation.
\newblock {\em Calc. Var. Partial Differential Equations}, 3(2):253--271,
  (1995).

\bibitem{Maggi_book}
F.~Maggi.
\newblock {\em Sets of finite perimeter and geometric variational problems},
  volume 135 of {\em Cambridge Studies in Advanced Mathematics}.
\newblock Cambridge University Press, Cambridge, 2012.

\bibitem{Mantegazza}
C.~Mantegazza, M.~Novaga, A.~Pluda, and F.~Schulze.
\newblock Evolution of networks with multiple junctions.
\newblock {\em Preprint arXiv:1611.08254}, (2018).

\bibitem{Menne}
U.~Menne.
\newblock Second order rectifiability of integral varifolds of locally bounded
  first variation.
\newblock {\em J. Geom. Anal.}, 23(2):709--763, (2013).

\bibitem{Mugnai}
L.~Mugnai and M.~R\"{o}ger.
\newblock The {A}llen-{C}ahn action functional in higher dimensions.
\newblock {\em Interfaces Free Bound.}, 10(1):45--78, (2008).

\bibitem{Mullins}
W.~W. Mullins.
\newblock Two-dimensional motion of idealized grain boundaries.
\newblock {\em J. Appl. Phys.}, 27:900--904, (1956).

\bibitem{R-S}
M.~R\"{o}ger and R.~Sch\"{a}tzle.
\newblock On a modified conjecture of {D}e {G}iorgi.
\newblock {\em Math. Z.}, 254(4):675--714, (2006).

\bibitem{SW}
F.~Schulze and B.~White.
\newblock A local regularity theorem for mean curvature flow with triple edges.
\newblock {\em J. Reine Angew. Math.}, 758:281--305, (2020).

\bibitem{Simon}
L.~Simon.
\newblock {\em Lectures on geometric measure theory}, volume~3 of {\em
  Proceedings of the Centre for Mathematical Analysis, Australian National
  University}.
\newblock Australian National University, Centre for Mathematical Analysis,
  Canberra, 1983.

\bibitem{stu-tone2}
S.~Stuvard and Y.~Tonegawa.
\newblock Dynamical instability of minimal surfaces at flat singular points.
\newblock {\em Preprint arXiv:2008.13728}, (2020).

\bibitem{ST19}
S.~Stuvard and Y.~Tonegawa.
\newblock An existence theorem for {B}rakke flow with fixed boundary
  conditions.
\newblock {\em Calc. Var. Partial Differential Equations}, 60(1):Paper No. 43,
  53, (2021).

\bibitem{Takasao-Tonegawa}
K.~Takasao and Y.~Tonegawa.
\newblock Existence and regularity of mean curvature flow with transport term
  in higher dimensions.
\newblock {\em Math. Ann.}, 364(3-4):857--935, (2016).

\bibitem{Ton3}
Y.~Tonegawa.
\newblock Integrality of varifolds in the singular limit of reaction-diffusion
  equations.
\newblock {\em Hiroshima Math. J.}, 33(3):323--341, (2003).

\bibitem{Ton-2}
Y.~Tonegawa.
\newblock A second derivative {H}\"{o}lder estimate for weak mean curvature
  flow.
\newblock {\em Adv. Calc. Var.}, 7(1):91--138, (2014).

\bibitem{Ton1}
Y.~Tonegawa.
\newblock {\em {B}rakke's mean curvature flow: An introduction}.
\newblock SpringerBriefs in Mathematics. Springer, Singapore, 2019.

\end{thebibliography}

\end{document}